\newtheorem{theorem}{Theorem}[section]
\newtheorem{definition}[theorem]{Definition}
\newtheorem{remark}[theorem]{Remark}
\newtheorem{conjecture}[theorem]{Conjecture}
\newtheorem{proposition}[theorem]{Proposition}
\newtheorem{corollary}[theorem]{Corollary}
\newtheorem{lemma}[theorem]{Lemma}
\newtheorem{example}[theorem]{Example}
\def\Q{\mathbb{Q}}
\def\F{\mathbb{F}}
\def\R{\mathbb{R}}
\def\Z{\mathbb{Z}}
\def\A{\mathbb{A}}
\def\C{\mathbb{C}}
\def\G{\mathbb{G}}
\def\Ac{\mathscr{A}}
\def\D{\mathbb{D}}
\def\E{\mathcal{E}}
\def\Fc{\mathcal{F}}
\def\Fl{\mathscr{F}}
\def\Ll{\mathcal{L}}
\def\M{\mathcal{M}}
\def\N{\mathcal{N}}
\def\Ol{\mathcal{O}}
\def\S{\mathcal{S}}
\def\W{\mathcal{W}}
\def\Y{\mathcal{Y}}
\def\Zm{\mathcal{Z}}
\def\V{\mathcal{V}}
\def\Bun{\mathrm{Bun}}
\def\ch{\mathrm{ch}}
\def\Inv{\mathrm{Inv}}
\def\Isom{\mathrm{Isom}}
\def\Fil{\mathrm{Fil}}
\def\Hom{\mathrm{Hom}}
\def\Lie{\mathrm{Lie}}
\def\Mm{\mathrm{M}}
\def\Mod{\mathrm{Mod}}
\def\Nilp{\mathrm{Nilp}}
\def\GL{\mathrm{GL}}
\def\PGL{\mathrm{PGL}}
\def\Proj{\mathrm{Proj}}
\def\Rep{\mathrm{Rep}}
\def\Sh{\mathrm{Sh}}
\def\Sht{\mathrm{Sht}}
\def\GSp{\mathrm{GSp}}
\def\SO{\mathrm{SO}}
\def\GSpin{\mathrm{GSpin}}
\def\RZ{\mathrm{RZ}}
\def\Spin{\mathrm{Spin}}
\def\Spa{\mathrm{Sp}}
\def\Spa{\mathrm{Spa}}
\def\Spf{\mathrm{Spf}}
\def\Spec{\mathrm{Spec}}
\def\Adm{\mathrm{Adm}}
\def\lan{\langle}
\def\ran{\rangle}
\def\lra{\longrightarrow}
\def\ra{\rightarrow}
\def\ov{\overline}
\def\ul{\underline}
\def\wh{\widehat}
\def\wt{\widetilde}
\def\st{\stackrel}
\def\tr{\textrm}
\begin{document}

\title{On some generalized Rapoport-Zink spaces}
\author{Xu Shen}
\date{}
\address{Morningside Center of Mathematics, Academy of Mathematics and Systems Science, Chinese Academy of Sciences\\
	No. 55, Zhongguancun East Road\\
	Beijing 100190, China}\email{shen@math.ac.cn}
	
\renewcommand\thefootnote{}
\footnotetext{2010 Mathematics Subject Classification. Primary: 11G18; Secondary: 14G35.}

\renewcommand{\thefootnote}{\arabic{footnote}}
\keywords{Rapoport-Zink spaces, shtukas, Shimura varieties, K3 surfaces}

\begin{abstract}
We enlarge the class of Rapoport-Zink spaces of Hodge type by modifying the centers of the associated $p$-adic reductive groups. These such-obtained Rapoport-Zink spaces are called of abelian type.  The class of Rapoport-Zink spaces of abelian type is strictly larger than the class of Rapoport-Zink spaces of Hodge type, but the two type spaces are closely related as having isomorphic connected components. The rigid analytic generic fibers of Rapoport-Zink spaces of abelian type  can be viewed as moduli spaces of local $G$-shtukas in mixed characteristic in the sense of Scholze.

We prove that Shimura varieties of abelian type can be uniformized by the associated Rapoport-Zink spaces of abelian type. We construct and study the Ekedahl-Oort stratifications for the special fibers of Rapoport-Zink spaces of abelian type.  As an application, we deduce a Rapoport-Zink type uniformization for the supersingular locus of the moduli space of polarized K3 surfaces in mixed characteristic. Moreover, we show that the Artin invariants of supersingular K3 surfaces are related to some purely local invariants.

\end{abstract}

\maketitle
\setcounter{tocdepth}{1}
\tableofcontents

\section{Introduction}

The theory of Rapoport-Zink spaces finds its origin in the work of Drinfeld in \cite{Dr}. Let $E$ be a finite extension of $\Q_p$, and let $\Omega^d_E$ be the complement of all $E$-rational hyperplanes in the $p$-adic projective space $\mathbb{P}^{d-1}$ over $E$. In \cite{Dr} Drinfeld interpreted this rigid-analytic space $\Omega^d_E$ as the generic fibre of a formal scheme over $\Ol_E$ parametrizing certain $p$-divisible groups. He used this formal moduli scheme to $p$-adically uniformize certain Shimura curves and to construct \'etale coverings of $\Omega^d_E$.  In their foundational and seminal work \cite{RZ}, Rapoport and Zink generalized greatly the construction of Drinfeld by introducing general formal moduli spaces of $p$-divisible groups with EL/PEL structures, and proved these spaces $\breve{\M}$ can be used to uniformize certain pieces of general PEL type Shimura varieties. Moreover, Rapoport and Zink constructed \'etale coverings $\M_K$ of the generic fibers of these formal moduli spaces, and  realized these rigid analytic spaces as \'etale coverings of more general non-archimedean period domains.  Besides these importances in arithmetic geometry and $p$-adic Hodge theory, it was conjectured by Kottwitz that  the $\ell$-adic cohomology  of these Rapoport-Zink spaces $\M_K$ realizes the local Langlands correspondence for the related local reductive group $G$, cf. \cite{Ra} section 5.

Recently, in \cite{Kim1} Kim has constructed more general formal moduli spaces of $p$-divisible groups with additional structures. (Here and throughout the rest of this introduction we assume $p>2$.) These formal schemes $\breve{\M}$ are called of Rapoport-Zink spaces of Hodge type, associated to unramified local Shimura data of Hodge type $(G, [b], \{\mu\})$ (see below). The additional structures on $p$-divisible groups are given by the so called crystalline Tate tensors, cf. \cite{Kim1} Definition 4.6, generalizing the EL/PEL structures introduced by Rapoport-Zink (in the unramified case). Kim also constructed a tower $(\M_K)_K$ of rigid analytic spaces (as usual, $K\subset G(\Q_p)$ runs through open compact subgroups of $G(\Q_p)$), when passing to the generic fibers of these formal moduli schemes. These Rapoport-Zink spaces of Hodge type appear as local analogues of the recent work of Kisin \cite{Ki1} on integral canonical models of Shimura varieties of Hodge type. In \cite{Kim2} Kim has proved his Rapoport-Zink spaces of Hodge type can be used to uniformize certain pieces of Shimura varieties of Hodge type. If the unramified local Shimura datum of Hodge type comes from a Shimura datum of Hodge type, Howard and Pappas have given another (global) construction of the associated Hodge type Rapoport-Zink spaces. We refer to \cite{HP} for more details.

In this note, we show that we can in fact go ahead one step further: we will construct some (slightly) more general formal and rigid analytic Rapoport-Zink spaces, and we will show that these spaces can be used to uniformize (pieces of) Kisin's integral canonical models Shimura varieties of abelian type, cf. \cite{Ki1}. Moreover, we will give some interesting applications to the moduli spaces of K3 surfaces in mixed characteristic.

There are several motivations for our work here. In our previous work \cite{Sh1}, we constructed perfectoid Shimura varieties of abelian type. One of the main motivations for this work is to study the local geometric structures of these perfectoid Shimura varieties, and to study the local geometric structures of Kisin's integral models of Shimura varieties of abelian type \cite{Ki1}. Another motivation is the recent developments in the theory of local Shimura varieties. In \cite{RV}, Rapoport-Viehmann conjectured the existence of a rigid analytic tower \[(\M_K)_K\] associated to a local Shimura datum $(G, [b], \{\mu\})$, where\footnote{Here we have followed \cite{RV} to write a local Shimura datum as $(G, [b], \{\mu\})$.}
\begin{itemize}
	\item $G$ is a connected reductive group over $\Q_p$,
	\item $\{\mu\}$ is a conjugacy class of minuscule cocharacters $\mu: \G_m\ra G_{\ov{\Q}_p}$,
	\item $[b]$ is a $\sigma$-conjugacy class in the Kottwitz set $B(G, \mu)$ (see \cite{Kot2}  section 6)
\end{itemize}
These conjectural local Shimura varieties are intended to be generalizations of Rapoport-Zink spaces, and there should be a theory in the local situation as good as the classical theory of Shimura varieties (\cite{D}). Recently, using the theory of perfectoid spaces (\cite{Sch1}), and the developments in $p$-adic Hodge theory due to Fargues, Fargues-Fontaine, and Kedlaya-Liu \cite{F, FF, KL}, Scholze has almost given a solution for Rapoport-Viehmann's conjecture by constructing moduli of local $G$-shtukas in mixed characteristic (cf. \cite{Sch2}) \[(\Sht_K)_K\] as some reasonable geometric objects. These geometric objects are called diamonds there, a generalization of perfectoid spaces and analytic adic spaces. Along the way of construction, we get an infinite level moduli space $\Sht_\infty$, such that as diamonds we have $\Sht_\infty= \varprojlim_K\Sht_K$.

In fact, Scholze proved more: one can allow the conjugacy class of cocharacters $\{\mu\}$ non minuscule, contrary to the original requirement of Rapoport-Viehmann in \cite{RV}, and in fact one can allow several $\{\mu\}$'s. Thus this theory is the mixed characteristic analogue of the theory of moduli of shtukas in the function fields case (\cite{Var, HV}). 

Despite its great success, the method of \cite{Sch2} is purely generic\footnote{We have learnt very recently that Scholze's method also produces integral models of local Shimura varieties as $v$-sheaves, cf. \cite{Sch4, SW17}.}: a priori, one has no information on reduction mod $p$. In the case of EL/PEL Rapoport-Zink spaces $\M_K$, Scholze proved the associated diamonds $\M_K^\diamond$ are isomorphic to his moduli spaces of local $G$-shtukas $\Sht_K$. From the point of view of moduli, this means that one can switch $p$-divisible groups with additional structures to local $G$-shtukas. Thus, in these classical cases, one gets formal integral structures and can talk about reduction mod $p$. From now on, we assume that $G$ is unramified over $\Q_p$ and fix a reductive integral model $G_{\Z_p}$ of $G$ over $\Z_p$. Using Dieudonn\'e theory, one can prove the special fibers of formal Rapoport-Zink spaces (of EL/PEL/Hodge type) are closely related to the corresponding affine Deligne-Lusztig varieties \[X^{G}_\mu(b):=\{g\in G(L)/G(W)|\,g^{-1}b\sigma(g)\in G(W)\mu(p)G(W)\}, \] where $W=W(\ov{\F}_p), L=W_\Q, G(W)=G_{\Z_p}(W)$, and $\sigma$ is the Frobenius. More precisely, in the above definition we have fixed a representative $b\in G(L)$ of the class $[b]$. On $X^{G}_\mu(b)$, we have an action of $J_b(\Q_p)$, where $J_b$ is the $\sigma$-centralizer of $b$. These objects are defined purely group theoretically, and thus make sense for arbitrary $(G, [b], \{\mu\})$ (as in the case of Scholze's moduli of local $G$-shtukas). These affine Deligne-Lusztig varieties play a crucial role in understanding the reduction mod $p$ of Shimura varieties, cf. \cite{Ra1}.

In this paper, we introduce a class of local Shimura data, the so called unramified local Shimura data of abelian type, and for each such datum $(G, [b], \{\mu\})$, we construct a formal scheme $\breve{\M}$, and a tower of rigid analytic spaces $(\M_K)_{K}$ such that
\begin{itemize}
\item the reduced special fiber $\M_{red}(\ov{\F}_p)\simeq X^G_\mu(b)$;
\item the rigid analytic (adic) generic fiber $\breve{\M}^{ad}_\eta=\M_{G(\Z_p)}$;
\item the associated diamonds $\M_K^\diamond\simeq \Sht_K$.
\end{itemize}
Moreover, we can prove that there exists a preperfectoid space $\M_\infty$ over $L$ such that \[\M_\infty\sim \varprojlim_K\M_K,\] where the meaning of $\sim$ is as \cite{SW} Definition 2.4.1.
This class of unramified local Shimura data of abelian type is strictly larger than the class of unramified local Shimura data of Hodge type.
Thus, among all local Shimura data, we find a new and larger class such that
\begin{itemize}
\item there exists a formal model $\breve{\M}$, such that $\breve{\M}^{ad,\diamond}_\eta\simeq \Sht_{G(\Z_p)}, \quad \M_{red}(\ov{\F}_p)\simeq X^G_\mu(b)$;
\item there exists a preperfectoid space $\M_\infty$, such that $\M_\infty^\diamond\simeq \Sht_\infty$.
\end{itemize}
We remark that the analogue of the above two additional structures in the global situation of Shimura varieties of abelian type are known by \cite{Ki1, Sh1}. They are not known yet for general local Shimura data (or local shtuka data).

A local Shimura datum $(G, [b],  \{\mu\})$ is called of unramified Hodge type, if $G$ is unramified, and there exists an embedding $(G, [b],  \{\mu\})\hookrightarrow (\GL(V), [b'],  \{\mu'\})$ of local Shimura data, such that $\{\mu'\}$ corresponds to $(1^{r}, 0^{n-r})$ for some integral $1\leq r \leq n=\dim V$. Roughly, the class of local Shimura data of Hodge type is the largest class for which the associated Rapoport-Zink spaces can be realized as moduli of $p$-divisible groups with additional structures. In this paper we introduce the following notion.
A local Shimura datum $(G, [b], \{\mu\})$ is called of unramified abelian type, if there exists an unramified local Shimura datum of Hodge type $(G_1, [b_1],  \{\mu_1\})$ such that we have an isomorphism of the associated adjoint local Shimura data $(G^{ad}, [b^{ad}],  \{\mu^{ad}\})\simeq (G_1^{ad}, [b_1^{ad}],  \{\mu_1^{ad}\})$. This is the local analogue of a Shimura datum of abelian type\footnote{More precisely, our local Shimura data of abelian type are the local analogues of Shimura data of preabelian type.}. We remark that although by definition we only change the centers of the groups, there does not exist local Hodge embedding any more for a general local Shimura datum of abelian type  $(G, [b], \{\mu\})$ (as in the corresponding global situation of Shimura varieties). This means that the class of local Shimura data of (unramified) abelian type is strictly larger than the class of (unramified) Hodge type. By Serre's classification \cite{Ser}, the groups $G$ in this larger class consist exactly of all classical groups, see section 4. 

Our first main theorem is as follows. See Theorem \ref{T:ab}, Proposition \ref{P:local system}, Corollary \ref{C:local ab}.
\begin{theorem}\label{T:intr ab}
	Let $(G, [b], \{\mu\})$ be an unramified local Shimura datum of abelian type. Fix a representative $b\in G(L)$ of $[b]$. Then there exists a formal scheme $\breve{\M}(G,b,\mu)$, which is formally smooth, formally locally of finite type over $W$, such that \[\ov{\M}(G,b,\mu)^{perf}\simeq X_{\mu}^{G}(b).\] Here $\ov{\M}(G,b,\mu)^{perf}$ is the perfection of the special fiber $\ov{\M}(G,b,\mu)$, and $X_{\mu}^{G}(b)$ is the affine Deligne-Lusztig variety, considered as a perfect scheme by \cite{Zhu, BS}.
	The formal scheme $\breve{\M}:=\breve{\M}(G,b,\mu)$ is equipped with a transitive action of $J_b(\Q_p)$, compatible with the action of $J_b(\Q_p)$ on $X_{\mu}^{G}(b)$. Moreover, there exist a tower of rigid analytic spaces $(\M_K)_K$ and a preperfectoid space $\M_\infty$ over $L$ such that
	\begin{enumerate}
	\item $\breve{\M}^{ad}_\eta=\M_{G(\Z_p)}$,
	\item $\M_\infty\sim \varprojlim_K\M_K$,
	\item $\M_K^\diamond\simeq \Sht_K$,
	\item there exists a compatible system of \'etale morphism $\pi_{dR}: \M_K\ra \Fl\ell_{G,\mu}^{adm}$,
	\item there exists a Hodge-Tate period morphism $\pi_{HT}: \M_\infty\ra \Fl\ell_{G,\mu^{-1}}$.
	\end{enumerate}
\end{theorem}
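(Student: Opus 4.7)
The plan is to follow the strategy of Kisin \cite{Ki1} for constructing integral canonical models of Shimura varieties of abelian type from those of Hodge type, adapted to the local setting. First I fix an auxiliary unramified local Shimura datum of Hodge type $(G_1,[b_1],\{\mu_1\})$ with an isomorphism of adjoint data $(G^{ad},[b^{ad}],\{\mu^{ad}\})\simeq (G_1^{ad},[b_1^{ad}],\{\mu_1^{ad}\})$, whose existence is guaranteed by the definition of abelian type. Kim's construction \cite{Kim1} furnishes a formal Rapoport-Zink space $\breve{\M}_1:=\breve{\M}(G_1,b_1,\mu_1)$, a rigid analytic tower $(\M_{1,K_1})_{K_1}$, and a preperfectoid infinite level space $\M_{1,\infty}$, together with the identification of the reduced special fiber with $X^{G_1}_{\mu_1}(b_1)$, the comparison with Scholze's shtukas, and the period morphisms $\pi_{dR}$ and $\pi_{HT}$.

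Next I construct $\breve{\M}(G,b,\mu)$ from $\breve{\M}_1$ by a twisting procedure analogous to Deligne/Kisin's global construction. Fix a connected component $\breve{\M}_1^+\subset\breve{\M}_1$; it is stable under the stabilizer $J_{b_1}(\Q_p)^+$, which acts via its image in the common adjoint group $J^{ad}(\Q_p)$. Writing $J_b(\Q_p)^+$ for the preimage in $J_b(\Q_p)$ of the image of $J_{b_1}(\Q_p)^+$ in $J^{ad}(\Q_p)$, I define
\[
\breve{\M}(G,b,\mu):=J_b(\Q_p)\times^{J_b(\Q_p)^+}\breve{\M}_1^+,
\]
i.e.\ a disjoint union of copies of $\breve{\M}_1^+$ indexed by $J_b(\Q_p)/J_b(\Q_p)^+$, glued by the adjoint action. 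Formal smoothness and local finite type over $W$ are inherited from $\breve{\M}_1$, and the $J_b(\Q_p)$-action is built into the construction.

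For the special fiber identification, I use the parallel decomposition of the affine Deligne-Lusztig variety $X^G_\mu(b)$ into connected components governed by the adjoint datum, which by the results of \cite{CKV} and the adjoint isomorphism matches the above presentation componentwise; applying the Hodge type identification $\ov{\M}_1^{perf}\simeq X^{G_1}_{\mu_1}(b_1)$ on each component and twisting yields $\ov{\M}(G,b,\mu)^{perf}\simeq X^G_\mu(b)$ equivariantly. The rigid analytic tower $(\M_K)_K$ for $K\subset G(\Q_p)$ is obtained by passing to the generic fiber and forming the quotients corresponding to $K$, using the adjoint description of the tower on the Hodge type side; the preperfectoid infinite level space $\M_\infty$ is then the corresponding twist of $\M_{1,\infty}$, inheriting preperfectoidness. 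The period morphisms $\pi_{dR}$ and $\pi_{HT}$ descend from those on the Hodge type side because both factor through data intrinsic to $G^{ad}$; the diamond comparison $\M_K^\diamond\simeq\Sht_K$ follows from the Hodge type case together with Scholze's functorial constructions of $\Sht_K$, which respect central isogenies and quotients.

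The main obstacle I expect is Step 2: verifying that the twisting construction is well-defined and independent of the auxiliary Hodge type datum $(G_1,[b_1],\{\mu_1\})$, and that the resulting object carries a $J_b(\Q_p)$-action (and not merely a $J^{ad}(\Q_p)^+$-action) in a canonical way. This requires a careful group-theoretic analysis of the central part of $J_b(\Q_p)$ and its action on connected components of $\breve{\M}_1$, paralleling the analogous global analysis for Shimura varieties of abelian type. The compatibility of this twist with Scholze's shtuka spaces on the diamond side, and with the de Rham and Hodge-Tate period maps, are then formal consequences once the adjoint-equivariance of Kim's constructions is established.
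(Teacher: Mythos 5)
Your overall strategy coincides with the paper's: fix an auxiliary unramified Hodge type datum $(G_1,[b_1],\{\mu_1\})$ with the same adjoint datum, carve out a piece $\breve{\M}_1^+$ of Kim's space $\breve{\M}_1$, transport it across the adjoint isomorphism, and induce up via the transitive $J_b(\Q_p)$-action on connected components (using \cite{CKV} Theorem 1.2); then push the tower, the preperfectoid space, and the period maps through the same presentation. The paper proceeds exactly this way.

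However, two points in your Step~2 are imprecise in a way that would block the argument as written. First, you take $\breve{\M}_1^+$ to be a single \emph{connected component} of $\breve{\M}_1$, whereas the paper takes it to be a full fiber of the Kottwitz map $\omega_{G_1}\colon X_{\mu_1}^{G_1}(b_1)\to c_{b_1,\mu_1}\pi_1(G_1)^\Gamma$, which is in general a \emph{union} of connected components. This distinction matters: the ingredient from \cite{CKV} that lets you transport the $+$-piece across the central isogeny is the cartesian square relating $\omega_{G_1}$, $\omega_{G_1^{ad}}$, and $\omega_G$ (Proposition~\ref{P:cartesian}, from \cite{CKV} Corollary 2.4.2). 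That square controls fibers of $\omega$, not $\pi_0$ directly, and in general it is not known that $\pi_0(X_\mu^G(b))\to\pi_1(G)^\Gamma$ is injective (Theorem~\ref{T:CKV}~(2) gives that only under Hodge--Newton indecomposability). So with a single connected component you do not get the needed identification $\breve{\M}_1^+\simeq\breve{\M}(G,b,\mu)^+$ from \cite{CKV}. Second, you define $J_b(\Q_p)^+$ as the preimage in $J_b(\Q_p)$ of the image of $J_{b_1}(\Q_p)^+$ in $J^{ad}(\Q_p)$, but the correct group is the intrinsic stabilizer of $X_\mu^G(b)^+$ in $J_b(\Q_p)$, i.e.\ $\ker(\omega_G\colon J_b(\Q_p)\twoheadrightarrow\pi_1(G)^\Gamma)$; since $\pi_1(G)^\Gamma\to\pi_1(G^{ad})^\Gamma$ is typically neither injective nor easily compared with the image of $\ker(\omega_{G_1})$ in $J^{ad}(\Q_p)$, the two candidate subgroups need not agree, and the induced space would change. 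You correctly flag independence of the choice of $(G_1,[b_1],\{\mu_1\})$ as a gap; the paper settles it via Kisin's observation that the deformation ring $R_G$ at a point depends only on $G^{ad}$ (\cite{Ki1}~1.5.4), combined with the \cite{CKV} cartesian diagram. Once you replace ``connected component'' by ``fiber of $\omega_{G_1}$'' and use the intrinsic definition of $J_b(\Q_p)^+$, the remainder of your outline (tower, preperfectoid space, $\pi_{dR}$, $\pi_{HT}$, comparison with $\Sht_K$) matches the paper's treatment, which likewise works one $+$-piece at a time and then applies the group action.
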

Here $\Fl\ell_{G,\mu}^{adm}$ is the admissible locus in the $p$-adic flag variety $\Fl\ell_{G,\mu}$ associated to $(G, \{\mu\})$, cf. \cite{Ra2} Definition A.6 or \cite{CFS} Definition 3.1, and $\Fl\ell_{G,\mu^{-1}}$ is the $p$-adic flag variety associated to $(G, \{\mu^{-1}\})$. In fact, we will see in Corollary \ref{C:local ab} that $\pi_{HT}$ also factors through a locally closed subspace $\Fl\ell_{G,\mu^{-1}}^b\subset \Fl\ell_{G,\mu^{-1}}$.

The construction of $\breve{\M}(G,b,\mu)$ associated to $(G, [b], \{\mu\})$ as above is based on the following observations.
Take any unramified local Shimura datum of Hodge type $(G_1, [b_1],  \{\mu_1\})$ such that $(G^{ad}, [b^{ad}],  \{\mu^{ad}\})\simeq (G_1^{ad}, [b_1^{ad}],  \{\mu_1^{ad}\})$. We have the associated formal Rapoport-Zink space
$\breve{\M}(G_1,b_1,\mu_1)$ constructed by Kim \cite{Kim1}, by patching together Faltings's construction of deformation ring for $p$-divisible groups (with crystalline Tate tensors) with Artin's criterion for algebraic spaces. By \cite{Zhu} Proposition 3.11, $\ov{\M}(G_1,b_1,\mu_1)^{perf}\simeq X_{\mu_1}^{G_1}(b_1)$. For any local Shimura datum $(G, [b], \{\mu\})$, we have a $J_b(\Q_p)$-equivariant surjective map
\[\omega_G: X_{ \mu}^G(b)\lra c_{b,\mu}\pi_1(G)^\Gamma,\] which factors through the set of connected components $\pi_0(X_{\mu}^G(b))$. Here $\pi_1(G)$ is the algebraic fundamental algebraic group of $G$ and $\Gamma=\mathrm{Gal}(\ov{\Q}_p/\Q_p)$.  See subsection \ref{subsection:ADL} for the construction of this map and the element $c_{b,\mu}\in \pi_1(G).$  Moreover, by \cite{CKV} Theorem 1.2,
$J_b(\Q_p)$ acts transitively on $\pi_0(X_\mu^G(b))$. For any local Shimura datum $(G, [b], \{\mu\})$,  by \cite{CKV} Corollary 2.4.2, we have a cartesian diagram
\[\xymatrix{X_{\mu}^{G}(b)\ar[r]\ar[d] & X_{\mu^{ad}}^{G_1^{ad}}(b^{ad})\ar[d]\\
	c_{b,\mu}\pi_1(G)^\Gamma\ar[r] & c_{b^{ad},\mu^{ad}}\pi_1(G^{ad})^\Gamma.
}\]
In particular we apply the above diagram to $(G, [b], \{\mu\})$ and $(G_1, [b_1],  \{\mu_1\})$ as above.
Let $X_{\mu_1}^{G_1}(b_1)^+\subset X_{\mu_1}^{G_1}(b_1)$ be a fixed choice of fiber of the map $\omega_{G_1}:  X_{\mu_1}^{G_1}(b_1)\ra c_{b_1,\mu_1}\pi_1(G_1)^\Gamma$. This is isomorphic to the corresponding subset of $X_{\mu}^{G}(b)$. Let \[\breve{\M}(G_1,b_1,\mu_1)^+\subset \breve{\M}(G_1,b_1,\mu_1)\] be the open and closed subspace corresponding to $X_{\mu_1}^{G_1}(b_1)^+$. As $X_{\mu}^{G}(b)=J_b(\Q_p)X_{\mu}^{G}(b)^+$,  we get the formal scheme $\breve{\M}(G,b,\mu)$ whose special fiber satisfies $\ov{\M}(G,b,\mu)^{perf}\simeq X_{\mu}^{G}(b)$. By construction, this formal scheme does not depend on the choice of the Hodge type local Shimura datum $(G_1, [b_1],  \{\mu_1\})$.
The other properties can be proved similarly.

Let $\big(\M(G,b,\mu)_K\big)_{K\subset G(\Q_p)}$ and $\big(\M(G_1,b_1,\mu_1)_{K_1}\big)_{K_1\subset G_1(\Q_p)}$ be the two towers associated to $(G, [b], \{\mu\})$ and $(G_1, [b_1],  \{\mu_1\})$ as above. By construction, the two towers are locally isomorphic in the sense that there exist sub towers\footnote{Here a sub tower $(Y_K)_K$ of a tower $(X_K)_K$ of inverse system of rigid analytic spaces is by definition given by an inverse system of subspaces $Y_K\subset X_K$ with compatible transition maps.} $\big(\M(G,b,\mu)_K^+\big)_{K\subset G(\Q_p)}$  and $\big(\M(G_1,b_1,\mu_1)_{K_1}^+\big)_{K_1\subset G_1(\Q_p)}$ such that 
\[\M(G,b,\mu)^+_\infty=\M(G_1,b_1,\mu_1)^+_\infty,\]where $\M(G,b,\mu)^+_\infty$ is the preperfectoid space over $L$ such that \[\M(G,b,\mu)^+_\infty\sim \varprojlim_{K}\M(G,b,\mu)_K^+,\] and similarly for $\M(G_1,b_1,\mu_1)^+_\infty$.
This implies in particular that $\Fl\ell_{G,\mu}^{adm}=\Fl\ell_{G_1,\mu_1}^{adm}$.  The tower $\big(\M(G,b,\mu)_K\big)_{K\subset G(\Q_p)}$ can be recovered from $\big(\M(G,b,\mu)_K^+\big)_{K\subset G(\Q_p)}$ and $\pi_1(G)^\Gamma$ by the action of either $G(\Q_p)$ or $J_b(\Q_p)$.
We expect that such results hold true for any local shtuka data $(G, [b], \{\mu\})$ and $(G_1, [b_1],  \{\mu_1\})$  with the same adjoint data.

We note that the above construction is simpler than the corresponding global situation, cf. \cite{Sh1, Ki1}, where one has to make a quotient on each geometric connected component of Shimura varieties of Hodge type. 

In subsection 4.3 we will try to find a moduli interpretation for the formal scheme $\breve{\M}(G,b,\mu)$ associated to $(G, [b], \{\mu\})$ as above, cf. Proposition \ref{P:moduli}, which is a priori non canonical, however. It is desirable to find a more canonical moduli interpretation for $\breve{\M}(G,b,\mu)$. After the first version of this paper appeared on line, B\"ultel and Pappas have recently found an intrinsic moduli interpretation for $\breve{\M}(G,b,\mu)$ with $(G, [b], \{\mu\})$ of Hodge type under a certain nilpotent condition, cf. \cite{BP}. They use a notion of $(G,\mu)$-displays, which is purely group theoretical. We can naturally extend B\"ultel and Pappas's moduli interpretation to certain abelian type case $\breve{\M}(G,b,\mu)$ studied in this paper, cf. Theorem \ref{T:moduli}. As mentioned above, the further recent  progress of \cite{SW17} will give a canonical moduli interpretation for the formal scheme $\breve{\M}(G,b,\mu)$ in the general case, as moduli of local shtukas similar to that in section 5, cf. Remark \ref{Remarks} (3).

If the unramified local Shimura datum of abelian type $(G, [b], \{\mu\})$  comes from a Shimura datum of abelian type $(G,X)$, we can prove the following uniformization theorem. Let $K^p\subset G(\A_f^p)$ be a fixed sufficiently small open compact subgroup. Consider $S_K$, the Kisin integral canonical model over $W$ of the Shimura variety $\Sh_K$ with $K=G(\Z_p)K^p$. Let \[\phi: \mathfrak{Q}\ra  \mathfrak{G}_G\] be a Langlands-Rapoport parameter with the associated reductive group $I_\phi$ over $\Q$, such that $[b]=[b(\phi)]$ (see \cite{Ki2} 3.3.6 for the precise meaning of these objects, where a Langlands-Rapoport parameter is called an admissible morphism between the Galois gerbs $\mathfrak{Q}$ and $\mathfrak{G}_G$). Let $\breve{\M}=\breve{\M}(G,b,\mu)$. Fix a Langlands-Rapoport parameter  $\phi_0: \mathfrak{Q}\ra  \mathfrak{G}_{G^{ad}}$ for the adjoint group such that $\phi^{ad}=\phi_0$.  In section 6 we will construct a subspace $\Zm_{\phi_0, K^p}\subset \ov{S}_K$, such that the formal completion of $S_K$ along $\Zm_{\phi_0,K^p}$ can be defined. The following theorem was proved by Rapoport and Zink in the PEL type case (\cite{RZ}), and by Kim in the Hodge type case (\cite{Kim2}, see also \cite{HP}). It can be viewed as the geometric version of the Langlands-Rapoport description for the underlying $\ov{\F}_p$-points, cf. \cite{Ki2}. In fact, it was pointed out in the introduction of \cite{RV} that the works of Kisin \cite{Ki1, Ki2} should yield new Rapoport-Zink spaces (comp. \cite{HP}). Here we construct these spaces locally, and show that they admit global application (comp. \cite{RV} Remark 5.9). See Theorems \ref{Thm:uniformize} and \ref{T:uniformize basic}.
\begin{theorem}\label{thm:unif}
	We have an isomorphism of formal schemes over $W$
	\[ \Theta: \coprod_{[\phi],\phi^{ad}=\phi_0} I_\phi(\Q)\setminus \breve{\M}\times G(\A_f^p)/K^p\st{\sim}{\lra} \wh{S_K}_{/\Zm_{\phi_0,K^p}}, \]where $\phi_0: \mathfrak{Q}\ra  \mathfrak{G}_{G^{ad}}$ is a fixed Langlands-Rapoport parameter for the adjoint group $G^{ad}$, $[\phi]$ runs through the set of isomorphism classes of Langlands-Rapoport parameters $\phi$ for $G$ such that $\phi^{ad}=\phi_0$ and $[b]=[b(\phi)]$.
	When $[b]$ is basic, we have $\Zm_{\phi_0,K^p}=\ov{S}_K^b$ which is the basic locus, and the above isomorphism reduces to
	\[\Theta:  I_\phi(\Q)\setminus \breve{\M}\times G(\A_f^p)/K^p\st{\sim}{\lra} \wh{S_K}_{/\ov{S}_K^b}.\]
\end{theorem}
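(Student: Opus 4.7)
The plan is to deduce the theorem from Kim's uniformization theorem in the Hodge type case (\cite{Kim2}, together with \cite{HP} when available), by transporting the result through the same adjoint-lifting mechanism that was used to construct $\breve{\M}(G,b,\mu)$ from a Hodge type $\breve{\M}(G_1,b_1,\mu_1)^+$. Fix once and for all a Hodge type lift: a Shimura datum of Hodge type $(G_1,X_1)$ together with an unramified local Shimura datum of Hodge type $(G_1,[b_1],\{\mu_1\})$, compatible at the adjoint level with $(G,X)$ and $(G,[b],\{\mu\})$ respectively, and compatible with $\phi_0$; the existence of such a lift, for a suitable prime-to-$p$ level $K_1^p\subset G_1(\A_f^p)$, is part of the classification in Sections~3--4 and of Kisin's construction of abelian type integral canonical models in \cite{Ki1}, which expresses $S_K$ as a twist of geometric connected components of $S_{K_1}$ by an action of a quotient involving $\pi_1(G)$ and $\pi_1(G_1)$.

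First I apply Kim's theorem to $(G_1,b_1,\mu_1)$ and $K_1^p$ to obtain
\[
\Theta_1\colon \coprod_{[\phi_1],\,\phi_1^{ad}=\phi_0} I_{\phi_1}(\Q)\setminus \breve{\M}(G_1,b_1,\mu_1)\times G_1(\A_f^p)/K_1^p \;\st{\sim}{\lra}\; \wh{S_{K_1}}_{/\Zm_{\phi_0,K_1^p}}.
\]
Restricting $\Theta_1$ to a chosen fiber of $\omega_{G_1}\colon X_{\mu_1}^{G_1}(b_1)\to c_{b_1,\mu_1}\pi_1(G_1)^\Gamma$ produces, via the discussion preceding Theorem~\ref{T:intr ab} and \cite{CKV} Corollary 2.4.2, an isomorphism $\Theta_1^+$ between the $+$-piece $\breve{\M}(G_1,b_1,\mu_1)^+$ (times prime-to-$p$ data, modulo the stabilizer in $I_{\phi_1}(\Q)$ of the chosen component) and the formal completion of a fixed geometric connected component of $S_{K_1}$ along its intersection with $\Zm_{\phi_0,K_1^p}$. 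This gives the $+$-level version of the statement.

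The third step is to propagate $\Theta_1^+$ along Kisin's passage from $S_{K_1}$ to $S_K$. By construction, $\breve{\M}(G,b,\mu)$ is obtained from $\breve{\M}(G_1,b_1,\mu_1)^+$ by applying $J_b(\Q_p)$, and by \cite{CKV} this recovers all of $X_\mu^G(b)$ together with the correct $\pi_1(G)^\Gamma$-torsor structure of its connected components; globally, the very same ``fix a component, then twist by central / fundamental-group data'' recipe is how $S_K$ is assembled from $S_{K_1}$ in \cite{Ki1}. Performing both quotients simultaneously on the source and target of $\Theta_1^+$ yields a map
\[
\Theta\colon \coprod_{[\phi],\,\phi^{ad}=\phi_0} I_\phi(\Q)\setminus \breve{\M}\times G(\A_f^p)/K^p \;\lra\; \wh{S_K}_{/\Zm_{\phi_0,K^p}},
\]
where the subspace $\Zm_{\phi_0,K^p}\subset\ov{S}_K$ is defined in Section~6 precisely as the image of $\Zm_{\phi_0,K_1^p}$ under this quotient; bijectivity of $\Theta$ is inherited from $\Theta_1^+$, and equivariance under $G(\A_f^p)$ and $J_b(\Q_p)$ follows from the corresponding equivariances of $\Theta_1$. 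The bijection between the indexing sets uses that $\phi\mapsto \phi^{ad}$ identifies Langlands-Rapoport parameters modulo the central twist relating $I_\phi$ and $I_{\phi_1}$.

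Finally, when $[b]$ is basic, the Newton stratum is closed in $\ov{S}_K$, so $\Zm_{\phi_0,K^p}$ equals the entire basic locus $\ov{S}_K^b$, and there is (up to conjugation) a unique admissible $\phi$ with $\phi^{ad}=\phi_0$ and $[b]=[b(\phi)]$, so the disjoint union collapses to a single term, giving the stated simplification. The main obstacle is the third step: one must match Kisin's twisting construction for Shimura varieties of abelian type with the $+$-to-full reconstruction of $\breve{\M}$, and in particular verify that the $I_\phi(\Q)$-action that emerges from the quotient is the intended one. This comes down to a careful comparison of Kisin's gerbe-theoretic parametrization of isogeny classes on $\ov{S}_{K_1}$ with the purely local data governed by $\pi_1(G)^\Gamma$ via \cite{CKV} Corollary 2.4.2, and of the two systems of connected components; once this compatibility is set up, the rest of the argument is a formal descent.
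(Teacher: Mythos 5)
Your plan matches the paper's: apply Kim's Hodge-type uniformization (\cite{Kim2} Theorem 4.7), restrict to a connected component to get $\Theta_1^+$, and reassemble via the same $\Ac(G_{\Z_{(p)}})/\Ac(G_{1\Z_{(p)}})^{\circ}$-twisting that Kisin uses to build $S_K$ from $S_{K_1}^+$, with the basic case following from Kim's Theorem 4.11. The compatibility you rightly single out as ``the main obstacle'' is exactly what the paper resolves by reducing the whole quotient identification to the set-theoretic statement of \cite{Ki2} Corollary 3.8.12 (the paper's Proposition 6.3) and then promoting it to formal schemes (Proposition 6.4); you should cite that corollary explicitly rather than leaving the comparison as an unfinished task, since it is precisely the gerbe-theoretic input that makes the ``formal descent'' you invoke go through.
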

Unsurprisingly, we apply the tricks of Kisin as in \cite{Ki2} to deduce the theorem from the Hodge type case. One can also deduce rigid analytic and perfectoid versions of the above uniformization theorem.

We consider the examples of basic GSpin and special orthogonal groups Rapoport-Zink spaces. Let $\breve{\M}_1=\breve{\M}(\GSpin, b,\mu), \breve{\M}=\breve{\M}(\SO, b',\mu')$ be the associated basic Rapoport-Zink spaces, where $\GSpin=\GSpin(V,Q), \SO=\SO(V,Q)$ are unramified GSpin and special orthogonal groups associated to a quadratic space $(V,Q)$ over $\Q_p$, with $\dim V=n+2$ for some integer $n\geq 1$. By considering the $G$-zip associated to the universal $p$-divisible group with crystalline Tate tensors on the special fiber $\ov{\M_1}$ of $\breve{\M}_1$, we can define an Ekedahl-Oort stratification on $\ov{\M_1}$, and thus on $\M_{1red}$ (the reduced special fiber), which is the local analogue of the Ekedahl-Oort stratification for Shimura varieties of Hodge type (cf. \cite{Zh}). The index set of this stratification is a subset $^J\W^b$ of the absolute Weyl group of $G_1$, which is thus finite. In fact one can find by computation that, it is in bijection with some explicit finite set of integers.  
For each $w\in\, ^J\W^b$, we have the associated Ekedahl-Oort stratum $\M_{1w}$ of $\M_{1red}$. On the other hand, in \cite{HP}, Howard and Pappas introduced another stratification for the reduced special fiber $\M_{1red}$:
\[\M_{1red}=\coprod_{\Lambda}\M_{1\Lambda}^\circ,\]where $\Lambda$ runs through the set of vertex lattices, see loc. cit. section 5.  By Corollary \ref{C:RZ SO} $\breve{\M}\simeq \breve{\M}_1/p^\Z$, we get the induced Ekedahl-Oort and Howard-Pappas stratifications for $\M_{red}$.
The following theorem is proved in subsection \ref{subsection:HPEO}: see Theorem \ref{Thm:EO} and Corollary \ref{C:EO for SO} for more precise statements.
\begin{theorem}\label{T:EO orth}
Each Ekedahl-Oort stratum $\M_{1w}$ of $\M_{1red}$ is some (disjoint) union of Howard-Pappas strata. Similar result holds for $\M_{red}$.
\end{theorem}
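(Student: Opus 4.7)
The first step is to reduce to the GSpin case. By Corollary~\ref{C:RZ SO} one has $\breve{\M}\simeq \breve{\M}_1/p^{\Z}$, and both the Ekedahl-Oort and the Howard-Pappas stratifications are $p^{\Z}$-stable on $\ov{\M}_1$; hence the statement for $\ov{\M}_{red}$ descends immediately from that for $\ov{\M}_{1red}$, and I concentrate on the basic GSpin Rapoport-Zink space throughout.

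The Ekedahl-Oort stratification is defined via the universal $G_1$-zip of type $\mu$ attached to the universal $p$-divisible group with crystalline Tate tensors on $\ov{\M}_1$, in the style of \cite{Zh}. This produces a morphism
\[\zeta\colon \ov{\M}_1\lra \big[G_1\text{-}\mathrm{Zip}^{\mu}\big],\]
whose fibres over the finitely many points of the target are, by definition, the EO strata $\M_{1w}$ indexed by $w\in{}^J\W^b$. The theorem is therefore equivalent to showing that $\zeta$ is constant on each Howard-Pappas stratum $\M_{1\Lambda}^\circ$, i.e. that the isomorphism class of the associated $G_1$-zip depends only on the vertex lattice $\Lambda$, and in fact only on its type $t_\Lambda$. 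To verify this I would use the lattice parametrization of \cite{HP}: for a vertex lattice $\Lambda$ of type $t_\Lambda$, the perfection of $\M_{1\Lambda}^\circ$ is realized as an open Deligne-Lusztig-type subvariety in an orthogonal Grassmannian over $\F_p$ built from the quadratic space $\Lambda/\Lambda^{\vee}$, and for every geometric point $x\in \M_{1\Lambda}^\circ$ the covariant Dieudonn\'e module $M_x$ is built from $\Lambda_W$ together with a choice of Lagrangian in $\Lambda/p\Lambda^{\vee}$. This combinatorial data translates, via the standard dictionary between $F$-crystals with $G_1$-structure and $G_1$-zips, into a $G_1$-zip whose underlying bundle together with its Hodge and conjugate filtrations is manifestly determined by $t_\Lambda$. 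Hence $\zeta(\M_{1\Lambda}^\circ)$ is a single point $z_{w(\Lambda)}$, and one obtains
\[\M_{1w}\,=\,\coprod_{\Lambda:\,w(\Lambda)=w}\M_{1\Lambda}^\circ.\]

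The main obstacle is the explicit identification $\Lambda\mapsto w(\Lambda)$: one has to compute the $G_1$-zip type attached to a Dieudonn\'e module of the form dictated by a vertex lattice of each given type, and verify that this assignment exhausts ${}^J\W^b$. This is a combinatorial calculation that I would carry out by combining the Bruhat-type classification of isomorphism classes in $[G_1\text{-}\mathrm{Zip}^{\mu}]$ with the explicit description of the minuscule cocharacter of $\GSpin$ and with the fact, again from \cite{HP}, that vertex lattices are classified up to $J_b(\Q_p)$-conjugacy by their type. Once this bookkeeping is matched on both sides, the descent to $\ov{\M}_{red}$ noted at the outset completes the proof.
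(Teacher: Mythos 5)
Your reduction goes in the opposite direction from the paper's: you descend from $\breve{\M}_1$ to $\breve{\M}$ at the start and then work in the GSpin setting, whereas the paper reduces Theorem~\ref{Thm:EO} to Corollary~\ref{C:EO for SO} (the $\SO$ case), consistent with \cite{HP}, whose Bruhat--Tits strata and special-lattice parametrization are set up on the $\SO$ space. Both directions are legitimate, and this is a genuinely different (if cosmetic) route.

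The more serious issue is where your argument outsources its content. The paper's proof is short because its key step is a citation: \cite{GoHe} Corollary~4.1.3 already gives the identity $\M_w(k)=\coprod_{\Lambda,\,w(\Lambda)=w}\M_\Lambda^\circ(k)$ once one knows from \cite{HP} Theorem~6.5.6 that each $\M_\Lambda^\circ$ is a union of two Deligne--Lusztig varieties $X_B(w^\pm)$ for Coxeter elements $w^\pm$ in the Weyl group of $\SO(\Lambda_W/\Lambda_W^\vee)$, and the remaining work is the scheme-theoretic upgrade (openness of $\M_\Lambda^\circ$ in $\M_w$, as in \cite{VW} Corollary~4.10). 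You propose instead to recompute the assignment $\Lambda\mapsto w(\Lambda)$ by hand from the Dieudonn\'e-module description of a point of $\M_{1\Lambda}^\circ$, but you never actually carry this out, and the one concrete claim you do make about it is false: the $G_1$-zip attached to a point of $\M_{1\Lambda}^\circ$ does \emph{not} depend only on $t(\Lambda)$. When $n$ is even and $\det V\neq(-1)^{n/2}$, the set ${}^J\W^b$ contains two distinct elements $w_{m-1}$ and $w_{m-1}'$ of the same length, and the type-$2m$ vertex lattices are partitioned between the two corresponding EO strata (this is exactly case (2)(b) of the corollary following Theorem~\ref{Thm:EO} in the paper). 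An argument in which ``the zip is manifestly determined by $t_\Lambda$'' would wrongly conclude that these two EO strata coincide. The dependence on $\Lambda$ finer than on $t(\Lambda)$ is encoded precisely in the Coxeter element $w(\Lambda)\in\W_\Omega\hookrightarrow\W$ of \cite{HP}, which is where \cite{GoHe} Corollary~4.1.3 does the work. You also do not address the passage from equality on $k$-points to equality of locally closed subschemes, for which the paper invokes the openness argument of \cite{VW}; this is a smaller gap, but it is a gap.
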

For a similar result in the case of the basic unitary group $GU(1,n-1)$ Rapoport-Zink space, see \cite{VW} Theorem D. 

In fact, in subsection 7.1 we construct the Ekedahl-Oort stratification for the special fibers of arbitrary Rapoport-Zink spaces of abelian type, cf. Theorem \ref{T:RZ EO}. We can compare our geometric construction with the Ekedahl-Oort stratification for affine Deligne-Lusztig varieties (with hyperspecial levels) in \cite{GoHe}, cf. Proposition \ref{P:EO}. In subsection 7.2, we discuss a theorem of similar phenomenon as Theorem \ref{T:EO orth} (cf. Theorem \ref{T:FHN}) for an unramified local Shimura datum of abelian type $(G, [b], \{\mu\})$, with $(G,\{\mu\})$ fully Hodge-Newton decomposable in the sense of \cite{GoHeNi} Definition 2.1. Our discussion in this more general setting is indeed motivated by \cite{GoHeNi} Theorems 2.3 and 2.5, where a posteriori the classification there (for minuscule $\mu$) lies in our class of local Shimura data of abelian type. The basic GSpin and special orthogonal groups Rapoport-Zink spaces are just special cases where one can make things more explicit (by the work of \cite{HP}).

Specializing further to the case of K3 surfaces, we have some interesting applications. Take an integer $d\geq 1$ such that $p\nmid 2d$. Let $\Mm_{2d, K}$ be the moduli spaces of K3 surfaces $f: X\ra S$ together with a primitive polarization $\xi$ of degree $2d$ and a $K$-level structure over $W$. Recall that by the global integral Torelli theorem (cf. \cite{MP2} Corollary 5.15), the integral Kuga-Satake period map
\[\iota: \Mm_{2d,K}\lra S_{K}\] is an open immersion, where $S_K$ is the integral canonical model over $W$ of the Shimura variety $\Sh_K$ for $G=\SO(2,19)$,
see subsection \ref{subsection:mod K3} for more details.
Here, we assume that $K=K_pK^p$ with $K_p=G(\Z_p)$ is the fixed hyperspecial subgroup. 
Let $X$ be a supersingular K3 surface over $\ov{\F}_p$, then the discriminant of its N\'eron-Severi lattice is equal to \[-p^{2\sigma_0(X)}\] for some integer $1\leq \sigma_0(X)\leq 10$. The integer $\sigma_0(X)$ is called the Artin invariant of $X$. The following corollary is a consequence of the above theorems. Note that the group $G$ is adjoint and thus $\phi=\phi_0$.
\begin{corollary}[Corollaries \ref{C:K3 RZ} and \ref{C:K3 Artin} ]
	\begin{enumerate}
\item	Let $\phi$, $[b]$ and $\Zm_{\phi,K^p}$ be as in the above Theorem \ref{thm:unif}, and let $J_\phi$ be the pullback of $\Zm_{\phi,K^p}$ under the open immersion $\ov{\Mm}_{2d, K}\hookrightarrow \ov{S}_{K}$ of special fibers. Then we have the following identity \[\wh{\Mm_{2d, K}}_{/J_\phi} =\coprod_{j\in I}\breve{\N}/\Gamma_j, \]
	where $\breve{\N}\subset \breve{\M}(G, b,\mu)$ is an open subspace, $I$ is certain countable set, and for any $j\in I$, $\Gamma_j\subset J_b(\Q_p)$ is some discrete subgroup. If moreover $[b]=[b_0]$ is basic, then $J_\phi=\ov{\Mm}_{2d, K}^{ss}$ which is the supersingular locus in $\ov{\Mm}_{2d, K}$, and the above disjoint union is finite.
\item Let $x\in\ov{\Mm}_{2d, K}^{ss}(\ov{\F}_p)$ be a point, and $X_x$ the associated supersingular K3 surface over $\ov{\F}_p$. Then we have the identity between the Artin invariant $\sigma_0(X_x)$ and the type $t(\Lambda_x)$: \[\sigma_0(X_x)=\frac{t(\Lambda_x)}{2},\]where $\Lambda_x$ is the vertex lattice attached to the special lattice associated to $(X_x,\xi_x)$, cf. subsection \ref{subsection: K3 app}.
\end{enumerate}
\end{corollary}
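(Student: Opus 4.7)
\medskip

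\noindent\textbf{Proof proposal.} For part (1), the plan is to pull back Theorem \ref{thm:unif} through the open immersion $\iota: \Mm_{2d,K}\hookrightarrow S_K$. First I would apply the uniformization theorem to $G=\SO(2,19)$, $K=G(\Z_p)K^p$, which yields
\[\Theta: \coprod_{[\phi],\,\phi^{ad}=\phi_0}I_\phi(\Q)\setminus \breve{\M}\times G(\A_f^p)/K^p \st{\sim}{\lra} \wh{S_K}_{/\Zm_{\phi_0,K^p}}.\]
Since $G$ is adjoint, we can take $\phi=\phi_0$, and the union on the left collapses. Then I would take the formal completion of $\Mm_{2d,K}$ along $J_\phi:=\iota^{-1}(\Zm_{\phi,K^p})$ and note that because $\iota$ is an open immersion, this formal completion is an open formal subscheme of $\wh{S_K}_{/\Zm_{\phi,K^p}}$. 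Transporting this open formal subscheme through $\Theta^{-1}$ produces an open formal subscheme of $I_\phi(\Q)\setminus \breve{\M}\times G(\A_f^p)/K^p$. Unwinding the double coset, this open locus has the form $\coprod_{j\in I}\breve{\N}/\Gamma_j$, where the index set $I$ parametrizes the $I_\phi(\Q)$-orbits on $G(\A_f^p)/K^p$ that meet the K3 locus, $\breve{\N}\subset \breve{\M}$ is the corresponding open formal subspace, and each $\Gamma_j$ is the stabilizer of a representative in $J_b(\Q_p)$. When $[b]$ is basic, Theorem \ref{T:uniformize basic} identifies $\Zm_{\phi,K^p}$ with the basic locus $\ov{S}_K^b$, so $J_\phi$ equals the supersingular locus $\ov{\Mm}_{2d,K}^{ss}$ in the K3 moduli space; the finiteness of the disjoint union then follows from the fact that $I_\phi(\Q)\setminus G(\A_f^p)/K^p$ is finite in the basic case (this is the standard compactness/Arithmetic input used in PEL uniformization).

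For part (2), the strategy is to compare, via the Kuga-Satake construction, the Néron-Severi lattice of the supersingular K3 surface $X_x$ with the local data parameterizing the point $x$ in the basic Rapoport-Zink space $\breve{\M}(G,b_0,\mu)\simeq \breve{\M}(\GSpin,b,\mu)/p^\Z$. Using the isomorphism $\ov{\M}(G,b_0,\mu)^{perf}\simeq X^G_{\mu}(b_0)$ and the Howard-Pappas decomposition $\M_{red}=\coprod_\Lambda \M_\Lambda^\circ$, the point $x$ is attached to a vertex lattice $\Lambda_x$ inside an ambient quadratic space $V_{\Q_p}$ of dimension $21$, with type $t(\Lambda_x)$ defined as the dimension of $\Lambda_x/\Lambda_x^\vee$ over $\F_p$. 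Under the Kuga-Satake/crystalline comparison, the primitive part of $H^2_{cris}(X_x/W)$ recovers (after Tate twist) the $F$-crystal associated to the Dieudonn\'e module of the basic $p$-divisible group with $\GSpin$-structure; this identifies the special lattice attached to $X_x$ with the special lattice in the Howard-Pappas description of $\breve{\M}$. On the other side, Ogus's theory of supersingular K3 crystals identifies the $p^{\sigma_0(X_x)}$-torsion of the discriminant of the primitive Néron-Severi lattice with the characteristic subspace of the crystal; by Artin's formula $\disc(NS(X_x))=-p^{2\sigma_0(X_x)}$. Matching these two descriptions of the same lattice reads off the equality $t(\Lambda_x)=2\sigma_0(X_x)$.

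The main obstacle is the second part: it requires a careful dictionary between Ogus's invariant $\sigma_0$ (defined from the discriminant of the Néron-Severi lattice of $X_x$) and the purely group-theoretic type $t(\Lambda_x)$ of the vertex lattice in $V_{\Q_p}$ (defined as a length in the orthogonal local model). One can make this compatible by checking that the isometry obtained via Kuga-Satake sends the crystalline primitive lattice to the selfdual dual of the special lattice, so that the elementary divisors of $\Lambda_x/\Lambda_x^\vee$ and of $NS(X_x)^\vee/NS(X_x)$ are in bijection after an appropriate Tate twist; from this the factor of two emerges naturally. The remaining parts (e.g.\ passage between adic/formal/perfect pictures, finiteness in the basic case) follow from general machinery already established earlier in the paper and in \cite{HP,Kim2}.
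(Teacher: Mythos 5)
Your treatment of part (1) matches the paper's argument: apply Theorem \ref{thm:unif} for $G=\SO(2,19)$, exploit that $G$ is adjoint so $\phi=\phi_0$ and the coproduct over $[\phi]$ collapses, pull back through the open immersion $\iota$, and observe that the open subspace $\breve{\N}\subset\breve{\M}$ is exactly the preimage of $\wh{\Mm}_{2d,K}$; the paper records the same cartesian diagram and the moduli description of $\breve{\N}$ via Kuga--Satake $p$-divisible groups. The finiteness of the disjoint union in the basic case is also deduced the same way.

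For part (2), however, you take a genuinely different route from the paper, and it is worth flagging both the contrast and the gap. You propose to compare the Artin invariant with the vertex-lattice type directly, by running the Kuga--Satake crystalline comparison and Ogus's theory of supersingular K3 crystals: identify the special lattice $\Ll_x=\lan\ch_{cris}(\xi_x)\ran^\perp\subset H^2_{cris}(X_x/W)$ with the Howard--Pappas special lattice, then match the elementary divisors of $NS(X_x)^\vee/NS(X_x)$ against those of $\Lambda_x/\Lambda_x^\vee$. The paper instead routes the comparison through the Ekedahl--Oort stratification: it cites Ekedahl--van der Geer \cite{EvG} for the fact that a K3 surface in the global EO stratum $\ov{\Mm}_{2d,K}^{w_i}$ has Artin invariant $21-i$, uses the uniformization of part (1) to identify $\ov{\Mm}_{2d,K}^{w_i}$ with the image of the local EO stratum $\N_{w_i}$, and then invokes Corollary \ref{C:EO for SO} (the local EO = union of Howard--Pappas strata with $t(\Lambda)=2(21-i)$) to read off $t(\Lambda_x)/2=21-i=\sigma_0(X_x)$. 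Your direct approach is a legitimate alternative---indeed it is essentially the kind of analysis underlying \cite{EvG}---but as written it leaves the crucial lattice-matching step as a sketch: you would need to prove that the Kuga--Satake isometry carries the primitive crystalline lattice of $X_x$ to the special lattice of the corresponding point of $\breve{\N}$ compatibly with quadratic forms, and then verify that Ogus's description of the characteristic subspace produces precisely $\Lambda_x/\Lambda_x^\vee$ (up to Tate twist), which is a nontrivial bookkeeping exercise not carried out in the proposal. The paper avoids this entirely by leveraging the EO stratification and the cited global result, which is the shorter route given the machinery already in place. If you want to pursue your direct approach, the missing piece is a precise statement of the compatibility between the Kuga--Satake period map on crystalline realizations and the vertex-lattice construction $\Ll\mapsto\Lambda(\Ll)$ of \cite{HP} Proposition 5.2.2, together with Ogus's identification $\disc(NS(X_x))=-p^{2\sigma_0(X_x)}$.
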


We briefly describe the structure of this article. In section 2, we review some basics about affine Deligne-Lusztig varieties which will be used later. In section 3, we first recall the Rapoport-Viehmann conjecture on the theory of local Shimura varieties, then we concentrate on the case of unramified local Shimura datum of Hodge type, and review the construction of Kim \cite{Kim1} on the associated Rapoport-Zink spaces of Hodge type. In section 4, we introduce unramified local Shimura datum of abelian type, and construct the associated formal and rigid analytic Rapoport-Zink spaces. Section 5 is devoted to a review the general framework of moduli of local $G$-shtukas in mixed characteristic due to Scholze, to give a moduli interpretation of the generic fibers of our Rapoport-Zink spaces of abelian type. In section 6, we turn to the global situation of Shimura varieties of abelian type, and prove a Rapoport-Zink type uniformization theorem in this setting. In section 7, motivated by the study of Artin invariants of K3 surfaces, we construct the Ekedahl-Oort stratification for special fibers of Rapoport-Zink spaces. In section 8, we discuss some applications of our theory.  We work on the examples of basic GSpin and special orthogonal groups Rapoport-Zink spaces, and then more specially on the case of moduli spaces of K3 surfaces. These examples are just (related to) special cases of the fully Hodge-Newton decomposable Shimura varieties introduced in \cite{GoHeNi} (see also \cite{SZ}).  Finally, we investigate $p$-adic period domains in the basic orthogonal case in the appendix following Fargues.\\
 \\
\textbf{Acknowledgments.} I would like to thank Laurent Fargues for his constant encouragement and support in mathematics. In particular I thank him for kindly allowing me to include the content of \cite{F4} in the appendix. I wish to express my gratitude to Chao Zhang, from whom I learnt many about the Ekedahl-Oort stratifications. I would like to thank Miaofen Chen, Ruochuan Liu, Sian Nie, Weizhe Zheng and Xinwen Zhu for helpful communications. I should thank the referee for careful reading and useful suggestions.
This work was partially supported by the Chinese Academy of Sciences grants 50Y64198900, 29Y64200900, the Recruitment Program of Global Experts of China, and the NSFC grants No. 11631009 and No. 11688101.

\section{Affine Deligne-Lusztig varieties in mixed characteristic}
In this section, we recall some basic facts about affine Deligne-Lusztig varieties in mixed characteristic, which will be used later.

Fix a prime $p$. Let $G$ be a connected reductive group over $\Q_p$, which we assume to be unramified. Fix $T\subset B$ a maximal torus inside a Borel subgroup of $G$. Let $W=W(\ov{\F}_p)$ be the ring of Witt vectors, and $L=W_\Q$.  Denote $\sigma$ as the Frobenius on $L$ and $W$. In the following we want to fix a hyperspecial subgroup $G(W)\subset G(L)$. To this end,  we fix a reductive model $G_{\Z_p}$ of $G$ over $\Z_p$ and set $G(W)=G_{\Z_p}(W)$. Sometimes by abuse of notation we will also write $G$ as the reductive group $G_{\Z_p}$ over $\Z_p$.

\subsection{Affine Deligne-Lusztig varieties}
For $b\in G(L)$ and a conjugacy class $\{\mu\}$ of cocharacters $\mu: \G_m\ra G_{\ov{\Q}_p}$, we define the affine Deligne-Lusztig sets
\[X_\mu^G(b)=\{g\in G(L)/G(W)|\,g^{-1}b\sigma(g)\in G(W)\mu(p)G(W)\}, \]
and \[X_{\leq\mu}^G(b)=\{g\in G(L)/G(W)|\,g^{-1}b\sigma(g)\in \bigcup_{\mu'\leq\mu}{G(W)\mu'(p)G(W)}\}.\]Here, we assume $\mu\in X_\ast(T)_+$ for the above choice of $B$, and
for dominant elements $\mu,\mu'\in X_\ast(T)$, we say that $\mu'\leq \mu$ if $\mu-\mu'$ is a non-negative integral linear combination of positive coroots.
Let $J_b$ be the reductive group over $\Q_p$ such that for any $\Q_p-$algebra $R$, \[J_b(R)=\{g\in G(L\otimes_{\Q_p} R)|\, gb=b\sigma(g)\}.\] Then $J_b(\Q_p)$ acts naturally on $X_\mu^G(b)$ and $X_{\leq\mu}^G(b)$.
The isomorphism classes of  $X_\mu^G(b)$, $X_{\leq\mu}^G(b)$ and $J_b$ depend only on the $\sigma$-conjugacy class $[b]$ of $b$. 
By \cite{Win}, $X_\mu^G(b)$ and $X_{\leq\mu}^G(b)$  are non empty if and only if $[b]\in B(G, \mu)$. Here $B(G, \mu)$ is the Kottwitz subset (cf. \cite{Kot2} section 6) inside $B(G)$, the set of all $\sigma$-conjugacy classes in $G(L)$. We assume $[b]\in B(G, \mu)$ from now on. The triple $(G, [b], \{\mu\})$ will be called a local shtuka datum in the section 5, cf. Definition \ref{D:local shtuka datum}.
By construction, we have $X_{\mu}^G(b)\subset X_{\leq\mu}^G(b)$.  When $\{\mu\}$ is minuscule, we have $X_{\leq \mu}^G(b)=X_{\mu}^G(b)$.

By the recent work of Zhu \cite{Zhu} and Bhatt-Schoze \cite{BS}, there exist perfect scheme structures on the sets $X_\mu^G(b)$ and $X_{\leq\mu}^G(b)$. More precisely,  $X_\mu^G(b)$ and $X_{\leq\mu}^G(b)$ are the sets of $\ov{\F}_p$-points of some  perfect schemes over $\ov{\F}_p$, which are locally closed subschemes of the Witt vector affine Grassmannian $Gr_G$ (cf. \cite{Zhu, BS}). 
It will be useful to briefly recall the related moduli interpretation. Denote $\E_0$ the trivial $G$-torsor on $W$. For any perfect $\ov{\F}_p$-algebra $R$, we have (cf. \cite{Zhu}1.2 and 3.1)
$Gr_G(R)=\{(\E, \beta)\}/\simeq$, where
\begin{itemize}
\item $\E$ is a $G$-torsor over $W(R)$,
\item  $\beta: \E[1/p]\simeq \E_0[1/p]$ is a trivialization over $W(R)[1/p]$,
\end{itemize}
and
\[\begin{split}X_{\leq\mu}^G(b)(R)&=\{(\E, \beta )\in Gr_G(R)|\, \Inv_x(\beta^{-1}b\sigma(\beta))\leq \mu, \forall x\in \Spec R \},\\
X_{\mu}^G(b)(R)&=\{(\E, \beta )\in Gr_G(R)|\, \Inv_x(\beta^{-1}b\sigma(\beta))= \mu, \forall x\in \Spec R \},\\
\end{split} \]
where $\Inv_x$ is the relative position at $x$, and $\mu$ is the dominant representative in the conjugacy class $\{\mu\}$.
By abuse of notation, we denote also by $X_{\leq\mu}^G(b)$ and $X_\mu^G(b)$ the associated perfect schemes.  By construction, $X_{\mu}^G(b)\subset X_{\leq\mu}^G(b)$ is an open subscheme.

\begin{lemma}
Let $(G_1,[b_1],\{\mu_1\})\ra (G_2,[b_2],\{\mu_2\})$ be a morphism (cf. Definition \ref{D:local Shimura morphism}) It induces a natural map
\[X_{\leq \mu_1}^{G_1}(b_1)\ra X_{\leq \mu_2}^{G_2}(b_2).\] If $G_1\ra G_2$ is a closed immersion, the above map is a closed immersion.
\end{lemma}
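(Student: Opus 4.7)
The plan is to build the map from the moduli-theoretic description of the Witt vector affine Grassmannian recalled above and then deduce the closed-immersion claim from the known functoriality of $Gr_G$ under group homomorphisms. First, unwinding Definition \ref{D:local Shimura morphism}, a morphism of local shtuka data should give a homomorphism $f: G_1 \to G_2$ together with compatibilities on $(b_i, \mu_i)$; after replacing $\beta$ by a suitable $G_2(L)$-translate we may arrange $f(b_1) = b_2$, and the conjugacy class $\{f \circ \mu_1\}$ will be dominated by $\{\mu_2\}$. Using the torsor description, $f$ defines the pushout morphism $f_\ast: Gr_{G_1} \to Gr_{G_2}$ sending $(\E, \beta)$ to $(\E \times^{G_1} G_2, f \circ \beta)$. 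For any $R$-point $(\E, \beta) \in X_{\leq \mu_1}^{G_1}(b_1)(R)$ the identity $(f\beta)^{-1} b_2 \sigma(f\beta) = f\big(\beta^{-1} b_1 \sigma(\beta)\big)$ together with the compatibility of the relative-position map under $f$ gives $\Inv_x \leq \mu_2$ at every $x \in \Spec R$, so $f_\ast$ restricts to the desired morphism $X_{\leq \mu_1}^{G_1}(b_1) \to X_{\leq \mu_2}^{G_2}(b_2)$.

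For the second assertion, suppose $G_1 \hookrightarrow G_2$ is a closed immersion. By the functoriality of the Witt vector affine Grassmannian established in \cite{Zhu} (and \cite{BS}), the induced map $Gr_{G_1} \to Gr_{G_2}$ is again a closed immersion. On the other hand, each $X_{\leq \mu_i}^{G_i}(b_i) \subset Gr_{G_i}$ is a closed subscheme: the condition $\Inv_x(\beta^{-1} b_i \sigma(\beta)) \leq \mu_i$ is the pullback along the $\sigma$-twisted map $\beta \mapsto \beta^{-1} b_i \sigma(\beta)$ of the closed Schubert variety cut out by the inequality $\leq \mu_i$. Hence the composition $X_{\leq \mu_1}^{G_1}(b_1) \hookrightarrow Gr_{G_1} \hookrightarrow Gr_{G_2}$ is a closed immersion that factors through the closed subscheme $X_{\leq \mu_2}^{G_2}(b_2) \subset Gr_{G_2}$, and a closed immersion into $Gr_{G_2}$ that lands inside a closed subscheme is a closed immersion onto that subscheme, proving the claim.

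The main subtlety, and really the only non-formal point, is that the notion of morphism in Definition \ref{D:local Shimura morphism} must be strong enough to force $\{f \circ \mu_1\} \leq \{\mu_2\}$ in the dominance order on cocharacters of $G_2$; once this is built into the definition, both the construction of the map and the closed-immersion argument are routine consequences of the moduli description of $Gr_G$ and the behaviour of $Gr_G$ under closed subgroups recorded in \cite{Zhu, BS}.
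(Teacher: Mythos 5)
The paper's own proof of this lemma is not a proof at all but a pointer: the first statement is declared ``clear'' and the second is outsourced to \cite{Kim1} Lemma 2.5.4 (1) and \cite{HP} 2.4.4. So your proposal is not ``the paper's argument'' — it is a reconstruction of the argument those references supply. As such it is largely on the right track, but your allocation of what is routine versus what is the real content is off.

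Two points. First, a small slip in the setup: you write ``after replacing $\beta$ by a suitable $G_2(L)$-translate we may arrange $f(b_1)=b_2$.'' It is not $\beta$ that needs adjusting; one simply takes $b_2:=f(b_1)$ as the representative of $[b_2]$, which Definition \ref{D:local Shimura morphism} permits since $f([b_1])=[b_2]$. Likewise, the definition gives $\{f\circ\mu_1\}=\{\mu_2\}$ on the nose, not just domination; what one actually needs, and should say, is that $\mu'\leq_{G_1}\mu_1$ forces $f(\mu')\leq_{G_2}f(\mu_1)$, which is the compatibility of the dominance orders along a homomorphism with compatible Borel/torus choices.

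Second, and more importantly, you label the ``only non-formal point'' as the dominance condition and treat the closed-immersion of Grassmannians as coming for free from \cite{Zhu,BS}. That is the wrong emphasis. The statement that $Gr_{G_1}\to Gr_{G_2}$ is a closed immersion for a closed immersion of reductive groups is the genuinely delicate input: it requires passing to reductive models over $\Z_p$ and knowing that the fppf quotient $G_{2,\Z_p}/G_{1,\Z_p}$ is quasi-affine (for reductive subgroups this holds, but it is not something that drops out of \cite{Zhu} or \cite{BS} by themselves). This is exactly what \cite{Kim1} Lemma 2.5.4 and \cite{HP} \S 2.4 are set up to prove, which is why the paper cites them. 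Your citation is thus pointing at the wrong sources for the key step; Zhu and Bhatt--Scholze give the geometric structure on $Gr_G$ and on $X_{\leq\mu}^G(b)$, but the closed-immersion statement for group embeddings is an additional lemma that lives in Kim and Howard--Pappas. With that attribution corrected, the remainder of your argument — that $X_{\leq\mu}^G(b)$ is cut out in $Gr_G$ by the closed Schubert condition on $\beta^{-1}b\sigma(\beta)$ (hence closed), and that a closed immersion into $Gr_{G_2}$ that factors through a subscheme is a closed immersion onto it — is sound.
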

\begin{proof} The first statement is clear. For the second statement,
	see \cite{Kim1} Lemma 2.5.4 (1) and \cite{HP} 2.4.4.
\end{proof}

\subsection{Connected components}\label{subsection:ADL}
In \cite{CKV} 2.3.5, Chen, Kisin and Viehmann introduced a notion of connected components for the affine Deligne-Lusztig sets $X_{\leq\mu}^G(b)$ by some ad hoc methods, since the algebro-geometric structure on $X_{\leq\mu}^G(b)$ had not been known by then. We denote by $\pi_0(X_{\leq\mu}^G(b)$ the set of connected components defined by Chen-Kisin-Viehmann in such a way. By resorting on the perfect scheme structure, we have a naturally defined notion of connected components for $X_{\leq\mu}^G(b)$. It is conjectured that the two definitions coincide, cf. \cite{Zhu} Remark 3.2 and \cite{CKV} 2.3.5. This was known in the case of unramified EL/PEL Rapoport-Zink spaces, cf. \cite{CKV} Theorem 5.1.5. Recently this has been proved by He-Zhou in the general case, cf. \cite{HZ} Theorem A.4.

Let $\pi_1(G)$ be the quotient\footnote{We note that $\pi_1(G)$ is finite if $G$ is semisimple.} of $X_\ast(T)$ by the coroot lattice of $G$. There is the Kottwitz homomorphism 
\[\omega_G: G(L)\lra \pi_1(G)\] for which an element $g\in G(W)\mu(p)G(W)\subset G(L)$ is sent to the class of $\mu$. Recall that for our pair $(b, \mu)$ we assume that $[b]\in B(G, \mu)$. Then there is an element $c_{b,\mu}\in \pi_1(G)$ such that $\omega_G(b)-\mu=(1-\sigma)(c_{b,\mu})$. The $\pi_1(G)^\Gamma$-coset of $c_{b,\mu}$ is uniquely determined. Here and the following, $\Gamma=\tr{Gal}(\ov{\Q}_p/\Q_p)$ is the local Galois group. In particular, if $b\in G(W)\mu(p)G(W)$ then we may take $c_{b,\mu}=1$.
As $\omega_G$ is trivial on $G(W)$, when restricting to  $X_{\leq \mu}^G(b)\subset G(L)/G(W)$,  by \cite{CKV} 2.3 (using the theory of Cartan decomposition in families of loc. cit. 2.1) we have a $J_b(\Q_p)$-equivariant morphism (of \'etale sheaves over $\ov{\F}_p$) 
\[\omega_G: X_{\leq \mu}^G(b)\lra c_{b,\mu}\pi_1(G)^\Gamma,\] which factors through $\pi_0(X_{\leq \mu}^G(b))$.  Thus we get a commutative diagram
\[\xymatrix{X_{\leq \mu}^G(b)\ar@{->>}[d] \ar[rd]^{\omega_G} & \\
	\pi_0(X_{\leq \mu}^G(b)) \ar[r] & c_{b,\mu}\pi_1(G)^\Gamma.
	}\]
Therefore, the non empty fibers of the map $\omega_G: X_{\leq \mu}^G(b) \ra c_{b,\mu}\pi_1(G)^\Gamma$ are unions of connected components of $X_{\leq \mu}^G(b)$.  Recall the following main theorem of \cite{CKV}.
\begin{theorem}[\cite{CKV} Theorems 1.2 and 1.1]\label{T:CKV}
	Assume that $\mu$ is minuscule.
	\begin{enumerate}
	\item $J_b(\Q_p)$ acts transitively on $\pi_0(X_\mu^G(b))$.
\item	Assume that $G^{ad}$ is simple, and $(\mu, b)$ is Hodge-Newton indecomposable in $G$. Then $\omega_G$ induces a bijection \[\pi_0(X_\mu^G(b))\simeq c_{b,\mu}\pi_1(G)^\Gamma\] unless $[b]=[\mu(p)]$ with $\mu$ central, in which case \[X_\mu^G(b)\simeq G(\Q_p)/G(\Z_p)\] is discrete.
\end{enumerate}
\end{theorem}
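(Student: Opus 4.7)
The plan is to reduce, via standard functorial properties, to a short list of base cases and then analyse those. First I would exploit that the Kottwitz map $\omega_G$, the action of $J_b(\Q_p)$, and the formation of $\pi_0$ are all compatible with products of groups, so both assertions split along an isogeny decomposition of $G^{ad}$ into simple factors. This reduces us to the case where $G^{ad}$ is simple. Next I would invoke the Hodge--Newton decomposition: if $(\mu,b)$ admits a HN decomposition with respect to a proper standard Levi $M\subset G$, there is a $J_b(\Q_p)$-equivariant identification
\[ X_\mu^G(b)\simeq J_b(\Q_p)\times^{J_{b_M}(\Q_p)}X_{\mu_M}^M(b_M), \]
from which both the transitivity in (1) and the isomorphism in (2) can be deduced for $(G,b,\mu)$ from the same statements for $(M,b_M,\mu_M)$ by induction on semisimple rank, together with the compatibility of $\omega_M$ and $\omega_G$ via $\pi_1(M)\to\pi_1(G)$.

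In the remaining case, where $G^{ad}$ is simple, $(\mu,b)$ is HN indecomposable, and we are not in the excluded superbasic situation, the argument splits into two parts. On the one hand, I would check that the composition $J_b(\Q_p)\to \pi_1(G)^\Gamma$ has image equal to the full coset $c_{b,\mu}\pi_1(G)^\Gamma$: since $J_b$ is an inner form of a Levi $M_b\subset G$ determined by the Newton point of $b$, and HN indecomposability rules out $M_b\subsetneq G$ outside the excluded case, this surjectivity follows by inspection. Combined with the factorisation of $\omega_G$ through $\pi_0(X_\mu^G(b))$ already recorded in the excerpt, this yields both the surjectivity of $\omega_G$ onto $c_{b,\mu}\pi_1(G)^\Gamma$ in (2) and the transitivity of the $J_b(\Q_p)$-action on $\pi_0$ in (1). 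On the other hand, to finish (2) I must show that $\omega_G$ separates components, i.e.\ that each non-empty fibre of $\omega_G$ is itself connected.

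For this fibre connectedness, my plan is to construct, for any two points in the same fibre, an explicit chain of curves inside $X_\mu^G(b)$ linking them. The natural candidates are families coming from the action of affine root subgroups and $SL_2$-type subgroups of $G(L)$, combined with a combinatorial analysis of $X_\mu^G(b)$ in terms of its Iwahori-level refinement indexed by the admissible set $\mathrm{Adm}(\mu)$ in the Iwahori--Weyl group; the idea is to reduce connectedness at hyperspecial level to a controlled connectedness statement at Iwahori level, where one has many more explicit one-parameter families. The excluded superbasic case $[b]=[\mu(p)]$ with $\mu$ central is handled separately by a direct calculation: centrality makes $\sigma$-conjugation by $\mu(p)$ preserve every $G(W)$-double coset in $G(L)$, and unwinding definitions gives the set-theoretic identification $X_\mu^G(b)\simeq G(\Q_p)/G(\Z_p)$, which in particular is discrete, so no transitivity statement on $\pi_0$ in the sense of (2) is available or needed.

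The main obstacle is unquestionably the connectedness of the fibres of $\omega_G$ in the HN indecomposable non-superbasic case. Transitivity of the $J_b(\Q_p)$-action alone is not enough, since a priori it merely permutes connected components transitively; to match components bijectively with elements of $c_{b,\mu}\pi_1(G)^\Gamma$ one must genuinely construct families inside $X_\mu^G(b)$, and verify that these suffice to connect any two points in a given fibre. Making this rigorous requires the detailed combinatorial analysis of $\mathrm{Adm}(\mu)$ and of the relevant affine Schubert cells that constitutes the technical heart of \cite{CKV}, and it is the one step of the plan where I would expect no genuine shortcut.
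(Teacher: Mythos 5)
This theorem is quoted in the paper verbatim from \cite{CKV} (Theorems~1.1 and~1.2 there); no independent proof is offered, so your plan must be measured against the argument in \cite{CKV}. At the level of outline your plan is faithful to theirs: reduce to $G^{ad}$ simple via compatibility with products, reduce to the Hodge--Newton indecomposable case via the $J_b(\Q_p)$-equivariant identification of $X^G_\mu(b)$ with a parabolic induction from a Levi, dispose of the superbasic case $[b]=[\mu(p)]$ with $\mu$ central by direct computation, and in the main case combine surjectivity of $\omega_G$ onto $c_{b,\mu}\pi_1(G)^\Gamma$ with connectedness of its fibres, the latter being established by constructing explicit families (at Iwahori level, indexed by $\mathrm{Adm}(\mu)$, then transported to hyperspecial level). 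You have correctly located the fibre-connectedness step as the technical core.

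There is, however, a concrete error in your surjectivity argument. You claim that Hodge--Newton indecomposability of $(\mu,b)$ rules out $M_b\subsetneq G$ (that is, forces $b$ basic) outside the excluded superbasic case, and that surjectivity of $J_b(\Q_p)\to c_{b,\mu}\pi_1(G)^\Gamma$ then follows by inspection. This is false: HN indecomposability is the joint condition that no proper Levi $M\supset M_b$ satisfies $\kappa_M(b)=\mu^\sharp$, and it does not force $M_b=G$. For example, take $G=\GL_6$, $\mu=(1,1,0,0,0,0)$, and $[b]\in B(G,\mu)$ with Newton slopes $(1/2,1/2,1/4,1/4,1/4,1/4)$. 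Here $M_b=\GL_2\times\GL_4$ is proper, the only proper Levi containing $M_b$ is $M_b$ itself, and $\kappa_{M_b}(b)=(1,1)\ne(2,0)=\mu^\sharp$; so $(\mu,b)$ is HN indecomposable, $\mu$ is not central, yet $b$ is not basic. The surjectivity you need is nonetheless true, but it must be argued: in the paper it is Lemma~\ref{L:surjective}~(2), deduced from Lemma~4.6.4 of \cite{Ki2}, and the proof passes through writing $J_b$ as an inner form of $M_b$, applying the Kottwitz surjection $J_b(\Q_p)\twoheadrightarrow\pi_1(M_b)^\Gamma$, and then verifying that $\pi_1(M_b)^\Gamma\to\pi_1(G)^\Gamma$ is still surjective --- a step that is not automatic (surjections of $\Gamma$-modules need not remain surjective on invariants), even though the underlying map $\pi_1(M_b)\to\pi_1(G)$ is obviously onto. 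With this correction, the remainder of your plan stands as a reasonable account of the \cite{CKV} proof.
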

Recently Nie has obtained similar results as above on $\pi_0(X_{\leq\mu}^G(b))$ for general $\mu$ (not necessary minuscule). We refer the reader to \cite{Nie} Theorems 1.1 and 1.2 for the precise statements.

Assume that $\mu$ is minuscule. By (1) of the above theorem, all non empty fibers of $\omega_G: X_{\leq \mu}^G(b)\lra c_{b,\mu}\pi_1(G)^\Gamma$ are isomorphic to each other under the transition induced by the action of $J_b(\Q_p)$.  Fix a point $x_0\in \tr{Im}(\omega_G: X_{\mu}^G(b) \ra c_{b,\mu}\pi_1(G)^\Gamma )$ (soon we will show that $\omega_G$ is surjective). Let \[X_{\mu}^G(b)^+\subset X_{\mu}^G(b)\] be the fiber of $\omega_G$ over $x_0$. By (1) of the above theorem, we have the equality
\[X_{\mu}^G(b)=J_b(\Q_p)X_{\mu}^G(b)^+. \] In the following, we will not need to work on each connected component of $X_{\mu}^G(b)$. The subspace $X_{\mu}^G(b)^+$ and the equality above will be all what we need.

Now let $\mu$ be arbitrary. Since we assume $[b]\in B(G,\mu)$, the set $X_\mu^G(b)\neq \emptyset$. This means that there exists some $g\in G(L)$ such that $b':=g^{-1}b\sigma(g)\in G(W)\mu(p)G(W)$. Thus after replacing $b$ by $b'$, we may assume $c_{b,\mu}=1$.
(We note that the element $c_{b,\mu}$ can be defined for arbitrary $\mu$.)
\begin{lemma}\label{L:surjective}
\begin{enumerate}
	\item The restriction of $\omega_G: G(L)\ra \pi_1(G)$ to $G(\Q_p)$ induces a surjective map
\[\omega_G: G(\Q_p)\ra \pi_1(G)^\Gamma. \]
\item The map $J_b(\Q_p)\ra \pi_1(G)^\Gamma$ is surjective.
\end{enumerate}
\end{lemma}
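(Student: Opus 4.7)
The plan for (1) is a direct Lang--Steinberg construction. Lift $\bar\mu \in \pi_1(G)^\Gamma$ to a cocharacter $\mu \in X_\ast(T)$; then $\mu(p) \in T(L) \subset G(L)$ satisfies $\omega_G(\mu(p)) = \bar\mu$ by the very definition of the Kottwitz map. The obstruction to $\mu(p)$ being $\sigma$-fixed is $\mu(p)^{-1}\sigma(\mu(p)) = (\sigma\mu - \mu)(p)$, and the hypothesis $\bar\mu \in \pi_1(G)^\Gamma$ forces $\sigma\mu - \mu$ to lie in the coroot lattice $Q^\vee = X_\ast(T^{sc})$, where $T^{sc} \subset G^{sc}$ is the preimage of $T \cap G^{\mathrm{der}}$ in the simply connected cover. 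Hence this obstruction lies in $T^{sc}(L) \subset G^{sc}(L)$. By Kneser's theorem $H^1(\Q_p, G^{sc}) = 1$, together with inflation--restriction, $H^1(\langle\sigma\rangle, G^{sc}(L)) = 1$; equivalently, the Lang map $h \mapsto h \sigma(h)^{-1}$ on $G^{sc}(L)$ is surjective. Choose $h \in G^{sc}(L)$ with $h\sigma(h)^{-1} = (\sigma\mu - \mu)(p)$; then $g := \mu(p) h$ lies in $G(\Q_p)$, and $\omega_G(g) = \bar\mu$ because $\omega_G$ vanishes on $G^{sc}(L)$.

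For (2), the plan is to run the same argument with $\sigma$ replaced by the $b$-twisted Frobenius $\sigma_b = \mathrm{Ad}(b) \circ \sigma$, whose fixed-point set in $G(L)$ is exactly $J_b(\Q_p)$. Since inner automorphisms act trivially on $\pi_1(G)$, we have $\pi_1(G)^{\sigma_b} = \pi_1(G)^\Gamma$. After a preliminary $\sigma$-conjugation we may assume $b \in T(L)$: by Kottwitz's classification, $[b]$ descends to the Levi $M_b$ centralizing the Newton cocharacter $\nu_b$, inside which $b$ is basic and thus representable by a value of a cocharacter into $T$; replacing $b$ by $b' = g^{-1} b \sigma(g)$ identifies $J_b$ with $J_{b'}$ via $j \mapsto g^{-1} j g$, and since $\omega_G$ is conjugation-invariant the image of $\omega_G|_{J_b(\Q_p)}$ in $\pi_1(G)^\Gamma$ is unchanged. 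For $\bar\lambda \in \pi_1(G)^\Gamma$ lifted to $\lambda \in X_\ast(T)$, the assumption $b \in T(L)$ implies that $b$ commutes with $\lambda(p)$ and $\sigma(\lambda)(p)$, so the cocycle condition for $\lambda(p)h \in J_b(\Q_p)$ reduces to $h\sigma_b(h)^{-1} = (\sigma\lambda - \lambda)(p) \in T^{sc}(L) \subset G^{sc}(L)$. The inner form of $G^{sc}$ attached to $\sigma_b$ is again simply connected, so Kneser's theorem yields $H^1(\langle\sigma_b\rangle, G^{sc}(L)) = 1$, producing the required $h$; the element $j = \lambda(p) h$ then lies in $J_b(\Q_p)$ with $\omega_G(j) = \bar\lambda$.

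The main obstacle is the reduction to a representative $b \in T(L)$: in general a $\sigma$-conjugacy class in $G(L)$ need not have such a representative directly (for instance the basic supersingular class in $\GL_2$ requires going up to $N_G(T)(L)$), and one must first pass to the Levi $M_b$, where basicness in $M_b$ guarantees a torus representative. One may alternatively bypass this reduction by factoring $\omega_G|_{J_b(\Q_p)}$ through the surjection $\omega_{J_b}\colon J_b(\Q_p) \twoheadrightarrow \pi_1(M_b)^\Gamma$ obtained by applying the argument of (1) to $J_b$ (which is itself a connected reductive group over $\Q_p$), and then verifying that $\pi_1(M_b)^\Gamma \to \pi_1(G)^\Gamma$ is surjective using $\nu_b \in X_\ast(Z(M_b))^\Gamma_\Q$ to produce enough $\Gamma$-fixed lifts in $\pi_1(M_b)$.
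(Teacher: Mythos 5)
Your Lang--Steinberg strategy for (1) is the right idea and is essentially Kisin's own argument (the paper simply cites \cite{Ki2}, Lemma 1.2.3), but one step is stated incorrectly. Vanishing of the \emph{continuous} $\widehat{\Z}$-cohomology $H^1(\langle\sigma\rangle, G^{sc}(L))$ (which is what Kneser plus inflation--restriction gives) is \emph{not} equivalent to surjectivity of the Lang map $h\mapsto h\sigma(h)^{-1}$ on $G^{sc}(L)$: surjectivity of the Lang map would force $B(G^{sc})$ to be a single point, and already $B(\mathrm{SL}_2)$ is infinite. Continuous cohomology vanishing only gives you that every element satisfying the norm condition $x\,\sigma(x)\cdots\sigma^{n-1}(x)=1$ for some $n$ lies in the image. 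You therefore need to check that your specific obstruction $(\sigma\mu-\mu)(p)$ is such a cocycle --- which it is, since the partial norms telescope to $(\sigma^n\mu-\mu)(p)=1$; equivalently, its Newton point vanishes and $\kappa_{G^{sc}}$ is trivial, so by Kottwitz's classification it is $\sigma$-conjugate to $1$ in $G^{sc}(L)$. Without this extra line the proof of (1) invokes a false statement.

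For (2) the gap is more serious and your parenthetical warning is actually fatal, not an aside. Passing to $M_b$ makes $b$ basic in $M_b$, but basic elements of a Levi are generally \emph{not} representable by elements of $T(L)$ --- your own $\mathrm{GL}_2$ example shows this, as the slope-$1/2$ basic class needs a representative in $N_G(T)(L)\setminus T(L)$ --- so the reduction ``after a preliminary $\sigma$-conjugation we may assume $b\in T(L)$'' simply does not hold, and the computation $h\sigma_b(h)^{-1}=(\sigma\lambda-\lambda)(p)$ never gets off the ground. The proposed bypass via $\omega_{J_b}\colon J_b(\Q_p)\to\pi_1(M_b)^\Gamma$ is a reasonable route but is left unfinished in exactly the two places where the content lies: (i) you invoke part (1) for $J_b$, but $J_b$ is an inner form of $M_b$ and need not be unramified, so one must check the cited surjectivity holds without the unramifiedness hypothesis (it does, in the generality of \cite{Ki2} Lemma 1.2.3, but this must be said); and (ii) the surjectivity of $\pi_1(M_b)^\Gamma\to\pi_1(G)^\Gamma$ is asserted but not proved, and it is precisely the nontrivial point --- from the exact sequence $0\to Q^\vee_G/Q^\vee_{M_b}\to\pi_1(M_b)\to\pi_1(G)\to 0$ one gets an obstruction in $H^1(\Gamma, Q^\vee_G/Q^\vee_{M_b})$ that you must show vanishes. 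The paper sidesteps this entirely by citing \cite{Ki2} Lemma 4.6.4 and asserting the argument there extends.
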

\begin{proof}
For (1): this is contained in Lemma 1.2.3 of \cite{Ki2}. 

For (2): in the case that $(G, [b],  \{\mu\})$ comes from a Hodge type Shimura datum $(\mathbb{G}, X)$ unramified at $p$ (and  $Z_{\mathbb{G}}$ is a torus), see Lemma 4.6.4 of \cite{Ki2}. The arguments there work also in the general case.
\end{proof}

\begin{proposition}\label{P: surjective}
The map \[\omega_G: X_{\leq\mu}^G(b)\lra c_{b,\mu}\pi_1(G)^\Gamma\] is surjective. In particular we get a surjection 
\[\pi_0(X_{\leq\mu}^G(b))\twoheadrightarrow \pi_1(G)^\Gamma.\]
\end{proposition}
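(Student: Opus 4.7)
The plan is to reduce to the situation $c_{b,\mu}=1$, exhibit one explicit point in $X_{\leq\mu}^G(b)$ mapping to the neutral element, and then bootstrap surjectivity from the $J_b(\Q_p)$-action using Lemma \ref{L:surjective}(2).

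First I would invoke the already-established bijection $X_{\leq\upsilon}^G(b)\simeq X_{\leq\mu}^G(b)$ for $\upsilon=\sigma(\mu)$, which is compatible with the Kottwitz map $\omega_G$ on both sides in the sense that surjectivity onto $c_{b,\upsilon}\pi_1(G)^\Gamma$ is equivalent to surjectivity onto $c_{b,\mu}\pi_1(G)^\Gamma$. Replacing $\mu$ by $\upsilon$, I may assume $c_{b,\mu}=1$, so the target is literally $\pi_1(G)^\Gamma$. With this normalization I can (as already noted in the discussion preceding the statement) choose the representative $b\in[b]$ so that $b\in G(W)\mu(p)G(W)$, whence the class of the identity coset $1\in G(L)/G(W)$ lies in $X_{\leq\mu}^G(b)$ and satisfies $\omega_G(1)=1\in\pi_1(G)^\Gamma$.

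Next I would use $J_b(\Q_p)$-equivariance of $\omega_G$. Concretely, $J_b(\Q_p)\subset G(L)$ acts by left multiplication on $X_{\leq\mu}^G(b)$, and the restriction of the Kottwitz homomorphism $\omega_G\colon G(L)\to\pi_1(G)$ to $J_b(\Q_p)$ is a group homomorphism landing in $\pi_1(G)^\Gamma$, which by Lemma \ref{L:surjective}(2) is surjective. Thus, given any $\lambda\in\pi_1(G)^\Gamma$, pick $j\in J_b(\Q_p)$ with $\omega_G(j)=\lambda$; then $j\cdot 1\in X_{\leq\mu}^G(b)$ and
\[
\omega_G(j\cdot 1)=\omega_G(j)+\omega_G(1)=\lambda+0=\lambda,
\]
proving surjectivity of $\omega_G$ on $X_{\leq\mu}^G(b)$.

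Finally, the "in particular" assertion is immediate: by construction $\omega_G$ factors through $\pi_0(X_{\leq\mu}^G(b))$, so the induced map $\pi_0(X_{\leq\mu}^G(b))\twoheadrightarrow \pi_1(G)^\Gamma$ inherits surjectivity from the previous step. There is no real obstacle here; the only subtlety is the bookkeeping around normalizing $c_{b,\mu}$ to $1$ so that the target becomes a group, making the equivariance argument into an additive statement. All the substantive inputs (existence of $1$ in $X_{\leq\upsilon}^G(b)$, and surjectivity of $\omega_G$ on $J_b(\Q_p)$) are provided by the preceding discussion and by Lemma \ref{L:surjective}(2).
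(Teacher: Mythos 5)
Your proof is correct and follows essentially the same route as the paper's: both rely on the $J_b(\Q_p)$-equivariance of $\omega_G$ together with the surjectivity of $J_b(\Q_p)\to\pi_1(G)^\Gamma$ from Lemma \ref{L:surjective}(2). The only difference is presentational — you make explicit the normalization $c_{b,\mu}=1$ and exhibit the point $1\in X_{\leq\upsilon}^G(b)$, steps the paper has already carried out in the paragraph just before the proposition, whereas the paper's proof simply invokes equivariance plus transitivity of the $J_b(\Q_p)$-action on the target (via \cite{CKV} Lemma 2.3.6) and concludes.
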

\begin{proof}
By Lemma 2.3.6 of \cite{CKV}, the map $\omega_G$ is compatible with the $J_b(\Q_p)$-actions on both sides. By construction, $J_b(\Q_p)$ acts on $\pi_1(G)^\Gamma$ by left multiplication via the map $J_b(\Q_p)\ra \pi_1(G)^\Gamma$, which is surjective by (2) of Lemma \ref{L:surjective}.
 Thus $\omega_G: X_{\leq\mu}^G(b)\ra c_{b,\mu}\pi_1(G)^\Gamma$ is surjective.
\end{proof}

We continue to assume that $\mu$ can be arbitrary. For a reductive group $G$ over $\Q_p$, we write $Z_G$ as its center.
\begin{proposition}\label{P:cartesian}
Let $(G_1,[b_1],\{\mu_1\})\ra (G_2,[b_2],\{\mu_2\})$ be a morphism.
If $G_2=G_1/Z$ for some central group $Z\subset Z_{G_1}$, we have the following cartesian diagram
\[\xymatrix{X_{\leq \mu_1}^{G_1}(b_1)\ar[r]\ar[d]^{\omega_{G_1}}& X_{\leq \mu_2}^{G_2}(b_2) \ar[d]^{\omega_{G_2}}\\
	c_{b_1,\mu_1}\pi_1(G_1)^\Gamma\ar[r] & c_{b_2,\mu_2}\pi_1(G_2)^\Gamma.
}\]
\end{proposition}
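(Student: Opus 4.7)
The plan is to verify cartesianness on $\overline{\F}_p$-points; this suffices because both affine Deligne-Lusztig objects are locally closed perfect subschemes of the Witt vector affine Grassmannian and the right-hand column lands in a discrete set, so the scheme-theoretic fiber product agrees pointwise with the naive one. Commutativity of the square is immediate from functoriality of the Kottwitz homomorphism applied to the quotient map $G_1 \to G_2$.

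For the cartesian property, I would start from a compatible pair $(\bar{x}, c)$ with $\bar{x} = \bar{g}G_2(W) \in X_{\leq\mu_2}^{G_2}(b_2)(\overline{\F}_p)$ and $c \in c_{b_1,\mu_1}\pi_1(G_1)^\Gamma$ mapping to $\omega_{G_2}(\bar{x})$ in $c_{b_2,\mu_2}\pi_1(G_2)^\Gamma$, and construct a unique preimage in $X_{\leq\mu_1}^{G_1}(b_1)$. Pick any lift $g \in G_1(L)$ of $\bar{g}$. Since $Z \subset Z_{G_1}$ is of multiplicative type, the central exact sequence $1 \to Z \to G_1 \to G_2 \to 1$ yields an exact sequence
\[\pi_1(Z)^\Gamma \to \pi_1(G_1)^\Gamma \to \pi_1(G_2)^\Gamma \to 1.\]
Combined with the surjectivity of $\omega_Z : Z(L) \to \pi_1(Z)^\Gamma$ (the analogue of Lemma \ref{L:surjective} for the torus-like group $Z$), this lets me choose $z \in Z(L)$ with $\omega_{G_1}(gz) = c$. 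Centrality of $Z$ ensures the image of $gz$ in $G_2(L)/G_2(W)$ coincides with $\bar{g}G_2(W)$ and that
\[(gz)^{-1}b_1\sigma(gz) = g^{-1}b_1\sigma(g) \cdot z^{-1}\sigma(z).\]

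The remaining step, and the main obstacle, is to show $gz \in X_{\leq\mu_1}^{G_1}(b_1)$ and that the lift is unique modulo $G_1(W)$. The key point is that among the $G_1(W)$-double cosets in $G_1(L)$ lying over a fixed $G_2$-double coset, those satisfying the $\leq\mu_1$ admissibility condition form a torsor under a quotient of $\pi_1(Z)^\Gamma$, so the $G_2$-admissibility of $\bar{g}$ together with the prescribed Kottwitz invariant $c$ (which by assumption is compatible with $c_{b_1,\mu_1}$ and $c_{b_2,\mu_2}$) uniquely selects the $\mu_1$-admissible lift. Two candidate lifts differing by an element of $Z(L) G_1(W)$ and having the same Kottwitz invariant must lie in the same $G_1(W)$-coset, giving uniqueness. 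This equivariance of Cartan decompositions under the central isogeny, carefully handled in the possibly disconnected $Z$ case, is exactly the content of [CKV] Corollary 2.4.2, which I would invoke (via their analysis of Cartan decompositions in families, loc. cit. 2.1) to complete the argument.
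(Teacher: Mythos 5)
Your proposal follows the same route as the paper: the paper's entire proof is a citation to \cite{CKV} Corollary 2.4.2, and your argument, after unfolding the expected structure, likewise defers the key admissibility/uniqueness step to that same corollary. So in substance you are doing what the paper does, with added scaffolding.

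That scaffolding, however, contains a false statement that you should correct even though your ultimate argument does not depend on it. The sequence
\[
\pi_1(Z)^\Gamma \to \pi_1(G_1)^\Gamma \to \pi_1(G_2)^\Gamma \to 1
\]
is not exact: taking $\Gamma$-invariants of $0\to\pi_1(Z)\to\pi_1(G_1)\to\pi_1(G_2)\to 0$ is only left exact, and the cokernel term is an $H^1(\Gamma,\pi_1(Z))$, which need not vanish. The paper itself treats surjectivity of $\pi_1(G_1)^\Gamma\to\pi_1(G_2)^\Gamma$ as a nontrivial \emph{hypothesis} (see the lemma following Corollary~\ref{C:local ADLV}, and Proposition~\ref{P:moduli}, where it is explicitly assumed, as well as the GSpin/SO example where it is verified by hand). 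Fortunately your argument only uses exactness in the middle, namely $\ker\bigl(\pi_1(G_1)^\Gamma\to\pi_1(G_2)^\Gamma\bigr)=\pi_1(Z)^\Gamma$, which is correct since $\pi_1(Z)\hookrightarrow\pi_1(G_1)$ and taking invariants preserves kernels. Relatedly, the Kottwitz map $\omega_Z$ is valued in $\pi_1(Z)$, not $\pi_1(Z)^\Gamma$; what you need is that $\omega_Z\colon Z(L)\to\pi_1(Z)$ is surjective (true because $Z_L$ is a split group of multiplicative type), so in particular it hits any given element of the subgroup $\pi_1(Z)^\Gamma$. Finally, since you end by invoking \cite{CKV} Corollary 2.4.2 for the central admissibility and uniqueness point anyway, the cleanest version of the write-up would simply cite it outright rather than half-reproduce the inputs to its proof.
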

\begin{proof}
This is contained in \cite{CKV} Corollary 2.4.2.
\end{proof}

Let the notations be as in the above proposition. Combined with Proposition \ref{P: surjective}, we get
\begin{corollary}\label{C:local ADLV}
Let $x_1\in c_{b_1,\mu_1}\pi_1(G_1)^\Gamma$ be a point and $x_2\in c_{b_2,\mu_2}\pi_1(G_2)^\Gamma$ be its image under $c_{b_1,\mu_1}\pi_1(G_1)^\Gamma\ra c_{b_2,\mu_2}\pi_1(G_2)^\Gamma$. Let $X_{\leq \mu_1}^{G_1}(b_1)^+$ and $X_{\leq \mu_2}^{G_2}(b_2)^+$ be the fibers of $\omega_{G_1}$ and $\omega_{G_2}$ at $x_1$ and $x_2$ respectively, which are non empty by Proposition \ref{P: surjective}. Then the map $X_{\leq \mu_1}^{G_1}(b_1)\ra X_{\leq \mu_2}^{G_2}(b_2)$ induces a bijection 
\[X_{\leq \mu_1}^{G_1}(b_1)^+ \st{\sim}{\lra}X_{\leq \mu_2}^{G_2}(b_2)^+.\]
\end{corollary}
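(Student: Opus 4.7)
The plan is to deduce the corollary essentially formally from the two inputs just proved: the cartesian diagram of Proposition \ref{P:cartesian} and the surjectivity statement of Proposition \ref{P: surjective}. The key observation is that in any cartesian square of sets (or sheaves), the fiber of one vertical map over a point is naturally in bijection with the fiber of the other vertical map over the image point.

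More concretely, I would first note that by Proposition \ref{P: surjective}, after possibly replacing $\mu_i$ by $\sigma(\mu_i)$ (so that $c_{b_i,\mu_i}=1$ in the normalization used there), both maps $\omega_{G_1}$ and $\omega_{G_2}$ are surjective onto their targets, so the fibers $X_{\leq \mu_1}^{G_1}(b_1)^+$ and $X_{\leq \mu_2}^{G_2}(b_2)^+$ are non-empty; in particular the statement of the corollary is not vacuous. Then, given the cartesian diagram
\[
\xymatrix{
X_{\leq \mu_1}^{G_1}(b_1)\ar[r]\ar[d]^{\omega_{G_1}} & X_{\leq \mu_2}^{G_2}(b_2) \ar[d]^{\omega_{G_2}} \\
c_{b_1,\mu_1}\pi_1(G_1)^\Gamma \ar[r] & c_{b_2,\mu_2}\pi_1(G_2)^\Gamma,
}
\]
the fiber of $\omega_{G_1}$ over $x_1$ is identified with the fiber product of $X_{\leq \mu_2}^{G_2}(b_2)$ with $\{x_1\}$ over $c_{b_2,\mu_2}\pi_1(G_2)^\Gamma$, and since $x_1$ maps to $x_2$, this fiber product is just the fiber of $\omega_{G_2}$ over $x_2$. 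This gives the claimed bijection $X_{\leq \mu_1}^{G_1}(b_1)^+ \st{\sim}{\to} X_{\leq \mu_2}^{G_2}(b_2)^+$.

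There is essentially no obstacle here: the work has been done in Proposition \ref{P:cartesian} (which quotes \cite{CKV} Corollary 2.4.2) and Proposition \ref{P: surjective}. The only thing to check, which is a formality, is that the compatibility of the two $\omega_{G_i}$ with the horizontal map $X_{\leq \mu_1}^{G_1}(b_1) \to X_{\leq \mu_2}^{G_2}(b_2)$ really does identify fibers over matched points; this is just the universal property of the fiber product applied to the cartesian square. So the corollary follows by pure diagram chase, and the proof should be a one-line reference back to the two preceding propositions.
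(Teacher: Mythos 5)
Your argument is correct and is exactly the paper's intended (and unwritten) proof: the corollary is stated immediately after Proposition \ref{P:cartesian} with the lead-in ``Combined with Proposition \ref{P: surjective}, we get,'' and the content is precisely the fiber identification in a cartesian square of sets that you spell out. Nothing further is needed.
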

We still keep the above notations.
\begin{lemma}
	If $\pi_1(G_1)^\Gamma\ra \pi_1(G_2)^\Gamma$ is surjective, then the map $X_{\leq \mu_1}^{G_1}(b_1)\ra X_{\leq \mu_2}^{G_2}(b_2)$ induces an isomorphism \[X_{\leq \mu_1}^{G_1}(b_1)/Z(\Q_p)\simeq X_{\leq \mu_2}^{G_2}(b_2).\]
\end{lemma}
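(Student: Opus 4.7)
The plan is to deduce the isomorphism from the cartesian diagram of Proposition \ref{P:cartesian} together with a direct coset calculation using the Kottwitz description of $Z(L)/Z(W)$ for the central torus $Z$.

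Surjectivity of $X_{\leq \mu_1}^{G_1}(b_1)\ra X_{\leq \mu_2}^{G_2}(b_2)$ is immediate from the cartesian square: the bottom horizontal arrow $c_{b_1,\mu_1}\pi_1(G_1)^\Gamma\ra c_{b_2,\mu_2}\pi_1(G_2)^\Gamma$ is a torsor map under $\pi_1(G_1)^\Gamma\ra\pi_1(G_2)^\Gamma$, which is surjective by hypothesis, so the top arrow is surjective as well. That the map factors through the $Z(\Q_p)$-quotient is equally clear: for $z\in Z(\Q_p)$ and $g\in G_1(L)$, centrality of $Z$ combined with $\sigma(z)=z$ gives $(zg)^{-1}b_1\sigma(zg)=g^{-1}b_1\sigma(g)$, so left multiplication by $Z(\Q_p)$ preserves $X_{\leq \mu_1}^{G_1}(b_1)$; and since $Z$ maps to $1$ in $G_2=G_1/Z$, this action is trivial downstairs.

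For injectivity on the quotient, suppose $gG_1(W)$ and $g'G_1(W)$ in $X_{\leq \mu_1}^{G_1}(b_1)$ have the same image in $X_{\leq \mu_2}^{G_2}(b_2)$. Smoothness of $G_1\ra G_2$ together with the vanishing of $H^1_{et}(W,Z)$ (for $Z$ of multiplicative type over the strictly henselian $W$) shows $G_1(W)\ra G_2(W)$ is surjective, so the preimage of $G_2(W)$ in $G_1(L)$ is $Z(L)G_1(W)$. Writing $g'=gzh$ with $z\in Z(L),\, h\in G_1(W)$, and inserting into the affine Deligne-Lusztig condition for both $g$ and $g'$, centrality of $z$ forces $z^{-1}\sigma(z)\in Z(W)$, i.e., $[z]\in (Z(L)/Z(W))^\sigma$. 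By Kottwitz's theorem for the unramified torus $Z$, the natural map $Z(\Q_p)/Z(\Z_p)\ra (Z(L)/Z(W))^\sigma=X_\ast(Z)^\Gamma$ is an isomorphism, so we may write $z=z_0u$ with $z_0\in Z(\Q_p)$ and $u\in Z(W)$; then $g'G_1(W)=gzG_1(W)=z_0gG_1(W)$ lies in the $Z(\Q_p)$-orbit of $gG_1(W)$, as desired.

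The main obstacle I expect is this last step: the Kottwitz identification of $(Z(L)/Z(W))^\sigma$ with $Z(\Q_p)/Z(\Z_p)$ relies on $Z$ being an unramified torus (or at least of multiplicative type with vanishing relevant Galois cohomology), so one should record exactly which hypotheses on the central subgroup $Z$ make the lifting step valid. Granting standard Kottwitz theory for such $Z$, the rest of the argument is a formal combination of the cartesian diagram and a coset chase.
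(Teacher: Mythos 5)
You take a genuinely different route from the paper, which simply invokes the proofs of \cite{CKV} Corollaries 2.4.2 and 2.4.3: under the surjectivity hypothesis those show that the fibers of $X_{\leq \mu_1}^{G_1}(b_1)\ra X_{\leq \mu_2}^{G_2}(b_2)$ are torsors under $X_\ast(Z)^\Gamma$, and $Z(\Q_p)$ acts on $X_{\leq \mu_1}^{G_1}(b_1)$ through the natural map $Z(\Q_p)\ra X_\ast(Z)^\Gamma$, whence the quotient identity. Concretely this is read off the cartesian square of Proposition~\ref{P:cartesian}: the quotient by $Z(\Q_p)$ collapses the $\pi_1(G_1)^\Gamma$-factor to $\pi_1(G_2)^\Gamma$ once one knows $\ker\bigl(\pi_1(G_1)^\Gamma\ra\pi_1(G_2)^\Gamma\bigr)=X_\ast(Z)^\Gamma$ and that $Z(\Q_p)\ra X_\ast(Z)^\Gamma$ is onto. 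Your coset chase in $G_1(L)$ reproves the fiber structure by hand.

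Both routes need $Z(\Q_p)\twoheadrightarrow X_\ast(Z)^\Gamma$, and you are right to flag it, but it holds for any unramified group of multiplicative type, not just tori, since $X_\ast(Z)$ only sees the torus part of $Z$. The more delicate hidden hypothesis in your version is the earlier step: to write the preimage of $G_2(W)$ in $G_1(L)$ as $Z(L)G_1(W)$ you need $G_1(W)\ra G_2(W)$ to be surjective, and you justify this by ``smoothness of $G_1\ra G_2$ together with the vanishing of $H^1_{et}(W,Z)$''. But $G_1\ra G_2$ is smooth if and only if $Z$ is, which fails when $Z$ has a $\mu_p$-factor; in that case the relevant cohomology is fppf, and $H^1_{\mathrm{fppf}}(W,\mu_p)\simeq W^\times/(W^\times)^p\neq 0$. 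The paper's argument never needs to lift $W$-points: the torsor structure on fibers is a purely combinatorial consequence of the cartesian square and the exactness of $0\ra X_\ast(Z)^\Gamma\ra\pi_1(G_1)^\Gamma\ra\pi_1(G_2)^\Gamma$, which is why it works without a smoothness restriction on $Z$. In the situations where the lemma is actually applied later (e.g.\ $Z=\G_m$) both proofs go through; but for the lemma in the generality stated, you should take the fiber structure from the cartesian square rather than from lifting through $G_2(W)$.
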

\begin{proof}
	This is implied by the proof of \cite{CKV} Corollaries 2.4.2 and 2.4.3: under the assumption that $\pi_1(G_1)^\Gamma\ra \pi_1(G_2)^\Gamma$ is surjective, all fibers of $X_{\leq \mu_1}^{G_1}(b_1)\ra X_{\leq \mu_2}^{G_2}(b_2)$ are torsors under $X_\ast(Z)^\Gamma$. The group $Z(\Q_p)$ acts on $X_{\leq \mu_1}^{G_1}(b_1)$ via the natural map $Z(\Q_p)\ra X_\ast(Z)^\Gamma$.
\end{proof}

\section{Rapoport-Zink spaces of Hodge type}
Following Rapoport-Viehmann, we first review the general conjecture on the theory of local Shimura varieties in \cite{RV}. Then we concentrate on the Hodge type case, cf. \cite{Kim1, HP,BP}.

\subsection{Local Shimura data and local Shimura varieties}
Recall the following definition of Rapoport-Viehmann.
\begin{definition}[\cite{RV} Definition 5.1]\label{D:RV}
A local Shimura datum over $\Q_p$ is a triple $(G, [b],  \{\mu\})$ where
\begin{itemize}
	\item $G$ is a connected reductive group over $\Q_p$,
	\item $[b]\in B(G)$ is a $\sigma$-conjugacy class,
	\item $\{\mu\}$ is a conjugacy class of cocharacters $\mu: \G_m\ra G_{\ov{\Q}_p}$,
\end{itemize}
such that the following conditions are satisfied
\begin{enumerate}
	\item $[b]\in B(G,\mu)$,
	\item $\{\mu\}$ is minuscule.
\end{enumerate}
\end{definition} 
Associated to a local Shimura datum, we have
\begin{itemize}
	\item the reflex field $E=E(G,\{\mu\})$, which is the field of definition of $\{\mu\}$ inside the fixed algebraic closure $\ov{\Q}_p$,
	\item the flag variety $\Fl\ell_{G, \mu}$, considered as a rigid analytic space (or an adic space) over $\breve{E}$, the completion of the maximal unramified extension of $E$; here the associated parabolic subgroup $P_\mu$ is given by $P_\mu=\{g\in G|\, \lim_{t\ra 0}\mu(t)g\mu(t)^{-1} \,\tr{exists}\}$, 
	\item the reductive group $J_b$ over $\Q_p$, for $b\in [b]$, which up to isomorphism only depends on $[b]$. The group $J_b(\Q_p)$ acts on $\Fl\ell_{G, \mu}$,
	\item  the weakly admissible open subspace $\Fl\ell_{G, \mu}^{wa}\subset \Fl\ell_{G, \mu}$ defined in \cite{RZ} 1.35 and \cite{DOR} Definition 9.5.4. The action of $J_b(\Q_p)$ on $\Fl\ell_{G, \mu}$ stabilizes $\Fl\ell_{G, \mu}^{wa}$.
\end{itemize}
In fact, if $G$ is unramified, we have also (cf. the last section)
\begin{itemize}
	\item the affine Deligne-Lusztig variety $X_\mu^G(b)$ over $\ov{\F}_p$ (which will be expected to be the special fiber of some formal model of the following local Shimura variety $\M_{G(\Z_p)}$, cf. Conjecture \ref{C:RV}).
\end{itemize}

Let $(G, [b],  \{\mu\})$ be a local Shimura datum, with local reflex field $E$. We have the following conjecture (\cite{RV} 5.1):
\begin{conjecture}[Rapoport-Viehmann]\label{C:RV}
	There is a tower of rigid analytic spaces over $\tr{Sp}\breve{E}$,
	\[(\M_K)_{K},\] where $K$ runs through all open compact subgroups of $G(\Q_p)$, with the following properties:
	\begin{enumerate}
		\item the group $J_b(\Q_p)$ acts on each space $\M_K$,
		\item the group $G(\Q_p)$ acts on the tower $(\M_K)_{K}$ as Hecke correspondences,
		\item the tower is equipped with a Weil descent datum over $E$,
		\item there exists a compatible system of \'etale and partially proper period maps \[\pi_K: \M_K\ra \Fl\ell_{G, \mu}^{wa}\] which is equivariant for the action of $J_b(\Q_p)$.
		\end{enumerate}
\end{conjecture}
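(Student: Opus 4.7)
The plan is to construct the tower $(\M_K)_K$ case-by-case, reducing as much as possible to the unramified Hodge type situation already handled by Kim. First I would treat the case where $(G, [b], \{\mu\})$ is of unramified Hodge type: fix an embedding $(G, [b], \{\mu\}) \hookrightarrow (\GL(V), [b'], \{\mu'\})$, form the classical Rapoport--Zink formal scheme $\breve{\M}(\GL(V), b', \mu')$ parametrizing $p$-divisible groups with quasi-isogeny to a fixed one, and cut out the closed formal subscheme $\breve{\M}(G, b, \mu)$ defined by the crystalline Tate tensors carving out $G \subset \GL(V)$. Passing to the rigid analytic generic fiber and taking \'etale covers obtained by trivializing the $p$-adic Tate module at level $K \subset G(\Q_p)$ yields the Hodge type tower.

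Next, for a general unramified local Shimura datum of abelian type $(G, [b], \{\mu\})$, I would exploit an isomorphism of adjoint data with some $(G_1, [b_1], \{\mu_1\})$ of Hodge type. Using the cartesian diagram of Proposition \ref{P:cartesian} together with the transitivity of the $J_b(\Q_p)$-action on $\pi_0(X_\mu^G(b))$ from Theorem \ref{T:CKV}, the strategy is to select the open and closed sub-tower $(\M(G_1, b_1, \mu_1)_{K_1}^+)_{K_1}$ cut out by a single fiber of $\omega_{G_1}$, observe via Corollary \ref{C:local ADLV} that this piece is canonically identified with the analogous sub-tower for $(G, [b], \{\mu\})$, and reassemble the full tower by spreading it out along the $G(\Q_p)$- and $J_b(\Q_p)$-actions, both of which surject onto $\pi_1(G)^\Gamma$ by Lemma \ref{L:surjective}. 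Independence of the chosen Hodge type lift follows because the identification on the $^+$-pieces is canonical.

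Properties (1)--(4) would then be verified as follows. The $J_b(\Q_p)$-action originates from composing with self-quasi-isogenies of the framing object; compatibility with the abelian type reassembly is automatic because both sides carry matching $J_b(\Q_p)$-actions on connected components. The $G(\Q_p)$ Hecke action comes from varying $K$. The Weil descent datum over $E$ is inherited from the Frobenius descent datum on the Hodge type moduli via the adjoint identification. The period map $\pi_K$ is the Grothendieck--Messing/crystalline period morphism sending a point to its Hodge filtration; its image lies in $\Fl\ell_{G,\mu}^{wa}$ by the theorem of Colmez--Fontaine (weak admissibility equals admissibility of the associated filtered isocrystal with $G$-structure).

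The hard part is the case beyond abelian type: for a general reductive $G / \Q_p$, possibly ramified or of exceptional type, there is no Hodge embedding and no moduli of $p$-divisible groups to lean on, so classical crystalline Dieudonn\'e theory cannot produce a formal model. The only systematic route is Scholze's theory of moduli of local $G$-shtukas, which yields $(\Sht_K)_K$ as diamonds; extracting an honest rigid analytic tower with Weil descent datum then requires the representability and formal integral model results of \cite{Sch2, SW17}. A secondary difficulty is the period map in this generality, since one must work directly with $G$-bundles on the Fargues--Fontaine curve rather than with Dieudonn\'e modules, and the appropriate notion of $\Fl\ell_{G,\mu}^{wa}$ must be recast through the admissible locus $\Fl\ell_{G,\mu}^{adm}$ of Fargues and Rapoport.
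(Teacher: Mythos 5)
The statement you are asked to prove is a \emph{conjecture}, and the paper does not prove it in full generality; it records the conjecture in Section 3.1 and then verifies or partially verifies it only for restricted classes of local Shimura data. Your proposal follows the paper's own strategy for those cases (Kim's construction in unramified Hodge type, the reassembly via connected components and affine Deligne--Lusztig varieties for abelian type, and Scholze's moduli of local $G$-shtukas in general), so the architecture you sketch is a faithful reading of the paper. However, two specific points in your proposal overstate what this route delivers and would fail if pressed.

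The first and main gap is property (3), the Weil descent datum over $E$. You assert that it ``is inherited from the Frobenius descent datum on the Hodge type moduli via the adjoint identification.'' The paper does \emph{not} establish this. The remark following Theorem \ref{T:ab} states explicitly that Weil descent data on the abelian type Rapoport--Zink spaces are not studied in the paper, and only sketches that such a datum ``should be possible'' to define by splitting it into a piece on $\breve{\M}(G,b,\mu)^+$ and a piece on $\pi_1(G)^\Gamma$. Likewise, Theorem \ref{thm:local Shimura}, which is the paper's actual partial solution of Conjecture \ref{C:RV}, lists only properties (1), (2), and a version of (4), and the paper flags the omission of the Weil descent datum when introducing that theorem. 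The technical obstruction is that the reassembly along $\pi_1(G)^\Gamma$ depends on the choice of a base component $\M^+$, and compatibility of a descent datum with the $J_b(\Q_p)$- and $G(\Q_p)$-translations used to spread $\M^+$ to the full tower is exactly what must be checked; no such check exists in the paper, and your proposal inherits this gap unexamined.

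The second, more minor issue is the justification you give for the period map landing in $\Fl\ell_{G,\mu}^{wa}$. You invoke Colmez--Fontaine (``weak admissibility equals admissibility'') to place the image in $\Fl\ell_{G,\mu}^{wa}$, but this is the wrong direction and, over the relevant non-discretely-valued base fields, the equality simply fails. What the paper actually shows (Proposition \ref{P:adm Hodge} and Corollary \ref{C:image ab}) is that the image of $\pi_{G,dR}$ is $\Fl\ell_{G,\mu}^{adm}$, and that $\Fl\ell_{G,\mu}^{adm}\subset\Fl\ell_{G,\mu}^{wa}$ always holds by construction. Whether this inclusion is an equality is the Hartl--Rapoport / Fargues--Rapoport question discussed after Lemma \ref{L:period}, and it is a nontrivial theorem in special cases (e.g.\ the appendix handles basic $\SO$) rather than a general consequence of Colmez--Fontaine.
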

In fact, in \cite{RV} 5.1 there is a more precise statement on the point (4) of the conjecture. In particular, there should be an open subspace \[\Fl\ell_{G, \mu}^{a}\subset \Fl\ell_{G, \mu}^{wa},\] which should be the image of the period maps $\pi_K$ for all $K$. In fact, Rapoport and Zink conjecture that there exists a $\Q_p$-local system with $G$-structure over $\Fl\ell_{G, \mu}^{a}$ which interpolates the $p$-adic crystalline Galois representations attached to all classical points. Moreover, the tower $(\M_K)_{K\subset G(\Q_p)}$ should be the geometric realization (i.e. spaces of lattices with level structures) of this universal $\Q_p$-local system with $G$-structure over $\Fl\ell_{G, \mu}^{a}$. We refer to \cite{RV} 5.1, \cite{RZ} section 1, \cite{DOR} Conjecture 11.4.4, and \cite{Har} Conjecture 2.3 for more details. This conjecture has been known for the local Shimura data which arise from local EL/PEL data (\cite{RZ}), and the unramified local Shimura datum of Hodge type (\cite{Kim1}). In both cases, these spaces $\M_K$ are finite \'etale covers of the rigid analytic generic fibers of some formal schemes $\breve{\M}$ over $\Spf \Ol_{\breve{E}}$, which are formal moduli spaces of $p$-divisible groups with some additional structures. The special fibers of these formal schemes $\breve{\M}$ are the affine Deligne-Lusztig varieties which we introduced in the last section. In section \ref{section:shtuka} we will discuss a partial solution of the above conjecture due to Scholze, cf. \cite{Sch2,SW17}.

It will be useful to make a definition for morphisms of local Shimura data.
\begin{definition}\label{D:local Shimura morphism}
Let $(G_1, [b_1],\{\mu_1\}), (G_2, [b_2],\{\mu_2\})$ be two local Shimura data. A morphism \[(G_1, [b_1],\{\mu_1\})\ra (G_2, [b_2],\{\mu_2\})\] is a homomorphism of algebraic groups $f: G_1\ra G_2$ sending $([b_1], \{\mu_1\})$ to $([b_2], \{\mu_2\})$.
\end{definition}
If $(G_1, [b_1],\{\mu_1\})\ra (G_2, [b_2],\{\mu_2\})$ is a morphism of local Shimura data, then it is conjectured (\cite{RV} Properties 5.3 (iv)) that for any open compact subgroups $K_1\subset G_1(\Q_p), K_2\subset G_2(\Q_p)$ with $f(K_1)\subset K_2$, there exists a morphism of the associated local Shimura varieties
\[\M(G_1,b,\mu)_{K_1}\lra \M(G_2,b_2,\mu_2)_{K_2}\times \tr{Sp}\breve{E}_1,\]and when $G_1\ra G_2$ is a closed immersion these are closed embeddings for $K_1=K_2\cap G_1(\Q_p)$. 

\subsection{Local Shimura data of Hodge type}
Now we recall the definition of a special class of local Shimura data (cf. \cite{RV} Remark 5.4 (i)):
\begin{definition}\label{D:local Hodge}
	A local Shimura datum $(G, [b],  \{\mu\})$ is called of Hodge type, if there exists an embedding $f: G\hookrightarrow \GL(V)$ and a local Shimura datum $(\GL(V), [b'],  \{\mu'\})$ with $\{\mu'\}$ corresponds to $(1^r, 0^{n-r})$ for some integer $1\leq r\leq n=\dim V$, such that $[b], \{\mu\}$ are mapped to $[b'], \{\mu'\}$ under $f$.
\end{definition}
If $G$ is moreover unramified, by \cite{Ki1} Lemma 2.3.1, we can find some $\Z_p$-lattice $V_{\Z_p}\subset V$ such that $G\hookrightarrow \GL(V)$ is induced by an embedding $G_{\Z_p}\hookrightarrow \GL(V_{\Z_p})$, where $G_{\Z_p}$ is a reductive model of $G$ over $\Z_p$.
\begin{definition}\label{D:local Hodge unramified}
A local Shimura datum of Hodge type $(G, [b],  \{\mu\})$ is called unramified, if $G$ is unramified.
\end{definition}
We note that for an unramified local Shimura datum of Hodge type $(G, [b],  \{\mu\})$, the local reflex field $E$ is an unramified extension of $\Q_p$. Thus $\breve{E}=L, \Ol_{\breve{E}}=\Ol_L=W$ where as before $W=W(\ov{\F}_p), L=W_\Q$. We will fix a reductive model $G_{\Z_p}$ of $G$ over $\Z_p$.
\begin{remark}
The above definition of unramified local Shimura data of Hodge type is more general than that in \cite{HP} Definition 2.3.3. Moreover, for an unramified local Shimura datum of Hodge type $(G, [b],  \{\mu\}) $ in the sense of \cite{HP}, one always has $Z_G\supset \G_m$. 
\end{remark}

We want to classify local Shimura data of Hodge type. Let $(G, [b],  \{\mu\})$ be a given local Shimura datum. Take any faithful representation $V$ of $G$ over $\Q_p$, so that we get an embedding $\rho: G\hookrightarrow \GL(V)$. Therefore we get a conjugacy class $\{\mu'\}$ of cocharacters $\mu'=\rho_{\ov{\Q}_p}\circ \mu: \G_m\ra \GL(V)_{\ov{\Q}_p}$. Let $N(G)$ be the set of Newton points of $G$, cf. \cite{RR} 1.7 (in \cite{Kot2} 4.1, it was denoted by $\ov{C}_\Q$). Recall that the maps \[\nu_G: B(G)\ra N(G),\quad \kappa_G: B(G)\ra \pi_1(G)_\Gamma\]  are functorial in $G$, cf. \cite{Kot1} section 4, \cite{Kot2} 4.9 and 7.5, and \cite{RR} 1.9 and 1.15. We get in particular a map \[B(G, \mu)\ra B(\GL(V), \mu').\]
Let $[b']\in B(\GL(V), \mu')$ be the image of $[b]$ under this map. The triple $(\GL(V), [b'],\{\mu'\})$ is a local Shimura datum if and only if $\{\mu'\}$ is minuscule and corresponds to $(1^r, 0^{n-r})$ for some integer $1\leq r\leq n=\dim V$.  In which case $(G, [b],  \{\mu\})$ is of Hodge type.  As above, let $G$ be a reductive group over $\Q_p$ and $\{\mu\}$ be a conjugacy class of minuscule cocharacters $\mu: \G_m\ra G_{\ov{\Q}_p}$.
In \cite{Ser} Serre classified the pair $(G,\{\mu\})$ for which there exists a faithful representation $V$ of $G$ such that the induced class of cocharacters $\{\mu'\}$ under the embedding $G\hookrightarrow \GL(V)$ corresponds to $(1^r, 0^{n-r})$ for some integer $1\leq r\leq n=\dim V$. It turn out the simple factors of $G^{ad}_{\ov{\Q}_p}$ are groups of type $A, B, C$ or $D$, cf. \cite{Ser} section 3.

The following examples of local Shimura datum of Hodge type are standard.
\begin{example}
	\begin{enumerate}
		\item Let $(G, [b],  \{\mu\})$ be a local Shimura datum which comes from a local EL/PEL datum (cf. \cite{RZ} 1.38), then it is of Hodge type (cf. \cite{Kim1} 4.7).
		\item Let $(G,X)$ be a Shimura datum of Hodge type, i.e. there exists some embedding into the Siegel Shimura datum $(G,X)\hookrightarrow (\GSp, S^{\pm})$. Let $\mu$ be the cocharacter associated to $X$. Take any $[b]\in B(G_{\Q_p},\mu)$. Then the local Shimura datum $(G_{\Q_p}, [b],  \{\mu\})$ is of Hodge type.
	\end{enumerate}
\end{example}

Here is an example of non Hodge type local Shimura datum.
\begin{example}[See \cite{RV} Example 5.5]\label{E:PGLn}
	Let $G=\PGL_n$, $\mu$ be any non trivial minuscule cocharacter, and $[b]\in B(G,\mu)$ be arbitrary. Then the local Shimura datum $(G, [b],  \{\mu\})$ is not of Hodge type. 
\end{example}

\subsection{Rapoport-Zink spaces of Hodge type}
Throughout the rest of this section, we assume that $p>2$.
Let $(G, [b],  \{\mu\})$ be an unramified local Shimura datum of Hodge type. Fix a representative $b\in G(L)$ of $[b]$.  Kim (\cite{Kim1}) constructs a formal moduli scheme $\breve{\M}=\breve{\M}(G,b,\mu)$ over $\Spf W$ parametrizing $p$-divisible groups with crystalline Tate tensors.  We briefly review the related constructions in this subsection. By abuse of notation, we write also $G$ as the fixed associated reductive group scheme over $\Z_p$. Then there exists a faithful representation \[\rho: G\hookrightarrow \GL(\Lambda),\] such that the induced cocharacter $\mu'=\rho_{\ov{\Q}_p}\circ\mu: \G_m\ra \GL(\Lambda\otimes\ov{\Q}_p)$ is minuscule. Let $\Lambda^\vee$ be the dual lattice, and $\Lambda^\otimes$ be the tensor algebra of $\Lambda\oplus \Lambda^\vee$. By Proposition 1.3.2 of \cite{Ki1}, there exists a finite collection of tensors $\{s_\alpha\in \Lambda^\otimes\}_{\alpha\in I}$ such that $\rho: G\subset \GL(\Lambda)$ is the schematic stabilizer of $(s_\alpha)$.  We fix a representative $\mu$. Let $\Lambda\otimes W=\Lambda^0\oplus \Lambda^1$ be the decomposition of $\Lambda\otimes W$ according to the weights of $\mu$, which in turn induces a filtration $\Fil^\bullet\Lambda\otimes W$ with $\Fil^0\Lambda\otimes W=\Lambda\otimes W, \Fil^1\Lambda\otimes W=\Lambda^1$. We assume that $rank\; \Lambda=n, rank\;\Lambda^1=d$. We note that $P_\mu:=\tr{Aut}(\Lambda, {s_\alpha}, \Fil^\bullet\Lambda\otimes W)$ is a parabolic subgroup of $G_W$.

By our assumption and the classical Dieudonn\'e theory, there exists a $p$-divisible group $X_0$ of dimension $d$ and height $n$ over $\ov{\F}_p$, together with an isomorphism $\varepsilon: \D(X_0)\simeq (\Lambda\otimes W, b\sigma)$, where $\D(X_0)$ is the contravariant Dieudonn\'e module of $X_0$. The pair $(X_0, \varepsilon)$ is unique up to a unique isomorphism and we fix it in the sequel. Then we can regard $s_\alpha\otimes 1$ as tensors in $\D(X_0)^\otimes$ via $\varepsilon$. We note that $b\sigma$ fixes $(s_\alpha\otimes 1)$ and $(s_\alpha\otimes 1)$ lie in $\Fil^1\Lambda\otimes W$. Each $s_\alpha\otimes 1$ can be regarded as a map $1:=\D(\Q_p/\Z_p)\ra \D(X_0)^\otimes$, compatible with the filtrations, and such that the induced map $1\ra \D(X_0)^\otimes[\frac{1}{p}]$ is Frobenius-invariant, i.e. $s_\alpha\otimes 1$ is a crystalline Tate tensor of $X_0$, cf. \cite{Kim1} Definition 4.6.

Let $\Nilp_W$ be the category of $W$-algebras on which $p$ is locally nilpotent. Let $R\in \Nilp_W$ and $X$ be a $p$-divisible group on $\Spec R$. Consider the contravariant Dieudonn\'e crystal $\D(X)$ attached to $X$. Then as usual there is a decreasing (Hodge) filtration $\Fil^\bullet \D(X)_R$ on $\D(X)_R$ with locally free graded pieces over $R$. Here $\D(X)_R$ is the value of $\D(X)$ at the trivial PD-thickening $R\st{id}{\ra}R$. Namely, $\Fil^0\D(X)_R=\D(X)_R, \Fil^1\D(X)_R=(\Lie X)^\vee$ and $\Fil^2\D(X)_R=0$. As above, a crystalline Tate tensor of $X$ is a morphism $t_\alpha: 1\ra \D(X)^\otimes$ of crystals, such that $t_{\alpha R}: 1_R\ra \D(X)^\otimes_R$ is compatible with the filtrations, and the induced map $t_\alpha: 1\ra \D(X)^\otimes[\frac{1}{p}]$ is Frobenius-invariant.

Denote by $\Nilp_W^{sm}$ the full subcategory of $\Nilp_W$ consisting of formally smooth formally finitely generated $W/p^m$-algebras for $m\geq 1$. We use the following version of Rapoport-Zink functor, cf. \cite{Zhu} Definition 3.8, which is equivalent to Definition 4.6 of \cite{Kim1}.
\begin{definition}
	The Rapoport-Zink space associated to the unramified local Shimura datum of Hodge type is the functor $\breve{\M}$ on $\Nilp_W^{sm}$ defined by $\breve{\M}(R)=\{(X,(t_\alpha)_{\alpha\in I}, \rho)\}/\simeq$ where
	\begin{itemize}
		\item $X$ is a $p$-divisible group on $\Spec R$,
		\item $(t_\alpha)_{\alpha\in I}$ is a collection of crystalline Tate tensors of $X$,
		\item $\rho: X_0\otimes R/J\ra X\otimes R/J$ is a quasi-isogeny which sends $s_\alpha\otimes 1$ to $t_\alpha$ for $\alpha\in I$, where $J$ is some ideal of definition of $R$,
	\end{itemize}
	such that the following condition holds:\\
	the $R$-scheme 
	\[ \Isom\Big(\big(\D(X)_R, (t_\alpha), \Fil^\bullet(\D(X)_R) \big), \big(\Lambda\otimes R, (s_\alpha\otimes1), \Fil^\bullet\Lambda\otimes R\big)\Big) \]
	that classifies the isomorphisms between locally free sheaves $\D(X)_R$ and $\Lambda\otimes R$ on $\Spec R$ preserving the tensors and the filtrations is a $P_\mu\otimes R$-torsor.
\end{definition}

\begin{theorem}[\cite{Kim1} Theorem 4.9.1]\label{T:hodge}
 The functor $\breve{\M}$ is represented by a separated formal scheme, formally smooth and formally locally of finite type over $W$.
\end{theorem}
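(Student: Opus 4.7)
The plan is to establish representability by realizing $\breve{\M}$ as a locally closed formal subscheme of the ambient Siegel-type Rapoport-Zink space, and to deduce formal smoothness from Faltings-style deformation theory with crystalline Tate tensors. First, I would use the Hodge embedding $\rho \colon G \hookrightarrow \GL(\Lambda)$ and the induced morphism of unramified local Shimura data $(G,[b],\{\mu\}) \to (\GL(\Lambda),[b'],\{\mu'\})$ to define a forgetful natural transformation $\breve{\M}(G,b,\mu) \to \breve{\M}(\GL(\Lambda),b',\mu')$, sending $(X,(t_\alpha),\rho)$ to the underlying triple $(X,\rho)$. The target is a classical unramified EL-type Rapoport-Zink space, which is known by \cite{RZ} to be representable by a separated formal scheme, formally smooth and formally locally of finite type over $W$.

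Second, I would verify representability locally. Given an $\ov{\F}_p$-point $x=(X_0,(t_\alpha),\tr{id})$, Faltings's deformation theory for $p$-divisible groups equipped with crystalline Tate tensors produces a universal formal deformation ring $R_x$ that is a complete Noetherian local $W$-algebra, formally smooth of relative dimension $\dim(G/P_\mu)$. The point is that deformations of $(X_0,(t_\alpha))$ are classified by lifts of the Hodge filtration on $\D(X_0)_W$ to filtrations of type $\mu$ compatible with the tensors, i.e., by $W$-points of the smooth partial flag scheme $G/P_\mu$; Grothendieck-Messing together with the $P_\mu$-torsor condition yields an unobstructed deformation problem. Since the analogous deformation ring for the ambient Siegel datum is formally smooth with the Hodge filtration varying over the parabolic $P_{\mu'} \subset \GL(\Lambda_W)$, the induced map on deformation rings factors through the closed subscheme cut out by $G/P_\mu \hookrightarrow \GL(\Lambda)/P_{\mu'}$.

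Third, to globalize, I would argue that the forgetful morphism is representable by a locally closed immersion. The crystalline Tate tensors $t_\alpha$, if they exist, are uniquely determined by their specialization $s_\alpha \otimes 1$ at the base point via Frobenius-equivariant parallel transport in the universal Dieudonn\'e crystal. The conditions that these transported tensors (i) extend as global sections of $\D(X)^\otimes$, (ii) respect the Hodge filtration on $\D(X)_R$, and (iii) cut out a $P_\mu \otimes R$-torsor inside the $P_{\mu'} \otimes R$-torsor of filtration-preserving isomorphisms, are each representable by closed conditions on the ambient formal scheme. The main obstacle lies precisely here: one must show that the closed subfunctor so defined agrees with the deformation-theoretic picture at each point, and that gluing the Faltings local rings along the image of this locally closed embedding produces a formal scheme rather than merely a formal algebraic space. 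This is accomplished either by checking Artin-style representability criteria directly or by exhibiting the immersion into the ambient Siegel Rapoport-Zink space, from which separatedness and formal local finite type are inherited, while formal smoothness of relative dimension $\dim G/P_\mu$ follows from the pointwise Faltings computation.
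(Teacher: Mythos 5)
The paper does not reprove this statement; it cites \cite{Kim1} Theorem 4.9.1, and its introduction summarizes Kim's argument as ``patching together Faltings's construction of deformation ring for $p$-divisible groups (with crystalline Tate tensors) with Artin's criterion for algebraic spaces.'' Your primary proposed route is different in structure: you try to realize $\breve{\M}(G,b,\mu)$ directly as a (locally) closed formal subscheme of an ambient classical Rapoport-Zink space and inherit representability, separatedness, and formal local finite type from the embedding. That is the spirit of the alternative global construction of Howard-Pappas \cite{HP}, not of Kim's bottom-up argument. Kim builds the functor from Faltings deformation rings and applies Artin's criterion without a priori knowing the closed immersion $\breve{\M}(G,b,\mu)\hookrightarrow\breve{\M}(\GL_n,b',\mu')$ recorded after Proposition \ref{P:Zhu}; Howard-Pappas, conversely, take a closed formal subscheme of a Siegel Rapoport-Zink space as the definition, which requires factoring through $\GSp$ and is set up when the local datum comes from a global Shimura datum.

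The genuine gap is the unproved assertion in your third step. You claim that the conditions cutting out $\breve{\M}(G,b,\mu)$ inside the ambient space --- that the transported tensors $\rho(s_\alpha\otimes 1)$ are integral global sections of the crystal $\D(X)^\otimes$, that they respect the Hodge filtration, and that the Isom scheme is a $P_\mu\otimes R$-torsor --- are ``each representable by closed conditions on the ambient formal scheme,'' and you then flag this as ``the main obstacle'' without discharging it. This is precisely the technical heart of the theorem: verifying it uniformly over all $R\in\Nilp^{sm}_W$ is not a formal consequence of the pointwise deformation theory at a closed point, and both \cite{Kim1} and \cite{HP} devote substantial work to it (Kim sidesteps it altogether by the Artin-criterion route you mention only parenthetically). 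As written you identify the two available strategies and sketch the local Faltings computation that either would require, but you carry out neither, so representability itself has not been established. A secondary slip: your Hodge embedding goes to $\GL(\Lambda)$ and you call the target an EL-type space, yet you later invoke ``the ambient Siegel Rapoport-Zink space''; the latter presupposes a $G$-invariant perfect alternating form on $\Lambda$ so that the datum factors through a $\GSp$, as is done for $\GSpin$ in section 7 of the paper.
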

 In the classical EL/PEL case ( and with ramification), see \cite{RZ}  Theorem 3.25.  In \cite{Kim1} 4.7, the unramified local EL/PEL data are explained as special examples of unramified Hodge type data. See also \cite{HP} Theorem 3.2.1 for the case that $(G, [b],  \{\mu\})$ comes from a Shimura datum of Hodge type. If $\rho(b)$ has no slope 0, B\"ultel and Pappas have proved the above theorem by a different approach, see \cite{BP}. More precisely, they introduced notions of $(G,\mu)$-displays and quasi-isogenies between such, and they proved that the similar moduli problem of $(G,\mu)$-displays together with quasi-isogenies are representable. In the case $G=\GL_n$, the moduli problem of B\"ultel-Pappas is equivalent to the moduli problem of Rapoport-Zink, by the theorems of Zink (\cite{Zin}) and Lau (\cite{Lau}) that formal $p$-divisible groups over a $p$-adically complete and separated algebra $R$ are classified by the associated nilpotent displays.
 
 We denote also by $\breve{\M}$ the associated formal scheme, and refer it as the formal Rapoport-Zink space of Hodge type attached to $(G, [b],  \{\mu\})$. Let $\M$ be the rigid analytic generic fiber over $L=W_\Q$ of the formal scheme $\breve{\M}$. In the rest of this paper, we will use the following convention: if $G$ is an unramified reductive group over $\Q_p$, we will fix a reductive model over $\Z_p$ and write $G(\Z_p)$ for the associated hyperspecial group.
In \cite{Kim1} 7.4, Kim explained how to construct a tower of rigid analytic spaces \[(\M_K)_{K\subset G(\Z_p)}\] that satisfies the list of properties in Conjecture \ref{C:RV}. Moreover, $\M_{G(\Z_p)}=\M$, and $\M_K\ra \M$ is finite \'etale for any open compact subgroup $K\subset G(\Z_p)$. In particular, for unramified local Shimura data of Hodge type, the Conjecture \ref{C:RV} is true.

Let $\ov{\M}$ be the special fiber over $\ov{\F}_p$ of $\breve{\M}$. Recall that in section 2 attached to $(G,[b], \{\mu\})$, we introduced the affine Deligne-Lusztig variety $X_\mu^G(b)$ over $\ov{\F}_p$, viewed as a perfect scheme. The relation between $\breve{\M}$ and $X_\mu^G(b)$ is as follows.
\begin{proposition}[\cite{Zhu} Proposition 3.11]\label{P:Zhu}
	$X_\mu^G(b)$ is the perfection $\ov{\M}^{perf}$ of $\ov{\M}$. 
\end{proposition}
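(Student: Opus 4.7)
The strategy is to construct a natural isomorphism $\ov{\M}^{perf}\cong X_\mu^G(b)$ of perfect $\ov{\F}_p$-schemes by going through Dieudonn\'e modules on both sides. The essential technical input is the equivalence, over a perfect $\ov{\F}_p$-algebra $R$, between $p$-divisible groups over $R$ and finite projective $W(R)$-modules equipped with an injective $\sigma$-linear Frobenius $F$ whose cokernel is killed by $p$ (due to Gabber, extended by Lau, and reinterpreted in \cite{BS}). Under this equivalence, a crystalline Tate tensor on $X$ becomes a Frobenius-invariant tensor on $\D(X)_{W(R)}[1/p]$ that respects the Hodge filtration modulo $p$.

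On $R$-points, given $(X,(t_\alpha),\rho)\in \ov{\M}(R)$, I would set $M:=\D(X)_{W(R)}$ and form
\[ \E:=\Isom_{W(R)}\big((\Lambda\otimes W(R),(s_\alpha\otimes 1)),(M,(\tilde t_\alpha))\big),\]
which is a $G$-torsor on $W(R)$ because $G\subset \GL(\Lambda)$ is the schematic stabilizer of the $s_\alpha$. The quasi-isogeny $\rho$ and the frame $\varepsilon$ together provide a trivialization $\beta:\E[1/p]\cong \E_0[1/p]$, and the Frobenius of $\D(X)$ transported across $\beta$ reads $\beta^{-1}b\sigma(\beta)\in G(W(R)[1/p])$. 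The $P_\mu$-torsor condition on the Hodge filtration in Kim's definition then forces the pointwise invariant of $\beta^{-1}b\sigma(\beta)$ to equal $\mu$, yielding a point of $X_\mu^G(b)(R)$.

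Conversely, given $(\E,\beta)\in X_\mu^G(b)(R)$, I would form $M:=\E\times^G(\Lambda\otimes W(R))$ with Frobenius defined by $b\sigma$ via $\beta$; the $s_\alpha$ extend to tensors $\tilde t_\alpha$ on $M$ via the $G$-structure. Since $\{\mu\}$ is minuscule, the pointwise condition $\Inv(\beta^{-1}b\sigma(\beta))=\mu$ uniquely determines a $P_\mu$-type Hodge filtration on $M/pM$, and Gabber--Lau then produces a $p$-divisible group $X$ over $R$ with Dieudonn\'e module $M$. The tensors $\tilde t_\alpha$ descend to crystalline Tate tensors $t_\alpha$ on $X$, and $\beta$ supplies the quasi-isogeny $\rho$. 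Checking that the two constructions are inverse to each other is then a formal consequence of the Gabber--Lau equivalence and the Tannakian interpretation of $G$-torsors via tensor compatibilities.

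The main technical obstacle will be verifying that the $P_\mu$-torsor condition imposed in Kim's moduli problem matches precisely the pointwise invariant $\mu$ condition (rather than merely $\leq\mu$) in the Witt vector affine Grassmannian. This is exactly where the minuscule hypothesis on $\{\mu\}$ is essential: for minuscule $\mu$ one has $X_{\leq\mu}^G(b)=X_\mu^G(b)$, and the parabolic $P_\mu$ is precisely the stabilizer of a Hodge filtration of type $\mu$, so the $P_\mu$-torsor condition and the invariant condition coincide Zariski-locally. The passage to perfection causes no loss of information because $\ov{\M}$ is formally smooth, so the $R$-points for perfect $R$ determine $\ov{\M}^{perf}$ as a perfect scheme.
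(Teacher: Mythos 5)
The paper gives no proof of its own, simply citing [Zhu] Proposition 3.11, and your argument reconstructs exactly the cited proof: Dieudonn\'e theory over perfect rings (Gabber--Lau), translation of $(X,(t_\alpha),\rho)$ into a $G$-torsor on $W(R)$ with trivialization furnished by the quasi-isogeny and the fixed frame $\varepsilon$, and the matching of Kim's $P_\mu$-torsor condition with the pointwise relative position $\mu$ in the Witt vector affine Grassmannian --- so the approach matches. One correction to your final paragraph: the fact that $\ov{\M}^{perf}(R)=\ov{\M}(R)$ for perfect $R$ is a definitional property of perfection and uses nothing about formal smoothness of $\ov{\M}$; the genuine subtlety you brush past is that Kim's moduli description is given on $\Nilp_W^{sm}$, which does not contain perfect $\ov{\F}_p$-algebras, so the moduli description of $\ov{\M}(R)$ for perfect $R$ that your comparison relies upon must be established separately --- this transfer, via the Gabber--Lau classification of $p$-divisible groups over perfect bases, is precisely the technical content of Zhu's Proposition 3.11 that your sketch invokes rather than verifies.
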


If $(G, [b],  \{\mu\})\hookrightarrow (\GL_n, [b'],  \{\mu'\})$ is an embedding of unramified local Shimura data of Hodge type, by construction, we have the following embeddings
\[\breve{\M}(G,b,\mu)\hookrightarrow \breve{\M}(\GL_n,b',\mu'),\quad X_\mu^G(b)\hookrightarrow X_{\mu'}^{\GL_n}(b'), \]which are compatible in the sense of the above proposition.

\subsection{Connected components} Let the notations be as above. Recall in subsection 2.2 we have the map
 \[\omega_G: X_{\mu}^G(b)\lra c_{b,\mu}\pi_1(G)^\Gamma.\] By Proposition \ref{P:Zhu} we get an induced map of \'etale sheaves over $W$:
\[\omega_G: \breve{\M}\lra c_{b,\mu}\pi_1(G)^\Gamma.\]Let $G^{der}\subset G$ be the derived subgroup, and $G^{ab}$ the abelian quotient $G/G^{der}$. Consider the exact sequence
\[1\ra G^{der}\ra G\ra G^{ab}\ra 1,\]which induces a map
\[c_{b,\mu}\pi_1(G)^\Gamma\ra c_{b,\mu}\pi_1(G^{ab})^\Gamma=c_{b,\mu}X_\ast(G^{ab})^\Gamma,\]
where $X_\ast(G^{ab})$ is the cocharacter group of the torus $G^{ab}$ over $\ov{\Q}_p$. Let $X^\ast_{\Q_p}(G)$ be the group of $\Q_p$-rational characters of $G$. Then we have \[X^\ast_{\Q_p}(G)=X^\ast(G^{ab})^\Gamma.\] The $\Gamma$-equivariant pairing $X_\ast(G^{ab})\times X^\ast(G^{ab})\ra \Z$ then induces a map
\[c_{b,\mu}X_\ast(G^{ab})^\Gamma\ra \Hom(X^\ast(G^{ab})^\Gamma,\Z)=\Hom(X^\ast_{\Q_p}(G),\Z). \]In summary, we get a map by considering the composition
\[\varkappa_{\breve{\M}}: \breve{\M}\ra c_{b,\mu}\pi_1(G)^\Gamma\ra c_{b,\mu}X_\ast(G^{ab})^\Gamma\ra \Hom(X^\ast_{\Q_p}(G),\Z). \]
In the EL/PEL case, this is just the map constructed in \cite{RZ} 3.52. (See also \cite{CKV} 5.1.3.)

If $(G, [b],  \{\mu\})\hookrightarrow (\GL_n, [b'],  \{\mu'\})$ is an embedding of unramified local Shimura data of Hodge type, we get the following commutative diagram
 \[\xymatrix{X_\mu^G(b)\ar[r]\ar[d]& X_{\mu'}^{\GL_n}(b')\ar[d]\\
 	c_{b,\mu}\pi_1(G)^\Gamma \ar[r] & c_{b',\mu'}\pi_1(\GL_n)^\Gamma.
 	}\]
Moreover, we know $\pi_1(\GL_n)^\Gamma=\pi_1(\GL_n)\simeq \Z$.

Since by Proposition \ref{P:Zhu} $X_\mu^G(b)$ is the perfection $\ov{\M}^{perf}$ of $\ov{\M}$,
we have the isomorphism between the sets of connected components \[\pi_0(\M_{red})\simeq \pi_0^{perf}(X_\mu^G(b)).\] Here $\pi_0^{perf}(X_\mu^G(b))$ denotes the set of connected components of the perfect scheme $X_\mu^G(b)$. On the other hand, we have also the set of connected components $\pi_0(X_\mu^G(b))$ defined in \cite{CKV}. 
\begin{proposition}
With the above notations, there is a bijection
	\[\pi_0(\M_{red})\simeq  \pi_0(X_\mu^G(b)).\]
\end{proposition}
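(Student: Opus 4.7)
The plan is to reduce the comparison to the known case of EL data (specifically $\GL_n$) via the Hodge embedding. By Proposition \ref{P:Zhu} one has the identification $\pi_0(\M_{red}) = \pi_0^{perf}(X_\mu^G(b))$, where the right-hand side is the connected-component set of the perfect scheme $X_\mu^G(b)$. There is a natural comparison map $c$ between this set and the CKV set $\pi_0(X_\mu^G(b))$ induced by the identity on the underlying set $X_\mu^G(b)(\ov{\F}_p)$, since the two constructions are both reasonable notions of connectedness on the same set of geometric points. The task is to show $c$ is a bijection.

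Fix a Hodge embedding $(G, [b], \{\mu\}) \hookrightarrow (\GL_n, [b'], \{\mu'\})$, which by the lemma in Section 2.1 induces a closed immersion of perfect schemes $X_\mu^G(b) \hookrightarrow X_{\mu'}^{\GL_n}(b')$ compatible with both notions of $\pi_0$ and with the Kottwitz maps $\omega_G$ and $\omega_{\GL_n}$. By \cite{CKV} Theorem 5.1.5, the two notions of $\pi_0$ coincide in the $\GL_n$ case, so the comparison map for $\GL_n$ is already a bijection. Both $\pi_0$-sets for $G$ carry a transitive $J_b(\Q_p)$-action — on the CKV side by Theorem \ref{T:CKV}(1), and on the perfect side via the $J_b(\Q_p)$-action on the formal scheme $\breve{\M}$ — and $c$ is equivariant. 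It therefore suffices to match the stabilizers in $J_b(\Q_p)$ of a chosen component on the two sides. Both stabilizers contain the kernel of $J_b(\Q_p) \twoheadrightarrow \pi_1(G)^\Gamma$ by the factorization of $\omega_G$ through $\pi_0$ on both sides, combined with Lemma \ref{L:surjective}(2); equality with this kernel is deduced from the corresponding statement for $\GL_n$ via the closed embedding.

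The main obstacle I expect is exactly this bootstrapping step from $\GL_n$ to $G$: concretely, one must verify that a single fiber of $\omega_G$ in $X_\mu^G(b)$ is not broken into more than one perfect-scheme component by the closed embedding into $X_{\mu'}^{\GL_n}(b')$. The cleanest way to handle this is to observe that the Kottwitz map is constant on every scheme-theoretic component in both pictures — so the fibers of $\omega_G$ are unions of perfect-scheme components — and then combine transitivity of $J_b(\Q_p)$ with the $\GL_n$-identification (which is already a bijection) to conclude that each such fiber for $G$ consists of a single perfect-scheme component. This matches the CKV description of $\pi_0$ coming from Proposition \ref{P:cartesian} and Theorem \ref{T:CKV}, and closes the argument.
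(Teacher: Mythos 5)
The paper's own proof of this proposition is simply a citation to Remark 3.2 of \cite{Zhu}; it offers no argument at all. Your attempt is therefore a genuinely different, direct argument, but it has a gap that makes it fail.

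The critical flaw is the final step, where you conclude that ``each such fiber [of $\omega_G$] for $G$ consists of a single perfect-scheme component.'' This is equivalent to asserting that $\omega_G$ is injective on $\pi_0$. But that is exactly the content of Theorem \ref{T:CKV}(2) (CKV's Theorem 1.1), which requires $(\mu,b)$ to be Hodge--Newton indecomposable. The proposition is stated for arbitrary $[b]\in B(G,\mu)$, and for non-basic $[b]$ the pair $(\mu,b)$ is very often Hodge--Newton decomposable; in that case the CKV set $\pi_0(X_\mu^G(b))$ is strictly larger than a single $\pi_1(G)^\Gamma$-coset, so the fibers of $\omega_G$ contain several CKV components, hence several perfect-scheme components as well (assuming the proposition is true). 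Your argument would therefore end up ``proving'' a false statement about the $\omega_G$-fibers in the HN-decomposable regime, and the bijection would not follow.

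Two subsidiary issues compound this. First, you claim that $J_b(\Q_p)$ acts transitively on $\pi_0^{perf}(X_\mu^G(b))$ ``via the $J_b(\Q_p)$-action on $\breve{\M}$''; but a group acting on a formal scheme need not act transitively on its connected components. CKV Theorem 1.2 only gives transitivity on \emph{their} $\pi_0$, and extracting transitivity on the scheme-theoretic $\pi_0$ is essentially the statement you are trying to prove, so this is circular. Second, for a transitive $J_b$-set equipped with a $J_b$-equivariant surjection onto $c_{b,\mu}\pi_1(G)^\Gamma$, the stabilizer of a point is \emph{contained in}, not \emph{contains}, the kernel of $J_b(\Q_p)\to\pi_1(G)^\Gamma$; you have the containment backwards. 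Equality of that stabilizer with the kernel is again exactly injectivity of $\omega_G$ on $\pi_0$, which brings you back to the HN-indecomposability hypothesis you cannot assume. The comparison with $\GL_n$ and the closed immersion cannot repair this, since a connected component of the subscheme $X_\mu^G(b)$ is generally not a connected component of the ambient $X_{\mu'}^{\GL_n}(b')$.
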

\begin{proof}
	See the Remark 3.2 of \cite{Zhu}. See also \cite{HZ} Theorem A.4.
\end{proof}

Let $\pi_0(\breve{\M})$ be the set of connected components of the formal scheme $\breve{\M}$, which is the same as $\pi_0(\M_{red})$. On the other hand, we have also the set of connected components $\pi_0(\M)$ of the generic fiber $\M$. As $\breve{\M}$ is formally smooth and in particular normal, by \cite{dJ2} Theorem 7.4.1, we have a bijection  \[\pi_0(\M_{red})\simeq  \pi_0(\M).\]

One can also consider the set of connected components $\pi_0(\M_K)$ for the finite \'etale cover $\M_K$ of $\M$. In \cite{RV}, Rapoport and Viehmann made a conjecture on $\pi_0(\M_K\times \C_p)$ under the assumption that $G^{der}$ is \textsl{simply connected}. We refer to \cite{RV} Conjecture 4.26 for the precise statement on the existence of a determinant morphism for the tower $(\M_K)_K$. This conjecture is known in the unramified simple EL/PEL case, cf. Theorem 6.3.1 of \cite{C} (see also \cite{CKV} Theorem 5.1.10 and Remark 5.1.11). It will be interesting to consider the more general Hodge type case studied here.

Fix a point $x_0\in c_{b,\mu}\pi_1(G)^\Gamma$. Let $\M_{red}^+\subset\M_{red}$ be the fiber of $\omega_G$ over $x_0$. Then $\M_{red}^+$ is some union of connected components of $\M_{red}$. Let $\breve{\M}^+\subset \breve{\M}$ be the associated  sub formal scheme, with generic fiber $\M^+$. For any open compact subgroup $K\subset G(\Q_p)$, let $\M_K^+\subset \M_K$ be the pullback of $\M^+\subset\M$. We get a tower \[(\M_K^+)_{K\subset G(\Z_p)}.\] We have the equalities
\[\breve{\M}=J_b(\Q_p)\breve{\M}^+,\quad \M_{red}=J_b(\Q_p)\M_{red}^+, \quad \M= J_b(\Q_p)\M^+\]and
\[\M_K=J_b(\Q_p)\M_K^+. \]

\section{Rapoport-Zink spaces of abelian type}
We enlarge the class of Rapoport-Zink spaces of Hodge type in this section. They are constructed locally from Rapoport-Zink spaces of Hodge type. Throughout this section we assume $p>2$.

\subsection{Local Shimura data of abelian type}
Let $(G, [b],  \{\mu\})$ be a local Shimura datum. Consider the natural projection $G\ra G^{ad}$ from $G$ to its associated adjoint group. We get induced $[b^{ad}], \{\mu^{ad}\}$, so that \[(G^{ad}, [b^{ad}], \{\mu^{ad}\})\] is also a local Shimura datum and $(G, [b],  \{\mu\})\ra (G^{ad}, [b^{ad}], \{\mu^{ad}\})$ is a morphism of local Shimura data.
We introduce the local analogue of a Shimura datum of abelian type (more precisely, of preabelian type) as follows. 
\begin{definition}
A local Shimura datum $(G, [b],  \{\mu\})$  is called  of abelian type, if there exists a local Shimura datum of Hodge type $(G_1, [b_1],  \{\mu_1\})$ such that we have an isomorphism of the associated adjoint local Shimura data $(G^{ad}, [b^{ad}],  \{\mu^{ad}\})\simeq (G_1^{ad}, [b_1^{ad}],  \{\mu_1^{ad}\})$.
\end{definition}

Thus any local Shimura datum of Hodge type is also of abelian type. The later class is strictly larger.
\begin{example}\label{E:PGL}
Let $G=\PGL_n$. Consider a nontrivial minuscule cocharacter $\mu_1: \G_m\ra \GL_n$ and $[b_1]\in B(\GL_n,\mu_1)$. Take $\mu=\mu_1^{ad}, [b]=[b_1^{ad}]$. Then $(G, [b], \{\mu\})$ is of abelian type, but not of Hodge type, cf. Example \ref{E:PGLn}.
\end{example}

Recall that for a local Shimura datum $(G, [b],  \{\mu\})$, if $G_i$ is a simple local factor of $G^{ad}_{\ov{\Q}_p}$ such that the component $\mu_i^{ad}$ of $\mu^{ad}$ is not trivial, then  $G_i$ is a group of one of types $A, B, C, D, E_6, E_7$, cf. \cite{Ser} Annexe.
By Serre's classification (\cite{Ser} section 3) and our definition, simple factors of $G$ appearing in local Shimura data of abelian type consists exactly of local reductive groups of types $A, B, C, D$. This is compatible with Deligne's classification of Shimura data of abelian type in \cite{D}, cf. Example \ref{E:global}.

\subsection{The associated Rapoport-Zink spaces}\label{Subsection:RZab}
To construct Rapoport-Zink spaces, we need the following unramified assumption.
\begin{definition}\label{D:local abelian unramified}
A local Shimura datum of abelian type $(G, [b],  \{\mu\})$ is called unramified, if $G$ is unramified, and there exists an unramified local Shimura datum of Hodge type $(G_1,[ b_1],\{\mu_1\})$ such that $(G^{ad}, [b^{ad}],  \{\mu^{ad}\})\simeq (G_1^{ad}, [b_1^{ad}],  \{\mu_1^{ad}\})$.
\end{definition}
For an unramified local Shimura datum of abelian type, the local reflex field $E$ is an unramified extension of $\Q_p$. Thus $\breve{E}=L, \Ol_{\breve{E}}=\Ol_L=W$ where as before $W=W(\ov{\F}_p), L=W_\Q$.

The following example is one of our main motivations.
\begin{example}\label{E:global}
Let $(G,X)$ be a Shimura datum of abelian type such that $G$ is unramified at $p$ (cf. \cite{D, Ki1}). Take any $[b]\in B(G,\mu)$, the associated triple $(G_{\Q_p}, [b],  \{\mu\})$ is an unramified local Shimura datum of abelian type.
\end{example}

\begin{lemma}
Let $(G, [b], \{\mu\})$ be an unramified local Shimura datum of abelian type. Consider the associated adjoint local Shimura datum $(G^{ad}, [b^{ad}],  \{\mu^{ad}\})$. Fix a representative $b\in G(L)$ of $[b]$ with image $b^{ad}\in G^{ad}(L)$, and identify $Z_G$ as a central subgroup of $J_b$. We have the following isomorphism of reductive groups over $\Q_p$
\[J_b/Z_G\simeq J_{b^{ad}}.\]
\end{lemma}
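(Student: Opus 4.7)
The plan is to construct the natural map of $\Q_p$-group schemes $J_b \to J_{b^{ad}}$ induced by the projection $G \twoheadrightarrow G^{ad}$, and to show it is surjective with kernel $Z_G$. This will give $J_b/Z_G \simeq J_{b^{ad}}$, which is the content of the lemma (reading $J_b^{ad}$ as $J_b/Z_G$; when $b$ is basic, so that $J_b$ is an inner form of $G$ itself, this quotient coincides with the Lie-theoretic adjoint group of $J_b$).

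The map comes from functoriality of the $\sigma$-centralizer: choose a representative $b \in [b]$ and let $b^{ad}$ denote its image in $G^{ad}(L)$. For a $\Q_p$-algebra $R$ and $g \in J_b(R)$, the image $\bar g \in G^{ad}(R \otimes_{\Q_p} L)$ automatically satisfies $\bar g^{-1} b^{ad} \sigma(\bar g) = b^{ad}$, so lies in $J_{b^{ad}}(R)$. For the kernel: any $z \in Z_G(R)$ is $\Q_p$-rational and thus $\sigma$-fixed, and central so it commutes with $b$, whence $z^{-1} b \sigma(z) = b$, embedding $Z_G$ into $J_b$; conversely any element of $J_b$ that maps trivially to $G^{ad}$ already lies in $Z_G$. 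Thus $\ker(J_b \to J_{b^{ad}}) = Z_G$ as $\Q_p$-group schemes.

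For surjectivity as a morphism of algebraic $\Q_p$-groups I would check it after base change to $\ov{\Q}_p$, where the inner twists trivialize: $(J_b)_{\ov{\Q}_p} \simeq (M_b)_{\ov{\Q}_p}$, with $M_b$ the centralizer of the Newton cocharacter $\nu_b$ in $G$, and likewise $(J_{b^{ad}})_{\ov{\Q}_p} \simeq (M_b/Z_G)_{\ov{\Q}_p}$, so under these identifications the induced map is the obvious quotient $M_b \twoheadrightarrow M_b/Z_G$. Combining gives an exact sequence $1 \to Z_G \to J_b \to J_{b^{ad}} \to 1$ of fppf sheaves of groups over $\Q_p$, hence the desired isomorphism.

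The main (mild) obstacle is this surjectivity step: at the level of $\Q_p$-points the map need not be surjective because of obstructions in $H^1(\Q_p, Z_G)$, but only fppf-surjectivity (equivalently, surjectivity on geometric points) is needed for the isomorphism at the level of algebraic $\Q_p$-groups. The abelian-type hypothesis plays no role in the argument, which is formal for any $(G, [b])$ with $G$ a connected reductive group over $\Q_p$.
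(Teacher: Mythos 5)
Your argument is correct and in fact supplies the substance that the paper omits: the paper's own proof is the one line ``This follows from the definitions of $J_b$ and $J_{b^{ad}}$.'' Your route --- build the map $J_b\to J_{b^{ad}}$ by functoriality of the $\sigma$-centralizer, identify the kernel with $Z_G$ (central implies it commutes with $b$, and the $J_b$-condition then forces $\sigma$-fixedness; descent gives $Z_G(R\otimes L)^{\sigma=1}=Z_G(R)$), and verify fppf-surjectivity after base change to $L$ where $J_b$ and $J_{b^{ad}}$ become $M_b$ and $M_b/Z_G$ --- is essentially the only way to make the definitional assertion precise, and it is sound.

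Your parenthetical caveat about the meaning of $J_b^{ad}$ is the most valuable part of the review: if $J_b^{ad}$ were read as the Lie-theoretic adjoint group $J_b/Z_{J_b}$, the statement would be \emph{false} for non-basic $b$. For instance, for $(G,[b],\{\mu\})=(\GL_2,\ \text{slopes }(1,0),\ (1,0))$ one has $J_b\simeq\G_m^2$ and hence $J_b/Z_{J_b}=1$, while in $G^{ad}=\PGL_2$ the class $b^{ad}$ is still non-basic with $J_{b^{ad}}\simeq\G_m$. So the only reading under which the lemma holds for all $[b]\in B(G,\mu)$ is $J_b^{ad}:=J_b/Z_G$ (the image of $J_b$ in $J_{b^{ad}}$, or equivalently the group obtained by pushing the inner cocycle from $M_b$ to $M_b/Z_G$); your proof establishes exactly that, and the two agree with the true adjoint group precisely when $Z_{J_b}=Z_G$, e.g.\ in the basic case. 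You are also right that the abelian-type hypothesis plays no role --- the exact sequence $1\to Z_G\to J_b\to J_{b^{ad}}\to 1$ is a formal consequence of $1\to Z_G\to G\to G^{ad}\to 1$ for any connected reductive $G/\Q_p$ and any $b$.
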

\begin{proof}
This follows from the definitions of $J_b$ and $J_{b^{ad}}$.
\end{proof}

\begin{theorem}\label{T:ab}
Let $(G, [b], \{\mu\})$ be an unramified local Shimura datum of abelian type. Fix a representative $b\in G(L)$ of $[b]$. Then there exists a formal scheme $\breve{\M}(G,b,\mu)$, which is formally smooth, formally locally of finite type over $W$, such that \[\ov{\M}(G,b,\mu)^{perf}\simeq X_{\mu}^{G}(b),\] where $\ov{\M}(G,b,\mu)$ is the special fiber of  $\breve{\M}(G,b,\mu)$.
The formal scheme $\breve{\M}(G,b,\mu)$ is equipped with a transitive action of $J_b(\Q_p)$, compatible with the action of $J_b(\Q_p)$ on $X_{\mu}^{G}(b)$.
\end{theorem}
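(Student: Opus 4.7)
The plan is to bootstrap from Kim's construction (Theorem \ref{T:hodge}) in the Hodge type case by using the identification of adjoint data. First I would choose a unramified local Shimura datum of Hodge type $(G_1,[b_1],\{\mu_1\})$ witnessing that $(G,[b],\{\mu\})$ is of abelian type, i.e.\ with $(G^{ad},[b^{ad}],\{\mu^{ad}\})\simeq(G_1^{ad},[b_1^{ad}],\{\mu_1^{ad}\})$. By Kim, we have the formal Rapoport-Zink space $\breve{\M}(G_1,b_1,\mu_1)$ over $W$, formally smooth and formally locally of finite type, whose perfected special fiber is $X_{\mu_1}^{G_1}(b_1)$ by Proposition \ref{P:Zhu}. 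Apply the Kottwitz map $\omega_{G_1}: X_{\mu_1}^{G_1}(b_1)\to c_{b_1,\mu_1}\pi_1(G_1)^\Gamma$, which lifts (as in the preceding subsection) to a locally constant map $\omega_{G_1}: \breve{\M}(G_1,b_1,\mu_1)\to c_{b_1,\mu_1}\pi_1(G_1)^\Gamma$. Fix a point in the image and let $\breve{\M}(G_1,b_1,\mu_1)^+$ denote the corresponding open-and-closed sub-formal scheme; its perfected special fiber is the fiber $X_{\mu_1}^{G_1}(b_1)^+$ of $\omega_{G_1}$ at that point.

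Next I would invoke the cartesian diagram of Proposition \ref{P:cartesian} applied to $(G,[b],\{\mu\})\to (G^{ad},[b^{ad}],\{\mu^{ad}\})$ and to $(G_1,[b_1],\{\mu_1\})\to (G_1^{ad},[b_1^{ad}],\{\mu_1^{ad}\})$. Combined with Corollary \ref{C:local ADLV}, after choosing compatible base points in $c_{b,\mu}\pi_1(G)^\Gamma$ and $c_{b_1,\mu_1}\pi_1(G_1)^\Gamma$ mapping to the same point of $c_{b^{ad},\mu^{ad}}\pi_1(G^{ad})^\Gamma$, this yields a canonical bijection $X_{\mu_1}^{G_1}(b_1)^+\simeq X_{\mu}^{G}(b)^+$ of fibers. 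I would then \emph{define} $\breve{\M}(G,b,\mu)^+:=\breve{\M}(G_1,b_1,\mu_1)^+$ as a formal scheme over $W$; its perfected special fiber is $X_{\mu}^{G}(b)^+$ by transport via this bijection. Formal smoothness and formal local finite type are inherited from the Hodge type case.

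To define $\breve{\M}(G,b,\mu)$ itself, I would then "spread out" along $J_b(\Q_p)$. By Theorem \ref{T:CKV}(1), $J_b(\Q_p)$ acts transitively on $\pi_0(X_\mu^G(b))$, so $X_\mu^G(b)=J_b(\Q_p)\cdot X_\mu^G(b)^+$. Let $H\subset J_b(\Q_p)$ be the stabilizer of the chosen base point in $c_{b,\mu}\pi_1(G)^\Gamma$; via the isomorphism $J_b^{ad}\simeq J_{b^{ad}}$ (the lemma just stated) and the Hodge type formal scheme, $H$ acts naturally on $\breve{\M}(G,b,\mu)^+$ compatibly with its action on $X_\mu^G(b)^+$. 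Form
\[
\breve{\M}(G,b,\mu):=J_b(\Q_p)\times^{H}\breve{\M}(G,b,\mu)^+,
\]
the disjoint union of $|J_b(\Q_p)/H|$ copies of $\breve{\M}(G,b,\mu)^+$ indexed by $J_b(\Q_p)/H$, equipped with the evident transitive $J_b(\Q_p)$-action. Then $\breve{\M}(G,b,\mu)$ is formally smooth and formally locally of finite type over $W$, and its perfected special fiber is $J_b(\Q_p)\times^H X_\mu^G(b)^+\simeq X_\mu^G(b)$ as $J_b(\Q_p)$-sets.

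The step I expect to be the main obstacle is the verification that the construction is independent of the auxiliary Hodge type datum $(G_1,[b_1],\{\mu_1\})$, and relatedly the verification that the $H$-action on $\breve{\M}(G,b,\mu)^+$ is canonical (i.e.\ does not depend on auxiliary choices) so that the gluing is well defined. For independence, given two choices $(G_1,[b_1],\{\mu_1\})$ and $(G_1',[b_1'],\{\mu_1'\})$, I would compare both with the adjoint datum $(G^{ad},[b^{ad}],\{\mu^{ad}\})$: Corollary \ref{C:local ADLV} identifies the corresponding "$+$" pieces of both affine Deligne-Lusztig varieties with that of $G^{ad}$, and on the formal side this identification lifts to the level of $\breve{\M}(\cdot)^+$ because both Hodge type Rapoport-Zink spaces are obtained by Grothendieck-Messing deformation from the same universal deformation at a closed point (the deformation ring is determined by the $\mu^{ad}$-filtration on the Dieudonn\'e module of the isocrystal in $G^{ad}(L)$). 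The compatibility of the $J_b(\Q_p)$-action with that on $X_\mu^G(b)$ follows from the analogous compatibility for $J_{b_1}(\Q_p)$ in the Hodge type case together with the lemma identifying $J_b^{ad}\simeq J_{b^{ad}}$.
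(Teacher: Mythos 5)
Your proposal is correct and follows essentially the same route as the paper's proof: restrict to the fiber $\breve{\M}(G_1,b_1,\mu_1)^+$ over a base point via $\omega_{G_1}$, identify it with the ``$+$'' piece for $G$ using the cartesian diagram of Proposition \ref{P:cartesian} and Corollary \ref{C:local ADLV}, then glue copies indexed by $J_b(\Q_p)/J_b(\Q_p)^+$ by Theorem \ref{T:CKV}(1). You also correctly flag independence of the auxiliary Hodge type datum as the delicate point; the paper resolves this (as you suggest) by invoking the fact from \cite{Ki1} 1.5.4 that the deformation ring $R_{G_1,x_1}$ at a closed point depends only on the adjoint group, together with the gluing constructions in Kim's section 6.
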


\begin{proof}
Take any unramified local Shimura datum of Hodge type $(G_1, [b_1], \{\mu_1\})$ as in Definition \ref{D:local abelian unramified} and fix a representative $b_1\in [b_1]$.
Consider the associated formal Rapoport-Zink space $\breve{\M}(G_1,b_1,\mu_1)$ over $\Spf W$. Then its special fiber $\ov{\M}(G_1,b_1,\mu_1)$ satisfies \[\ov{\M}(G_1,b_1,\mu_1)^{perf}\simeq X_{\mu_1}^{G_1}(b_1).\]
Recall that we have following cartesian diagram (cf. Proposition \ref{P:cartesian})
\[\xymatrix{X_{\mu_1}^{G_1}(b_1)\ar[r]\ar[d]^{\omega_{G_1}} & X_{\mu_1^{ad}}^{G_1^{ad}}(b_1^{ad})\ar[d]^{\omega_{G_1^{ad}}}\\
		c_{b_1,\mu_1}\pi_1(G_1)^\Gamma\ar[r] & c_{b_1^{ad},\mu_1^{ad}}\pi_1(G_1^{ad})^\Gamma.
	}\]
Let $X_{\mu_1}^{G_1}(b_1)^+\subset X_{\mu_1}^{G_1}(b_1)$ be the fiber over $c_{b_1,\mu_1}$ under the map $\omega_{G_1}:  X_{\mu_1}^{G_1}(b_1)\ra c_{b_1,\mu_1}\pi_1(G_1)^\Gamma$. Let $\breve{\M}(G_1,b_1,\mu_1)^+$ be the corresponding formal sub scheme of $\breve{\M}(G_1,b_1,\mu_1)$. On the other hand, we can consider also the fiber  $X_{\mu}^{G}(b)^+\subset X_{\mu}^{G}(b)$ over $c_{b,\mu}$ under $\omega_{G}: X_{\mu}^{G}(b)\ra c_{b,\mu}\pi_1(G)^\Gamma$. Then by Corollary \ref{C:local ADLV}
\[ X_{\mu_1}^{G_1}(b_1)^+\simeq X_{\mu}^{G}(b)^+.\]
We set
\[\breve{\M}(G,b,\mu)^+:=\breve{\M}(G_1,b_1,\mu_1)^+,\]
then  $\ov{\M}G,b,\mu)^{+,perf}\simeq X_{\mu}^{G}(b)^+$. By Theorem \ref{T:CKV} (1), we have
\[X_{\mu}^{G}(b)=J_b(\Q_p)X_{\mu}^{G}(b)^+.\]
Therefore, there exists a formal scheme
$\breve{\M}(G,b,\mu)$, equipped with an action of $J_b(\Q_p)$, such that 
\[\begin{split}\breve{\M}(G,b,\mu)&=J_b(\Q_p)\breve{\M}(G,b,\mu)^+,\\
\ov{\M}(G,b,\mu)^{perf}&\simeq X_{\mu}^{G}(b),\end{split}\]
and the induced action of $J_b(\Q_p)$ on $\M(G,b,\mu)_{red}$ is compatible with that on $X_{\mu}^{G}(b)$ under the above identification.
In fact, we can take
\[\begin{split}\breve{\M}(G,b,\mu)&=[J_b(\Q_p)\times \breve{\M}(G,b,\mu)^+ ]/ J_b(\Q_p)^+\\ &\simeq \coprod_{J_b(\Q_p)/J_b(\Q_p)^+}\breve{\M}(G,b,\mu)^+,\end{split}\]
where $J_b(\Q_p)^+\subset J_b(\Q_p)$ is the stabilizer of $X_{\mu}^{G}(b)^+$ under the action of $J_b(\Q_p)$ on $X_{\mu}^{G}(b)$.

The above construction does not depend on the choice of the unramified local Shimura datum of Hodge type $(G_1, [b_1],\{\mu_1\})$ as in the statement of the theorem, since if $(G_2, [b_2],\{\mu_2\})$ is another such one, then we have a canonical isomorphism
\[\breve{\M}(G_1,b_1,\mu_1)^+\simeq \breve{\M}(G_2,b_2,\mu_2)^+. \]
This follows from the bijection $X_{\mu_1}^{G_1}(b_1)^+\simeq X_{\mu_2}^{G_2}(b_2)^+$, the isomorphism of deformation rings $R_{G_1, x_1}\simeq R_{G_2, x_2}$, where $X_{\mu_1}^{G_1}(b_1)^+ \ni x_1\mapsto x_2 \in X_{\mu_2}^{G_2}(b_2)^+$, cf. \cite{Ki1} 1.5.4 (from the description there, $R_G$ depends only on the adjoint group $G^{ad}$), and the constructions in section 6 of \cite{Kim1}.

\end{proof}

By construction, we have a map of \'etale sheaves $\breve{\M}(G,b,\mu)\ra c_{b,\mu}\pi_1(G)^\Gamma$ over $W$, lifting the map $\omega_G: X_{\leq \mu}^G(b)\lra c_{b,\mu}\pi_1(G)^\Gamma$.
As \cite{Zhu} Corollary 3.12, we have the following dimension formula for the special fibers by applying loc. cit. Theorem 3.1.
\begin{corollary}
Let the notations be as in Theorem \ref{T:ab}. We have $\dim \M_{red}=\lan \rho, \mu-\nu_{[b]}\ran -\frac{1}{2} \tr{def}_G(b)$, where $\rho$ is the half-sum of (absolute) positive roots of $G$, and $\tr{def}_G(b)=rank_{\Q_p}G-rank_{\Q_p}J_b$.
\end{corollary}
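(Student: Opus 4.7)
The plan is to reduce the formula to the Hodge-type case (where it is already known via Zhu's dimension formula for affine Deligne--Lusztig varieties) and then verify that each of the ingredients on the right-hand side depends only on the adjoint local Shimura datum $(G^{ad},[b^{ad}],\{\mu^{ad}\})$.

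First I would observe that by the construction in the proof of Theorem \ref{T:ab}, we have
\[
\breve{\M}(G,b,\mu)\ \simeq\ \coprod_{J_b(\Q_p)/J_b(\Q_p)^+}\breve{\M}(G_1,b_1,\mu_1)^+,
\]
where $(G_1,[b_1],\{\mu_1\})$ is any auxiliary unramified local Shimura datum of Hodge type with the same adjoint datum, and $\breve{\M}(G_1,b_1,\mu_1)^+$ is an open and closed formal subscheme of $\breve{\M}(G_1,b_1,\mu_1)$. Since dimension is invariant under open immersions and disjoint unions of equidimensional formal schemes, this immediately gives $\dim \M(G,b,\mu)_{red}=\dim \M(G_1,b_1,\mu_1)_{red}$.

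Next, by Proposition \ref{P:Zhu}, $\ov{\M}(G_1,b_1,\mu_1)^{perf}\simeq X^{G_1}_{\mu_1}(b_1)$, and the perfection map is a universal homeomorphism, hence preserves dimension. Applying Zhu's dimension formula (Theorem 3.1 of \cite{Zhu}) for affine Deligne--Lusztig varieties in the Hodge-type case yields
\[
\dim \M(G_1,b_1,\mu_1)_{red}=\lan \rho_{G_1},\mu_1-\nu_{[b_1]}\ran-\tfrac{1}{2}\mathrm{def}_{G_1}(b_1).
\]

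The final step, which is the only piece requiring attention, is to check that each summand on the right coincides with its counterpart for $(G,b,\mu)$. The half-sum of absolute positive roots $\rho$ is a character of $G^{ad}$ (and of $G_1^{ad}$), and $\mu-\nu_{[b]}$ maps to $\mu^{ad}-\nu_{[b^{ad}]}$ under $G\to G^{ad}$, which is identified with $\mu_1^{ad}-\nu_{[b_1^{ad}]}$ under the given isomorphism of adjoint data; thus
\[
\lan \rho_G,\mu-\nu_{[b]}\ran=\lan \rho_{G^{ad}},\mu^{ad}-\nu_{[b^{ad}]}\ran=\lan \rho_{G_1},\mu_1-\nu_{[b_1]}\ran.
\]
For the defect, using that $Z_G(\Q_p)\subset J_b(\Q_p)$ (and similarly for $G_1$), one has $\mathrm{rank}_{\Q_p}G-\mathrm{rank}_{\Q_p}J_b=\mathrm{rank}_{\Q_p}G^{ad}-\mathrm{rank}_{\Q_p}J_b^{ad}$, and combined with the isomorphism $J_b^{ad}\simeq J_{b^{ad}}$ (the preceding lemma) this gives $\mathrm{def}_G(b)=\mathrm{def}_{G^{ad}}(b^{ad})=\mathrm{def}_{G_1}(b_1)$. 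Substituting back yields the claimed formula. The only possible obstacle is verifying the invariance of the defect under the change of center, but this reduces to the standard fact above about centers sitting inside $J_b$, so the argument is essentially mechanical.
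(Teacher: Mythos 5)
Your strategy is more roundabout than the paper's, and the detour introduces a gap. The paper's own argument is direct: Theorem~\ref{T:ab} already identifies $\ov{\M}(G,b,\mu)^{perf}$ with $X_\mu^G(b)$ for the group $G$ itself (not merely for an auxiliary Hodge-type $G_1$), Zhu's Theorem~3.1 is a purely group-theoretic dimension formula for $X_\mu^G(b)$ valid for any unramified reductive group, and perfection preserves dimension. There is therefore no need to pass to $G_1$ and no need to verify that the right-hand side is insensitive to changing the center.

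The detour also creates a genuine error in the defect step. Your claimed identity
\[
\mathrm{rank}_{\Q_p}G-\mathrm{rank}_{\Q_p}J_b=\mathrm{rank}_{\Q_p}G^{ad}-\mathrm{rank}_{\Q_p}J_b^{ad}
\]
is false for non-basic $[b]$ if $J_b^{ad}$ denotes the adjoint group of $J_b$. The issue is that $J_b$ is an inner form of the Levi $M_b=Z_G(\nu_b)$, whose center is strictly larger than $Z_G$ when $[b]$ is not basic; the inclusion $Z_G(\Q_p)\subset J_b(\Q_p)$ you invoke only gives $\mathrm{rank}_{\Q_p}Z_G^\circ\leq\mathrm{rank}_{\Q_p}Z_{J_b}^\circ$, not equality, and equality is exactly what the displayed identity requires. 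Concretely, take $G=\GL_2$ and $b$ non-basic, so $J_b$ is the diagonal torus $T$: the left side is $2-2=0$, while $J_b^{ad}$ is trivial so the right side is $1-0=1$. In the same example $J_{b^{ad}}$ is a rank-one torus, so invoking ``$J_b^{ad}\simeq J_{b^{ad}}$'' in the adjoint-group sense is equally problematic for non-basic $b$. The invariance you want \emph{is} true, but a correct route through the adjoint group should write $\mathrm{def}_G(b)=\mathrm{rank}_{\Q_p}M_b-\mathrm{rank}_{\Q_p}J_b=\mathrm{rank}_{\Q_p}M_b^{ad}-\mathrm{rank}_{\Q_p}J_b^{ad}$, using that $M_b$ is a rational Levi (so has the same $\Q_p$-rank as $G$) and that inner twisting preserves centers, and then observe that both terms depend only on the adjoint datum; alternatively, deduce it for free from the invariance of $\dim X_\mu^G(b)$ (Proposition~\ref{P:cartesian}) and the obvious invariance of $\lan\rho,\mu-\nu_{[b]}\ran$. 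But the cleanest fix, and the one the paper uses, is simply to apply Zhu's formula directly to $G$.
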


Let $(G, [b], \{\mu\})$ be an unramified local Shimura datum of abelian type. Take an embedding $G\hookrightarrow \GL_n$. Then we get an induced triple $(\GL_n,[b'],\{\mu'\})$. If $(G, [b], \{\mu\})$ is not of Hodge type, then $\{\mu'\}$ is not minuscule. In any case, we have the embedding
\[\ov{\M}(G,b,\mu)^{perf}\simeq X_\mu^G(b)\hookrightarrow X_{\leq\mu'}^{\GL_n}(b'). \]

\begin{remark}
In this paper we do not study the Weil descent data on Rapoport-Zink spaces. To define the Weil descent datum on the abelian type Rapoport-Zink space $\breve{\M}(G,b,\mu)$, we just mention that it should be possible to develop a similar theory as that in \cite{D} by dividing the desired Weil descent datum into two parts: one part for $\breve{\M}(G,b,\mu)^+$, and one part for $\pi_1(G)^\Gamma$ so that the morphism $\breve{\M}(G,b,\mu)\ra c_{b,\mu}\pi_1(G)^\Gamma$ is equivariant for the Weil descent data on two sides. The part for $\breve{\M}(G,b,\mu)^+$ is inherited from the Weil descent datum for any associated Hodge type Rapoport-Zink space $\breve{\M}(G_1,b_1,\mu_1)$. For the spaces as in the following Proposition \ref{P:moduli} (1) or Theorem \ref{T:moduli},  the Weil descent datum can be defined quite easily: by quotient from that for $\breve{\M}(G_1,b_1,\mu_1)$ or by moduli methods as that in \cite{RZ} 3.48.
\end{remark}

\subsection{A moduli interpretation}\label{subsection moduli}
Let $(G,[b],\{\mu\})$ be as in Theorem \ref{T:ab}. Then by construction, locally the formal scheme $\breve{\M}(G,b,\mu)$ admits a moduli interpretation. More precisely, take  $(G_1, [b_1],\{\mu_1\})$ as in Definition \ref{D:local abelian unramified}. Then the formal scheme $\breve{\M}(G_1,b_1,\mu_1)$ is a moduli space of $p$-divisible groups with crystalline Tate tensors. In particular, $\breve{\M}(G,b,\mu)^+$ is a moduli space of $p$-divisible groups with crystalline Tate tensors such that under the map $\omega_{G_1}$ the image is fixed. 

Suppose now that there exists a triple $(G_1, [b_1],\{\mu_1\})$ as in Definition \ref{D:local abelian unramified} such that the map 
\[\pi_1(G_1)^\Gamma\ra \pi_1(G_1^{ad})^\Gamma \] is surjective. Then the formal scheme $\breve{\M}(G,b,\mu)$ admits a  global moduli interpretation as follows. 
\begin{proposition}\label{P:moduli}
Under the above assumption, 
\begin{enumerate}
\item we have an isomorphism of formal schemes \[\breve{\M}(G_1^{ad},b_1^{ad},\mu_1^{ad})\simeq\breve{\M}(G_1,b_1,\mu_1)/X_\ast(Z_{G_1})^\Gamma.\]
 \item $\breve{\M}(G,b,\mu)$ is the pullback of $\breve{\M}(G_1,b_1,\mu_1)/X_\ast(Z_{G_1})^\Gamma$ under the morphism 
$\pi_1(G)^\Gamma\ra \pi_1(G^{ad})^\Gamma $.
\end{enumerate}
\end{proposition}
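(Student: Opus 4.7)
The plan is to reduce (1) to the quotient statement for affine Deligne--Lusztig varieties in the last lemma of subsection 2.2, and then obtain (2) as a formal consequence of (1) together with Proposition \ref{P:cartesian} applied to $(G,[b],\{\mu\})\to(G^{ad},[b^{ad}],\{\mu^{ad}\})$.

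For (1) I first observe that the action of $Z_{G_1}(\Q_p)\subset J_{b_1}(\Q_p)$ on $\breve{\M}(G_1,b_1,\mu_1)$ factors through the Kottwitz map $Z_{G_1}(\Q_p)\to X_\ast(Z_{G_1})^\Gamma$, and under the surjectivity assumption this map is surjective (by the same argument as Lemma \ref{L:surjective}(2) applied to the unramified torus $Z_{G_1}$). Consequently one can form the quotient formal scheme $\breve{\M}(G_1,b_1,\mu_1)/X_\ast(Z_{G_1})^\Gamma$, and the lemma at the end of subsection 2.2 shows that on the perfect special fibre it is canonically identified with $X_{\mu_1^{ad}}^{G_1^{ad}}(b_1^{ad})\simeq\ov{\M}(G_1^{ad},b_1^{ad},\mu_1^{ad})^{perf}$. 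To promote this to an isomorphism of formal schemes, I will use the formal smoothness of both sides and the fact, recalled in the proof of Theorem \ref{T:ab}, that the Faltings deformation rings $R_{G_1,x}$ entering Kim's construction depend only on the adjoint data. Hence the induced map $\breve{\M}(G_1,b_1,\mu_1)/X_\ast(Z_{G_1})^\Gamma\to\breve{\M}(G_1^{ad},b_1^{ad},\mu_1^{ad})$ is formally \'etale and bijective on $\ov{\F}_p$-points, so an isomorphism.

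For (2), I apply Proposition \ref{P:cartesian} to the central quotient $G\to G^{ad}$ to obtain a cartesian square
\[\xymatrix{X_{\mu}^{G}(b)\ar[r]\ar[d]_{\omega_G} & X_{\mu^{ad}}^{G^{ad}}(b^{ad})\ar[d]^{\omega_{G^{ad}}}\\
c_{b,\mu}\pi_1(G)^\Gamma\ar[r] & c_{b^{ad},\mu^{ad}}\pi_1(G^{ad})^\Gamma.}\]
By the construction in the proof of Theorem \ref{T:ab}, both $\breve{\M}(G,b,\mu)$ and $\breve{\M}(G^{ad},b^{ad},\mu^{ad})$ decompose as disjoint unions of copies of the common $+$-piece $\breve{\M}(G,b,\mu)^+=\breve{\M}(G^{ad},b^{ad},\mu^{ad})^+$, indexed respectively by $c_{b,\mu}\pi_1(G)^\Gamma$ and $c_{b^{ad},\mu^{ad}}\pi_1(G^{ad})^\Gamma$ (using Proposition \ref{P: surjective} to identify the indexing sets with the images of the $\omega$-maps). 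The induced morphism $\breve{\M}(G,b,\mu)\to\breve{\M}(G^{ad},b^{ad},\mu^{ad})$ acts as the identity on the $+$-piece and via the natural map on the indexing sets, so it realises $\breve{\M}(G,b,\mu)$ as the fibre product of $\breve{\M}(G^{ad},b^{ad},\mu^{ad})$ with $c_{b,\mu}\pi_1(G)^\Gamma$ over $c_{b^{ad},\mu^{ad}}\pi_1(G^{ad})^\Gamma$. Combining with (1) and the identification $(G^{ad},b^{ad},\mu^{ad})=(G_1^{ad},b_1^{ad},\mu_1^{ad})$ yields the required pullback description.

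The main obstacle I expect is the lifting from a set-level quotient to one of formal schemes in (1). This requires that $X_\ast(Z_{G_1})^\Gamma$ act suitably freely on the perfect special fibre together with formal \'etaleness of the quotient map on deformation rings; both ultimately rest on the adjoint dependence of the deformation theory used in Kim's construction, which is exactly the input that made Theorem \ref{T:ab} work in the first place.
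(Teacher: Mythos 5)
Your proposal is correct and takes essentially the same approach as the paper, using Proposition \ref{P:cartesian} and the surjectivity of the $\omega$-maps to transfer the cartesian-diagram structure from affine Deligne--Lusztig varieties to the Rapoport--Zink spaces (the paper states this transfer tersely as ``inducing the corresponding cartesian diagrams for Rapoport--Zink spaces as \'etale sheaves over $W$''; your deformation-ring argument makes that step explicit). One small inaccuracy worth noting: the surjectivity of $Z_{G_1}(\Q_p)\to X_\ast(Z_{G_1})^\Gamma$ is automatic for the unramified torus $Z_{G_1}$ by Lemma \ref{L:surjective}(1) and does not require the hypothesis that $\pi_1(G_1)^\Gamma\to\pi_1(G_1^{ad})^\Gamma$ be surjective, which is needed only to ensure the quotient exhausts all of $\breve{\M}(G_1^{ad},b_1^{ad},\mu_1^{ad})$.
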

\begin{proof}
We have the following cartesian diagrams (of morphisms between \'etale sheaves over $\ov{\F}_p$)
\[ 
\xymatrix{X_{\mu_1}^{G_1}(b_1)\ar[r]\ar[d] & X_{\mu_1^{ad}}^{G_1^{ad}}(b_1^{ad})\ar[d] & X_\mu^G(b)\ar[l]\ar[d]\\
	c_{b_1,\mu_1}\pi_1(G_1)^\Gamma\ar[r] & c_{b_1^{ad},\mu_1^{ad}}\pi_1(G_1^{ad})^\Gamma & c_{b,\mu}\pi_1(G)^\Gamma\ar[l]
	}\]
inducing the corresponding cartesian diagrams for Rapoport-Zink spaces (as \'etale sheaves over $W$).
All the vertical maps in the above diagram are surjective by Proposition \ref{P: surjective}. The assertions follow by the assumption $\pi_1(G_1)^\Gamma\ra \pi_1(G_1^{ad})^\Gamma $ is surjective.
\end{proof}

\begin{example}
Consider Example \ref{E:PGL} again. As the exact sequence $1\ra \G_m\ra \GL_n\ra \PGL_n\ra 1$ induces a surjection
\[\pi_1(\GL_n)^\Gamma=\pi_1(\GL_n)\ra \pi_1(\PGL_n)^\Gamma, \]
we have \[\breve{\M}(\PGL_n,b,\mu)\simeq \breve{\M}(\GL_n,b_1,\mu_1)/p^\Z.\]
\end{example}
Another example will be given in section 8.

By construction, both the above local moduli interpretation for $\breve{\M}(G,b,\mu)^+$ and the global moduli interpretation in Proposition \ref{P:moduli} are not canonical. Moreover, the formal scheme $\breve{\M}(G,b,\mu)$ associated to a general unramified local Shimura datum of abelian type does not admit a moduli interpretation by $p$-divisible groups with additional structures. Nevertheless, we have
\begin{theorem}\label{T:moduli}
Let $(G,[b],\{\mu\})$ be an unramified local Shimura datum of abelian type. Assume that there exists an unramified local Shimura datum of Hodge type $(G_1,[b_1],\{\mu_1\})$ with a local Hodge embedding $\iota: G_1\hookrightarrow \GL_n$ such that $\iota(b_1)$ has no slope 0, and such that $(G_1^{ad},[b_1^{ad}],\{\mu_1^{ad}\})\simeq (G^{ad},[b^{ad}],\{\mu^{ad}\})$. Then the formal scheme $\breve{\M}(G,b,\mu)$ represents the moduli functor of $(G,\mu)$-displays $\RZ_{G,\mu,b}$ defined in \cite{BP} 4.2.
\end{theorem}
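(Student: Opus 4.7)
The plan is to leverage the Hodge-type case proved by B\"ultel--Pappas and propagate it to the abelian-type case through the adjoint quotient, exactly paralleling the construction of $\breve{\M}(G,b,\mu)$ in Theorem \ref{T:ab}. By \cite{BP}, under the slope-zero-avoidance condition on $\iota(b_1)$, the functor $\RZ_{G_1,\mu_1,b_1}$ of $(G_1,\mu_1)$-displays is representable and canonically isomorphic to $\breve{\M}(G_1,b_1,\mu_1)$. Since the notion of a $(G,\mu)$-display is purely group-theoretic and functorial in the pair $(G,\mu)$, the projection $G_1 \twoheadrightarrow G_1^{ad}$ and the map $G \to G^{ad}\simeq G_1^{ad}$ induce morphisms of display functors
\[
\RZ_{G_1,\mu_1,b_1}\lra \RZ_{G_1^{ad},\mu_1^{ad},b_1^{ad}} \la \RZ_{G,\mu,b}.
\]

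First I would verify that each of these three display functors admits a Kottwitz-type connected-component map $\varkappa : \RZ_{H,\nu,c} \to c_{c,\nu}\pi_1(H)^\Gamma$, compatibly under morphisms of the group data. On the Hodge-type factor this follows from identifying $\RZ_{G_1,\mu_1,b_1}$ with $\breve{\M}(G_1,b_1,\mu_1)$ and composing with $\omega_{G_1}$; on $\RZ_{G,\mu,b}$ it should follow directly from the definition of $(G,\mu)$-displays in \cite{BP} 4.2 (via the underlying $G$-torsor and its type). Next I would establish the analog of Proposition \ref{P:cartesian} for displays: for a central isogeny $H_1\to H_2$ with kernel $Z$, the diagram
\[\xymatrix{
\RZ_{H_1,\nu_1,c_1}\ar[r]\ar[d]^{\varkappa_{H_1}} & \RZ_{H_2,\nu_2,c_2}\ar[d]^{\varkappa_{H_2}}\\
c_{c_1,\nu_1}\pi_1(H_1)^\Gamma \ar[r] & c_{c_2,\nu_2}\pi_1(H_2)^\Gamma
}\]
is cartesian. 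This is the key intermediate step: it reduces to the fact that two $(H_1,\nu_1)$-displays with the same underlying $(H_2,\nu_2)$-display and the same $\pi_1(H_1)^\Gamma$-type differ only by a $Z$-twist, and the central character is rigid. This can be checked fpqc-locally on the base, where both displays become trivial, reducing the statement to the group-theoretic statement on $\sigma$-conjugacy classes used in \cite{CKV} Corollary 2.4.2.

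With this in hand, both sides of the purported isomorphism fit into cartesian squares over $\RZ_{G_1^{ad},\mu_1^{ad},b_1^{ad}}=\breve{\M}(G_1^{ad},b_1^{ad},\mu_1^{ad})$: on the $\breve{\M}$-side this is the cartesian diagram underlying Theorem \ref{T:ab} and Corollary \ref{C:local ADLV}; on the display side this is the cartesian diagram just established. I would then identify the $\breve{\M}(G,b,\mu)^+$ piece with the corresponding $+$-piece of $\RZ_{G,\mu,b}$ by invoking the Hodge-type isomorphism of B\"ultel--Pappas after restricting to the fiber over $c_{b_1^{ad},\mu_1^{ad}}$, since by construction
\[
\breve{\M}(G,b,\mu)^+ = \breve{\M}(G_1,b_1,\mu_1)^+ \simeq \RZ_{G_1,\mu_1,b_1}^+ \simeq \RZ_{G,\mu,b}^+.
\]
Finally, by Theorem \ref{T:CKV}(1) together with the surjectivity of $J_b(\Q_p)\to\pi_1(G)^\Gamma$ from Lemma \ref{L:surjective}, the $J_b(\Q_p)$-action is transitive on the set of $\omega_G$-fibers on both sides and is compatible with the display-theoretic $J_b(\Q_p)$-action, so the local $+$-level identification propagates to a global isomorphism $\breve{\M}(G,b,\mu)\simeq \RZ_{G,\mu,b}$.

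The main obstacle is the cartesian square for displays under a central isogeny. While the analogous statement for affine Deligne--Lusztig varieties is precisely \cite{CKV} Corollary 2.4.2, transporting it to the deformation-theoretic side requires showing that the map $\RZ_{H_1,\nu_1,c_1}\to\RZ_{H_2,\nu_2,c_2}\times_{\pi_1(H_2)^\Gamma}\pi_1(H_1)^\Gamma$ is not only bijective on $\ov{\F}_p$-points but an isomorphism of formal schemes. This should follow because the deformation rings at corresponding points are isomorphic (as in the proof of Theorem \ref{T:ab}, citing \cite{Ki1} 1.5.4 that $R_G$ depends only on $G^{ad}$), but one must check this compatibility at the level of the $(G,\mu)$-display functor rather than at the level of $p$-divisible groups with Tate tensors, which is where the nilpotent/slope-zero hypothesis on $\iota(b_1)$ enters via \cite{BP}.
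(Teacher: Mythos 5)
Your proposal takes essentially the same route as the paper's proof, which is itself only a brief sketch. Both arguments rest on (i) the B\"ultel--Pappas Hodge-type representability $\breve{\M}(G_1,b_1,\mu_1)\simeq\RZ_{G_1,\mu_1,b_1}$ and (ii) cartesian diagrams of the display functors over the adjoint display functor along $\pi_1(\cdot)^\Gamma \to \pi_1(G^{ad})^\Gamma$, mirroring the $\breve{\M}$-side diagrams from the proof of Proposition \ref{P:moduli}; the paper establishes the display-side cartesian squares by citing \cite{BP} Proposition 4.2.5 and Remark 4.2.6 to identify the special fibers of the $\RZ$-functors with the corresponding affine Deligne--Lusztig varieties (then imports the ADLV cartesian diagram), which is exactly the step you isolate as the ``main obstacle'' and propose to verify more explicitly via deformation rings and $J_b(\Q_p)$-transitivity on the $+$-pieces.
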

\begin{proof}
We just briefly sketch the arguments: by the proof of the above Proposition 4.8, we have the cartesian diagrams of \'etale sheaves
\[ 
\xymatrix{\breve{\M}(G_1,b_1,\mu_1)\ar[r]\ar[d] & \breve{\M}(G^{ad},b^{ad},\mu^{ad})\ar[d] & \breve{\M}(G,b,\mu)\ar[l]\ar[d]\\
	c_{b_1,\mu_1}\pi_1(G_1)^\Gamma\ar[r] & c_{b_1^{ad},\mu_1^{ad}}\pi_1(G_1^{ad})^\Gamma & c_{b,\mu}\pi_1(G)^\Gamma.\ar[l]
}\]
Consider the B\"ultel-Pappas functors $\RZ_{G_1,\mu_1,b_1}, \RZ_{G_1^{ad},\mu_1^{ad},b_1^{ad}}, \RZ_{G,\mu,b}$ as \'etale sheaves over $W$. By construction their restrictions over $\ov{\F}_p$ are isomorphic to the \'etale sheaves given by the corresponding affine Deligne-Lusztig varieties, cf. \cite{BP} Proposition 4.2.5 and Remark 4.2.6. Thus we have also the cartesian diagrams of \'etale sheaves
\[ 
\xymatrix{\RZ_{G_1,\mu_1,b_1}\ar[r]\ar[d] & \RZ_{G^{ad},\mu^{ad},b^{ad}}\ar[d] & \RZ_{G,\mu,b}\ar[l]\ar[d]\\
	c_{b_1,\mu_1}\pi_1(G_1)^\Gamma\ar[r] & c_{b_1^{ad},\mu_1^{ad}}\pi_1(G_1^{ad})^\Gamma & c_{b,\mu}\pi_1(G)^\Gamma.\ar[l]
}\]
By \cite{BP} Remark 5.2.7, $\breve{\M}(G_1,b_1,\mu_1)$ represents the functor $\RZ_{G_1,\mu_1,b_1}$. Therefore $\breve{\M}(G,b,\mu)$ represents $\RZ_{G,\mu,b}$.
\end{proof}

When passing to the generic fibers, Rapoport-Zink spaces of abelian type are indeed canonical moduli spaces of some objects (local $G$-shtukas in the sense of Scholze): see the next section.

\subsection{Generic fibers and local Shimura varieties of abelian type}\label{subsection:generic}
Let $(G,[b],\{\mu\})$ and $\breve{\M}=\breve{\M}(G,b,\mu)$ be as in Theorem \ref{T:ab}. We consider the rigid analytic fiber $\M=\M(G,b,\mu)$ over $L$, regarded as an adic space. For any open compact subgroup $K\subset G(\Z_p)$, we construct a finite \'etale cover $\M_K$ of $\M$ as follows. If $(G,[b],\{\mu\})$ is of Hodge type, then this is known by \cite{Kim1} 7.4 (see also our subsection 3.3). Now consider the general case.

First, assume that $K=K_n$ for some $n\geq 1$, where $K_n=\ker \Big(G(\Z_p)\ra G(\Z_p/p^n\Z_p)\Big)$. On the component  $\M^+=(\breve{\M}(G,b,\mu)^{+})^{ad}_\eta$, we can construct a finite \'etale cover $\M_n^+$ by taking some unramified local Shimura datum of Hodge type $(G_1, [b_1],\{\mu_1\})$ as in Definition \ref{D:local abelian unramified} and using the moduli interpretation of $\M(G_1,b_1,\mu_1)$.  We  can take \[\M_n=[J_b(\Q_p)\times\M_n^+]/J_b(\Q_p)^+.\]
In this way we get a tower $(\M_n)_n$ on which $G(\Z_p)$ acts. Set $\M_0=\M$. The action of $G(\Z_p)$ on $\M_n$ factors through $G(\Z_p)/K_n=G(\Z_p/p^n\Z_p)$. Now let $K\subset G(\Z_p)$ be arbitrary. Take some sufficiently large $n$ such that $K_n\subset K$. Set
\[\M_K=\M_n/K.\] Then $\M_K$ is a finite \'etale cover of $\M$, and it does not depend on the choice of $n$. When $K\subset G(\Z_p)$ is normal,  $\M_K$ is a Galois cover of $\M$, with Galois group $G(\Z_p)/K$. For any $g\in G(\Q_p)$ and any open compact subgroup $K\subset G(\Z_p)$, we have a natural isomorphism
\[\M_K\st{\sim}{\lra}\M_{gKg^{-1}}.\]As a result, the group $G(\Q_p)$ acts on the tower
\[(\M_K)_{K\subset G(\Z_p)}\]by Hecke correspondences. 

As before, for any open compact $K\subset G(\Z_p)$, let $\M_K^+\subset\M_K$ be the pullback of $\M^+\subset \M$. In this way we get a sub tower $(\M_K^+)_K\subset (\M_K)_K$.
Let $G(\Q_p)^+\subset G(\Q_p)$ be the subgroup which is the stabilizer of the sub tower
\[(\M_K^+)_K\subset (\M_K)_K.\]By Lemma \ref{L:surjective} (1) the map
\[\omega_G: G(\Q_p)\ra\pi_1(G)^\Gamma\] is surjective. By construction we have the induced bijection
\[\omega_G: G(\Q_p)/G(\Q_p)^+\st{\sim}{\lra}\pi_1(G)^\Gamma,\]and moreover,
\[\M_K= J_b(\Q_p)\M_K^+, \quad (\M_K)_K=G(\Q_p) (\M_K^+)_K.\]

Let $(G,[b],\{\mu\})$ be an unramified local Shimura datum of abelian type. In the following we want to construct a period map 
$\pi_{dR}: \M\ra \Fl\ell_{G,\mu}$ on the generic fiber $\M$ of the associated Rapoport-Zink space, and to study some of its properties.
Take any $(G_1,[b_1],\{\mu_1\})$ as in Definition \ref{D:local abelian unramified}. Then we have the canonical identification of the associated $p$-adic flag varieties over $L$
\[\Fl\ell_{G,\mu}=G/P_\mu = \Fl\ell_{G_1,\mu_1}=G_1/P_{\mu_1}.\] Sometimes we will simply write them as $\Fl\ell_\mu$. By \cite{Kim1} 7.5, we have a period map \[\pi_{G_1,dR}: \M(G_1,b_1,\mu_1)\ra \Fl\ell_\mu,\]which is $J_{b_1}(\Q_p)$-equivalent. If $(G, [b],  \{\mu\})\hookrightarrow (\GL_n, [b'],  \{\mu'\})$ is an embedding of unramified local Shimura data of Hodge type, we get an induced embedding of flag varieties $\Fl\ell_{G_1,\mu_1}\hookrightarrow \Fl\ell_{\GL_n,\mu'}$ over $L$. By construction,
we have the following commutative diagram
\[\xymatrix{
\M(G_1,b_1,\mu_1)\ar[d]^{\pi_{G_1,dR}}\ar@{^{(}->}[r]&\M(\GL_n,b',\mu')\ar[d]^{\pi_{\GL_n,dR}}\\
\Fl\ell_{G_1,\mu_1}\ar@{^{(}->}[r]&\Fl\ell_{\GL_n,\mu'}.
}
\]
Let us briefly review the construction of $\pi_{G_1,dR}$.
Let $(s_\alpha)\subset \Lambda^\otimes$ be a finite collection of tensors ($\tr{rank} \Lambda=n$) such that $G_1\subset \GL(\Lambda)$ is the   schematic stabilizer of $(s_\alpha)$. Then the closed embedding $\Fl\ell_{G_1,\mu_1}\hookrightarrow \Fl\ell_{\GL_n,\mu'}$ classifies $\{\mu_1\}$-filtrations of $\Lambda$ with respect to $(s_\alpha)$ cf. \cite{Kim1} Definition 2.2.3 and Lemma 2.2.8. By \cite{Kim1} 7.5, the period morphism $\pi_{G_1,dR}: \M(G_1,b_1,\mu_1)\ra \Fl\ell_{G_1,\mu_1}$ is given by $\big(\Fil^\bullet \D(X^{univ})_{\breve{\M}_1}^{rig}, (t_\alpha^{rig})\big)$ using the induced isomorphism $\rho: \D(X^{univ})_{\breve{\M}_1}^{rig}\simeq \Ol_{\M_1}\otimes \Lambda$ which matches $(t_\alpha^{rig})$ with $1\otimes s_\alpha$, where $\breve{\M}_1=\breve{\M}(G_1,b_1,\mu_1), \M_1=\M(G_1,b_1,\mu_1), (X^{univ}, (t_\alpha), \rho)$ is the universal $p$-divisible group with crystalline Tate tensors and quasi-isogeny over $\breve{\M}_1$. Thus the above diagram is commutative.

Restricting the map $\pi_{G_1,dR}$ to $\M(G_1,b_1,\mu_1)^+=\M^+$, we get a map
\[\pi_{dR}^+: \M^+=\M(G_1,b_1,\mu_1)^+\ra \Fl\ell_\mu.\]
Then applying the group action of $J_b(\Q_p)$, we can define a $J_b(\Q_p)$-equivariant period map for $\M$
\[\pi_{dR}=\pi_{G, dR}: \M=\M(G,b,\mu)\ra \Fl\ell_\mu.\]

Let $\Fl\ell_{G_1,\mu_1}^{adm}\subset \Fl\ell_\mu$ be the open subspace defined by Hartl (using Robba rings) in \cite{Har} section 6, which can be defined equivalently by using the crystalline period ring $B_{cris}$ (cf. \cite{Fal}). In \cite{SW, Ra2},  the subspace $\Fl\ell_{G,\mu}^{adm}$ is described using the Fargues-Fontaine curve, which applies to an arbitrary local Shimura datum $(G, [b], \{\mu\})$. See also Proposition \ref{P:admissible}.
\begin{proposition}\label{P:adm Hodge}
$\Fl\ell_{G_1,\mu_1}^{adm}$ is the image of $\pi_{G_1,dR}$. And we have the following commutative diagram
\[\xymatrix{
	\M(G_1,b_1,\mu_1)\ar@{->>}[d]^{\pi_{G_1,dR}}\ar@{^{(}->}[r]&\M(\GL_n,b',\mu')\ar@{->>}[d]^{\pi_{\GL_n,dR}}\\
	\Fl\ell_{G_1,\mu_1}^{adm}\ar@{^{(}->}[r]&\Fl\ell_{\GL_n,\mu'}^{adm}.
}
\]
\end{proposition}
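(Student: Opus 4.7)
The plan is to reduce the statement to the classical $\GL_n$ case (\cite{RZ} Chapter 5 and Faltings) and then transfer the $G_1$-structure. The proof splits into two independent parts: that the image of $\pi_{G_1,dR}$ is contained in $\Fl\ell_{G_1,\mu_1}^{adm}$, and that it surjects onto it.

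First, I would establish the identity
\[
\Fl\ell_{G_1,\mu_1}^{adm} = \Fl\ell_{G_1,\mu_1} \cap \Fl\ell_{\GL_n,\mu'}^{adm}
\]
inside $\Fl\ell_{\GL_n,\mu'}$. A filtered isocrystal with $G_1$-structure is admissible (equivalently weakly admissible, by Colmez--Fontaine) if and only if its underlying filtered isocrystal is: one direction is trivial, and the other uses reductivity of $G_1$ to reduce Tannakian semistability to a single faithful representation, cf.\ \cite{DOR} Chapter 9 or \cite{RV} Appendix A. Combined with the commutative square of period morphisms already established in the paragraph above, and the classical fact that $\pi_{\GL_n,dR}(\M(\GL_n,b',\mu')) = \Fl\ell_{\GL_n,\mu'}^{adm}$, this forces $\pi_{G_1,dR}$ to factor through $\Fl\ell_{G_1,\mu_1}^{adm}$.

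For surjectivity, I would argue pointwise over complete algebraically closed extensions $C/L$. Given $x \in \Fl\ell_{G_1,\mu_1}^{adm}(C)$, its image in $\Fl\ell_{\GL_n,\mu'}^{adm}(C)$ comes, by the $\GL_n$ case, from a $p$-divisible group $X/\Ol_C$ equipped with a quasi-isogeny $\rho: X_0' \otimes \Ol_C/p \to X \otimes \Ol_C/p$. The remaining task is to promote the tensors $s_\alpha \otimes 1$, which live on the isocrystal side, to crystalline Tate tensors $t_\alpha$ on $X$ that are compatible with $\rho$. By the crystalline comparison isomorphism, Frobenius-fixed and filtration-compatible tensors in $\D(X)[1/p]^\otimes$ correspond bijectively to Galois-invariant tensors in $V_p(X)^\otimes$, and the $G_1$-structure on the filtered isocrystal encoded by $x$ provides exactly such data.

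The hard part is integrality: one needs each $t_\alpha$ to lift to a morphism of crystals $1 \to \D(X)^\otimes$, not merely one after inverting $p$, and the triple $(X,(t_\alpha),\rho)$ must satisfy the $P_\mu$-torsor condition built into the moduli functor $\breve{\M}(G_1,b_1,\mu_1)$. This rests on the existence of a trivialization $\D(X)_{\Ol_C} \simeq \Lambda \otimes \Ol_C$ matching both filtrations and tensors, which can be arranged because $x \in \Fl\ell_{G_1,\mu_1}$ and $s_\alpha \in \Lambda^\otimes$ integrally. This last step is exactly the technical content carried out in \cite{Kim1} Section 7.5, and also in \cite{HP} for the Shimura-type case; the proposal above is best read as a road map for how those arguments specialize to the present setting.
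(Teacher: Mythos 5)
Your blueprint matches the paper's proof quite closely: both establish $\Fl\ell_{G_1,\mu_1}^{adm} = \Fl\ell_{G_1,\mu_1} \cap \Fl\ell_{\GL_n,\mu'}^{adm}$ (the paper cites Hartl, Proposition 6.2; you invoke the Tannakian reduction via reductivity, which is the content of that result), use the known image of $\pi_{\GL_n,dR}$, and then argue surjectivity pointwise over algebraically closed complete $C|L$ by promoting a point $(X/\Ol_C,\rho)$ of $\M(\GL_n,b',\mu')$ to a point of $\M(G_1,b_1,\mu_1)$. The structure of the argument is the same.

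Two remarks on the details. First, your parenthetical invocation of Colmez--Fontaine to identify admissible and weakly admissible is not load-bearing here, and in fact the identity $\Fl\ell^{adm}_{G,\mu} = \Fl\ell^{wa}_{G,\mu}$ is false in general (the Fargues--Rapoport question discussed in Remark 4.15 of the paper); what you actually need and use is the identity of admissible loci with the intersection in $\GL_n$, so the parenthetical can simply be dropped. Second, on the tensor-lifting step, you route through the crystalline comparison isomorphism to the \'etale side. This is a detour: over algebraically closed $C$ there is no Galois group, so ``Galois-invariant tensors in $V_p(X)^\otimes$'' carries no content. The paper goes the direct route: via the rigidification $\rho$, the Dieudonn\'e module $\D(X_{k_C})$ is identified with $g\Lambda\otimes W$ for some $g\in G(\Q_p)/G(\Z_p)\subset \GL_n(\Q_p)/\GL_n(\Z_p)$, which immediately transports the integral tensors $s_\alpha\in\Lambda^\otimes$ to crystalline Tate tensors $t_\alpha$ on $X$ and verifies the $P_\mu$-torsor condition. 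Your proposal names the right obstruction (integrality of the $t_\alpha$ and the torsor condition) and correctly points to the trivialization $\D(X)_{\Ol_C}\simeq \Lambda\otimes\Ol_C$ as the key, but you defer the construction to Kim \S7.5, so your write-up stops one step short of the paper's self-contained argument. As a proof strategy, though, it is the same one.
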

\begin{proof}
By the above construction, the composition $\M(G_1,b_1,\mu_1)\hookrightarrow  \M(\GL_n,b',\mu') \st{\pi_{\GL_n,dR}}{\lra} \Fl\ell_{\GL_n,\mu'}$ factors through $\Fl\ell_{G_1,\mu_1}$.
 By \cite{Har} Theorem 7.3 and \cite{Fal} section 4, we have $\tr{Im}\;\pi_{\GL_n,dR}=\Fl\ell_{\GL_n,\mu'}^{adm}$. On the other hand,  by \cite{Har} Proposition 6.2, we have \[\Fl\ell_{G_1,\mu_1}^{adm}=\Fl\ell_{\GL_n,\mu'}^{adm}\cap\Fl\ell_{G_1,\mu_1}.\]Thus the above diagram commutes. To show $\tr{Im}\;\pi_{G_1,dR}=\Fl\ell_{G_1,\mu_1}^{adm}$, 
 it suffices to show that for any algebraically closed complete extension $C|L$, the induced map on $C$-valued points is surjective. Let $(x, (s_{x\alpha}))\in \Fl\ell_{G_1,\mu_1}(C, \Ol_C)$ with image $x\in \Fl\ell_{\GL_n,\mu'}(C,\Ol_C)$, such that there exists a point $(X/\Ol_C, \rho)\in \M(\GL_n,b',\mu')(C, \Ol_C)$ maps to $x$ under $\pi_{\GL_n,dR}$. By definition, we have the isomorphism
\[\rho: \D(X_{k_C})_\Q\simeq \Ol_{\M(\GL_n,b',\mu')}\otimes \Lambda,\]and $x=\rho(\Fil^1\D(X_{k_C})_\Q)$ considered as a filtration on the right hand side. Via the rigidification $\rho$, there exists an element $g\in G(\Q_p)/G(\Z_p)\subset \GL_n(\Q_p)/\GL_n(\Z_p)$ such that $\D(X_{k_C})\simeq (g\Lambda\otimes W, g^{-1}b'\sigma(g)\sigma)$.
Therefore, each tensor $s_\alpha$ on $\Lambda$ induces a crystalline Tate tensor $t_\alpha$ on $X$. We get a point $(X/\Ol_C, (t_{\alpha}), \rho)\in \M(G_1,b_1,\mu_1)(C, \Ol_C)$, which by construction maps to $(X/\Ol_C, \rho)\in \M(\GL_n,b',\mu')(C, \Ol_C)$ and $(x, (s_{x\alpha}))\in \Fl\ell_{G_1,\mu_1}(C, \Ol_C)$  under the embedding $\M(G_1,b_1,\mu_1)$ $\hookrightarrow \M(\GL_n,b',\mu')$ and the period map $\pi_{G_1,dR}$ respectively. 
\end{proof}

For any open compact subgroup $K\subset G(\Z_p)$, we have the finite \'etale map $\M^+_K=\M(G_1,b_1,\mu_1)^+_K\ra \M^+=\M(G_1,b_1,\mu_1)^+$, thus we get a morphism
\[\M^+_K\ra \Fl\ell_\mu.\]From this we can define a $J_b(\Q_p)$-equivalent period map for $\M_K$
\[\pi_{G, dR}: \M_K\ra \Fl\ell_\mu.\]
When $K$ varies, these period maps are compatible with the Hecke action of $G(\Q_p)$ on $(\M_K)_K$. Thus we may think that there exists a 
$G(\Q_p)$-invariant map $(\M_K)_K\ra \Fl\ell_\mu$.

Recall that we also have $\Fl\ell_{G_1,\mu_1}^{wa}$ and $\Fl\ell_{G,\mu}^{wa}$.
By construction, we have $\Fl\ell_{G_1,\mu_1}^{adm}\subset\Fl\ell_{G_1,\mu_1}^{wa}$, and similarly $\Fl\ell_{G,\mu}^{adm}\subset\Fl\ell_{G,\mu}^{wa}$.
\begin{lemma}\label{L:period}
We have \[\Fl\ell_{G_1,\mu_1}^{wa}=\Fl\ell_{G,\mu}^{wa},\quad
\Fl\ell_{G_1,\mu_1}^{adm}= \Fl\ell_{G,\mu}^{adm}.\]
\end{lemma}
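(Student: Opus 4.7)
The plan is to reduce both equalities to a statement about the adjoint data, then invoke the fact that both weak admissibility and admissibility depend only on the image in the adjoint flag variety.

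First, I would identify the flag varieties themselves. Since the parabolic $P_\mu \subset G$ contains $Z_G$, the natural projection $G \to G^{ad}$ induces an isomorphism $\Fl\ell_{G,\mu} = G/P_\mu \st{\sim}{\ra} G^{ad}/P_{\mu^{ad}} = \Fl\ell_{G^{ad},\mu^{ad}}$, and similarly for $(G_1,\mu_1)$. The hypothesis $(G^{ad},[b^{ad}],\{\mu^{ad}\}) \simeq (G_1^{ad},[b_1^{ad}],\{\mu_1^{ad}\})$ then produces canonical identifications
\[ \Fl\ell_{G,\mu} \;=\; \Fl\ell_{G^{ad},\mu^{ad}} \;=\; \Fl\ell_{G_1^{ad},\mu_1^{ad}} \;=\; \Fl\ell_{G_1,\mu_1}. \]
So the content of the lemma is that, under these identifications, the two open loci on either side coincide with the corresponding loci for the common adjoint datum.

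For the weakly admissible locus, I would use the characterization via filtered isocrystals with $G$-structure (\cite{RZ} 1.18, \cite{DOR} Ch.~9): a point $x \in \Fl\ell_{G,\mu}(\ov L)$ is weakly admissible for $(G,[b],\{\mu\})$ iff for every rational representation $V$ of $G$ the induced filtered isocrystal $(V,\Fil^\bullet_x V, b_V\sigma)$ is weakly admissible. The key observation is that for any central character $\chi$ of $G$ the twist $V \otimes \chi$ is again a representation of $G$, and the contribution of $\chi$ to both the Hodge slope $t_H$ and the Newton slope $t_N$ is $(\dim V)\cdot\lan\chi,\mu\ran$ and $(\dim V)\cdot\lan\chi,\nu_b\ran$ respectively; since $[b]\in B(G,\mu)$ forces these to agree on the central part, the weak admissibility inequalities reduce to inequalities only on representations of $G^{ad}$. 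Thus $\Fl\ell_{G,\mu}^{wa} = \Fl\ell_{G^{ad},\mu^{ad}}^{wa}$, and the same for $(G_1,\mu_1)$, giving the first equality.

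For the admissible locus, I would use the reformulation via the Fargues--Fontaine curve from \cite{SW, Ra2}: $x \in \Fl\ell_{G,\mu}^{adm}$ iff the $\mu$-modification of the $G$-bundle $\E_b$ determined by $x$ is the trivial $G$-bundle. Since $G \to G^{ad}$ is a central isogeny, a $G$-bundle on the Fargues--Fontaine curve is trivial iff its pushforward to $G^{ad}$ is trivial and its pushforward to $G^{ab}$ is trivial; the latter is automatic once $[b]$ and $\{\mu\}$ are fixed. Therefore the triviality of the modification is detected on the adjoint side, and
\[ \Fl\ell_{G,\mu}^{adm} \;=\; \Fl\ell_{G^{ad},\mu^{ad}}^{adm} \;=\; \Fl\ell_{G_1,\mu_1}^{adm}. \]

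The main obstacle is making precise the claim that admissibility is detected on the adjoint group. This requires knowing that the central part of $\E_b$ is rigid once $[b] \in B(G,\mu)$ is fixed, so that there is no nontrivial central modification to worry about. In other words, I would use that the functoriality diagram $B(G) \to B(G^{ad}) \times B(G^{ab})$ is (essentially) injective on the fiber we consider, and that $G^{ab}$-bundles on the curve are determined by their Kottwitz invariant, which is already pinned down by $\omega_G$. All the heavy lifting reduces to what was already exploited in Proposition \ref{P:cartesian} and Proposition \ref{P: surjective} to control connected components; here one uses the same compatibility, but on the generic fiber and for the $p$-adic semistability condition rather than for $\pi_0$.
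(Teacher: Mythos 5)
Your proof is essentially the same as the paper's. The paper disposes of the lemma in two sentences: the first equality is cited to \cite{DOR} Proposition 9.5.3 (iv), and the second is asserted to ``follow by the definition using $G$-bundles on the Fargues-Fontaine curve.'' Your proposal is exactly the elaboration of both of these citations: for the weakly admissible locus you unwind the DOR criterion (weak admissibility of a filtered $G$-isocrystal is detected on the adjoint group, since the central contribution to $t_H$ and $t_N$ cancels by the assumption $[b]\in B(G,\mu)$), and for the admissible locus you unwind what ``follows by the definition'' means (the modified $G$-bundle $\E_x$ lies in $B(G,\mu)$, and triviality of such a class is detected by its image in $B(G^{ad},\mu^{ad})\times B(G^{ab},\mu^{ab})$, of which the abelian part is forced). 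So the two proofs are the same in content; yours just supplies the justification the paper outsources.

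Two small cautions if you want to promote this from a sketch to a verbatim proof. First, $G\to G^{ad}$ is not a central isogeny (its kernel $Z_G$ need not be finite); what you really use is that it is a surjection whose kernel is central, combined with the standard factorization through $B(G)\hookrightarrow N(G)\times\pi_1(G)_\Gamma$ and the compatibility of the Newton and Kottwitz maps under $G\to G^{ad}$ and $G\to G^{ab}$. The precise statement underlying ``the latter is automatic'' is that $B(G,\mu)\to B(G^{ad},\mu^{ad})$ is a bijection, which is the content implicit in Proposition \ref{P:cartesian} (via \cite{CKV} Corollary 2.4.2) and used again explicitly by the paper for the shtuka version in Proposition \ref{P:shtuka cart}. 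Second, your central-character twisting argument for the weak admissibility step is suggestive but not by itself a complete reduction; a general representation of $G$ satisfying the center-to-center hypothesis need not factor as $V_0\otimes\chi$ with $V_0$ inflated from $G^{ad}$. The cleaner reduction is the one DOR actually performs: the weak admissibility inequalities are indexed by parabolic subgroups, and parabolics of $G$ and $G^{ad}$ are canonically identified, while the degree comparison $t_H=t_N$ on the abelianization is guaranteed by $[b]\in B(G,\mu)$. With those two points tightened, your argument is exactly the paper's.
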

\begin{proof}
The equality $\Fl\ell_{G_1,\mu_1}^{wa}=\Fl\ell_{G,\mu}^{wa}$ follows by \cite{DOR} Proposition 9.5.3 (iv). 
The second equality follows by the definition using $G$-bundles on the Fargues-Fontaine curve.
\end{proof}

\begin{corollary}\label{C:image ab}
$\Fl\ell_{G,\mu}^{adm}\subset \Fl\ell_\mu$ is the image of the above period map $\pi_{G,dR}$.
\end{corollary}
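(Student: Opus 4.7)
The plan is to prove the two inclusions separately, building on Proposition \ref{P:adm Hodge}, Lemma \ref{L:period}, and the construction of $\pi_{G,dR}$ in Subsection \ref{subsection:generic}. Recall that $\pi_{G,dR}$ is built by $J_b(\Q_p)$-equivariant extension of $\pi_{dR}^+ = \pi_{G_1,dR}|_{\M^+}$ along the decomposition $\M = J_b(\Q_p) \cdot \M^+$, so its image equals $J_b(\Q_p) \cdot \pi_{dR}^+(\M^+)$.

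For the inclusion $\tr{Im}(\pi_{G,dR}) \subset \Fl\ell_{G,\mu}^{adm}$, Proposition \ref{P:adm Hodge} gives $\pi_{dR}^+(\M^+) \subset \Fl\ell_{G_1,\mu_1}^{adm}$, which by Lemma \ref{L:period} coincides with $\Fl\ell_{G,\mu}^{adm}$. The admissible locus is intrinsically defined via modifications of the $G$-bundle on the Fargues-Fontaine curve associated to the class $[b]$, hence is $J_b(\Q_p)$-stable, which closes the inclusion after applying $J_b(\Q_p)$.

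For the reverse inclusion, let $x \in \Fl\ell_{G,\mu}^{adm} = \Fl\ell_{G_1,\mu_1}^{adm}$. By Proposition \ref{P:adm Hodge} there exists a preimage $y = j y' \in \M(G_1,b_1,\mu_1) = J_{b_1}(\Q_p) \cdot \M^+$ with $j \in J_{b_1}(\Q_p)$, $y' \in \M^+$, so that $x = \bar{j} \cdot \pi_{dR}^+(y')$, where $\bar{j}$ is the image of $j$ in $J_{b^{ad}}(\Q_p) = J_{b_1^{ad}}(\Q_p)$. Here we use that $Z_G \subset P_\mu$ for minuscule $\mu$, so $\Fl\ell_{G,\mu} = \Fl\ell_{G^{ad},\mu^{ad}} = \Fl\ell_{G_1,\mu_1}$ and all $J$-actions factor through the common adjoint group. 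The task reduces to exhibiting $\tilde{j} \in J_b(\Q_p)$ acting on $S := \pi_{dR}^+(\M^+)$ as $\bar{j}$ does, giving $\pi_{G,dR}(\tilde{j} y') = \tilde{j} \cdot \pi_{dR}^+(y') = x$.

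The main obstacle is that the images of $J_b(\Q_p)$ and $J_{b_1}(\Q_p)$ in $J_{b^{ad}}(\Q_p)$ need not coincide as subgroups, so a direct lift of $\bar{j}$ need not exist. What one actually proves is the set-theoretic orbit equality $J_b(\Q_p) \cdot S = J_{b_1}(\Q_p) \cdot S$: by construction, the quotients $J_b(\Q_p)/J_b(\Q_p)^+$ and $J_{b_1}(\Q_p)/J_{b_1}(\Q_p)^+$ are identified with $\pi_1(G)^\Gamma$ and $\pi_1(G_1)^\Gamma$ respectively (via the transitivity statement in Theorem \ref{T:CKV}(1) together with the surjectivity of Lemma \ref{L:surjective}(2)), and both further project onto the common quotient $\pi_1(G^{ad})^\Gamma$ through the cartesian diagrams of Proposition \ref{P:cartesian} applied to $G \to G^{ad}$ and $G_1 \to G_1^{ad}$. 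This common indexing of the translates of $S$ inside $\Fl\ell^{adm}$ yields the required orbit equality, completing the proof.
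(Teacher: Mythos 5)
The paper's proof is both different in mechanism and simpler than what you propose. The key lever in the paper is the \emph{Hecke invariance} of the period map: $\pi_{dR}: (\M_K)_K \to \Fl\ell_\mu$ is invariant under the $G(\Q_p)$-action, and the tower satisfies $(\M_K)_K = G(\Q_p)\cdot(\M_K^+)_K$. Together these facts give, with almost no work, $\tr{Im}\,\pi_{G_1,dR} = \pi_{dR}^+(\M^+) =: \Fl\ell^{adm,+}$ and likewise $\tr{Im}\,\pi_{G,dR} = \Fl\ell^{adm,+}$. Proposition \ref{P:adm Hodge} and Lemma \ref{L:period} then force $\Fl\ell^{adm,+} = \Fl\ell^{adm}_{G_1,\mu_1} = \Fl\ell^{adm}_{G,\mu}$. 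You use the $J_b(\Q_p)$-equivariance instead; this is the weaker structure, and it is exactly where your argument runs into trouble.

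Your forward inclusion is fine: $S := \pi_{dR}^+(\M^+) \subset \Fl\ell^{adm}_{G,\mu}$ and the latter is $J_b(\Q_p)$-stable, hence $J_b(\Q_p)\cdot S \subset \Fl\ell^{adm}_{G,\mu}$. The reverse inclusion as you formulate it, however, has a genuine gap. You need $J_b(\Q_p)\cdot S \supset J_{b_1}(\Q_p)\cdot S$, and you try to get this from the assertion that $J_b(\Q_p)/J_b(\Q_p)^+ \cong \pi_1(G)^\Gamma$ and $J_{b_1}(\Q_p)/J_{b_1}(\Q_p)^+ \cong \pi_1(G_1)^\Gamma$ ``both further project onto the common quotient $\pi_1(G^{ad})^\Gamma$.'' But the maps $\pi_1(G)^\Gamma \to \pi_1(G^{ad})^\Gamma$ and $\pi_1(G_1)^\Gamma \to \pi_1(G_1^{ad})^\Gamma$ are \emph{not} surjective in general (the coinvariants functor is right exact, not the invariants functor), and Proposition \ref{P:cartesian} asserts a cartesian diagram, not surjectivity of the bottom row. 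Even granting that the translates of $S$ under $J_{b^{ad}}(\Q_p)$ are indexed by $\pi_1(G^{ad})^\Gamma$, the $J_b(\Q_p)$-orbit of $S$ would then correspond only to the image $\Delta_G \subset \pi_1(G^{ad})^\Gamma$ and the $J_{b_1}(\Q_p)$-orbit to $\Delta_{G_1}$, and there is no a priori reason these two subgroups should coincide for an arbitrary pair $(G, G_1)$ with common adjoint.

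What actually happens — and what the Hecke-invariance argument shows directly — is that $S = \Fl\ell^{adm}$ on the nose: over any point $y$ of $\Fl\ell^{adm}_{G_1,\mu_1}$ the fiber of $\pi_{G_1,dR}$ is a $G_1(\Q_p)/G_1(\Z_p)$-torsor, and since $\omega_{G_1}: G_1(\Q_p)/G_1(\Z_p) \to \pi_1(G_1)^\Gamma$ is surjective (Lemma \ref{L:surjective}(1)), that fiber always meets $\M^+$. So the $J$-orbit comparison you are trying to set up is vacuous, but this is precisely the fact your chosen route never establishes. In short: replace the $J_b(\Q_p)$-equivariance argument for the reverse inclusion with the $G(\Q_p)$-Hecke-invariance argument (using $(\M_K)_K = G(\Q_p)(\M_K^+)_K$), and the proof closes cleanly.
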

\begin{proof}
Let $\Fl\ell_{G_1,\mu_1}^{adm,+}\subset\Fl\ell_{G_1,\mu_1}$ be the image of $\pi_{dR}^+$. 

Since $(\M(G_1,b_1,\mu_1)_K)_K=G_1(\Q_p)(\M(G_1,b_1,\mu_1)_K^+)_K$ and the map $\M(G_1,b_1,\mu_1)_K\ra \Fl\ell_{G_1,\mu_1}^{adm}$ is $G_1(\Q_p)$-invariant, we get
\[\tr{Im}\,\pi_{G_1,dR}=\Fl\ell_{G_1,\mu_1}^{adm,+}.\]

We have also $(\M(G,b,\mu)_K)_K=G(\Q_p)(\M(G,b,\mu)_K^+)_K$, and by our construction the map $(\M(G,b,\mu)_K)_K\ra \Fl\ell_\mu$ is $G(\Q_p)$-invariant, we get also  \[\tr{Im}\,\pi_{G,dR} =\Fl\ell_{G_1,\mu_1}^{adm,+}.\]
Thus $\tr{Im}\,\pi_{G_1,dR}=\tr{Im}\,\pi_{G,dR} $. By Proposition \ref{P:adm Hodge} and Lemma \ref{L:period}, we have 
\[\tr{Im}\,\pi_{G,dR} =\tr{Im}\,\pi_{G_1,dR}=\Fl\ell_{G_1,\mu_1}^{adm}= \Fl\ell_{G,\mu}^{adm}.\]
\end{proof}

\begin{remark}
	We always have $\Fl\ell_{G,\mu}^{adm}\subset\Fl\ell_{G,\mu}^{wa}$. In \cite{Har} section 9 and \cite{Ra2} Question A. 20, Hartl and Rapoport asked that when is $\Fl\ell_{G,\mu}^{adm}=\Fl\ell_{G,\mu}^{wa}$? For $G=\GL_n$, in Theorem 9.3 of \cite{Har} Hartl gave a complete solution of this question. For arbitrary $G$ and minuscule $\mu$, Fargues and Rapoport conjecture that this holds true with $[b]$ basic if and only if $(G, \{\mu\})$ is fully Hodge-Newton decomposable in the sense of \cite{GoHeNi} Definition 2.1 (2), cf. \cite{GoHeNi} Conjecture 0.1.
 In the appendix we will see that $\Fl\ell_{G,\mu}^{adm}=\Fl\ell_{G,\mu}^{wa}$ in the case $[b]$ is basic and $G$ is the special orthogonal group. For a solution of the Fargues-Rapoport conjecture, see \cite{CFS}.
\end{remark}

Recall that by Lemma \ref{L:surjective} (1)  the  map
\[\omega_G: G(\Q_p)/G(\Z_p)\ra \pi_1(G)^\Gamma\]is surjective.
\begin{lemma}\label{L:cartesian}
\begin{enumerate}
\item The following diagram is cartesian: \[\xymatrix{G(\Q_p)/G(\Z_p) \ar[r]^{\omega_{G}}\ar[d]& \pi_1(G)^\Gamma\ar[d]\\
	G^{ad}(\Q_p)/G^{ad}(\Z_p)\ar[r]^-{\omega_{G^{ad}}} & \pi_1(G^{ad})^\Gamma.
}\]
\item  In particular, for $G$ and $G_1$ as above we have $G(\Q_p)^+\simeq G_1(\Q_p)^+$.
\end{enumerate}
\end{lemma}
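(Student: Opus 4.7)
The commutativity of the square in (1) is immediate from the functoriality of the Kottwitz homomorphism under the quotient $G\to G^{ad}$. For the cartesian property my plan is to first establish the $L$-level analogue
\[
G(L)/G(W) \;\longrightarrow\; \bigl(G^{ad}(L)/G^{ad}(W)\bigr)\times_{\pi_1(G^{ad})}\pi_1(G)
\]
is a bijection of $\sigma$-sets, and then obtain (1) by applying the $\sigma$-fixed-point functor, which preserves pullbacks. This uses the standard identifications $(G(L)/G(W))^\sigma = G(\Q_p)/G(\Z_p)$ and $(G^{ad}(L)/G^{ad}(W))^\sigma = G^{ad}(\Q_p)/G^{ad}(\Z_p)$, both coming from Lang's theorem applied to the hyperspecial subgroups $G(W)$ and $G^{ad}(W)$, valid because $G$ and $G^{ad}$ are unramified.

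The cartesian property at the $L$-level would be argued fiberwise over $G^{ad}(L)/G^{ad}(W)$. Writing $Z=Z_G$ in the short exact sequence $1\to Z\to G\to G^{ad}\to 1$, the key input is the vanishing $H^1(W,Z)=0$. Since $W$ is strictly henselian with algebraically closed residue field $\ov{\F}_p$, this reduces to $H^1(\ov{\F}_p,Z_{\ov{\F}_p})=0$, which holds both for the torus part $Z^0$ (Lang's theorem) and for the finite \'etale component group (trivially, as $\ov{\F}_p$ is algebraically closed). Consequently $G(W)\twoheadrightarrow G^{ad}(W)$, and each non-empty fiber of $G(L)/G(W)\to G^{ad}(L)/G^{ad}(W)$ is a torsor under $Z(L)/Z(W)$. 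Because the finite \'etale component group $F=\pi_0(Z)$ contributes trivially (as $F(L)=F(W)=F(\ov{\F}_p)$ for unramified \'etale $F$), one has $Z(L)/Z(W)=Z^0(L)/Z^0(W)\simeq X_\ast(Z^0)$ by the Kottwitz isomorphism for the torus $Z^0$.

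On the other side, the fibers of $\pi_1(G)\to\pi_1(G^{ad})$ are torsors under $\ker\bigl(\pi_1(G)\to\pi_1(G^{ad})\bigr)$, and I claim this kernel is naturally isomorphic to $X_\ast(Z^0)$. Indeed, the two subtori $Z^0$ and $T\cap G^{\mathrm{der}}$ of $T$ intersect in a finite group, so their cocharacter lattices are disjoint inside $X_\ast(T)$; combined with the inclusion $Q^\vee\subseteq X_\ast(T\cap G^{\mathrm{der}})$ this gives $X_\ast(Z^0)\cap Q^\vee=0$, whence $X_\ast(Z^0)$ injects into $\pi_1(G)=X_\ast(T)/Q^\vee$ with image exactly the kernel in question. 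The Kottwitz identification $Z(L)/Z(W)\simeq X_\ast(Z^0)$ then compatibly matches the two torsor structures, yielding the $L$-level cartesian property. The main obstacle is this cocharacter-lattice identification of the kernel, which is the only place subtle input (disjointness of subtori with finite intersection) is needed.

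For (2), apply (1) to both $G$ and $G_1$; since $G^{ad}\simeq G_1^{ad}$ identifies $\pi_1(G^{ad})^\Gamma=\pi_1(G_1^{ad})^\Gamma$, both quotients $G(\Q_p)^+/G(\Z_p)=\omega_G^{-1}(0)$ and $G_1(\Q_p)^+/G_1(\Z_p)=\omega_{G_1}^{-1}(0)$ are, via their respective cartesian diagrams, in bijection with the common subset $\omega_{G^{ad}}^{-1}(0)\subset G^{ad}(\Q_p)/G^{ad}(\Z_p)$. Transporting back along the natural projections gives the identification $G(\Q_p)^+\simeq G_1(\Q_p)^+$, compatible with the common action on the positive sub-tower $(\M_K^+)_K$ constructed in Theorem~\ref{T:ab}.
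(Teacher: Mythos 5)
Your strategy is genuinely different from the paper's: the paper works directly at the $\Q_p$ level and invokes \cite{Ki2} Lemma 1.2.4 as a black box to supply the crucial lifting of $g^{ad}$, while you attempt a self-contained proof by first establishing the $L$-level analogue and then descending via $\sigma$-fixed points. The descent mechanism itself is sound (taking $\sigma$-invariants is a limit and so preserves pullbacks, Lang's theorem for the smooth affine group schemes $G_W$, $G^{ad}_W$ with connected fibres identifies $(G(L)/G(W))^\sigma$ with $G(\Q_p)/G(\Z_p)$, and unramifiedness gives $\pi_1(G)^\sigma=\pi_1(G)^\Gamma$), and the cocharacter-lattice computation showing $\ker\bigl(\pi_1(G)\to\pi_1(G^{ad})\bigr)\cong X_\ast(Z^0)$ is correct. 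But the proposed proof of the $L$-level cartesian square has two real gaps.

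First, the vanishing $H^1(W,Z)=0$ is false in general, and the claimed decomposition of $Z=Z_G$ into a torus $Z^0$ and a finite \emph{\'etale} component group is incorrect. The center of a reductive group scheme over $\Z_p$ is of multiplicative type but need not be smooth; the finite part can contain $\mu_{p^k}$-factors. Take $G=\mathrm{SL}_p$ (unramified, even split, over $\Q_p$) with $G^{ad}=\mathrm{PGL}_p$ and $Z=\mu_p$: this is not \'etale over $\Z_p$, the relevant cohomology is fppf, and $H^1_{\mathrm{fppf}}(W,\mu_p)=W^\times/(W^\times)^p$ is nonzero (the $p$-th power map on $1+pW$ lands in $1+p^2W$, so $W^\times/(W^\times)^p\cong W/pW=\ov{\F}_p$). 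Concretely, $\mathrm{SL}_p(W)\to\mathrm{PGL}_p(W)$ fails to be surjective: a class $gW^\times\in\mathrm{PGL}_p(W)$ lifts iff $\det(g)\in(W^\times)^p$. So the inference ``$H^1(W,Z)=0$, hence $G(W)\twoheadrightarrow G^{ad}(W)$, hence the fibres of $G(L)/G(W)\to G^{ad}(L)/G^{ad}(W)$ are $Z(L)/Z(W)$-torsors'' does not go through as written. (The fibre statement itself happens to be true, but what one actually needs is the weaker assertion that $G(W)$ surjects onto $G^{ad}(W)\cap\mathrm{im}\bigl(G(L)\to G^{ad}(L)\bigr)$, and that requires a different argument.)

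Second, and more seriously, matching the two $X_\ast(Z^0)$-torsor structures would only give \emph{injectivity} of the canonical map $G(L)/G(W)\to\bigl(G^{ad}(L)/G^{ad}(W)\bigr)\times_{\pi_1(G^{ad})}\pi_1(G)$. You never address surjectivity: given $g^{ad}\in G^{ad}(L)/G^{ad}(W)$ with $\omega_{G^{ad}}(g^{ad})$ in the image of $\pi_1(G)\to\pi_1(G^{ad})$, why does $g^{ad}$ lift to $G(L)/G(W)$ at all? Since $H^1(L,Z)$ is generally nonzero (again $\mu_p$: $H^1_{\mathrm{fppf}}(L,\mu_p)=L^\times/(L^\times)^p\neq 0$), even lifting to $G(L)$ has an obstruction that must be analyzed. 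This lifting is precisely the content of Kisin's Lemma 1.2.4 (at the $\Q_p$ level, or equivalently of \cite{CKV} Lemma 2.4.1/Corollary 2.4.2 at the Witt-vector level), which the paper cites precisely because this step is not formal. Without it, the proposal establishes at most a monomorphism, not the claimed cartesian property.

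Part (2) of your proposal is fine modulo part (1): once the cartesian square holds for both $G$ and $G_1$ over the common $G^{ad}$, the identification $G(\Q_p)^+/G(\Z_p)\simeq\omega_{G^{ad}}^{-1}(0)\simeq G_1(\Q_p)^+/G_1(\Z_p)$ follows, and this is the same deduction the paper makes.
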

\begin{proof}
Note that non empty fibers of both vertical maps are torsors under $X_\ast(Z_G)^\Gamma$. By \cite{Ki2} Lemma 1.2.4, if $g^{ad}\in G^{ad}(\Q_p)/G^{ad}(\Z_p)$ and $\omega_{G^{ad}}(g^{ad})$ lifts to an element of $\pi_1(G)^\Gamma$, then $g^{ad}$ lies in the image of $G(\Q_p)/G(\Z_p) \ra G^{ad}(\Q_p)/G^{ad}(\Z_p)$. Therefore the above diagram is cartesian.

In particular, we have the bijection $G(\Q_p)^+\simeq G_1(\Q_p)^+$ from (1) for $G$ and $G_1$ as above.
\end{proof}

Let $X$ be a rigid analytic space over a local field $k|\Q_p$.  By \cite{dJ1} section 5 and \cite{KL} 8.4, we have the categories of $\Z_p$-local systems and $\Q_p$-local systems on $X$. Denote them by $\Z_p-Loc_{X}$ and  $\Q_p-Loc_{X}$ respectively. Let $G$ be a reductive group over $\Q_p$. Denote by $\Rep G$ the category of rational representations of $G$. Recall that a $\Q_p$-$G$-local system on $X$ is an exact tensor functor $\Rep G\ra \Q_p-Loc_{X}$ (see \cite{Han} 4.3 for example). If $G$ is moreover unramified, and fix a reductive model $G_{\Z_p}$ of $G$ over $\Z_p$,  then we can define similarly $\Z_p$-$G$-local systems (or better notion: $G_{\Z_p}$-local systems) on $X$. In the following we will take $X=\Fl\ell_{G,\mu}^{adm}$ or $X=\M$.
By construction, we have
\begin{proposition}\label{P:local system}
There exists a $J_b(\Q_p)$-equivariant $\Q_p$-$G$-local system $\mathbb{V}$ on $\Fl\ell_{G,\mu}^{adm}$ such that for any affinoid algebra $(R, R^+)$ over $(L, \mathcal{O}_L)$, $\M(R,R^+)$ is the set of $G_{\Z_p}$-local systems in $\mathbb{V}_{\Spa(R,R^+)}$. In particular, there exists a $J_b(\Q_p)$-equivariant $G_{\Z_p}$-local system $\mathbb{L}$ on $\M$, and the tower $(\M_K)_{K\subset G(\Z_p)}$ is obtained by trivializing $\mathbb{L}$.
\end{proposition}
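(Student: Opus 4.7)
The plan is to bootstrap from the Hodge type case. Fix an auxiliary unramified local Shimura datum of Hodge type $(G_1,[b_1],\{\mu_1\})$ as in Definition \ref{D:local abelian unramified}, so that by Lemma \ref{L:period} the admissible loci coincide, $\Fl\ell_{G,\mu}^{adm} = \Fl\ell_{G_1,\mu_1}^{adm}$, and by construction $\M(G,b,\mu)^+ = \M(G_1,b_1,\mu_1)^+$. Kim's construction reviewed in \S\ref{subsection:generic} produces on $\M(G_1,b_1,\mu_1)$ a $\Q_p$-$G_1$-local system $\mathbb{L}_1$ built from the rational Tate module of the universal $p$-divisible group cut out by the Tate tensors $(t_\alpha)$; via the equivalence between admissible filtered $F$-isocrystals with $G_1$-structure and crystalline $G_1$-representations (Faltings/Hartl, invoked just before Lemma \ref{L:period}) this descends along the \'etale period map $\pi_{G_1,dR}$ to a $J_{b_1}(\Q_p)$-equivariant $\Q_p$-$G_1$-local system $\mathbb{V}_1$ on $\Fl\ell_{G_1,\mu_1}^{adm}$, with $\M(G_1,b_1,\mu_1)$ the moduli space of $G_{1,\Z_p}$-lattices in $\pi_{G_1,dR}^{\ast}\mathbb{V}_1$.

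The second step is to modify centers to pass from $G_1$ to $G$. Push $\mathbb{V}_1$ out along $G_1 \to G^{ad} = G_1^{ad}$ to obtain a $\Q_p$-$G^{ad}$-local system $\mathbb{V}^{ad}$ on the common admissible locus. On the open subspace $\Fl\ell_{G,\mu}^{adm,+}\subset \Fl\ell_{G,\mu}^{adm}$ appearing in Corollary \ref{C:image ab}, the cartesian square of Proposition \ref{P:cartesian} supplies the central data needed to upgrade the $G^{ad}$-structure to a $G$-structure: functorially on $V\in \Rep G$, decompose into $Z_G$-isotypic components, and build each piece from a $G_1$-representation together with a central character dictated by the map $c_{b,\mu}\pi_1(G)^\Gamma \to c_{b^{ad},\mu^{ad}}\pi_1(G^{ad})^\Gamma$. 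This yields a $\Q_p$-$G$-local system $\mathbb{V}^+$ on $\Fl\ell_{G,\mu}^{adm,+}$. Using the transitive $J_b(\Q_p)$-action on $\pi_0$ (compatible with the $J_{b^{ad}}(\Q_p)$-action via $J_b^{ad}\simeq J_{b^{ad}}$ from \S\ref{Subsection:RZab}), spread $\mathbb{V}^+$ to a $J_b(\Q_p)$-equivariant $\Q_p$-$G$-local system $\mathbb{V}$ on all of $\Fl\ell_{G,\mu}^{adm}$. Independence of the choice of $(G_1,b_1,\mu_1)$ follows by the same argument that established the well-definedness of $\breve{\M}(G,b,\mu)$ in the proof of Theorem \ref{T:ab}.

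For the moduli description, reduce via $\M = J_b(\Q_p)\M^+$ to the case of $\M^+ = \M(G_1,b_1,\mu_1)^+$; there by construction $\M^+(R,R^+)$ classifies $G_{1,\Z_p}$-lattices in $\pi_{G_1,dR}^{\ast}\mathbb{V}_1$ mapping to the fixed component, and these correspond bijectively, by Proposition \ref{P:cartesian} applied on the lattice level, to $G_{\Z_p}$-lattices in $\pi_{G,dR}^{\ast}\mathbb{V}$. The ``in particular'' clause is then immediate: set $\mathbb{L} := \pi_{G,dR}^{\ast}\mathbb{V}$, a tautological $G_{\Z_p}$-local system on $\M$; the tower $(\M_K)_{K\subset G(\Z_p)}$ built in \S\ref{subsection:generic} trivializes $\mathbb{L}$ modulo $K$ by construction. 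The main obstacle is the canonical upgrade in the second step: changing centers is not induced by any group morphism $G_1 \to G$, so one must argue functorially in $\Rep G$ and verify the consistency of central and component data on both sides of the period map via Proposition \ref{P:cartesian} and Lemma \ref{L:surjective}.
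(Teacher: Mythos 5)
Your plan starts the same way the paper does (reduce to a Hodge type datum $(G_1,[b_1],\{\mu_1\})$, use $\Fl\ell_{G,\mu}^{adm}=\Fl\ell_{G_1,\mu_1}^{adm}$, and take the étale Tate tensors to cut out a $\Q_p$-$G_1$-local system $\mathbb{V}_1$), but your second step diverges substantially and contains a real gap. You propose to push $\mathbb{V}_1$ out along $G_1\to G^{ad}$ and then lift the resulting $G^{ad}$-local system to a $G$-local system by decomposing $\Rep G$ into $Z_G$-isotypic pieces and dictating the central characters by the map $c_{b,\mu}\pi_1(G)^\Gamma\to c_{b^{ad},\mu^{ad}}\pi_1(G^{ad})^\Gamma$. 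This does not produce a construction: the component data valued in $\pi_1(G)^\Gamma$ do not determine, even fiberwise, a lift of a homomorphism $\pi_1(\Fl\ell^{adm},\ov{x})\to G^{ad}(\Q_p)$ to $G(\Q_p)$ --- passing through $G^{ad}$ loses information that cannot be recovered from $\pi_0$-data alone, and the isotypic pieces $V_\chi$ of a $G$-representation are not $G^{ad}$-representations, so there is no $G^{ad}$-local system to hang them on. You flag this yourself as ``the main obstacle'' but the remedy you gesture at (Proposition \ref{P:cartesian} plus Lemma \ref{L:surjective}) concerns affine Deligne-Lusztig sets and Kottwitz maps; it does not supply a functor $\Rep G\to\Q_p\text{-}Loc$.

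The paper's proof does not go through $G^{ad}$ at all. The key observation you are missing is Lemma \ref{L:cartesian}(2): the subgroups $G_1(\Q_p)^+\subset G_1(\Q_p)$ and $G(\Q_p)^+\subset G(\Q_p)$ defined via $\omega_{G_1},\omega_G$ are canonically \emph{isomorphic as groups}. The paper shows that the monodromy representation $\rho_{\mathbb{V}_1,\ov{x}}:\pi_1(\Fl\ell^{adm},\ov{x})\to G_1(\Q_p)$ attached to $\mathbb{V}_1$ already factors through $G_1(\Q_p)^+$ (this is read off from the sub-tower $(\M_{1K}^+)_K$), and then simply transports it along $G_1(\Q_p)^+\simeq G(\Q_p)^+\hookrightarrow G(\Q_p)$. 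This gives $\rho_{\ov{x}}:\pi_1(\Fl\ell^{adm},\ov{x})\to G(\Q_p)$ directly, hence the $\Q_p$-$G$-local system $\mathbb{V}$, with no lifting problem at all. The $J_b(\Q_p)$-equivariance and the identification $\M_K\simeq\Isom_{\Fl\ell^{adm}}(\ul{G},\mathbb{V})/K$ then follow exactly as you describe for the Hodge type piece, and the ``in particular'' clause is tautological. Note also a minor spatial slip: by the proof of Corollary \ref{C:image ab}, $\Fl\ell_{G,\mu}^{adm,+}=\Fl\ell_{G,\mu}^{adm}$, so there is no proper open subspace on which to first build $\mathbb{V}^+$; the construction on the flag variety side is global, and the ``$+$'' only lives on the Rapoport-Zink side.
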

\begin{proof}
Under the identity $\Fl\ell_{G_1,\mu_1}^{adm}=\Fl\ell_{G,\mu}^{adm}$, we have a $\Q_p-G_1$-local system $\mathbb{V}_1$ on $\Fl\ell_{G,\mu}^{adm}$. Indeed, let $V_p(X^{univ})$ be the rational Tate module of the universal $p$-divisible group $X^{univ}$ over $\M_1$. We have the \'etale Tate tensors \[t_{\alpha,et}: 1\ra V_p(X^{univ})^\otimes\] corresponding to each $t_\alpha$ under the comparison theorem, cf. \cite{Kim1} Theorem 7.1.6. $V_p(X^{univ})$ descends to a $\Q_p$-local system $\V_1$ on $\Fl\ell_{G,\mu}^{adm}$, equipped with the induced  \'etale Tate tensors $t_{\alpha,et}$.  Fix any geometric point $\ov{x}\ra\Fl\ell_{G,\mu}^{adm}$. Let \[\rho_{\V_1, \ov{x}}: \pi_1(\Fl\ell_{G,\mu}^{adm}, \ov{x}) \ra \GL_n(\Q_p)\] be the $p$-adic representation of the (de Jong's) fundamental group $\pi_1(\Fl\ell_{G,\mu}^{adm}, \ov{x}) $ corresponding to $\V_1$, cf. \cite{dJ1} Theorem 4.2. Then as $t_{\alpha,et}$ is invariant under $\pi_1(\Fl\ell_{G,\mu}^{adm}, \ov{x}) $, cf. \cite{Kim1} Theorem 7.1.6, we get a morphism 
\[\rho_{\mathbb{V}_1, \ov{x}}: \pi_1(\Fl\ell_{G,\mu}^{adm}, \ov{x}) \ra G_1(\Q_p)\] 
which thus defines a $\Q_p-G_1$-local system $\mathbb{V}_1$ on $\Fl\ell_{G,\mu}^{adm}$. Moreover, as in the proof of Proposition \ref{P:adm Hodge},
$\M_1(R,R^+)$ can be identified with the set of $\Z_p$-lattices together with tensors $(t_\alpha)$ in $\V_{1\Spa(R,R^+)}$, or equivalently, $[\Isom_{\Fl\ell_{G,\mu}^{adm}}(\ul{G_1}, \mathbb{V}_1)/G_1(\Z_p)](R,R^+)$,  where $\ul{G_1}$ is the trivial $\Q_p$-$G_1$-local system on $\Fl\ell_{G,\mu}^{adm}$.
The tower $(\M_{1K})_{K\subset G_1(\Z_p)}$ is the geometric realization of $\Q_p$-$G_1$-local system $\mathbb{V}_1$ on $\Fl\ell_{G,\mu}^{adm}$ in the sense that \[ \M_{1K}\simeq \Isom_{\Fl\ell_{G,\mu}^{adm}}(\ul{G_1}, \mathbb{V}_1)/K.\] This identification preserves the Hecke actions of $G_1(\Q_p)$ and the actions of $J_{b_1}(\Q_p)$, cf. \cite{Har} Remark 2.7 and the proof of loc. cit. Theorem 7.3 (c) and (d).

The group $\pi_1(\Fl\ell_{G,\mu}^{adm}, \ov{x}) $ acts on $G_1(\Q_p)$ through $\rho_{\mathbb{V}_1, \ov{x}}$. The group $J_{b_1}(\Q_p)$ acts on $G_1(\Q_p)$  as the $\Q_p$-local system $\mathbb{V}_1$ on $\Fl\ell_{G,\mu}^{adm}$ is $J_{b_1}(\Q_p)$-equivariant.

Fix a point $x_0\in \pi_1(G_1)^\Gamma$. Then we have the associated $\breve{\M}_1^+$ and $(\M_{1K}^+)_K$. The tower $(\M_{1K}^+)_K$ defines a subgroup $G_1(\Q_p)^+\subset G_1(\Q_p)$ and a morphism \[\rho_{\mathbb{V}_1, \ov{x}}^+: \pi_1(\Fl\ell_{G,\mu}^{adm}, \ov{x}) \ra G_1(\Q_p)^+.\] By Lemma \ref{L:cartesian} (2), we have $G(\Q_p)^+\simeq G_1(\Q_p)^+$. Therefore, we can define an action of $\pi_1(\Fl\ell_{G,\mu}^{adm}, \ov{x}) $ on $G(\Q_p)$, which commutes with the natural action of $J_b(\Q_p)$. Thus we get a $p$-adic representation  \[\rho_{ \ov{x}}: \pi_1(\Fl\ell_{G,\mu}^{adm}, \ov{x}) \ra G(\Q_p),\] which defines the desired 
$\Q_p$-$G$-local system $\mathbb{V}$ on $\Fl\ell_{G,\mu}^{adm}$. Moreover, for any $K\subset G(\Z_p)$, we have the identification
\[ \M_{K}\simeq \Isom_{\Fl\ell_{G,\mu}^{adm}}(\ul{G}, \mathbb{V})/K,\] where $\ul{G}$ is the trivial $\Q_p$-$G$-local system on $\Fl\ell_{G,\mu}^{adm}$. As above, this identification preserves the Hecke actions of $G(\Q_p)$ and the actions of $J_{b}(\Q_p)$.
\end{proof}
We note Corollary \ref{C:image ab} and Proposition \ref{P:local system} generalize \cite{Har} Theorem 8.4 (EL/PEL type case, but there one can allow ramification) to the abelian type case.

Let $(G,[b],\{\mu\})$ be an unramified local Shimura datum of abelian type. For each open compact subgroup $K\subset G(\Z_p)$, we get the associated Rapoport-Zink space
\[\M_K\simeq \coprod_{\pi_1(G)^\Gamma}\M^+_K.\]
Let $\Delta_G$ be the image of $\pi_1(G)^\Gamma\ra \pi_1(G^{ad})^\Gamma$. This is a finite group. We have an exact sequence
\[1\ra X_\ast(Z_G)^\Gamma\ra \pi_1(G)^\Gamma\ra \Delta_G\ra 1.\]We have the Hecke action of $G(\Q_p)$ on the tower $(\M_K)_K$.
The Hecke action of the central subgroup $Z_G(\Q_p)\subset G(\Q_p)$ stabilizes each $\M_K$. This action of $Z_G(\Q_p)$ is the same of that induced from $J_b(\Q_p)$ when we view $Z_G(\Q_p)\subset J_b(\Q_p)$. This action on \[\begin{split} \M_K&\simeq\coprod_{\pi_1(G)^\Gamma}\M^+_K \\
&=\coprod_{\Delta_G}\coprod_{X_\ast(Z_G)^\Gamma}\M_K^+
\end{split}\] is through the map $Z_G(\Q_p)\ra X_\ast(Z_G)^\Gamma$ and the injection $X_\ast(Z_G)^\Gamma\ra \pi_1(G)^\Gamma$. 

In summary, the tower $(\M_K)_{K\subset G(\Z_p)}$ associated to an unramified local Shimura datum of abelian type can be viewed as the local Shimura varieties thought of in Conjecture \ref{C:RV}. In the next section, we will put these spaces in a more general framework to get some moduli interpretation for each $\M_K$.

\subsection{Infinite level and the Hodge-Tate period map}
Let $(G,[b],\{\mu\})$ be an unramified local Shimura datum of abelian type, and $(\M_K)_K$ be associated tower of Rapoport-Zink spaces of abelian type. Let $\Fl\ell_{G, \mu^{-1}}$ be the $p$-adic flag variety over $L$ associated to $(G, \{\mu^{-1}\})$.
\begin{proposition}\label{P:local perfectoid}
	There exists a preperfectoid space $\M_\infty$ over $L$ such that\[\M_\infty\sim \varprojlim_K\M_K,\] cf. \cite{SW} Definition 2.4.1 for the precise meaning of such formula. Moreover, there exists a Hodge-Tate period map
	\[\pi_{HT}: \M_\infty \ra \Fl\ell_{G, \mu^{-1}},\]which agrees with the period map previously defined in the EL/PEL cases in \cite{SW, CS}.
\end{proposition}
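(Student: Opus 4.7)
The plan is to bootstrap from the Hodge type case, where Scholze--Weinstein \cite{SW} (generalizing Scholze--Caraiani \cite{CS}) have constructed the infinite level preperfectoid space and the Hodge-Tate period map, then propagate the construction along the decomposition $\M_K = J_b(\Q_p)\M_K^+$ used throughout Section \ref{Subsection:RZab} and subsection \ref{subsection:generic}.

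First, fix a unramified local Shimura datum of Hodge type $(G_1,[b_1],\{\mu_1\})$ with $(G_1^{ad},[b_1^{ad}],\{\mu_1^{ad}\})\simeq (G^{ad},[b^{ad}],\{\mu^{ad}\})$, and let $\M_1=\M(G_1,b_1,\mu_1)$ and $\M_{1,K_1}$ be its finite level covers. By the Hodge type case (cf.\ \cite{SW}), there is a preperfectoid space $\M_{1,\infty}$ over $L$ with $\M_{1,\infty}\sim\varprojlim_{K_1}\M_{1,K_1}$ and a $J_{b_1}(\Q_p)\times G_1(\Q_p)$-equivariant Hodge-Tate period map $\pi_{HT,1}:\M_{1,\infty}\to\Fl\ell_{G_1,-\mu_1}$. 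Under the canonical identifications $\Fl\ell_{G,\pm\mu}=\Fl\ell_{G_1,\pm\mu_1}$ (which follow from $(G^{ad},\{\mu^{ad}\})\simeq (G_1^{ad},\{\mu_1^{ad}\})$, since flag varieties only depend on the adjoint data), we may view $\pi_{HT,1}$ as landing in $\Fl\ell_{G,-\mu}$.

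Next, I would restrict to the distinguished component. Let $\M_{1,\infty}^+\subset\M_{1,\infty}$ be the preimage of $\breve{\M}(G_1,b_1,\mu_1)^+$ (equivalently, the inverse limit of the $\M^+_{1,K_1}$ constructed in subsection \ref{subsection:generic}). This is still preperfectoid and inherits the Hodge-Tate period map. Define
\[\M_\infty^+:=\M_{1,\infty}^+,\qquad \M_\infty:=\bigl[J_b(\Q_p)\times \M_\infty^+\bigr]/J_b(\Q_p)^+\simeq\coprod_{J_b(\Q_p)/J_b(\Q_p)^+}\M_\infty^+,\]
mirroring the finite-level definition. Since a disjoint union of preperfectoid spaces is preperfectoid, $\M_\infty$ is preperfectoid over $L$. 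The relation $\M_\infty\sim\varprojlim_K\M_K$ follows copy-of-copy from the corresponding Hodge type relation $\M_{1,\infty}^+\sim\varprojlim_{K_1}\M_{1,K_1}^+$, using that both $(\M_K)_K$ and $(\M_{1,K_1})_{K_1}$ restrict to the same sub-tower $(\M_K^+)_K=(\M_{1,K_1}^+)_{K_1}$ on the chosen component, and that passing to the quotient/disjoint union by $J_b(\Q_p)/J_b(\Q_p)^+$ commutes with the tilde-limit. The Hodge-Tate period map is then built by taking $\pi_{HT}|_{\M_\infty^+}$ to be the restriction of $\pi_{HT,1}$, and extending to all of $\M_\infty$ using the $J_b(\Q_p)$-equivariance of the Hodge-Tate period map in the Hodge type case (applied through the identification $J_b^{ad}\simeq J_{b^{ad}}$).

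Independence of the auxiliary Hodge type datum $(G_1,[b_1],\{\mu_1\})$ should follow exactly as in the proof of Theorem \ref{T:ab}: a second choice $(G_2,[b_2],\{\mu_2\})$ gives a canonical identification $\M_{1,\infty}^+\simeq \M_{2,\infty}^+$ (since the Hodge type infinite level spaces at a fixed component are determined by local data depending only on the adjoint side, via the crystalline/\'etale comparison and the local systems of Proposition \ref{P:local system}), and the Hodge-Tate map agrees under this identification because it only sees $\{\mu^{ad}\}$. The main technical obstacle will be this independence statement together with the verification that the resulting $\pi_{HT}$ is genuinely well-defined globally, i.e.\ that the $J_b(\Q_p)$-action used to glue $\M_\infty$ from copies of $\M_\infty^+$ is compatible with the action used to extend $\pi_{HT,1}|_{\M_\infty^+}$; this compatibility reduces to checking that the map $J_b(\Q_p)\to J_{b_1}(\Q_p)^{ad}\simeq J_{b^{ad}}(\Q_p)$ is equivariant in the expected way, and one recovers the EL/PEL agreement asserted in the statement by specializing the Hodge type case, where $\pi_{HT}$ coincides with the Scholze--Weinstein/Caraiani--Scholze map by construction.
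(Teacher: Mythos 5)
Your proposal follows essentially the same strategy as the paper: handle the Hodge type case first, restrict to the distinguished sub-tower $\M_K^+=\M_{1,K_1}^+$, build $\M_\infty^+$, and glue by the $J_b(\Q_p)$-action. The one place where you are too quick is the Hodge type case itself, which is not a black box you can simply cite as done in \cite{SW}: \cite{SW} constructs $\M_\infty$ and $\pi_{HT}$ only in the EL/PEL ($\GL_n$) setting, and \cite{CS} treats global Shimura varieties, not Hodge type Rapoport--Zink spaces. The paper's proof therefore breaks this step into two pieces — it cites Kim (\cite{Kim1} Prop.~7.6.1) for the existence of the Hodge type preperfectoid space $\M_{1,\infty}$, and then fixes a local Hodge embedding $(G_1,[b_1],\{\mu_1\})\hookrightarrow(\GL_n,[b'],\{\mu'\})$ so that the $\GL_n$ Hodge--Tate map of \cite{SW} is available, arguing as in \cite{CS} section 2 that the composite $\M_{1,\infty}\hookrightarrow\M(\GL_n,b',\mu')_\infty\to\Fl\ell_{\GL_n,-\mu'}$ factors through $\Fl\ell_{G_1,-\mu_1}$. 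Once you replace your citation of \cite{SW} with this two-step argument (Kim for the space, $\GL_n$-embedding plus \cite{CS} for the period map), your treatment of the abelian type step via $\M_\infty^+$ and $J_b(\Q_p)$-equivariance matches the paper and is if anything more explicit, including the discussion of well-definedness and independence of the auxiliary datum which the paper leaves implicit.
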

\begin{proof}
If $(G,[b],\{\mu\})$ is of Hodge type, the existence of the preperfectoid space $\M_\infty$ over $L$ such that $\M_\infty\sim \varprojlim_K\M_K$ is proved in \cite{Kim1} Proposition 7.6.1. Fix an embedding $(G,[b],\{\mu\})\hookrightarrow (\GL_n,[b'],\{\mu'\})$ with $\{\mu'\}$ minuscule. We have the associated preperfectoid space $\M(\GL_n,b',\mu')_\infty$ over $L$ such that $\M(\GL_n,b',\mu')_\infty\sim \varprojlim_{K'}\M(\GL_n,b',\mu')_{K'}$. The Hodge-Tate period map \[\pi_{HT}: \M(\GL_n,b',\mu')_\infty\ra \Fl\ell_{\GL_n,(\mu')^{-1}} \] is defined in \cite{SW} 7.1. Arguing as \cite{CS} section 2, we get that the composition \[ \M_\infty\hookrightarrow \M(\GL_n,b',\mu')_\infty\ra \Fl\ell_{\GL_n,(\mu')^{-1}} \]factors through $\Fl\ell_{G,\mu^{-1}}$. In particular we get
\[\pi_{HT}: \M_\infty \ra \Fl\ell_{G,\mu^{-1}}.\]
Now assume that we are in the general case.
As $J_b(\Q_p)$ acts on $|\M_\infty|:=\varprojlim_{K}|\M_K|$, it suffices to prove that there exist a preperfectoid space $\M_\infty^+$ over $L$ such that\[\M_\infty^+\sim \varprojlim_K\M_K^+,\]and a Hodge-Tate period map \[\pi_{HT}^+: \M_\infty^+\ra \Fl\ell_{G,\mu^{-1}}.\] This follows from the Hodge type case.
\end{proof}
The following corollary is clear now.
\begin{corollary}
There exists a sub preperfectoid space $\M_\infty^+\subset \M_\infty$ over $L$, which is stable under $G(\Q_p)^+$, such that\[\M_\infty^+\sim \varprojlim_K\M_K^+, \quad \M_\infty=G(\Q_p)\M_\infty^+.\]
\end{corollary}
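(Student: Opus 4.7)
The plan is to define $\M_\infty^+$ as the sub object of $\M_\infty$ cut out by the fiber of the continuous map $\omega_G : \M_\infty \to c_{b,\mu}\pi_1(G)^\Gamma$ obtained by composing the projection $\M_\infty \to \M$ with the map $\omega_G : \M \to c_{b,\mu}\pi_1(G)^\Gamma$ from subsection 3.4 restricted to the generic fiber. Equivalently, for any $K \subset G(\Z_p)$ the inclusions $\M_K^+ \subset \M_K$ are compatible under the transition maps (since for $K' \subset K$ the finite \'etale map $\M_{K'} \to \M_K$ by construction sends $\M_{K'}^+$ into $\M_K^+$), and $\M_\infty^+$ is the common pullback along $\M_\infty \to \M_K$ for all $K$. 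Since $\M_K^+ \subset \M_K$ is open and closed (being a union of connected components indexed by one fiber of $\omega_G$), the subspace $\M_\infty^+$ is open and closed in $\M_\infty$, hence inherits the preperfectoid structure from Proposition \ref{P:local perfectoid}.

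Next I would verify the limit relation $\M_\infty^+ \sim \varprojlim_K \M_K^+$. Recall that $\sim$ in the sense of \cite{SW} Definition 2.4.1 involves comparing underlying topological spaces and a density condition on structure sheaves. From $\M_\infty \sim \varprojlim_K \M_K$ (Proposition \ref{P:local perfectoid}) and the fact that $\M_\infty^+$ (respectively $\M_K^+$) is the preimage of a fixed point of $\omega_G$, the restriction of this equivalence to the open and closed subspace $\M_\infty^+$ yields $\M_\infty^+ \sim \varprojlim_K \M_K^+$; the density condition is preserved under passage to open and closed subspaces.

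For the stability claim: by its very definition in subsection \ref{subsection:generic}, $G(\Q_p)^+ \subset G(\Q_p)$ is the Hecke stabilizer of the sub tower $(\M_K^+)_K \subset (\M_K)_K$. Thus for each $g \in G(\Q_p)^+$ the Hecke isomorphisms $\M_K \xrightarrow{\sim} \M_{gKg^{-1}}$ restrict to isomorphisms $\M_K^+ \xrightarrow{\sim} \M_{gKg^{-1}}^+$, and passing to the inverse limit produces a self-action of $G(\Q_p)^+$ on $\M_\infty^+$.

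Finally, to prove $\M_\infty = G(\Q_p) \M_\infty^+$, I would use the identity $(\M_K)_K = G(\Q_p)(\M_K^+)_K$ established at the end of subsection \ref{subsection:generic}, together with the bijection $\omega_G : G(\Q_p)/G(\Q_p)^+ \xrightarrow{\sim} \pi_1(G)^\Gamma$ from Lemma \ref{L:surjective}(1) and Lemma \ref{L:cartesian}. A set of coset representatives translates $(\M_K^+)_K$ across all fibers of $\omega_G : (\M_K)_K \to \pi_1(G)^\Gamma$; since the surjectivity of $\omega_G$ on $\M_K$ from Proposition \ref{P: surjective} passes to the inverse limit on underlying topological spaces, the Hecke translates $g \cdot \M_\infty^+$ for $g$ ranging over $G(\Q_p)/G(\Q_p)^+$ exhaust $\M_\infty$. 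The main point requiring attention is checking that the $\sim$-relation, which is not literally an equivalence of adic spaces but the weaker notion of \cite{SW} Definition 2.4.1, behaves well under restriction to open and closed subspaces and under the Hecke translates; this is essentially formal but should be spelled out carefully.
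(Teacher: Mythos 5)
Your argument is correct and matches what the paper intends: the paper offers no proof, asserting only that the corollary "is clear" after Proposition \ref{P:local perfectoid}, whose proof already constructs $\M_\infty^+$ from the Hodge-type datum. You recover that same $\M_\infty^+$ from $\M_\infty$ as the open-and-closed preimage of a fixed fiber of $\omega_G$, and the preperfectoid property, the $\sim$-relation, the $G(\Q_p)^+$-stability, and the decomposition $\M_\infty=G(\Q_p)\M_\infty^+$ all pass from the finite-level facts $(\M_K)_K = G(\Q_p)(\M_K^+)_K$ to the limit exactly as you describe.
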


\section{Generic fibers of Rapoport-Zink spaces as moduli of local $G$-shtukas}\label{section:shtuka}
In this section, we work mainly on generic fibers.
We want to explain that the generic fibers of the formal schemes $\breve{\M}(G,b,\mu)$, associated to unramified local Shimura data of abelian type $(G,[b],\{\mu\})$, can be viewed as moduli spaces fo local $G$-shtukas in mixed characteristic in the sense of Scholze\footnote{By \cite{Sch4, SW17}, the same should be true even for the formal schemes $\breve{\M}(G,b,\mu)$!}, cf. \cite{Sch2}. We will work in the more general context of Conjecture \ref{C:RV}. The first few subsections will be a brief review of works of Fargues \cite{F3, FF} and Scholze \cite{Sch2}. The reader familiar with these can go directly to subsection \ref{Section:diamond}. 

\subsection{The Fargues-Fontaine curve and $G$-bundles}
The Fargues-Fontaine curve $X_{F, E}$ is associated to a datum $(F, E)$, where $E$ is  a local field with finite residue field $\F_q$ and $F|\F_q$ is a perfectoid field of characteristic $p$. For our purpose, we set $E=\Q_p$, and denote simply $X_{F, \Q_p}$ as $X_F$. It has several incarnations.
\subsubsection{The adic curve}
The adic curve $X_F$ admits the following adic uniformization \[X_F=Y_F/\phi^\Z,\]where $Y_F=\Spa(W(\Ol_F))\setminus V(p[\varpi_F])$, with $\varpi_F\in F$ satisfying $0<|\varpi_F|<1$. The action of the Frobenius $\phi$ on the Witt vectors is given by
\[\phi(\sum_n[x_n]p^n)=\sum_n[x_n^p]p^n, \quad \forall\, \sum_n[x_n]p^n\in W(\Ol_F).\]It induces a totally discontinuous action on $Y_F$.

 Suppose now that $F$ is algebraically closed. Then there is a unique non analytic point $x_k\in \Spa(W(\Ol_F))$. Set $\Y=\Y_F=\Spa(W(\Ol_F))\setminus\{x_k\}$. There exists a surjective continuous map $\kappa: \Y\ra \R_{\geq 0}\cup\{\infty\}$ defined by
 \[\kappa(x)=\frac{\log|[\varpi_F](\wt{x})|}{\log |p(\wt{x})|},\]where $\wt{x}$ is the unique maximal generalization of $x$, cf. \cite{Sch2} 12.2. For any $I\subset \R_{\geq 0}\cup\{\infty\}$, we denote $\Y_I=\kappa^{-1}(I)$. Then $Y:=Y_F=\Y_{(0,\infty)}$. 
 
 Let $I\subset [0,\infty]$ be an interval of the form $[r,\infty)$ or $[r,\infty]$.
 Recall that a $\phi$-module over $\Y_I$ is a pair $(\E, \phi_\E)$, where $\E$ is a vector bundle over $\Y_I$ and $\phi_\E: \phi^\ast\E|_{\Y_I}\ra \E$ is an isomorphism, cf. \cite{Sch2} Definition 13.2.1.
 It follows that $\phi$-modules over $\Y_{(0,\infty)}$ are the same as vector bundles over $X:=X_F$.

\subsubsection{The algebraic curve}
There is a natural line bundle $\Ol(1)$ on $X$, corresponding to the $\phi$-module on $\Y_{(0,\infty)}$ whose underlying line bundle is trivial and for which $\phi$ is $p^{-1}\phi$. Set $\Ol(n)=\Ol(1)^{\otimes n}$, and \[P=\bigoplus_{n\geq 0}H^0(X,\Ol(n)).\]We have 
\[H^0(X, \Ol(n))=\Ol(Y)^{\phi=p^n}.\]Let \[X^{sch}=\Proj(P).\] By \cite{FF}, this is a one dimensional noetherian regular scheme over $\Q_p$. 
There exists a morphism of ringed spaces
\[X\lra X^{sch},\]and $X$ may be viewed as the analytification of $X^{sch}$ in some generalized sense.

\begin{remark}
Using the theory of diamond developed in \cite{Sch2}, the curve admits yet another version: the diamond curve
\[X^\diamond=(\Spa (F)\times \Spa(\Q_p)^\diamond)/\phi^\Z,\]where $\phi=Frob_F\times Id$. We will not use this version in the following.
\end{remark}

Let $\Bun_{X^{sch}}$ and $\Bun_X$ be the categories of vector bundles on $X^{sch}$ and $X$ respectively. The morphism $X\lra X^{sch}$ induces a GAGA functor
\[\Bun_{X^{sch}}\lra \Bun_X.\]
\begin{theorem}[\cite{KL, F2}]
The GAGA functor induces an equivalence of categories \[\Bun_{X^{sch}}\st{\sim}{\lra} \Bun_X.\]
\end{theorem}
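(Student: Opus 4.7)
The plan is to follow the classical pattern of a GAGA-type theorem, reducing the equivalence to a cohomology comparison together with the classification of vector bundles on both sides. First I would establish fully faithfulness by showing that for any coherent sheaf $\Fc$ on $X^{sch}$, the natural map $H^0(X^{sch},\Fc)\ra H^0(X,\Fc^{an})$ is an isomorphism. For the line bundles $\Ol(n)$ this is tautological from the very construction $X^{sch}=\Proj(P)$ with $P_n=H^0(X,\Ol(n))=\Ol(Y)^{\phi=p^n}$. The case of a general vector bundle $\E$ then follows by twisting $\E$ by $\Ol(n)$ for $n$ large so that $\E(n)$ is generated by global sections, and writing $\E$ as the kernel of a morphism between finite direct sums of $\Ol(m_i)$'s. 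Passing from $\Hom$-groups to full $\Hom$ of vector bundles reduces to computing $H^0$ of the internal $\mathcal{H}om$-bundle, which is itself a vector bundle, so the same argument applies.

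Next, for essential surjectivity, I would appeal to the Harder-Narasimhan formalism on $\Bun_X$ developed by Fargues-Fontaine: any vector bundle $\E$ on $X$ admits a canonical filtration $0=\E_0\subset\E_1\subset\cdots\subset \E_r=\E$ with semistable graded pieces of strictly decreasing slopes. The key classification input (proved in \cite{FF} on the analytic side and in \cite{KL} on the algebraic/schematic side) states that over an algebraically closed $F$, every semistable bundle of slope $\lambda=d/h$ with $\gcd(d,h)=1$ is isomorphic to $\Ol(\lambda)^{\oplus m}$, where $\Ol(\lambda)$ is obtained as the pushforward of the degree $d$ line bundle along the degree $h$ étale cover of $X$ coming from the unramified extension of degree $h$ of $\Q_p$. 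These standard objects exist both algebraically and analytically and correspond under the GAGA functor. To recover an arbitrary $\E$ from its graded pieces, one lifts the successive extension classes in $\mathrm{Ext}^1$ using the agreement of higher cohomology, which again reduces to a $\phi$-module computation on $Y$.

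The main obstacle is the classification theorem for semistable bundles, which is the deep content of the Fargues-Fontaine theorem and rests on $p$-adic Hodge theory inputs: the fundamental exact sequence relating $B_{cris}^{\phi=p^n}$ to $B_{dR}/\Fil^n$, and the resulting computations of $H^0(X,\Ol(\lambda))$ and $H^1(X,\Ol(\lambda))$. A secondary technical point is the comparison of $\mathrm{Ext}^1$ groups needed to lift extensions; here one uses that $X$ and $X^{sch}$ are in a strong sense one-dimensional (the scheme $X^{sch}$ is a Dedekind scheme, and on $X$ one has vanishing of $H^i$ for $i\geq 2$ on coherent sheaves), so that the spectral sequence relating $\mathrm{Ext}$ and cohomology degenerates and reduces everything to the $H^0$ and $H^1$ comparisons already treated. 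Once these two inputs are in place, the equivalence of categories is essentially formal.
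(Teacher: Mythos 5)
The paper does not prove this theorem; it is quoted from \cite{KL} and \cite{F2}, so there is no internal proof to compare against. Judged against those sources, your sketch takes a genuinely different and, as written, logically hazardous route.

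Your full-faithfulness step is the standard Serre GAGA pattern --- comparison of $H^0$ via resolutions by finite sums of $\Ol(m_i)$, applied to internal $\mathcal{H}om$ bundles --- and is sound, since $X^{sch}=\Proj(P)$ carries $\Ol(1)$ as an ample line bundle by construction.

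The problem is essential surjectivity, where you invoke the classification of semistable bundles on both sides of the GAGA functor. Your attributions are inverted: \cite{FF} prove the classification on the \emph{schematic} curve $X^{sch}$, while \cite{KL} obtain the analytic description via $\phi$-modules over $\wt{\mathcal{R}}_F$. More seriously, in the Fargues--Fontaine development the analytic classification is \emph{deduced} from GAGA applied to the schematic one, so using it as an input to GAGA is circular in that framework. Your argument becomes non-circular only if one first proves the classification for $\phi$-modules over $\wt{\mathcal{R}}_F$ directly, as \cite{KL} do; but once that is available, the clean route to GAGA is the one \cite{KL} actually take, namely identifying both $\Bun_{X^{sch}}$ and $\Bun_X$ with $\phi$-modules over the Robba ring --- the latter identification being precisely the theorem quoted immediately after this one in the paper. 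Fargues's proof in \cite{F2} is likewise a direct comparison (Beauville--Laszlo gluing across $X\setminus\{\infty\}$ and the completion at $\infty$) and does not pass through the classification. A further small gap: you use $H^i=0$ for $i\geq 2$ on the adic curve and an $H^1$/$\mathrm{Ext}^1$ comparison to lift extensions without justification; these are true but establishing them is already the substance of a direct GAGA proof, so leaning on the classification does not actually save those computations. You should instead prove GAGA by a direct cohomology or $\phi$-module comparison and obtain the analytic classification as a corollary, not a hypothesis.
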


There is another way to describe vector bundles on $X$. Consider the Robba ring
\[\wt{\mathcal{R}}_F=\varinjlim_r H^0(\Y_{(0,r]},\Ol_{\Y_{(0,r]}} ).\]The Frobenius $\phi$ induces an action on $\wt{\mathcal{R}}_F$.
Recall a $\phi$-module over $\wt{\mathcal{R}}_F$ is a finite free $\wt{\mathcal{R}}_F$-module $M$ equipped with a $\phi$-linear automorphism.
\begin{theorem}[\cite{KL}, Theorem 6.3.12]
There is an equivalence of categories 
\[\Bun_X\simeq \{\phi-\tr{modules over}\, \wt{\mathcal{R}}_F \}.\]
\end{theorem}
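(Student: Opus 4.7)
The plan is to factor the desired equivalence through an intermediate category of $\phi$-modules on $Y := \Y_{(0,\infty)}$, exploiting the presentation $X = Y/\phi^{\Z}$ and then localizing near the boundary $\kappa = 0$.

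First, since $\phi$ acts freely and totally discontinuously on $Y$ with $X = Y/\phi^{\Z}$, faithfully flat descent along the \'etale cover $Y \to X$ identifies $\Bun_X$ with the category of $\phi$-equivariant vector bundles on $Y$, i.e.\ pairs $(\E,\phi_{\E})$ where $\E \in \Bun_Y$ and $\phi_\E : \phi^\ast \E \xrightarrow{\sim} \E$. This is exactly the category of $\phi$-modules over $\Y_{(0,\infty)}$ in the sense recalled just before the statement, so the first reduction is:
\[ \Bun_X \;\simeq\; \{\phi\text{-modules over } \Y_{(0,\infty)} \}. \]

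Next, I would pass from $\Y_{(0,\infty)}$ to the small annuli $\Y_{(0,r]}$. Restriction gives a functor from $\phi$-modules on $\Y_{(0,\infty)}$ to $\phi$-modules on $\Y_{(0,r]}$ for any $r$ (where $\phi$-module means the same thing, using that $\phi$ contracts $\Y_{(0,r]}$ into itself, once one recalls that $\kappa \circ \phi = p^{-1}\kappa$ or its reciprocal depending on convention). Conversely, given a $\phi$-module $(\E_0,\phi_{\E_0})$ on $\Y_{(0,r]}$, one extends it to all of $\Y_{(0,\infty)}$ by transport of structure along the Frobenius: the translates $\phi^n(\Y_{(0,r]})$ for $n \geq 0$ exhaust $\Y_{(0,\infty)}$, and the isomorphism $\phi_{\E_0}$ gives the descent data needed to glue the pullbacks on overlaps. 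One checks this is quasi-inverse to restriction, obtaining an equivalence with $\varinjlim_r \{\phi\text{-modules on } \Y_{(0,r]}\}$.

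Finally, I would identify $\phi$-modules on $\Y_{(0,r]}$ with finite projective $H^0(\Y_{(0,r]},\Ol)$-modules equipped with a semi-linear $\phi$-action, then take the colimit. The space $\Y_{(0,r]}$ is a Stein adic space (in the relevant sense of \cite{KL}), so the global sections functor establishes an equivalence between vector bundles on $\Y_{(0,r]}$ and finite projective modules over $H^0(\Y_{(0,r]},\Ol)$. Passing to the colimit over $r \to 0$ gives finite projective modules over $\wt{\mathcal{R}}_F = \varinjlim_r H^0(\Y_{(0,r]},\Ol)$, with the $\phi$-structure transporting to a semi-linear automorphism, i.e.\ a $\phi$-module over $\wt{\mathcal{R}}_F$. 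Composing the three equivalences yields the theorem.

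The main obstacle is the gluing step in the second reduction: one must verify that a $\phi$-module over $\wt{\mathcal{R}}_F$, representing a $\phi$-module on some germ $\Y_{(0,r]}$, actually extends (and uniquely) to a $\phi$-module on $\Y_{(0,\infty)}$. This requires that finite projective modules over $\wt{\mathcal{R}}_F$ descend from the germ to actual finite projective modules on a genuine annulus $\Y_{(0,r]}$ for some $r>0$, and that the extension by Frobenius translates is compatible on all overlaps. Both facts use the sheaf property of vector bundles and the careful analysis of the rings $H^0(\Y_{(0,r]},\Ol)$ as Fr\'echet-Stein-type algebras; these are essentially the technical heart of \cite{KL}, and I would invoke their results (on the equivalence between $\phi$-bundles and $\phi$-modules in the Robba setting) to complete this step rather than re-prove it from scratch.
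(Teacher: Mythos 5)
Your proof follows essentially the same three-step route sketched in the paper: identifying $\Bun_X$ with $\phi$-modules on $\Y_{(0,\infty)}$ via the uniformization $X=Y/\phi^{\Z}$, spreading a $\phi$-module from a small annulus $\Y_{(0,r]}$ to all of $Y$ by Frobenius translates, and descending a $\phi$-module over $\wt{\mathcal{R}}_F$ to $\wt{\mathcal{R}}_F^r$ for $r$ small enough. You present the chain in the reverse direction and correctly flag the descent from the germ to a genuine annulus as the technical heart of Kedlaya–Liu's argument, which is exactly the point the paper highlights.
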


The idea for the proof is that any $\phi$-module over $\wt{\mathcal{R}}_F$ is defined over \[\wt{\mathcal{R}}_F^r:=H^0(\Y_{(0,r]},\Ol_{\Y_{(0,r]}} )\] for some $r$ small enough. This can be spread to a $\phi$-module over $Y_F=\Y_{(0,\infty)}$ via pullback under Frobenius. Giving a $\phi$-module over $\Y_{(0,\infty)}$ is the same giving a vector bundle over $X_F$ by the uniformization $X_F=\Y_{(0,\infty)}/\phi^\Z$.

Let $\phi-\Mod_L$ be the category of $F$-isocrystals over $\ov{\F}_p$, where as before $L=W(\ov{\F}_p)_\Q$. For any $(D,\phi)\in \phi-\Mod_L$, we can construct a vector bundle $\E(D,\phi)$ on $X^{sch}$ by
\[ \E(D,\phi)=\Proj\Big(\bigoplus_{n\geq 0}(D\otimes_L \Ol(Y))^{\phi\otimes \phi=p^n}\Big). \]
\begin{theorem}[\cite{FF}]
The functor $\E(-): \phi-\Mod_L\ra \Bun_{X^{sch}}$ is essentially surjective.
\end{theorem}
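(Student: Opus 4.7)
\medskip

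\noindent\textbf{Proof proposal.} The plan is to reduce essential surjectivity to the Dieudonn\'e--Manin classification of $F$-isocrystals together with the Fargues--Fontaine classification of vector bundles on $X^{sch}$. Since both the source and target are additive and the functor $\E(-)$ preserves direct sums, and since over $\ov{\F}_p$ every $F$-isocrystal decomposes canonically by slopes
\[
(D,\phi) \;\simeq\; \bigoplus_{\lambda\in\Q} D_\lambda^{m_\lambda},
\]
where $D_\lambda$ is the simple isocrystal of slope $\lambda=r/s$ with $\gcd(r,s)=1$, $s>0$, it suffices to show that the family $\{\E(D_\lambda)\}_{\lambda\in\Q}$ contains a representative of every isomorphism class of stable vector bundle on $X^{sch}$.

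The first step is to compute $\E(D_\lambda)$ explicitly. For $\lambda=r/s$ as above, $D_\lambda$ has an $L$-basis $e_1,\dots,e_s$ with $\phi(e_i)=e_{i+1}$ ($i<s$) and $\phi(e_s)=p^r e_1$. Using the graded ring $\bigoplus_{n\geq 0}(D_\lambda\otimes_L\Ol(Y))^{\phi\otimes\phi=p^n}$ and the computation $H^0(X,\Ol(n))=\Ol(Y)^{\phi=p^n}$, one identifies $\E(D_\lambda)$ with the bundle denoted $\Ol_X(-\lambda)$ in \cite{FF}; in particular $\E(D_\lambda)$ is stable of rank $s$ and degree $-r$, hence slope $-\lambda$. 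One should also verify the functoriality conventions so that this agrees with Fargues--Fontaine's sign choice.

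The second (and main) step is to invoke the Fargues--Fontaine classification theorem: every vector bundle on $X^{sch}$ is isomorphic to a direct sum $\bigoplus_i \Ol_X(\lambda_i)$ with $\lambda_i\in\Q$. Granting this, every isomorphism class in $\Bun_{X^{sch}}$ is of the form $\bigoplus_i \E(D_{-\lambda_i})\simeq \E\big(\bigoplus_i D_{-\lambda_i}\big)$, which proves essential surjectivity.

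The hard part is of course the classification theorem itself, which I would not reprove from scratch but simply cite from \cite{FF}. Its key ingredients are: (i) the computation of $H^0$ and $H^1$ of the line bundles $\Ol_X(n)$, in particular the vanishing $H^0(X,\Ol_X(n))=0$ for $n<0$ and the existence of many global sections for $n\gg 0$; (ii) the fact that $X^{sch}$ is a Dedekind scheme whose closed points parametrize untilts of $F$, so one has a rich supply of modifications of vector bundles at closed points; and (iii) the crucial splitting result that the Harder--Narasimhan filtration of any vector bundle on $X$ splits, which is proved by showing that $\tr{Ext}^1(\Ol_X(\lambda),\Ol_X(\lambda'))=0$ when $\lambda<\lambda'$ via the cohomology computation in (i). This splitting, combined with the rigidity of semistable bundles on $X$ of a given slope (they are direct sums of copies of $\Ol_X(\lambda)$), yields the classification and hence the theorem.
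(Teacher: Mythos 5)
The paper does not give a proof of this statement: it is quoted as a black box from Fargues--Fontaine, with no argument supplied in the body of the paper. So there is no ``paper proof'' to compare against.

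That said, your sketch is correct, and it is the standard way essential surjectivity is deduced in \cite{FF}. The logical skeleton---Dieudonn\'e--Manin decomposition of $\phi$-$\Mod_L$ into simple isocrystals $D_\lambda$, additivity of $\E(-)$, the identification $\E(D_\lambda)\simeq\Ol_X(-\lambda)$, and the Fargues--Fontaine classification $\Bun_{X^{sch}}\simeq\{\bigoplus_i\Ol_X(\lambda_i)\}$---is exactly right, and the list of ingredients you give for the classification theorem (cohomology of $\Ol_X(n)$, the Dedekind structure of $X^{sch}$ and modifications at closed points, splitting of the Harder--Narasimhan filtration via the $\tr{Ext}^1$ vanishing) matches the structure of the actual proof. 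Two small remarks. First, there is no circularity in citing the classification theorem here: essential surjectivity of $\E(-)$ is indeed a direct corollary of the classification together with the slope computation $\E(D_\lambda)=\Ol_X(-\lambda)$, and in \cite{FF} these are proved in that order. Second, your caveat about sign conventions is warranted: with the normalization used in this paper, $\Ol(1)$ corresponds to the rank-one $\phi$-module whose Frobenius is $p^{-1}\phi$, so one must be careful whether $\E(D_\lambda)$ has slope $\lambda$ or $-\lambda$; the conclusion is unaffected since every $\lambda\in\Q$ occurs, but a clean exposition should pin the sign down explicitly.
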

Therefore, the composite $\E(-): \phi-\Mod_L\ra \Bun_{X^{sch}}\ra \Bun_X$ is also essentially surjective.

Let $G$ be a connected reductive group over $\Q_p$. We have the following equivalent definitions of a $G$-bundle on $X$ (or equivalently on $X^{sch}$):
\begin{enumerate}
	\item an exact tensor functor $\Rep G\ra \Bun_X$, where as before $\Rep G$ is the category of rational algebraic representations of $G$,
	\item a $G$-torsor on $X$ locally trivial for the \'etale topology.
\end{enumerate}

Recall that an $F$-isocrystal with $G$-structure over $\ov{\F}_p$ is an exact tensor functor \[\Rep G\lra \phi-\Mod_L.\] If $b\in G(L)$, it then defines an $F$-isocystal with a $G$-structure \[\begin{split} M_b: \Rep G&\lra \phi-\Mod_L\\ V&\longmapsto (V_L, b\sigma).\end{split}\] Its isomorphism class only depends on the $\sigma$-conjugacy class $[b]\in B(G)$ of $b$. Conversely, by Steinberg's theorem any $F$-isocrystal with $G$-structure arises in this way. Thus $B(G)$ is the set of isomorphism classes of $F$-isocrystals with $G$-structure, cf. \cite{RR} Remarks 3.4 (i). For $b\in G(L)$, let $\E_b$ be the composition of the above functor $M_b$ and \[\E(-):  \phi-\Mod_L\ra \Bun_{X^{sch}}\simeq \Bun_X.\] In this way, the set $B(G)$ also classifies $G$-bundles on $X$. In fact, we have
\begin{theorem}[\cite{F3}]\label{T: G-bundles}
	Assume that $F$ is algebraically closed. Then
	there is a bijection of sets
	\[\begin{split} B(G)&\st{\sim}{\lra} H^1_{et}(X, G) \\ 
  [b]&\longmapsto [\E_b]. \end{split}	\]
\end{theorem}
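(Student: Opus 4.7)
\medskip

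\noindent\textbf{Proof proposal.} The plan is to follow Fargues's Tannakian strategy, reducing to the case $G=\mathrm{GL}_n$ (already handled by Fargues--Fontaine) and then using a Harder--Narasimhan formalism for $G$-bundles to match the Newton decomposition of $B(G)$ with a ``reduction to a Levi'' decomposition of $H^1_{et}(X,G)$.

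First I would record the case $G=\mathrm{GL}_n$: by \cite{FF}, every vector bundle on $X$ decomposes as a direct sum $\bigoplus_i \Ol(\lambda_i)$ with $\lambda_i\in\Q$, and via Dieudonn\'e--Manin this matches isomorphism classes of $F$-isocrystals, i.e.\ $B(\mathrm{GL}_n)$; the bijection here is exactly $[b]\mapsto [\E_b]$. Second, I would reduce the general statement to the quasi-split case: any $G$-bundle $\E$ on $X$ is an inner form, and one can replace $G$ by its quasi-split inner form without changing either side (both $B(G)$ and $H^1_{et}(X,G)$ are invariant under inner twists in the expected way).

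Third, the heart of the argument is a Harder--Narasimhan type reduction for $G$-bundles on $X$. For each cocharacter/slope datum $\nu$ (an element of the rational cocharacter lattice of a maximal torus, modulo Weyl group, with standard positivity) one defines $\nu$-semistable $G$-bundles; the composition with the adjoint representation and the $\mathrm{GL}_n$ case of step~1 produces a canonical HN parabolic $P\subset G$ with Levi $M$, and a canonical reduction $\E_M$ of $\E$ to $M$ which is semistable of slope $\nu$ when pushed to $M^{\mathrm{ad}}$. The Tannakian reformulation of the splitting $\E=\bigoplus\Ol(\lambda_i)$ in the $\mathrm{GL}_n$ case gives both the existence and canonicity of this reduction. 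I would then classify semistable $G$-bundles of fixed slope $\nu$: on the Levi $M$ they become ``basic'', and semistable/basic $G$-bundles are classified by $\pi_1(M)_\Gamma$ via the Kottwitz invariant, matching exactly the set $B(M)_{\mathrm{basic}}$ of basic classes with Newton point $\nu$. Gluing these two layers recovers the Kottwitz parametrization of $B(G)$ by $(\nu,\kappa)$ on the bundle side.

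To conclude, I would check that this parametrization coincides with the explicit map $[b]\mapsto[\E_b]$. Surjectivity is immediate from the construction (every HN reduction of a $G$-bundle lifts to an isocrystal with $G$-structure via Steinberg and Dieudonn\'e--Manin). For injectivity one observes that the isomorphism class of $\E_b$ determines both the Newton point (as slopes of the HN filtration on any faithful $V\in\Rep G$, using step~1) and the Kottwitz invariant $\kappa_G(b)\in\pi_1(G)_\Gamma$ (as the ``degree'' of $\E_b$ for the abelianization), and these two invariants together determine $[b]\in B(G)$ by Kottwitz's theorem. The main obstacle is the HN theory for $G$-bundles on the Fargues--Fontaine curve---in particular establishing the canonicity of the reduction to the HN Levi and the classification of semistable $G$-bundles by $\pi_1(M)_\Gamma$---which is the content of Fargues's paper \cite{F3}; the rest of the argument is essentially Tannakian bookkeeping on top of the Fargues--Fontaine classification.
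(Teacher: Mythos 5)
The paper does not prove Theorem \ref{T: G-bundles}: it is stated as a direct citation to Fargues's paper \cite{F3}, with only the definitions of $\E_b$ and the GAGA/Robba-ring equivalences appearing beforehand, so there is no ``paper's own proof'' to compare against. Your sketch is therefore a blind reconstruction of Fargues's argument, and as such it is broadly faithful: the $\GL_n$ case from Fargues--Fontaine, a Harder--Narasimhan formalism for $G$-bundles on the curve, classification of semistable $G$-bundles by $\pi_1(G)_\Gamma$, and matching against the Kottwitz parametrization of $B(G)$ by the pair $(\nu_G,\kappa_G)$ is indeed the skeleton of \cite{F3}.

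Two of your intermediate steps are looser than they look. First, the reduction to the quasi-split inner form is not available for free: the identification $B(G)\simeq B(G^*)$ is a theorem of Kottwitz (not a definitional unwinding), and the matching statement $H^1_{et}(X,G)\simeq H^1_{et}(X,G^*)$ already presupposes that the automorphism sheaf of a basic $\E_b$ is the expected inner twist of $G$ over $X$ --- which is part of what \cite{F3} proves, not an input to it; Fargues in fact works with arbitrary reductive $G$ throughout rather than making this reduction. Second, reading $\kappa_G(b)$ off $\E_b$ as ``the degree of $\E_b$ for the abelianization'' only recovers the image of $\kappa_G(b)$ in $\pi_1(G^{ab})_\Gamma$, which is a proper quotient of $\pi_1(G)_\Gamma$ whenever $G^{der}$ is not simply connected. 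For the injectivity argument via Rapoport--Richartz you need the genuine invariant in $\pi_1(G)_\Gamma$, which on the bundle side is a first Chern class $c_1^G$ that must be constructed (via $z$-extensions or directly, as Fargues does) rather than read off the abelianization. Finally, you are right to flag that existence and uniqueness of the HN reduction for $G$-bundles, and the classification of the semistable ones, are the real content and cannot be extracted formally from the $\GL_n$ case --- but note that this caveat concedes that the sketch outsources essentially all of the proof to \cite{F3} itself.
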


We discuss briefly the relative version of the above theory.
Let $(R,R^+)$ be a perfectoid affinoid $\F_p$-algebra, and $S=\Spa(R,R^+)$ be the associated perfectoid space. We have an adic space over $\Q_p$:
\[X_S=Y_S/\phi^\Z,\]with $Y_S=Y_{R,R^+}=\Spa(A, A^+)\setminus V(p[\varpi_R])$, where
\[A=W(R^\circ)=\{\sum_{n\geq 0}[x_n]p^n | x_n\in R^\circ\},\quad A^+=\{\sum_{n\geq 0}[x_n]p^n\in A| x_0\in R^+ \},\]and $\varpi_R$ be a pseudo-uniformizer of $R$.
The adic space $X_S$ is the relative version of the Fargues-Fontaine curve. We can also define the scheme
\[X^{sch}_S=\Proj\Big(\bigoplus_{d\geq 0} H^0(X_S, \Ol_{X_S}(d)) \Big).\]Then there exists a map of locally ringed spaces
$X_S\ra X^{sch}_S$. We can define vector bundles on $X_S, X_S^{sch}$ as above, and the relative Robba ring $\wt{\mathcal{R}}_R$. Moreover, we have
\begin{theorem}[\cite{F2, KL} ]
\[Bun_{X^{sch}_S}\simeq Bun_{X_S}\simeq \{\phi-\tr{modules over}\, \wt{\mathcal{R}}_R \}. \]
\end{theorem}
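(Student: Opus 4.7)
I would split the statement into two separate equivalences: first the Robba--ring description $\Bun_{X_S}\simeq \{\phi\text{-modules over }\wt{\mathcal{R}}_R\}$, which follows from the uniformization $X_S=Y_S/\phi^\Z$ by a ``Frobenius spreading'' argument, and then the GAGA-type equivalence $\Bun_{X_S^{sch}}\simeq \Bun_{X_S}$, which is the real content. The two can be handled independently; I would do the Robba side first since it is almost formal.

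For the Robba side, write $Y_S=\Y_{(0,\infty)}=\bigcup_r \Y_{(0,r]}$ and recall $\wt{\mathcal{R}}_R=\varinjlim_r H^0(\Y_{(0,r]},\Ol)$. Given a $\phi$-module $M$ over $\wt{\mathcal{R}}_R$, I would descend $M$ to a finite projective module over $\wt{\mathcal{R}}_R^r:=H^0(\Y_{(0,r]},\Ol)$ for some small $r$ (using that these relative Robba-type rings are stably pseudocoherent, by Kedlaya--Liu), yielding a vector bundle $\E_r$ on $\Y_{(0,r]}$. Since $\phi$ acts on the $\kappa$-coordinate by scaling, it induces an isomorphism $\Y_{(0,r/p]}\st{\sim}{\to}\Y_{(0,r]}$, and iterated Frobenius pullback of $\E_r$ yields compatible extensions over every $\Y_{(0,rp^n]}$, hence a $\phi$-equivariant bundle on $Y_S$; by the uniformization this is a bundle on $X_S$. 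The inverse functor pulls a bundle on $X_S$ back to $Y_S$, restricts to $\Y_{(0,r]}$, takes global sections, and passes to the limit in $r$; functoriality and faithfulness are then straightforward from the $\phi$-equivariance.

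For the GAGA equivalence $\Bun_{X_S^{sch}}\simeq \Bun_{X_S}$, I would imitate Fargues's absolute argument. Analytification is exact; full faithfulness reduces to $H^0(X_S^{sch},\E)\simeq H^0(X_S,\E^{an})$ for every bundle $\E$, which follows because both compute the same Frobenius invariants on $Y_S$ graded by the powers $\Ol(n)$. For essential surjectivity, take $\F\in\Bun_{X_S}$ and perform Beauville--Laszlo glueing across the Cartier divisor cut out by an untilt of $S$, matching the analytic description with algebraic data produced from the Proj construction defining $X_S^{sch}$. The main obstacle will be this last step: one needs $\Ol(1)$ to be relatively ample, or equivalently uniform vanishing $H^1(X_S,\E(n))=0$ for $n\gg 0$ in the base. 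In the absolute case this follows from the Fargues--Fontaine classification of bundles on $X$, which has no relative analogue; here one must instead invoke the Kedlaya--Liu theory of pseudocoherent sheaves on the relative curve, combined with the fact that $S$ is perfectoid (hence sheafy and Tate-acyclic), to control the relevant cohomology uniformly in $S$.
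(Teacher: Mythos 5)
The paper gives no proof of this relative statement---it is only cited from \cite{F2, KL}---and the only proof sketch in the paper is for the \emph{absolute} Robba-side equivalence, where the idea is exactly the one you describe: descend a $\phi$-module over $\wt{\mathcal{R}}$ to $\wt{\mathcal{R}}^r$ for small $r$, spread to all of $Y$ by iterating Frobenius pullback, then pass to the quotient $X=Y/\phi^\Z$. Your Robba-side argument matches that sketch and adapts it correctly to the relative setting, including the observation that $\phi$ rescales the $\kappa$-coordinate, which is what makes the spreading work.

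On the GAGA side your outline (exactness, $H^0$-comparison for full faithfulness, Beauville--Laszlo plus ampleness of $\Ol(1)$ for essential surjectivity) is the right shape, but one logical dependency is reversed. Ampleness of $\Ol(1)$ on the absolute curve $X^{sch}$ does \emph{not} follow from the Fargues--Fontaine classification of bundles; it is established first, via the computation $H^0(X,\Ol(d))=\Ol(Y)^{\phi=p^d}$ together with the fundamental exact sequence, and the classification theorem is then proved \emph{using} ampleness (among much else). Likewise in the relative case, what Kedlaya--Liu prove directly is ampleness of $\Ol(1)$ on $X_S^{sch}$ via their descent and pseudocoherence machinery; the uniform $H^1$-vanishing you invoke is a consequence of ampleness rather than an equivalent reformulation of it (the Serre equivalence requires noetherian finiteness which is not available here). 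This misattribution does not change the structure of your argument---only how you would justify and locate its hardest step in the literature---so the proposal is sound modulo that correction and the unstated details of the Beauville--Laszlo glueing across the untilt divisor.
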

Let $S=\Spa(R,R^+)$ be an affinoid perfectoid space over $\F_p$, and $\varpi_R$ be a pseudo-uniformizer of $R$. We denote
\[\Y_{[0,\infty)}(R,R^+)=\Spa W(R^+)\setminus V([\varpi_R]).\]Then we have a continuous map
\[\kappa: \Y_{[0,\infty)}(R,R^+)\lra [0,\infty),\]the relative version of the map defined previously. With the same notation there, we have
\[Y_{S}=\Y_{(0,\infty)}(R,R^+).\] Let $G$ be a connected reductive group over $\Q_p$. Then as above we can define $G$-bundles on $X_S$, $Y_S=\Y_{ (0,\infty)}(R,R^+)$.  If $G$ is unramified over $\Q_p$, after fixing a reductive model $G_{\Z_p}$ of $G$ over $\Z_p$ we can further define $G$-bundles on $\Y_{[0,\infty)}(R,R^+)$.

If we start with a perfectoid space $S$ over $\Q_p$, then there exits a canonical closed embedding \[x_S: S\hookrightarrow Y_{S^\flat},\] which in turn induces a closed embedding \[x_S: S\hookrightarrow X_{S^\flat},\] cf. \cite{F} 1.4. Here $S^\flat$ is the tilt of $S$ over $\F_p$ in the sense of \cite{Sch1}. Thus we can view $S$ as a Cartier divisor on $X_{S^\flat}$. If $S=\Spa(R,R^+)$ is perfectoid affinoid over $\Q_p$, by \cite{F3} 1.6 we have a corresponding Cartier divisor $D$ on $X_{S^\flat}^{sch}$. The formal completion of $X_{S^\flat}^{sch}$ along $D$ is \[\Spf B^+_{dR, R},\] cf. Proposition 1.33 of \cite{F}.

\subsection{Local $G$-shtukas in mixed characteristic}
Let the notations be as above. From now on,  we assume that $G$ is unramified\footnote{This is not necessary by the methods of \cite{Sch2, Sch3, Sch4, SW17}. Here we restrict to the unramified case to simplify the exposition, which is also sufficient for all what we need.}  over $\Q_p$ and fix a reductive model $G_{\Z_p}$ of $G$ over $\Z_p$.  Let $S=\Spa(R,R^+)$ be an affinoid perfectoid space over $\F_p$, with an
 untilt $S^\sharp$ of $S$. Then there exists a closed embedding $S^\sharp\hookrightarrow \Y_{[0,\infty)}(R,R^+)$.
\begin{definition}[\cite{Sch2} Definition 11.4.1]
A local $G$-shtuka over $S$ with one paw $x: S^\sharp\ra \Y_{[0,\infty)}(R,R^+)$ is a pair $(\E, \phi_\E)$, where
\begin{itemize}
\item	$\E$ is a $G$-bundle over $\Y_{[0,\infty)}(R,R^+)$, 
\item $\phi_\E: \phi^\ast \E\ra \E$ is an isomorphism over $\Y_{[0,\infty)}(R,R^+)\setminus \Gamma_x$, such that along $\Gamma_x$ it is meromorphic. Here $\Gamma_x$ is the image of $x$.
\end{itemize}
\end{definition}
One can then generalize the above notion to define a local $G$-shtuka over a general perfectoid space over $\F_p$.

Let $C$ be a complete algebraically closed extension of $\Q_p$. We have the associated de Rham period ring $B_{dR}^+:=B_{dR,C}^+$ with a fixed uniformizer $\xi\in B_{dR}^+$. Let $B_{dR}=B_{dR}^+[\frac{1}{\xi}], A_{\inf}=W(\Ol_{C^\flat})$.
We have the following various descriptions of local $G$-shtukas with one paw at $C$, in the case $G=\GL_n$.
\begin{theorem}[\cite{Sch2} Proposition 20.1.1; see also \cite{F5}]\label{T:one paw}
	The following categories are equivalent.
	\begin{enumerate}
		\item Shtukas over $\Spa (C^\flat,\Ol_{C^\flat})$ with one paw at $C$.
		\item Pairs $(T, \Xi)$, where $T$ is a finite free $\Z_p$-module, and $\Xi\subset T\otimes B_{dR}$ is a $B_{dR}^+$-lattice.
		\item Breuil-Kisin-Fargues modules over $A_{\inf}$.
		\item Quadruples $(\Fc, \Fc',\beta,T)$, where $\Fc$ and $\Fc'$ are vector bundles on the Fargues-Fontaine curve $X=X_{C^\flat}$ , and $\beta: \Fc|_{X\setminus\{\infty\}}\st{\sim}{\ra}\Fc'_{X\setminus\{\infty\}}$ is an isomorphism, where $\Fc$ is trivial, and $T\subset H^0(X,\Fc)$ is a  $\Z_p$-lattice.  
	\end{enumerate}
	If the paw is minuscule, i.e. we have
	\[ \xi(T\otimes_{\Z_p} B_{dR}^+)\subset \Xi\subset T\otimes_{\Z_p}B_{dR}^+ ,\]then these categories are equivalent to the category of $p$-divisible groups over $\Ol_C$.
\end{theorem}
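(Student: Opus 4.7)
My proof plan: the four equivalences share a common backbone, namely the interplay between the three pieces of geometry attached to $A_{\inf}=W(\Ol_{C^\flat})$: the punctured formal spectrum $\Y_{[0,\infty)}(C^\flat,\Ol_{C^\flat})=\Spa(A_{\inf})\setminus V([\varpi])$, the Fargues--Fontaine curve $X=X_{C^\flat}$ obtained by dividing $\Y_{(0,\infty)}$ by $\phi^{\Z}$, and the formal disc at the untilt which carries $B_{dR}^+$. I would organize the argument so that the Breuil--Kisin--Fargues description in (3) is the hub, and (1), (2), (4) are each shown to correspond to different ways of encoding the same data: either as a $\phi$-structure on all of $\Y_{[0,\infty)}$ (this is (1) once one extends across the non-analytic point), as the combination of an \'etale realisation plus lattice at the paw (this is (2)), or as a modification of $G$-bundles on $X$ equipped with a trivialisation of the source (this is (4)).

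For (1) $\Leftrightarrow$ (3), a shtuka $(\E,\phi_\E)$ with one paw at $x:S^\sharp\hookrightarrow \Y_{[0,\infty)}$ is by definition a vector bundle on $\Y_{[0,\infty)}$ with a meromorphic $\phi$-structure. The key input is the Kedlaya--Liu extension theorem, which guarantees that a vector bundle on $\Spa(A_{\inf})\setminus V(p,[\varpi])$ extends uniquely to a finite projective $A_{\inf}$-module (using that $A_{\inf}$ is two-dimensional regular at the non-analytic point, so depth $\ge 2$ suffices); pushing $\E$ across this point produces the finite projective $A_{\inf}$-module $M$, and the meromorphic $\phi_\E$ at $\Gamma_x$ becomes precisely the BKF condition that $M[1/\phi(\xi)]\simeq \phi^*M[1/\phi(\xi)]$, where $\xi$ is a generator of the kernel of $A_{\inf}\twoheadrightarrow \Ol_C$ (so the pole is bounded by a power of $\xi$). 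Conversely, restricting a BKF module back to $\Y_{[0,\infty)}$ gives a shtuka with one paw at the single untilt.

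For (3) $\Leftrightarrow$ (2), given a BKF module $M$, set $T=(M\otimes_{A_{\inf}}W(\Ol_{C^\flat}[1/[\varpi]]))^{\phi=1}$, which is a finite free $\Z_p$-module by Fargues' classification of finite free $\Z_p$-modules inside $\phi$-modules on $\Y_{(0,\infty)}$, and let $\Xi=M\otimes_{A_{\inf}}B_{dR}^+$ inside $T\otimes_{\Z_p}B_{dR}=M\otimes_{A_{\inf}}B_{dR}$; recovering $M$ from $(T,\Xi)$ uses Beauville--Laszlo glueing (the completion along the paw is $B_{dR}^+$ and the complement carries $T\otimes A_{\inf}[1/\xi]$ with its canonical $\phi$-structure). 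For (3) $\Leftrightarrow$ (4), restrict $M$ to $\Y_{(0,\infty)}$ and descend under $\phi^{\Z}$ to obtain a vector bundle $\Fc$ on $X$; the triviality of $\Fc$ is Fargues--Fontaine's classification together with $\phi$-equivariance, $\Fc'$ is the modification at $\infty$ obtained by replacing $M\otimes B_{dR}^+$ by the extended lattice produced by $\phi_\E$, the isomorphism $\beta$ is the identity on the complement, and $T=H^0(X,\Fc)\cap (\text{integral structure})$ is the $\Z_p$-lattice of $\phi=1$ sections.

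The minuscule case is the deepest input and the place I expect the main obstacle: the condition $\xi(T\otimes B_{dR}^+)\subset \Xi\subset T\otimes B_{dR}^+$ says that the modification has pole bounded by $\xi$, equivalently that the cocharacter of the paw is minuscule of type $(1^r,0^{n-r})$. One then invokes the Scholze--Weinstein classification of $p$-divisible groups over $\Ol_C$ by pairs $(T,\Xi)$ with exactly this bound, or equivalently Fargues' theorem that $p$-divisible groups over $\Ol_C$ are the same as minuscule BKF modules. The hard part is this last step, for which I would simply cite \cite{SW} (and its refinement by Fargues) rather than reprove: the rest of the theorem is essentially a formal unwinding once one has the extension theorem across the non-analytic point of $\Spa(A_{\inf})$ and the GAGA/equivalence statements recalled earlier in the section.
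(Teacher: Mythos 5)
The paper does not contain a proof of this theorem; it is imported verbatim from Scholze's Berkeley lecture notes (\cite{Sch2} Proposition 20.1.1, see also \cite{F5}), so there is no in-paper argument to compare against. What I can do is assess your sketch against the proof in the cited sources.

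Your hub-and-spoke organisation around (3) is reasonable and broadly matches the structure of the proof in the literature, and you are right to defer the minuscule case to Scholze--Weinstein. However, there is a real gap in your $(1)\Leftrightarrow(3)$ step. A shtuka lives on $\Y_{[0,\infty)}=\Spa(A_{\inf})\setminus V([\varpi])$, which is strictly smaller than $\Spa(A_{\inf})\setminus\{x_k\}=\Spa(A_{\inf})\setminus V(p,[\varpi])$: the arc at $\kappa=\infty$, namely $V([\varpi])\setminus\{x_k\}$, is not in the domain of $\E$. The Kedlaya(-Liu) algebraization theorem only gets you from $\Spa(A_{\inf})\setminus\{x_k\}$ to finite projective $A_{\inf}$-modules, so before invoking it you must first extend $\E$ across $V([\varpi])\setminus\{x_k\}$. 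This first extension is not formal: it uses the fact that away from $\Gamma_x$ the Frobenius $\phi_\E$ is an honest isomorphism, so near $\kappa=\infty$ the $\phi$-module is trivializable and one extends via the \'etale realization $T$. This is a substantial step in Scholze's proof, not subsumed by the depth-$\ge 2$ extension across $x_k$, and skipping it leaves the construction of $M$ unjustified. Two smaller imprecisions: the BKF condition as defined in the paper has the pole at $\xi$ (you wrote $\phi(\xi)$, a convention mismatch you should reconcile); and in $(3)\Leftrightarrow(2)$ the $(\xi)$-adic completion of $A_{\inf}$ is not $B_{dR}^+$ (one must invert $p$ first), so your Beauville--Laszlo gluing should be formulated either over $A_{\inf}[1/p]$ or, more cleanly, on the Fargues--Fontaine curve where the completed local ring at $\infty$ genuinely is $B_{dR}^+$.
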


Recall that a Breuil-Kisin-Fargues module over $A_{\inf}$ is a pair $(M,\phi_M)$, where $M$ is a finite free $A_{\inf}$-module and $\phi_M: (\phi^\ast M)[\xi^{-1}]\st{\sim}{\ra} M[\xi^{-1}]$ is an isomorphism, cf. \cite{Sch2} Definition 11.4.2.

\subsection{Moduli of local $G$-shtukas in mixed characteristic}
We have the following generalizations of Definitions \ref{D:RV} and \ref{D:local Shimura morphism}.
\begin{definition}\label{D:local shtuka datum}
\begin{enumerate}
\item A local shtuka datum is a triple $(G,[b],\{\mu\})$, where
\begin{itemize}
	\item $G$ is a connected reductive group over $\Q_p$,
	\item $\{\mu\}$ is a conjugacy class of cocharacters $\mu: \G_m\ra G_{\ov{\Q}_p}$ over $\ov{\Q}_p$,
	\item $[b]\in B(G,\mu)\subset B(G)$.
\end{itemize}
\item Let $(G_1,[b_1],\{\mu_1\}), (G_2,[b_2],\{\mu_2\})$ be two local shtuka data. A morphism
 \[(G_1,[b_1],\{\mu_1\})\ra (G_2,[b_2],\{\mu_2\})\] is a homomorphism $f: G_1\ra G_2$ of algebraic groups sending $([b_1],\{\mu_1\})$ to $([b_2],\{\mu_2\})$.
\end{enumerate}
\end{definition}
\begin{remark}
\begin{enumerate}
\item By definition, a local Shimura datum $(G,[b],\{\mu\})$ is a local shtuka datum with $\{\mu\}$ minuscule. For a local shtuka datum $(G,[b],\{\mu\})$, the simple factors of $G^{ad}$ can be groups of arbitrary type.
\item In \cite{Sch2}, several $\{\mu\}$'s can be allowed, as in the classical function field case, cf. \cite{Var}.
\item In particular, if $(G,[b],\{\mu\})$ is a local shtuka datum, and $G\ra G'$ is a homomorphism of reductive groups over $\Q_p$, we get the induced $[b'], \{\mu'\}$ such that $(G', [b'], \{\mu'\})$ is also a local shtuka datum.
\item We refer the reader to \cite{HV} for local function field case, where $\{\mu\}$ is replaced by a bound $\hat{Z}$ in the sense of loc. cit. Definition 2.1 (b). 
\end{enumerate}
\end{remark}

Let $(G,[b],\{\mu\})$ be a local shtuka datum. As before, we have the associated local reflex field $E$, and the reductive group $J_b$ over $\Q_p$. Let $F$ be an algebraically closed perfectoid field of characteristic $p$. By Theorem \ref{T: G-bundles} we have a $G$-bundle on $X_{F}$, which is the same as a $\phi$-$G$-module $(\E_b, \phi_{\E_b})$ on $Y_F$, well defined up to isomorphism. We will use freely the notion of diamond in the following, cf. \cite{Sch2} for basic definitions and properties.
We define a functor on the category of perfectoid affinoid algebras over $\ov{\F}_p$ as follows.
\begin{definition}[\cite{Sch2} Definition 19.3.3]
Let $(R,R^+)$ be a perfectoid affinoid $\ov{\F}_p$-algebra together with a map $x: \Spa(R,R^+)^\diamond\ra \Spa(\breve{E})^\diamond$ (which is the same as giving an untilt of $R$ over $\breve{E}$). Let $\Sht (G,b,\mu)\ra \Spa(\breve{E})^\diamond$ be the functor such that for any $\big((R,R^+),x\big)$, 
\[\Sht (G,b,\mu)\big((R,R^+),x\big)=\{\big((\E,\phi_\E),\iota\big)\}/\simeq\]where
\begin{itemize}
	\item $(\E,\phi_\E)$ is a $G$-shtuka over $\Y_{[0,\infty)}(R,R^+)$ with one paw at $x$, such that $(\E,\phi_\E)$ is bounded by $\{\mu\}$.
	\item $\iota: (\E,\phi_\E)|_{[\rho,\infty)}\st{\sim}{\ra}(\E_b,\phi_{\E_b})|_{[\rho,\infty)}$ is an isomorphism for some sufficiently large $\rho$.
\end{itemize}
\end{definition}

The main theorem of \cite{Sch2} is 
\begin{theorem}[Scholze, \cite{Sch2} Theorem 20.3.1]\label{thm:scholze-shtuka}
The functor $\Sht (G,b,\mu)$ is represented by a diamond over $\Spa(\breve{E})^\diamond$.
\end{theorem}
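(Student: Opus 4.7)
The plan is to follow Scholze's strategy: reduce to $G=\GL_n$, reinterpret the shtuka moduli as modifications of bundles on the relative Fargues--Fontaine curve, and identify this with a Schubert cell inside the $B_{dR}^+$-affine Grassmannian, which is representable by a (spatial) diamond.

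First, I would fix a faithful representation $\rho: G\hookrightarrow \GL_n$ cut out by a collection of tensors $(s_\alpha)$ in some lattice, as in Subsection 3.3. By the morphism formalism in Definition \ref{D:local shtuka datum}, this yields a map of functors $\Sht(G,b,\mu)\to \Sht(\GL_n,b',\mu')$ that realizes the source as a closed sub-functor: a $\GL_n$-shtuka lifts to a $G$-shtuka exactly when the tensors $(s_\alpha)$ are preserved by both the Frobenius structure $\phi_\E$ and the framing $\iota$, and this is a closed condition on any affinoid perfectoid test ring. Thus I reduce to representing $\Sht(\GL_n,b',\mu')$ as a diamond, and then cutting out the $G$-locus as a closed subdiamond.

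Second, for $\GL_n$ I would use the relative analogue of Theorem \ref{T:one paw}(4). Given an affinoid perfectoid $(R,R^+)$ over $\ov{\F}_p$ with untilt $R^\sharp$ over $\breve{E}$, the framing $\iota$ lets one extend $(\E,\phi_\E)$ across $\kappa^{-1}(\infty)$, and the action of $\phi$ on $\Y_{(0,\infty)}(R,R^+)$ descends the pair to an object on the relative Fargues--Fontaine curve $X_{R,R^+}$. By the Kedlaya--Liu relative GAGA equivalence $\{\phi\text{-modules over }\wt{\mathcal{R}}_R\}\simeq \Bun_{X_{R,R^+}}$, what one obtains is a modification of the fixed bundle $\E_b$ on $X_{R,R^+}$, meromorphic along the Cartier divisor cut out by $S^\sharp\hookrightarrow X_{R,R^+}$, of generic type bounded by $\mu$. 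By a Beauville--Laszlo type gluing on the relative curve, such a modification is equivalent to the local datum at $S^\sharp$ of a $B_{dR}^+(R^\sharp)$-lattice $\Xi$ inside the $B_{dR}(R^\sharp)$-stalk of $\E_b$, of relative position $\leq \mu$ with respect to the distinguished lattice coming from $\E_b$. This is precisely a point of the Schubert variety $Gr_{\GL_n,\leq\mu'}^{B_{dR}^+}$ in the $B_{dR}^+$-affine Grassmannian over $\Spa(\breve{E})^\diamond$, which is known to be representable as a proper spatial diamond. Pulling back along the closed embedding $Gr_{G,\leq\mu}^{B_{dR}^+}\hookrightarrow Gr_{\GL_n,\leq\mu'}^{B_{dR}^+}$ then yields representability of $\Sht(G,b,\mu)$.

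The main obstacle is making each of these steps work uniformly in families over an arbitrary affinoid perfectoid base: the pointwise statements of Theorem \ref{T:one paw} have to be upgraded to honest family equivalences, which requires the full Kedlaya--Liu descent for $\phi$-modules, Fargues--Fontaine gluing across $\kappa^{-1}(\infty)$ in families, and relative Beauville--Laszlo at the Cartier divisor $S^\sharp$. Granted these, representability as a diamond is reduced to that of the $B_{dR}^+$-affine Grassmannian, and all the additional structure (the paw map to $\Spa(\breve{E})^\diamond$, the $J_b(\Q_p)$-action via automorphisms of $\E_b$, the $G(\Q_p)$-action by changing the framing $\iota$) comes out of the construction for free.
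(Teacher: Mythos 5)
Your Tannakian reduction to $\GL_n$ and the appeal to the $B_{dR}^+$-affine Grassmannian correctly reflect the general strategy, and the paper itself simply cites \cite{Sch2} Theorem 20.3.1 with a remark that Scholze's $\GL_n$ argument extends. But there is a genuine gap in your central step.

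You pass from the shtuka $(\E,\phi_\E)$ over $\Y_{[0,\infty)}(R,R^+)$ to a $\phi$-module on $\Y_{(0,\infty)}$, descend to the relative curve $X_{R,R^+}$, and then identify the resulting modification of $\E_b$ with a $B_{dR}^+$-lattice via Beauville--Laszlo, concluding that a shtuka with framing ``is precisely a point of $Gr_{\GL_n,\le\mu'}^{B_{dR}^+}$.'' This forgets the integral data of the shtuka at $\kappa^{-1}(0)$, which is invisible on $X_{R,R^+}=\Y_{(0,\infty)}/\phi^\Z$. That data is exactly the $\Z_p$-lattice $T$ in Theorem \ref{T:one paw}(4) (in general, the $G_{\Z_p}$-structure on the associated $\Q_p$-$G$-local system), and it is \emph{not} determined by the modification. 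What your construction produces is the period morphism $\pi_{dR}\colon \Sht(G,b,\mu)\to Gr_{G,\le\mu}^{B_{dR}^+}$, an \'etale map onto the open admissible locus $Gr_{G,\le\mu}^{B_{dR}^+,adm}$ whose geometric fibers are the discrete sets $G(\Q_p)/G(\Z_p)$ --- not an isomorphism, and not even a monomorphism. So representability does not ``reduce to that of the $B_{dR}^+$-affine Grassmannian'' by taking a sub-diamond; the shtuka space is an infinite \'etale cover of (an open in) the Grassmannian, and the nontrivial content of Scholze's theorem is precisely to control this cover.

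The missing argument is the one supplied by \cite{Sch2}: over $Gr_{G,\le\mu}^{B_{dR}^+,adm}$ the universal bundle is trivial (cf.\ Proposition \ref{P:admissible}), which produces a $\Q_p$-$G$-local system and hence a pro-\'etale $\underline{G(\Q_p)}$-torsor $\Sht(G,b,\mu)_\infty\to Gr_{G,\le\mu}^{B_{dR}^+,adm}$; each finite level $\Sht(G,b,\mu)_K=\Sht(G,b,\mu)_\infty/K$ is then \'etale over the admissible locus, and one uses that the Schubert cell is a spatial diamond together with stability of diamonds under \'etale morphisms and quotients by profinite groups. Your outline asserts representability after the (incorrect) Grassmannian identification and never addresses this torsor/descent step; the preparatory reductions (closed embedding for the $G$-locus via tensors, Kedlaya--Liu GAGA in families, relative Beauville--Laszlo at $S^\sharp$) are all reasonable, but they do not by themselves close the argument.
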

(In \cite{Sch2} the theorem is proved for the case $G=\GL_n$, but one sees immediately that the proof given there works also for the general case. See also \cite{SW17}.)

We want to discuss period maps in this setting. Consider the $B_{dR}^+$-affine Grassmannian $Gr_{G}^{B_{dR}^+} $ over $\Q_p$. This is the functor associating to any perfectoid affinoid $\Q_p$-algebra $(R,R^+)$ the set \[Gr_{G}^{B_{dR}^+}(R,R^+)=\{ (\E, \beta)\}/\simeq \]where $\E$ is a
$G$-torsor over $\Spec B_{dR, R}^+$, and $\beta$ is a trivialization of $\E\otimes_{B_{dR, R}^+}B_{dR, R}$. One can check that $Gr_{G}^{B_{dR}^+}$ is the \'etale sheaf associated to the presheaf
\[(R,R^+)\mapsto G(B_{dR, R})/G(B_{dR, R}^+). \]
Consider the case $(C, C^+)$ with $C|\Q_p$ an algebraically closed perfectoid field. Then we have the Cartan decomposition
\[G(B_{dR, C})=\coprod_{\mu\in X_\ast(T)_+}G(B_{dR, C}^+)\mu(\xi)^{-1} G(B_{dR, C}^+), \]
where $T\subset B\subset G$ is a fixed choice of maximal torus inside a Borel subgroup $B$ of $G$, and $X_\ast(T)_+\subset X_\ast(T)$ is the associated set of dominant cocharacters. Fix a conjugacy class of cocharacters $\{\mu\}$ with the dominant representative $\mu$. Let $E$ be the field of definition of $\{\mu\}$. Consider $Gr_{G,\leq\mu}^{B_{dR}^+}\subset Gr_{G}^{B_{dR}^+}\otimes E$ the sub functor such that
\[ Gr_{G,\leq\mu}^{B_{dR}^+}(R,R^+)=\{ (\E, \xi)\in Gr_{G}^{B_{dR}^+}(R,R^+) |\, \Inv(\E_x, \E_{0x})\leq \mu^{-1}, \, \forall x \in \Spa(R,R^+) \}.\]
This is the analogue of the classical Schubert variety associated to $\{\mu\}$ in the setting of $B_{dR}^+$-affine Grassmannian $Gr_{G}^{B_{dR}^+} $.
There is an action of $J_b(\Q_p)$ on $Gr_{G,\leq \mu}^{B_{dR}^+}$. By abuse of notation, we still denote $Gr_{G,\leq \mu}^{B_{dR}^+}\ra \Spa (\breve{E})^\diamond$ the sheaf base changed over $\Spa(\breve{E})^\diamond$. By Theorem 21.3.6 of \cite{Sch2}, this is a diamond.

There exists an \'etale morphism of diamonds over $\Spa(\breve{E})^\diamond$ (cf. \cite{Sch2} 20.4)
 \[\pi_{dR}: \Sht (G,b,\mu)\lra Gr_{G,\leq\mu}^{B_{dR}^+}.\] When $G=\GL_n$, this morphism can be defined by using Theorem \ref{T:one paw} (4).  Let
 \[Gr_{G,\leq\mu}^{B_{dR}^+,adm} \subset Gr_{G,\leq \mu}^{B_{dR}^+} \] be the image of $\pi_{dR}$. This is an open sub-diamond, and we call it the admissible locus. We have the following description of the admissible locus.
 \begin{proposition}[\cite{Sch2} 20.5, \cite{KL} ]\label{P:admissible}
 Let $(\E,\beta)\in Gr_{G,\leq\mu}^{B_{dR}^+}(R,R^+)$. Then \[(\E,\beta)\in Gr_{G,\leq\mu}^{B_{dR}^+,adm}(R,R^+)\] if and only if one of the following equivalent conditions holds: for any representation $V\in \Rep G$ such that the center of $G$ is mapped into the center of $\GL(V)$, with the associated vector bundle $(\E_V,\beta_V)$,
 \begin{enumerate}
 \item $\forall x\in \Spa(R,R^+)$ the vector bundle $\E_{V,x}$ is semi-stable of slope 0;
 \item $\phi$-module of $\E_V$ is trivial;
 \item $\E_V$ extends to a $\phi$-module over $\wt{\mathcal{R}}_R^{int}$, where $\wt{\mathcal{R}}_R^{int}=\varinjlim_{r}H^0(\Y_{[0,r]},\Ol_\Y)$ is the integral Robba ring.
 \end{enumerate}
 \end{proposition}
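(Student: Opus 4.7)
The plan is to interpret admissibility geometrically as triviality of a modified $G$-bundle on the relative Fargues-Fontaine curve, and then verify the three equivalences via Fargues' classification of $G$-bundles together with Kedlaya-Liu slope theory. First I would unpack the period morphism $\pi_{dR}$: a local $G$-shtuka $((\E,\phi_\E),\iota)$ over $\Spa(R,R^+)$ with paw at the untilt $\Spa(R^\sharp)$ produces, by descent along $\phi$ on $\Y_{[\rho,\infty)}(R,R^+)$, a $G$-bundle $\Fc$ on the relative Fargues-Fontaine curve $X_{R,R^+}$, and the trivialization $\iota$ identifies $\Fc$ with $\E_b$ away from the paw. The datum $(\E,\beta) \in Gr^{B_{dR}^+}_{G,\leq\mu}(R,R^+)$ records exactly the modification type of this identification at the paw. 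Crucially, the shtuka $\E$ itself, being a $G$-bundle on $\Y_{[0,\infty)}(R,R^+)$, descends on $X_{R,R^+}$ to the \emph{trivial} $G$-bundle. Hence a point $(\E,\beta)$ lies in the image of $\pi_{dR}$ precisely when the modification of $\E_b$ prescribed by $(\E,\beta)$ yields a trivial $G$-bundle on $X_{R,R^+}$.

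Second I would reduce triviality of this modified bundle to a Tannakian check on associated vector bundles. By Fargues' classification (Theorem \ref{T: G-bundles}), $G$-bundles on each geometric fiber of $X_{R,R^+}$ are classified by $B(G)$, and the trivial class is detected by pushforward along any representation $V \in \Rep G$ satisfying the hypothesis that the center of $G$ maps into the center of $\GL(V)$, since under this hypothesis the induced map $B(G) \to B(\GL(V))$ separates the trivial class from the rest (it is precisely central twists that are the potential obstruction and that force one to impose this condition on $V$ rather than allow arbitrary representations). Thus the $G$-bundle is trivial iff the associated vector bundle $\E_V$ is trivial on $X_{R,R^+}$ for every such $V$.

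Third I would verify the chain (1) $\Leftrightarrow$ (2) $\Leftrightarrow$ (3) for the vector bundle $\E_V$ by appealing to Kedlaya-Liu in the relative setting. The equivalence (1) $\Leftrightarrow$ (2) is the relative slope filtration theorem: a vector bundle on $X_{R,R^+}$ that is fiberwise semi-stable of slope $0$ is, in the Kedlaya-Liu dictionary, a $\phi$-module on the relative Robba ring $\wt{\mathcal{R}}_R$ that is isoclinic of slope $0$ at every point, and any such bundle is forced to descend to a trivial $\phi$-module (equivalently, corresponds to a $\Q_p$-local system that is pro-étale locally trivial). The equivalence (2) $\Leftrightarrow$ (3) is essentially tautological: a trivial $\phi$-module over $\wt{\mathcal{R}}_R$ has a $\phi$-invariant basis which extends uniquely across the locus $r = 0$ to give a $\phi$-module over $\wt{\mathcal{R}}_R^{int}$, and conversely any extension over $\wt{\mathcal{R}}_R^{int}$ admits $\phi$-invariant sections near $r = 0$ that produce the required trivializing basis.

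The main obstacle is the relative aspect: these equivalences are classical at a geometric point (due to Fargues-Fontaine), but to upgrade them to the perfectoid $(R,R^+)$ setting, and in particular to deduce that $Gr^{B_{dR}^+,adm}_{G,\leq\mu}$ is open, one needs the full strength of Kedlaya-Liu's relative $p$-adic Hodge theory combined with the diamond formalism of \cite{Sch2}. A secondary subtlety is the Tannakian reduction: the central-character condition on $V$ is essential, since dropping it would allow representations that factor through $G^{ad}$ and whose associated bundles detect only the adjoint $B(G^{ad})$-class rather than the full $B(G)$-class.
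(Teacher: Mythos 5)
The paper does not prove this proposition; it states it and cites \cite{Sch2} 20.5 and \cite{KL}. So there is no paper proof to compare against — your attempt is a reconstruction of material the author treats as background. At the level of strategy your outline is consistent with what one finds in those sources: interpret admissibility as membership in the image of $\pi_{dR}$, recast that via Beauville--Laszlo gluing as a triviality condition on the modified $G$-bundle on the relative Fargues--Fontaine curve, reduce to vector bundles Tannakianly, and then apply the Kedlaya--Liu slope-filtration machinery (semi-stable of slope $0$ at every point $\Leftrightarrow$ \'etale $\varphi$-module $\Leftrightarrow$ extends over the integral Robba ring). That chain of equivalences for vector bundles is exactly \cite{KL}.

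However, there is a concrete error in your secondary remark about the central-character hypothesis. You write that dropping the condition ``would allow representations that factor through $G^{ad}$ and whose associated bundles detect only the adjoint $B(G^{ad})$-class.'' But a representation factoring through $G^{ad}$ has $Z_G$ acting \emph{trivially} on $V$, and the trivial action certainly lands in $Z(\GL(V))$; so such $V$ already \emph{satisfy} the hypothesis. The condition ``$Z_G \to Z(\GL(V))$'' rules out the opposite situation: representations $V$ that decompose as $V_1 \oplus V_2$ with distinct central characters of $Z_G$. For those, the slope of $\E_V$ at a point is not intrinsic (the isotypic summands can have different slopes even for a basic class), so ``semi-stable of slope $0$'' is the wrong test. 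By Schur's lemma every irreducible $V$ automatically has the central property, and the real Tannakian point — which you gloss over — is that testing on this restricted class of representations still suffices to force both the Newton point and the Kottwitz point of the modified bundle to be trivial, and hence (via the $G$-structure carried by the shtuka integrally at $r=0$) forces triviality of the $G$-bundle itself. You should also be careful distinguishing ``trivial'' from ``pro-\'etale locally trivial / semi-stable of slope $0$ fiberwise'': over a general $(R,R^+)$ one only gets the latter, and the passage from the fiberwise statement to the \'etale $\varphi$-module statement is the relative Kedlaya--Liu input, not a formality.
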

 The action of $J_b(\Q_p)$ on $Gr_{G,\leq \mu}^{B_{dR}^+}$ stabilizes the open sub diamond $Gr_{G,\leq\mu}^{B_{dR}^+,adm}$. The period morphism \[\pi_{dR}: \Sht (G,b,\mu)\lra Gr_{G,\leq\mu}^{B_{dR}^+,adm}\] is then $J_b(\Q_p)$-equivariant.
 
 We have the following definition of local systems with additional structures on the diamond $Gr_{G,\leq\mu}^{B_{dR}^+,adm}$, similar to the classical situation.
\begin{definition}
Let $X$ be a diamond, and $G$ be a reductive group over $\Q_p$. Denote by $\Rep G$ the category of rational representations of $G$, and $\Q_p-Loc_{X}$ the category of $\Q_p$-local systems on $X$. Then a $\Q_p$-$G$-local system on $X$ is a tensor functor $\Rep G\ra \Q_p-Loc_{X}$. If $G$ is moreover unramified, and fix a reductive model (over $\Z_p$) $G_{\Z_p}$ of $G$,  then we can define similarly $\Z_p$-$G$-local systems (or better notion: $G_{\Z_p}$-local systems) on $X$.
\end{definition} 
By \cite{KL} Corollary 8.7.10, there exists a $J_b(\Q_p)$-equivariant $\Q_p$-$G$-local system $\mathbb{V}$ over $Gr_{G,\leq\mu}^{B_{dR}^+,adm}$, which realizes $\Sht (G,b,\mu)$ as the functor of the set of $\Z_p$-$G$-local systems in $\mathbb{V}$. In particular, there exists a $J_b(\Q_p)$-equivariant $\Z_p$-$G$-local system $\mathbb{L}$ over $\Sht (G,b,\mu)$. 

Scholze's theorem above (Theorem \ref{thm:scholze-shtuka}) in fact tells us more information.
More precisely, we get a tower of diamonds \[\Big(\Sht (G,b,\mu)_K\Big)_{K\subset G(\Z_p)}\] indexed by open compact subgroups $K\subset G(\Z_p)$ with $\Sht (G,b,\mu)_{G(\Z_p)}=\Sht (G,b,\mu)$, and the group $G(\Q_p)$ acts on this tower $\Big(\Sht (G,b,\mu)_K\Big)_{K\subset G(\Z_p)}$ by Hecke correspondences.  Let $(R,R^+)$ be a perfectoid affinoid $\ov{\F}_p$-algebra together with a map $x: \Spa(R,R^+)^\diamond\ra \Spa(\breve{E})^\diamond$. Then \[\Sht (G,b,\mu)_K\big((R,R^+),x\big)=\{\big((\E,\phi_\E),\iota, \alpha\big)\}/\simeq\]where
\begin{itemize}
	\item $(\E,\phi_\E)$ is a $G$-shtuka over $\Y_{[0,\infty)}(R,R^+)$ with one paw at $x$, such that $(\E,\phi_\E)$ is bounded by $\{\mu\}$.
	\item $\iota: (\E,\phi_\E)|_{[\rho,\infty)}\st{\sim}{\ra}(\E_b,\phi_{\E_b})|_{[\rho,\infty)}$ is an isomorphism for some sufficiently large $\rho$.
	\item $\alpha$ is a $K$-orbit of an isomorphism $\mathbb{L}(\E, \phi_\E)\simeq \mathbb{L}_0$, where $\mathbb{L}(\E, \phi_\E)$ is the $G$-local system associated to $(\E, \phi_\E)$, $\mathbb{L}_0$ is the trivial $G$-local system over $\Y_{[0,\infty)}(R,R^+)$.
\end{itemize}
As \[J_b(\Q_p)\subset Aut(\E_b,\phi_{\E_b}),\]cf. \cite{F} 2.5, 
$J_b(\Q_p)$ acts each $\Sht (G,b,\mu)_K$ by modifying $\iota$, and these actions are compatible when $K$ varies. When the context is clear, we will simply denote $\Sht (G,b,\mu)_K$ by $\Sht_K$.
The cover 
\[\pi_K: \Sht (G,b,\mu)_K\lra \Sht (G,b,\mu)\] is obtained by trivializing $K$-level structures, which is finite \'etale. By trivializing all of $\mathbb{L}$ we get a pro-\'etale cover
\[\pi_\infty: \Sht (G,b,\mu)_\infty\lra \Sht (G,b,\mu).\]We have the following moduli interpretation for $\Sht (G,b,\mu)_\infty$.
Let $(R,R^+)$ be a perfectoid affinoid $\ov{\F}_p$-algebra together with a map $x: \Spa(R,R^+)^\diamond\ra \Spa(\breve{E})^\diamond$. Then \[\Sht (G,b,\mu)_\infty\big((R,R^+),x\big)=\{\big((\E,\phi_\E),\iota, \alpha\big)\}/\simeq\]where
\begin{itemize}
	\item $(\E,\phi_\E)$ is a $G$-shtuka over $\Y_{[0,\infty)}(R,R^+)$ with one paw at $x$, such that $(\E,\phi_\E)$ is bounded by $\{\mu\}$.
	\item $\iota: (\E,\phi_\E)|_{[\rho,\infty)}\st{\sim}{\ra}(\E_b,\phi_{\E_b})|_{[\rho,\infty)}$ is an isomorphism for some sufficiently large $\rho$.
	\item $\alpha: \mathbb{L}(\E, \phi_\E)\simeq \mathbb{L}_0$ is an isomorphism, where as before $\mathbb{L}_0$ is the trivial $G$-local system over $\Y_{[0,\infty)}(R,R^+)$.
\end{itemize}
By construction,
we have an isomorphism of diamonds over $\Spa(\breve{E})^\diamond$
\[\Sht (G,b,\mu)_\infty/K\simeq \Sht (G,b,\mu)_K, \quad \Sht(G,b,\mu)_\infty=\varprojlim_{K}\Sht (G,b,\mu)_K.\]

For any open compact subgroup $K\subset G(\Q_p)$, we know that
the fibers of \[\Sht (G,b,\mu)_K(C, \Ol_C)\lra Gr_{G,\leq\mu}^{B_{dR}^+,adm}(C, \Ol_C)\] are in bijection with $G(\Q_p)/K$. We remark that it should be possible to define a notion of \'etale fundamental group for the diamond $Gr_{G,\leq\mu}^{B_{dR}^+,adm}$ as \cite{dJ1}, so that the $\Q_p$-$G$-local system $\mathbb{V}$ on $Gr_{G,\leq\mu}^{B_{dR}^+,adm}$ can be described in term of a collection of representations
\[\pi_1(Gr_{G,\leq\mu}^{B_{dR}^+,adm}, \ov{x})\lra G(\Q_p),\]for the geometric point $\ov{x}$ runs through each connected component of $Gr_{G,\leq\mu}^{B_{dR}^+,adm}$. See \cite{Sch3}.

At the infinite level, there exists a Hodge-Tate period map (cf. \cite{F} p.38; see also \cite{Han} Theorem 5.4)
\[\pi_{HT}: \Sht (G,b,\mu)_\infty\lra Gr_{G,\leq \mu^{-1}}^{B_{dR}^+},\] where $Gr_{G,\leq \mu^{-1}}^{B_{dR}^+}\subset Gr_{G}^{B_{dR}^+}\otimes E$ is the 
Schubert diamond associated to $\{\mu^{-1}\}$. We can make a little precise on the image of $\pi_{HT}$. By \cite{CS} Corollary 3.5.2 there is a natural map
\[\E:  Gr_{G}^{B_{dR}^+}(R,R^+)\lra  \Bun_{G, X_{R^\flat, R^{+\flat}}}.\]
Take $(R,R^+)=(C,\Ol_C)$ with $C|\Q_p$ complete and algebraically closed. By Theorem \ref{T: G-bundles},  we get a map $b(-): Gr_{G}^{B_{dR}^+}(C,\Ol_C)\ra B(G)$. By \cite{CS} Proposition 3.5.3, when restricting to $x\in Gr_{G,\leq \mu^{-1}}^{B_{dR}^+}(C, \Ol_C)$, one has \[b(x)\in B(G,\mu).\] Then for any $[b']\in B(G,\mu)$, we get a locally closed sub diamond \[Gr_{G,\leq \mu^{-1}}^{B_{dR}^+,b'}\subset Gr_{G,\leq \mu^{-1}}^{B_{dR}^+},\] such that the underling topological space $|Gr_{G,\leq \mu^{-1}}^{B_{dR}^+,b'}|$ is the fiber over $[b']$ under the above map $b(\cdot)$. Consider $[b']=[b]$ as in the local shtuka datum. Then by construction,  one has
\[ \pi_{HT}: \Sht (G,b,\mu)_\infty(C,\Ol_C)\lra Gr_{G,\leq \mu^{-1}}^{B_{dR}^+,b}(C,\Ol_C),\] 
for any $(C,\Ol_C)$ with $C|\Q_p$ complete and algebraically closed. That is, $\pi_{HT}$ factors through $Gr_{G,\leq \mu^{-1}}^{B_{dR}^+,b}$.

In summary, we get two period morphisms
\[\xymatrix{
	& \Sht (G,b,\mu)_\infty \ar[ld]_{\pi_{dR}} \ar[rd]^{\pi_{HT}} &\\
	Gr_{G,\leq\mu}^{B_{dR}^+,adm} & & Gr_{G,\leq \mu^{-1}}^{B_{dR}^+,b},
}\]
and the period morphism $\pi_{dR}$ factors through $\Sht(G,b,\mu)$.

\begin{remark}\label{R:moduli shtuka}
		In \cite{F} 8.2.1, there is an alternative construction of the diamond $\Sht (G,b,\mu)_\infty$. 
\end{remark}

By construction, a morphism
$(G_1,[b_1],\{\mu_1\})\ra (G_2,[b_2],\{\mu_2\})$ of local shtuka data induces a morphism of diamonds
\[\Sht(G_1,b_1,\mu_1)\lra \Sht(G_2,b_2,\mu_2).\]More generally, we have morphisms
\[\Sht(G_1,b_1,\mu_1)_{K_1}\lra \Sht(G_2,b_2,\mu_2)_{K_2}\]if $K_1$ is mapped into $K_2$ under $G_1\ra G_2$.

The above functoriality enables us to apply the Tannakian formalism. As before, we assume that $G$ is unramified over $\Q_p$. Consider now an embedding $G\hookrightarrow \GL_n$, then $([b], \{\mu\})$ induces $([b'], \{\mu'\})$, so that $(\GL_n, [b'], \{\mu'\})$ forms a local shtuka datum, and we get a morphism of local shtuka data $(G,[b],\{\mu\})\ra (\GL_n,[b'],\{\mu'\})$.
The following proposition is the local analogue of Deligne's theorem for Shimura varieties.
\begin{proposition}\label{P:embedding}
In the above setting, for any $K\subset G(\Z_p)$, there exists a $K'\subset \GL_n(\Z_p)$ such that there exists a natural closed embedding of diamonds
	\[\Sht(G,b,\mu)_K\hookrightarrow \Sht(\GL_n,b',\mu')_{K'}. \]
	The induced embedding of diamonds
	\[\Sht (G,b,\mu)_\infty \hookrightarrow \Sht (\GL_n,b',\mu')_\infty\] is compatible with the de Rham and Hodge-Tate period morphisms on both sides.
\end{proposition}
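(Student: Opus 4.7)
The plan is to build the morphism via extension of structure group along $G \hookrightarrow \GL_n$, verify the closed immersion property at infinite level by comparison with the $B_{dR}^+$-affine Grassmannians, and then descend to finite level.

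First, I would use the functoriality built into the definition of $\Sht(G,b,\mu)$: a $G$-shtuka $(\E, \phi_\E)$ over $\Y_{[0,\infty)}(R,R^+)$ with one paw at $x$, together with its rigidification to $(\E_b, \phi_{\E_b})$, pushes forward along $G \hookrightarrow \GL_n$ to a $\GL_n$-shtuka of the required type, with $b \mapsto b'$ and $\mu \mapsto \mu'$ by hypothesis. To pass to level structures, fix a $K$-stable lattice $\Lambda_{\Z_p}$ in a faithful representation of $G$ and take $K' \subset \GL_n(\Z_p) = \GL(\Lambda_{\Z_p})$ to be any open compact subgroup containing the image of $K$ under $G(\Z_p) \hookrightarrow \GL_n(\Z_p)$. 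Then a $K$-orbit of trivializations of the $G$-local system $\mathbb{L}(\E, \phi_\E)$ induces a $K'$-orbit of trivializations of the associated $\GL_n$-local system by extension of structure group.

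Next, to verify that the induced map of diamonds is a closed immersion, I would work first at infinite level and compare with $B_{dR}^+$-Grassmannians. The key input is the closed embedding of diamonds $Gr_{G,\leq\mu}^{B_{dR}^+} \hookrightarrow Gr_{\GL_n,\leq\mu'}^{B_{dR}^+}$, which is the $B_{dR}^+$-analogue of the closed embedding recorded in the first lemma of section 2, cut out by the Tannakian condition that the tensors $(s_\alpha)$ defining $G = \mathrm{Stab}(s_\alpha) \subset \GL_n$ extend across the paw. One then has a commutative square
\[
\xymatrix{
\Sht(G,b,\mu)_\infty \ar[r] \ar[d]_{\pi_{dR}} & \Sht(\GL_n,b',\mu')_\infty \ar[d]^{\pi_{dR}} \\
Gr_{G,\leq\mu}^{B_{dR}^+,adm} \ar@{^{(}->}[r] & Gr_{\GL_n,\leq\mu'}^{B_{dR}^+,adm}
}
\]
in which the vertical maps are the pro-\'etale covers trivializing the universal $G$- and $\GL_n$-local systems. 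Since the $\GL_n$-local system is the extension of the $G$-local system along $G \hookrightarrow \GL_n$, the top arrow is a base change of the bottom one along the compatible pro-\'etale torsor structures, hence a closed immersion of diamonds. Quotienting by $K$ on the left and $K'$ on the right (which act compatibly through $G(\Z_p) \hookrightarrow \GL_n(\Z_p)$) preserves the closed immersion, giving the desired closed embedding $\Sht(G,b,\mu)_K \hookrightarrow \Sht(\GL_n,b',\mu')_{K'}$ at finite level.

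The compatibility with $\pi_{dR}$ is contained in the diagram above, and compatibility with $\pi_{HT}$ follows from the parallel diagram on the Hodge--Tate side, using the analogous closed embedding $Gr_{G,\leq -\mu}^{B_{dR}^+,b} \hookrightarrow Gr_{\GL_n,\leq -\mu'}^{B_{dR}^+,b'}$ and the functoriality of the modification construction of $\pi_{HT}$ via pushing forward $G$-bundles to $\GL_n$-bundles on the Fargues--Fontaine curve. The main obstacle I anticipate is the clean verification that $Gr_{G,\leq\mu}^{B_{dR}^+} \hookrightarrow Gr_{\GL_n,\leq\mu'}^{B_{dR}^+}$ is genuinely a closed embedding of diamonds with the expected Tannakian description, and dually that the $G$-locus inside $\Sht(\GL_n,b',\mu')$ is precisely cut out by the condition that the $\phi$-invariant tensors $t_\alpha$ induced from $(s_\alpha)$ via the rigidification extend integrally across the paw --- the analogue in the language of local shtukas of the classical statement for $p$-divisible groups with crystalline Tate tensors used in Kim's construction in section 3.
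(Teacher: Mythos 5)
Your approach matches the paper's: reduce to infinite level, exhibit the closed embedding there via the $B_{dR}^+$-affine Grassmannians, and record the compatible commuting squares for $\pi_{dR}$ and $\pi_{HT}$. The paper's own proof is quite terse (``this is clear from the construction above''), and you have tried to flesh it out; two of the fleshed-out steps need adjustment.

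First, the square
\[
\xymatrix{
\Sht(G,b,\mu)_\infty \ar[r] \ar[d]_{\pi_{dR}} & \Sht(\GL_n,b',\mu')_\infty \ar[d]^{\pi_{dR}} \\
Gr_{G,\leq\mu}^{B_{dR}^+,adm} \ar@{^{(}->}[r] & Gr_{\GL_n,\leq\mu'}^{B_{dR}^+,adm}
}
\]
is \emph{not} cartesian, so the top arrow is not ``a base change of the bottom one.'' The right-hand vertical map is (fiberwise) a $\GL_n(\Q_p)$-torsor while the left-hand one is a $G(\Q_p)$-torsor, and $G(\Q_p)\subsetneq\GL_n(\Q_p)$, so the honest fiber product $\Sht(\GL_n,b',\mu')_\infty\times_{Gr_{\GL_n,\leq\mu'}^{B_{dR}^+,adm}}Gr_{G,\leq\mu}^{B_{dR}^+,adm}$ strictly contains the image of $\Sht(G,b,\mu)_\infty$. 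The correct argument factors the top arrow through that fiber product: the second factor is a base change of the bottom closed immersion, hence closed; and the first factor is, over each geometric point, the closed immersion $G(\Q_p)\hookrightarrow\GL_n(\Q_p)$ of trivializations, coming from $G$ being a closed subgroup of $\GL_n$ and $\mathbb{V}_{\GL_n}$ the extension along $G\hookrightarrow\GL_n$ of $\mathbb{V}_G$. The composite of two closed immersions is a closed immersion, which is what you wanted.

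Second, you cannot take $K'$ to be an \emph{arbitrary} open compact of $\GL_n(\Z_p)$ containing the image of $K$. If $K'\cap G(\Z_p)\supsetneq K$, then $\Sht(G,b,\mu)_K\to\Sht(\GL_n,b',\mu')_{K'}$ factors through the nontrivial finite \'etale cover $\Sht(G,b,\mu)_K\to\Sht(G,b,\mu)_{K'\cap G(\Z_p)}$ and hence fails to be injective, let alone a closed immersion. You must arrange $K'\cap G(\Z_p)=K$; this is exactly the compatibility $K_1=K_2\cap G_1(\Q_p)$ that the paper records after Definition~\ref{D:local Shimura morphism} as the condition under which the conjectural morphisms of local Shimura varieties are closed embeddings.
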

\begin{proof}
It suffices to prove that we have a closed embedding of diamonds
\[\Sht (G,b,\mu)_\infty \hookrightarrow \Sht (\GL_n,b',\mu')_\infty.\] This is clear from the construction above. Moreover, we have a closed embedding 
\[ Gr_{G,\leq\mu}^{B_{dR}^+,adm} \hookrightarrow Gr_{\GL_n,\leq\mu'}^{B_{dR}^+,adm},\] and the following diagram on de Rham period maps is commutative
\[\xymatrix{ \Sht (G,b,\mu)_\infty \ar[d]^{\pi_{dR}} \ar@{^{(}->}[r] & \Sht (\GL_n,b',\mu')_\infty \ar[d]^{\pi_{dR}}\\
	Gr_{G,\leq\mu}^{B_{dR}^+,adm} \ar@{^{(}->}[r] & Gr_{\GL_n,\leq\mu'}^{B_{dR}^+,adm}.
	}\]
We have also the following commutative diagram on Hodge-Tate period maps
\[\xymatrix{ \Sht (G,b,\mu)_\infty \ar[d]^{\pi_{HT}}\ar@{^{(}->}[r] & \Sht (\GL_n,b',\mu')_\infty \ar[d]^{\pi_{HT}}\\
	Gr_{G,\leq \mu^{-1}}^{B_{dR}^+,b} \ar@{^{(}->}[r] & Gr_{\GL_n,\leq (\mu')^{-1}}^{B_{dR}^+,b'}.
}\]
\end{proof}

\subsection{Moduli of local $G$-shtukas and affine Deligne-Lusztig varieties}\label{subsection: Moduli and ADLV}
Let $(G, [b], \{\mu\})$ be a local shtuka datum. Recall that we assume $G$ is unramified.
We want to compare the moduli space of local $G$-shtukas $\Sht(G,b,\mu)$ and the affine Deligne-Lusztig variety $X_{\leq \mu}^G(b)$ associated to $(G, [b], \{\mu\})$ as in section 2.

Let $(C,\Ol_C)$ be an affinoid perfectoid field of characteristic $p$ with an untilt $C^\sharp$ of $C$. Let $k$ be the residue field of $\Ol_C$.
We have a $J_b(\Q_p)$-equivariant morphism of sets
\[sp=sp_{(G,b,\mu)}:  \Sht (G,b,\mu)(C,\Ol_C)\lra X_{\leq\mu}^G(b)(k).\]Indeed, consider first the case $G=\GL_n$, we have
\[\Sht(G,b,\mu)(C,\Ol_C)=\{\big((\E,\phi_\E),\iota\big)\}/\simeq\] 
with $\big((\E,\phi_\E),\iota\big)$ a shtuka over $\Spa (C,\Ol_C)$ with one paw at $C^\sharp$. By Theorem \ref{T:one paw}, there exists a Breuil-Kisin module $(M,\phi)$ over $A_{\inf}=W(\Ol_C)$. Let \[(M\otimes_{A_{\inf}}W(k),\phi)\]
 be the associated Dieudonn\'e module. This defines a point in $X_{\leq\mu}^G(b)(k)$. This construction is compatible with the $J_b(\Q_p)$ actions on both sides. For the general case, we apply the Tannakian formalism: take any embedding $(G,[b],\{\mu\})\ra (\GL_n,[b'],\{\mu'\})$, then we have a commutative diagram
\[\xymatrix{\Sht(G,b,\mu)(C,\Ol_C)\ar[d]^{sp_{G,b.\mu}}\ar@{^{(}->}[r] & \Sht(\GL_n,b',\mu')(C,\Ol_C)\ar[d]^{sp_{\GL_n,b',\mu'}}\\
	X_{\leq\mu}^G(b)(k)\ar@{^{(}->}[r] & 	X_{\leq\mu'}^{\GL_n}(b')(k).
	}\]

Recall that we have the map  $\omega_G: G(L)/G(W)\ra \pi_1(G)$. In the rest of this subsection we will only consider $X_{\leq\mu}^G(b)$ as a subset of $G(L)/G(W)$. Restricting $\omega_G$ to $X_{\leq\mu}^G(b)$, it gives \[ \omega_G: X_{\leq\mu}^G(b)\lra c_{b,\mu}\pi_1(G)^\Gamma.\]
Recall that as in subsection 2.2, after replacing $b$ by $b'$ we may assume $c_{b,\mu}=1$. On the other hand,  restricting $\omega_G$ to $G(\Q_p)/G(\Z_p)$ we get \[\omega_G: G(\Q_p)/G(\Z_p)\lra  \pi_1(G)^\Gamma.\]We can regard the quotient set $G(\Q_p)/G(\Z_p)$ as certain subset of $\Sht(G,b,\mu)(C,\Ol_C)$, see below.
The above two maps are related by the following reduction map.
Recall that we have the period map $\pi_{dR}: \Sht(G,b,\mu)\ra Gr_{G,\leq \mu}^{B_{dR}^+,adm}$, which is $J_b(\Q_p)$-equivariant. Take any point $y\in Gr_{\leq \mu}^{B_{dR}^+,adm}(C,\Ol_C)$, then the fiber $\pi_{dR}^{-1}(y)$ is in bijection with $G(\Q_p)/G(\Z_p)$ once we fix a point $x\in \pi_{dR}^{-1}(y)$. For $g\in J_b(\Q_p)$, we take the point $gx\in \pi_{dR}^{-1}(gy)$ to identify $\pi_{dR}^{-1}(gy)$ with $G(\Q_p)/G(\Z_p)$. In this way we can define an action of $J_b(\Q_p)$ on $G(\Q_p)/G(\Z_p)$.
\begin{lemma}\label{L:specilization}
	There is a $J_b(\Q_p)$-equivariant map \[G(\Q_p)/G(\Z_p)\ra X_{\leq\mu}^G(b), \quad g\mapsto g_0,\]such that $\omega_G(g)=\omega_G(g_0)$.
\end{lemma}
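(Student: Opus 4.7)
The plan is to combine the two surjectivity results already at our disposal: Proposition \ref{P: surjective} gives that $\omega_G \colon X_{\leq \mu}^G(b) \twoheadrightarrow \pi_1(G)^\Gamma$ is surjective (recall we are in the convention $c_{b,\mu}=1$, obtained after replacing $\mu$ by $\sigma(\mu)$ if necessary), and Lemma \ref{L:surjective}(1) gives that $\omega_G \colon G(\Q_p)/G(\Z_p) \twoheadrightarrow \pi_1(G)^\Gamma$ is also surjective. Since the target of both maps is the same group, any set-theoretic section of the first map composed with the second will produce the desired assignment. The main subtlety is merely that the resulting map is not canonical; fortunately, the statement only asserts existence.

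Concretely, I would construct the map as follows. Since $c_{b,\mu} = 1$ under our convention, the element $1 \in G(L)/G(W)$ lies in $X_{\leq \mu}^G(b)$ (via the identification with $X_{\leq \sigma(\mu)}^G(b)$ as discussed before Lemma \ref{L:surjective}), and clearly $\omega_G(1) = 1$. Set $y_0 := 1 \in X_{\leq \mu}^G(b)$. Using Lemma \ref{L:surjective}(2), pick any set-theoretic section $\tau \colon \pi_1(G)^\Gamma \to J_b(\Q_p)$ of the surjection $\omega_G \colon J_b(\Q_p) \twoheadrightarrow \pi_1(G)^\Gamma$. Now define
\[
g_0 := \tau(\omega_G(g)) \cdot y_0 \in X_{\leq \mu}^G(b),
\]
where $J_b(\Q_p)$ acts on $X_{\leq \mu}^G(b)$ by left multiplication in $G(L)/G(W)$.

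To verify the required identity $\omega_G(g_0) = \omega_G(g)$, I would invoke the $J_b(\Q_p)$-equivariance of $\omega_G$ from Lemma 2.3.6 of \cite{CKV}, recalling that $J_b(\Q_p)$ acts on $\pi_1(G)^\Gamma$ by translation via its Kottwitz map. Thus
\[
\omega_G(g_0) = \omega_G(\tau(\omega_G(g))) \cdot \omega_G(y_0) = \omega_G(g) \cdot 1 = \omega_G(g),
\]
since $\tau$ is a section. There is no essential obstacle — the lemma is a formal consequence of the two surjectivity statements — and any other set-theoretic section $s \colon \pi_1(G)^\Gamma \to X_{\leq \mu}^G(b)$ of $\omega_G$ would give an equally valid map $g \mapsto s(\omega_G(g))$, showing that the construction is inherently non-canonical but the existence holds unconditionally.
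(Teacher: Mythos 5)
Your argument is formally valid for the lemma as literally stated, but it misses the substance and would not serve the purpose the lemma actually serves in the paper. You construct the map $g \mapsto \tau(\omega_G(g))\cdot y_0$ with $\tau$ a set-theoretic section of $\omega_G\colon J_b(\Q_p)\to\pi_1(G)^\Gamma$; this map factors through $\pi_1(G)^\Gamma$, a finitely generated abelian group, so it is an essentially degenerate map whose image is a finite set of points. The paper's construction is entirely different: one fixes a point $x\in\Sht(G,b,\mu)(C,\Ol_C)$, identifies $G(\Q_p)/G(\Z_p)$ with the Hecke orbit of $x$ inside $\Sht(G,b,\mu)(C,\Ol_C)$, and then composes with the specialization map $sp\colon \Sht(G,b,\mu)(C,\Ol_C)\to X_{\leq\mu}^G(b)$ defined in \S\ref{subsection: Moduli and ADLV}. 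The compatibility $\omega_G(g)=\omega_G(g_0)$ is then a genuine statement requiring an argument parallel to Lemma~1.2.18 of Kisin's paper, via Theorem~\ref{T:one paw} and the Tannakian formalism; it is not, in the paper's formulation, a formal consequence of surjectivity.

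The reason the paper's specific construction matters is visible in the proof of Proposition~\ref{P:shtuka cart}, which uses precisely the vertical maps ``constructed in Lemma~\ref{L:specilization}'' to assemble a cartesian square
\[
\xymatrix{G(\Q_p)/G(\Z_p) \ar[r]\ar[d]&G'(\Q_p)/G'(\Z_p)\ar[d] \\ X_{\leq\mu}^G(b)\ar[r]& X_{\leq\mu'}^{G'}(b').}
\]
The map constructed from the Hecke-orbit/specialization picture commutes functorially with a central quotient $G\to G'=G/Z$ (both sides live inside the corresponding shtuka fibers over a common point of $Gr_{G,\leq\mu}^{B_{dR}^+,\mathrm{adm}}$), and its fibers are exactly what is needed to deduce the cartesian property from the known cartesian diagram of affine Deligne--Lusztig varieties. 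Your map, by contrast, has no reason to commute with the induced maps $G(\Q_p)/G(\Z_p)\to G'(\Q_p)/G'(\Z_p)$ and $X_{\leq\mu}^G(b)\to X_{\leq\mu'}^{G'}(b')$ for independently chosen sections $\tau$ and $\tau'$, and because it factors through $\pi_1(G)^\Gamma$ the resulting square cannot be cartesian (fibers over a point in the image would be infinite or empty, rather than a $Z(\Q_p)$-torsor). So while nothing you wrote is false, the trivialization you offer is exactly the wrong map for the downstream use, and the real content of the paper's lemma — that the natural specialization of the Hecke orbit lands in the right $\pi_1(G)^\Gamma$-fiber — is left unproved.
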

\begin{proof}
Fix any point $x\in \Sht(G,b,\mu)(C,\Ol_C)$. Then we have an injection 
\[G(\Q_p)/G(\Z_p)\ra \Sht(G,b,\mu)(C,\Ol_C)\]which identifies $G(\Q_p)/G(\Z_p)$ with the Hecke orbit $\pi_{dR}^{-1}(\pi_{dR}(x))$ of $x$. For any $g\in J_b(\Q_p)$, the choice of the point $gx$ to identify $G(\Q_p)/G(\Z_p)$ with $\pi_{dR}^{-1}(\pi_{dR}(gx))$ shows that the injection $G(\Q_p)/G(\Z_p)\ra \Sht(G,b,\mu)(C,\Ol_C)$ is $J_b(\Q_p)$-equivariant. The composite
\[G(\Q_p)/G(\Z_p)\ra \Sht(G,b,\mu)(C,\Ol_C)\ra X_{\leq\mu}^G(b)\]gives the desired map, which is $J_b(\Q_p)$-equivariant, since by the above construction, the specialization map $\Sht(G,b,\mu)(C,\Ol_C)\ra X_{\leq\mu}^G(b)$ is $J_b(\Q_p)$-equivariant. 
The second assertion follows by the same argument as that in the proof of Lemma 1.2.18 of \cite{Ki2}, by applying Theorem \ref{T:one paw} (and Tannakian formalism) instead of subsection 1.1 of \cite{Ki2}.
\end{proof}

\begin{remark} Consider the composite map $G(\Q_p)/G(\Z_p)\ra X_{\leq\mu}^G(b)\ra \pi_1(G)^\Gamma$. Then this is surjective by Lemma \ref{L:surjective} (1). In \cite{Ki2} Proposition 1.2.23, Kisin proved a stronger result: the map \[G(\Q_p)/G(\Z_p)\lra \pi_0(X_{\leq\mu}^G(b))\]is surjective if $(G, [b],  \{\mu\})$ is an unramified local Shimura datum of Hodge type.
\end{remark}

The following is an analogue of Lemma 2.4.1 and Corollary 2.4.2 of \cite{CKV}, see also Proposition \ref{P:cartesian}.
\begin{proposition}\label{P:shtuka cart}
	Let $Z\subset Z_G$ be a central sub group and $G'=G/Z$, with the induced $[b']$ and $\{\mu'\}$ such that $(G',[b'], \{\mu'\})$ is a local shtuka datum. Then
we have a cartesian diagram
\[\xymatrix{\Sht (G,b,\mu)(C,\Ol_C)\ar[r]\ar[d]&\Sht (G',b',\mu')(C,\Ol_C)\ar[d] \\
X_{\leq\mu}^G(b)\ar[r]& X_{\leq\mu'}^{G'}(b').}\]
In particular, the induced diagram 
\[\xymatrix{\Sht (G,b,\mu)(C,\Ol_C)\ar[r]\ar[d]&\Sht (G',b',\mu')(C,\Ol_C)\ar[d] \\
\pi_1(G)^\Gamma	\ar[r]& \pi_1(G')^\Gamma}\] is also cartesian.
\end{proposition}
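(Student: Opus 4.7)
The plan is to combine the Tannakian functoriality of the specialization map $sp_{(G,b,\mu)}$ from subsection \ref{subsection: Moduli and ADLV} with the cartesian diagram for affine Deligne-Lusztig varieties of Proposition \ref{P:cartesian}. First, the outer square commutes: both specialization maps attach to a shtuka over $\Spa(C,\Ol_C)$ its associated (Tannakian) Breuil-Kisin-Fargues module over $A_{\inf}$ via Theorem \ref{T:one paw} and then base-change it to $W(k)$, and pushout along $G\to G'$ commutes with both steps.

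To see the top square is cartesian, I would pick $y'\in\Sht(G',b',\mu')(C,\Ol_C)$ and $x\in X_{\leq\mu}^{G}(b)$ mapping to a common element of $X_{\leq\mu'}^{G'}(b')(k)$, and construct a unique $G$-shtuka $y$ over $(C,\Ol_C)$ lifting both. The short exact sequence $1\to Z\to G\to G'\to 1$ makes each pushout map a fibration whose fibers are controlled by $Z$-cohomology, both on the shtuka side (over $\Y_{[0,\infty)}$) and on the Dieudonn\'e side (over $W(k)$). Proposition \ref{P:cartesian}, together with the argument following it, shows the nonempty fibers of $X_{\leq\mu}^{G}(b)\to X_{\leq\mu'}^{G'}(b')$ are torsors under $X_*(Z)^\Gamma$. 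I would identify the fibers of the top arrow with the same torsors by Tannakian lifting: the point $x$ pins down the $G$-structure on the $W(k)$-Dieudonn\'e module of $y'$, and this rigidification extends uniquely from $W(k)$ to $A_{\inf}$; by Theorem \ref{T:one paw} this produces the unique $G$-shtuka $y$ lifting $y'$ with $sp_{(G,b,\mu)}(y)=x$, and the $\{\mu\}$-boundedness of $y$ follows from that of $y'$ and $x$.

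The hard step is precisely the Tannakian lifting from $W(k)$ to $A_{\inf}$, performed coherently with the $\phi$-structure and the quasi-isogeny $\iota$. I would reduce to the $\GL_n$-case via a faithful embedding $G\hookrightarrow \GL_n$ and Proposition \ref{P:embedding}: in that case Theorem \ref{T:one paw}(3) identifies shtukas with honest Breuil-Kisin-Fargues modules and the lifting becomes a concrete statement about lattices; one then descends back to $G$-data using the tensor stabilizer description $G=\tr{Stab}((s_\alpha))\subset \GL(\Lambda)$. For the second assertion, the composition $\Sht(G,b,\mu)(C,\Ol_C)\to X_{\leq\mu}^{G}(b)\to c_{b,\mu}\pi_1(G)^\Gamma$ (after the convention $c_{b,\mu}=1$ of subsection \ref{subsection:ADL}) combined with the first cartesian square and the cartesian square of Proposition \ref{P:cartesian} gives the result, since the composition of two cartesian squares is cartesian.
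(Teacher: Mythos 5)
Your approach is genuinely different from the paper's, and it has a real gap at the step you yourself flag as hard.

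The paper's proof avoids lifting BKF modules entirely. It observes that $Gr_{G,\leq\mu}^{B_{dR}^+,adm}=Gr_{G',\leq\mu'}^{B_{dR}^+,adm}$ (this depends only on the adjoint datum), so both de Rham period maps $\pi_{G,dR}$ and $\pi_{G',dR}$ land in the \emph{same} target, and the top horizontal arrow of the square is a map over that common base. Fixing a point $x$ there, the fibers are exactly $G(\Q_p)/G(\Z_p)$ and $G'(\Q_p)/G'(\Z_p)$, and the vertical specialization maps restrict on fibers to the maps of Lemma \ref{L:specilization}. Cartesianness of the original square then reduces to cartesianness of
\[\xymatrix{G(\Q_p)/G(\Z_p)\ar[r]\ar[d] & G'(\Q_p)/G'(\Z_p)\ar[d]\\ X_{\leq\mu}^G(b)\ar[r] & X_{\leq\mu'}^{G'}(b'),}\]
which is settled by stacking it over the $\pi_1$-row: the lower square is cartesian by Proposition \ref{P:cartesian}, and the outer rectangle $G(\Q_p)/G(\Z_p)\to\pi_1(G)^\Gamma$ versus $G'(\Q_p)/G'(\Z_p)\to\pi_1(G')^\Gamma$ is cartesian by Lemma \ref{L:cartesian}; hence the upper square is cartesian. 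The "in particular" clause follows by the same two-out-of-three argument, exactly as in your last paragraph.

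The gap in your route is the sentence ``this rigidification extends uniquely from $W(k)$ to $A_{\inf}$.'' Given a $G'$-shtuka $y'$ over $(C,\Ol_C)$ and a $G$-lift $x$ of its Dieudonné-module specialization over $W(k)$, it is not automatic that a $G$-structure on the BKF module over $A_{\inf}$ exists, let alone uniquely, refining the $G'$-structure and reducing to $x$. The extension
$1\to Z\to G\to G'\to 1$ makes the set of lifts (when nonempty) a torsor, and you have to compare $Z$-cohomology over $A_{\inf}$, over $W(k)$, and over $\Y_{[0,\infty)}$ compatibly with $\phi$, the framing $\iota$, and $\{\mu\}$-boundedness; in general $Z$ is a diagonalizable group scheme such as $\mu_2$, not a torus, so this comparison is not a formality. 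Your suggested reduction to $\GL_n$ via Proposition \ref{P:embedding} does not resolve this, because after the embedding the cartesian square you need is still the one involving $G$ and $G'$, not $\GL_n$, and the tensors cutting out $G$ inside $\GL_n$ must again be extended from $W(k)$ to $A_{\inf}$. You also invoke the torsor statement for fibers of $X^G_{\leq\mu}(b)\to X^{G'}_{\leq\mu'}(b')$ without checking the surjectivity hypothesis $\pi_1(G)^\Gamma\to\pi_1(G')^\Gamma$ that the relevant lemma after Corollary \ref{C:local ADLV} requires. The de Rham period map is precisely the tool that packages all of this torsor bookkeeping into the concrete statement about $G(\Q_p)/G(\Z_p)$ (whose cartesianness over $\pi_1$ is Lemma \ref{L:cartesian}, proved via \cite{Ki2} Lemma 1.2.4), and the paper's proof is much shorter because it uses it.
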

\begin{proof}
Firstly, we have the natural identification $Gr_{G,\leq \mu}^{B_{dR}^+,adm}=Gr_{G',\leq \mu'}^{B_{dR}^+,adm}$, since by construction $Gr_{G,\leq \mu}^{B_{dR}^+,adm}$ depends only on the adjoint local shtuka datum $(G^{ad}, [b^{ad}], \{\mu^{ad}\})$.
Now consider the following commutative diagram
\[\xymatrix{\Sht (G,b,\mu)(C,\Ol_C)\ar[rr]\ar[rd]_{\pi_{G,dR}} &  &\Sht (G',b',\mu')(C,\Ol_C)\ar[ld]^{\pi_{G',dR}} \\
	&Gr_{G,\leq \mu}^{B_{dR}^+,adm}(C,\Ol_C).&}
\]
For any point $x\in Gr_{G,\leq \mu}^{B_{dR}^+,adm}(C,\Ol_C)$, the above horizontal map induces a map on fibers
\[G(\Q_p)/G(\Z_p)\ra G'(\Q_p)/G'(\Z_p),\]thus it suffices to show that the following diagram is cartesian
\[\xymatrix{G(\Q_p)/G(\Z_p) \ar[r]\ar[d]&G'(\Q_p)/G'(\Z_p)\ar[d] \\
	X_{\leq\mu}^G(b)\ar[r]& X_{\leq\mu'}^{G'}(b'),}\]
where the vertical maps are those constructed in Lemma \ref{L:specilization}.
Consider the following diagram
\[\xymatrix{G(\Q_p)/G(\Z_p) \ar[r]\ar[d] & G'(\Q_p)/G'(\Z_p)\ar[d] \\
	X_{\leq\mu}^G(b)\ar[r]\ar[d]^{\omega_G} & X_{\leq\mu'}^{G'}(b')\ar[d]^{\omega_{G'}}\\
\pi_1(G)^\Gamma\ar[r]&\pi_1(G')^\Gamma.}
\]
We know that the lower square is cartesian, cf. Proposition \ref{P:cartesian}, and by Lemma \ref{L:cartesian} 
\[ \xymatrix{G(\Q_p)/G(\Z_p) \ar[r]\ar[d]&G'(\Q_p)/G'(\Z_p)\ar[d] \\
	\pi_1(G)^\Gamma\ar[r]&\pi_1(G')^\Gamma}
	\]is also cartesian. Therefore the upper square is cartesian.
\end{proof}

\subsection{Local Shimura varieties as moduli of local $G$-shtukas}\label{Section:diamond}
We return to the setting of Definition \ref{D:RV}.
The following strengthened version of Theorem \ref{thm:scholze-shtuka}, which may be viewed as a partial solution of Conjecture \ref{C:RV} (as we do not give information on the desired Weil descent datum), is implied by the results in \cite{CS, Sch2, Sch3}. See also \cite{SW17} Lecture 24 and \cite{CFS} section 3.
Recall that by \cite{Sch2} Proposition 10.2.8, there is a fully faithful functor $X\mapsto X^\diamond$ from the category of normal rigid analytic spaces over $k$ to the category of diamonds over $\Spa (k)^\diamond$ for any non-archimedean field $k$ of characteristic 0. 
\begin{theorem}\label{thm:local Shimura}
Let $(G,[b],\{\mu\})$ be a  local Shimura datum. Assume that $G$ is unramified. Then there exists a tower of rigid analytic spaces over $\tr{Sp} \breve{E}$
	\[(\M_K)_{K},\] where $K$ runs through all open compact subgroups of $G(\Z_p)$, with the following properties:
	\begin{enumerate}
		\item the group $J_b(\Q_p)$ acts on each space $\M_K$,
		\item the group $G(\Q_p)$ acts on the tower $(\M_K)_{K}$ as Hecke correspondences,
		\item there exists a compatible system of \'etale and partially proper period maps \[\pi_K: \M_K\ra \Fl\ell_{G, \mu}^{adm}\] which is equivariant for the action of $J_b(\Q_p)$, where $\Fl\ell_{G, \mu}^{adm}\subset \Fl\ell_{G, \mu}$ is the open subspace defined in \cite{Ra2} A.6 (see also Proposition \ref{P:admissible} and \cite{CFS}),
		\item for any $K$, we have an isomorphism of diamonds $\M_K^\diamond\simeq \Sht_K$.
	\end{enumerate}
\end{theorem}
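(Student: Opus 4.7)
The plan is to deduce this from Theorem \ref{thm:scholze-shtuka} by descending the diamond tower $(\Sht_K)_K$ through the étale de Rham period map to a genuine rigid analytic tower, exploiting the minuscule hypothesis on $\{\mu\}$ to ensure that the target of $\pi_{dR}$ is already the diamond of a classical rigid analytic space. All the structure on $(\M_K)_K$ (actions of $J_b(\Q_p)$ and $G(\Q_p)$, period maps, compatibility) will then be forced by functoriality of this descent.

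The first step is to identify, under the assumption that $\{\mu\}$ is minuscule, the Schubert diamond $Gr_{G,\leq\mu}^{B_{dR}^+}$ over $\Spa(\breve{E})^\diamond$ with $\Fl\ell_{G,\mu}^\diamond$, the diamond attached to the rigid analytic flag variety over $\breve{E}$. Concretely, a $B_{dR,R}^+$-lattice of relative position bounded by a minuscule $\mu$ is the same datum as a $\{\mu\}$-filtration of the trivial bundle at the closed Cartier divisor, which is precisely a point of $\Fl\ell_{G,\mu}$. Under this identification one checks, by comparing the two characterizations of admissibility (via $B_{\mathrm{cris}}$ as in \cite{Fal} versus via semistable modifications of the trivial $G$-bundle on the Fargues--Fontaine curve as in Proposition \ref{P:admissible}), that the admissible sub-diamond $Gr_{G,\leq\mu}^{B_{dR}^+,adm}$ corresponds to $(\Fl\ell_{G,\mu}^{adm})^\diamond$.

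Next, since $\pi_{dR}: \Sht_K \to Gr_{G,\leq\mu}^{B_{dR}^+,adm} \simeq (\Fl\ell_{G,\mu}^{adm})^\diamond$ is étale and partially proper, and since the target is the diamond of a smooth rigid analytic space, the equivalence of étale sites of a seminormal rigid analytic space over a non-archimedean field of characteristic $0$ with that of its associated diamond (cf. \cite{Sch2} Lemma 15.6, together with the fully faithfulness of $X \mapsto X^\diamond$ in loc.~cit. Proposition 10.2.8) produces a unique étale morphism of rigid analytic spaces $\pi_K: \M_K \to \Fl\ell_{G,\mu}^{adm}$ with $\M_K^\diamond \simeq \Sht_K$ compatibly with $\pi_K$. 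Partial properness of $\pi_K$ follows from that of $\pi_{dR}$, since the property is read off from the diamond. Varying $K$, functoriality of this descent yields the tower $(\M_K)_K$ together with the compatible system of period maps required in (3) and (4).

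Finally, the action of $J_b(\Q_p)$ on each $\Sht_K$ is $\pi_{dR}$-equivariant, and the Hecke action of $G(\Q_p)$ on the tower $(\Sht_K)_K$ commutes with the period maps; by functoriality of the étale descent both actions transfer uniquely to $(\M_K)_K$, giving (1) and (2). The main obstacle is really the first step: checking cleanly that the minuscule Schubert diamond is the diamond of the classical flag variety and that admissibility matches on the two sides. Once this identification is in place, the rest of the argument is a formal application of Scholze's dictionary between (étale covers of) classical rigid analytic spaces and (étale covers of) their associated diamonds; crucially, this strategy breaks down in the non-minuscule local shtuka setting, where the target of $\pi_{dR}$ is an honest diamond and no rigid analytic model $\M_K$ can be extracted in this way.
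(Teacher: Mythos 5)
Your proposal follows essentially the same route as the paper: both hinge on the Bialynicki--Birula isomorphism (for minuscule $\mu$) identifying $Gr_{G,\mu}^{B_{dR}^+}$ with $\Fl\ell_{G,\mu}^\diamond$, after which the tower $(\Sht_K)_K$ is descended to a tower of rigid analytic spaces via the dictionary between \'etale objects on a rigid space and on its associated diamond. The paper phrases this descent in terms of transferring the $J_b(\Q_p)$-equivariant $\Q_p$-$G$-local system $\mathbb{V}$ across the identification (invoking the comparison of \'etale $\Z_p$/$\Q_p$-local systems in \cite{KL} Lemma 9.1.11) and then trivializing level structures, whereas you descend the \'etale period morphism $\pi_{dR}$ directly using the \'etale-site equivalence; these are equivalent formulations of the same underlying argument.
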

\begin{proof}
Consider the Bialynicki-Birula morphism \[Gr_{G,\mu}^{B_{dR}^+}\lra \Fl\ell_{G,\mu},\] cf. \cite{CS} Proposition 3.4.3. Since $\mu$ is minuscule, it is an isomorphism, cf. \cite{CS} Theorem 3.4.5, which induces an isomorphism 
\[Gr_{G,\mu}^{B_{dR}^+,adm}\st{\sim}{\lra} \Fl\ell_{G,\mu}^{adm,\diamond}.\]
The tower $(\Sht_K)_K$ is constructed out of a $J_b(\Q_p)$-equivariant $\Q_p-G$-local system $\mathbb{V}$ over $Gr_{G,\mu}^{B_{dR}^+,adm}$, which realizes $\Sht (G,b,\mu)$ as the functor of the set of $G_{\Z_p}$-local systems in $\mathbb{V}$. Since $Gr_{G,\mu}^{B_{dR}^+,adm}\simeq\Fl\ell_{G,\mu}^{adm,\diamond}$, there exists a corresponding $J_b(\Q_p)$-equivariant $\Q_p$-$G$-local system over $\Fl\ell_{G,\mu}^{adm}$ which we still denote by $\mathbb{V}$. Here we use the fact that the categories of \'etale $\Z_p$-local systems and $\Q_p$-local systems on an adic space $X$ are eauivalent to the corresponding categories on the pro-\'etale site $X_{\tr{pro\'et}}$, cf. \cite{KL} Lemma 9.1.11. Therefore we get a tower of rigid analytic spaces $(\M_K)_K$ with the properties listed as in the theorem.
\end{proof}
\begin{remark}
In the above situation,
it is natural to conjecture that there exists a preperfectoid space $\M_\infty$ over $\breve{E}$ such that $\M_\infty\sim \varprojlim_K\M_K$ and $\M_\infty^\diamond=\Sht_\infty$. We will see that this is true if $(G,[b],\{\mu\})$  is unramified of abelian type, cf. Corollary \ref{C:local ab}. This is the local analogue of the fact that Shimura varieties of abelian type with infinite level at $p$ are perfectoid, cf. \cite{Sh1}.
\end{remark}

Finally, we return to Rapoport-Zink spaces of abelian type. In particular we assume $p>2$ in the rest of this section.
\begin{corollary}\label{C:local ab}
Let $(G,[b],\{\mu\})$ be an unramified local Shimura datum of abelian type. For any open compact subgroup $K\subset G(\Z_p)$, let $\M_K$ and $\M_K'$ be the rigid analytic spaces over $\breve{E}$ constructed in subsection \ref{subsection:generic} and Theorem \ref{thm:local Shimura} respectively. Then we have an isomorphism of rigid analytic spaces over $\breve{E}$ \[\M_K\simeq \M_K'.\] In particular, we get isomorphisms of  diamonds over $\Spa (\breve{E}) ^\diamond$
\[\M_K^\diamond \simeq \Sht_K, \]and
\[\M_\infty^\diamond \simeq \Sht_\infty,\]with compatible period morphisms on both sides. In particular, the Hodge-Tate period map $\pi_{HT}$ in Proposition \ref{P:local perfectoid}  factors through $\pi_{HT}: \M_{\infty}\lra \Fl\ell_{G,\mu^{-1}}^b$.
\end{corollary}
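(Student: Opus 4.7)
The plan is to compare both sides by decomposing via the Kottwitz morphism and reducing to the Hodge type case. Fix any unramified local Shimura datum of Hodge type $(G_1,[b_1],\{\mu_1\})$ with the same adjoint datum as $(G,[b],\{\mu\})$. Both the abelian type tower $(\M_K)_K$ of subsection \ref{subsection:generic} and the Scholze-theoretic tower $(\M_K')_K=(\Sht_K)_K$ of Theorem \ref{thm:local Shimura} decompose via $\omega_G$ into plus parts permuted transitively by $J_b(\Q_p)$: for $(\M_K)_K$ this is built into the construction, and for $(\Sht_K)_K$ it follows from Proposition \ref{P:shtuka cart} applied through the common adjoint datum. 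Hence it suffices to produce a $J_b(\Q_p)^+$- and $G(\Q_p)^+$-equivariant, period-compatible isomorphism $(\M_K^+)^\diamond\simeq\Sht_K^+$, and then glue along the transitive action of $J_b(\Q_p)$ to obtain $\M_K^\diamond\simeq\Sht_K$.

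For the Hodge type datum $(G_1,[b_1],\{\mu_1\})$, the isomorphism $\M(G_1,b_1,\mu_1)_{K_1}^\diamond\simeq\Sht(G_1,b_1,\mu_1)_{K_1}$ compatible with period morphisms is essentially built into Kim's construction \cite{Kim1} together with the proof of Theorem \ref{thm:local Shimura}: both towers classify $K_1$-level trivializations of the $\Q_p$-$G_1$-local system of Proposition \ref{P:local system} on $\Fl\ell_{G_1,\mu_1}^{adm}$, and the equivalence of \'etale $\Q_p$-local systems on a rigid analytic space with those on its associated diamond (\cite{KL} Lemma 9.1.11 together with \cite{Sch2} Proposition 10.2.8) matches the resulting towers. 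By construction in subsection \ref{Subsection:RZab}, $\M(G,b,\mu)_K^+=\M(G_1,b_1,\mu_1)_{K_1}^+$ for the subgroup $K_1\subset G_1(\Z_p)$ corresponding to $K\cap G(\Z_p)^+$ under the bijection of Lemma \ref{L:cartesian}, and Proposition \ref{P:shtuka cart} delivers the matching identification $\Sht(G,b,\mu)_K^+\simeq\Sht(G_1,b_1,\mu_1)_{K_1}^+$ on the Sht side. Combining yields the plus-part isomorphism, and gluing gives $\M_K^\diamond\simeq\Sht_K$. The full faithfulness of $X\mapsto X^\diamond$ on normal rigid analytic spaces (\cite{Sch2} Proposition 10.2.8) then upgrades this to an isomorphism $\M_K\simeq\M_K'$ of rigid spaces, since both sides are finite \'etale over the normal bases $\breve{\M}^{ad}_\eta$ and $\Fl\ell_{G,\mu}^{adm}$ respectively. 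The statement at infinite level $\M_\infty^\diamond\simeq\Sht_\infty$ follows by passage to the inverse limit, using Proposition \ref{P:local perfectoid} and $\Sht_\infty=\varprojlim_K\Sht_K$.

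The factorization $\pi_{HT}\colon\M_\infty\to\Fl\ell_{G,-\mu}^b$ is then read off from the analogous factorization of the Sht-side Hodge-Tate map through $Gr_{G,\leq-\mu}^{B_{dR}^+,b}$ explained in the discussion preceding Remark \ref{R:moduli shtuka}, together with the Bialynicki-Birula isomorphism $Gr_{G,-\mu}^{B_{dR}^+}\simeq\Fl\ell_{G,-\mu}^\diamond$ for minuscule $\mu$ and the diamond isomorphism $\M_\infty^\diamond\simeq\Sht_\infty$ just established. The main obstacle I anticipate is bookkeeping compatibility of $G(\Q_p)$-, $J_b(\Q_p)$-actions and period morphisms across the various identifications; in particular one must verify that Kim's crystalline-based period map $\pi_{dR}$ on the Hodge type plus part agrees, after Bialynicki-Birula, with Scholze's $Gr^{B_{dR}^+}$-valued period morphism. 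This compatibility reduces to the Hodge type case and relies in an essential way on Lemma \ref{L:period}, which ensures that the admissible locus and the underlying crystalline period torsor depend only on the adjoint local Shimura datum.
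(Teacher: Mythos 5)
Your proposal is correct and follows essentially the same route as the paper: reduce to the Hodge type case via Proposition \ref{P:local system} and Proposition \ref{P:shtuka cart}, compare the two constructions as trivializations of the same $\Q_p$-$G$-local system over $\Fl\ell_{G,\mu}^{adm}$ using \cite{KL} Lemma 9.1.11, and obtain the Hodge type base case from Scholze's argument. The only stylistic difference is that you make the plus-part decomposition and the gluing along $J_b(\Q_p)$ explicit and then invoke full faithfulness of diamondification on normal rigid spaces to pass from diamonds back to rigid spaces, whereas the paper compares the rigid-analytic constructions directly; these are logically equivalent reorganizations of the same argument.
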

\begin{proof}
We first prove the case $(G,[b],\{\mu\})$ is of Hodge type. This follows exactly as the proof of \cite{Sch2} Theorem 19.4.5. Moreover, we have the following cartesian diagram
\[\xymatrix{\M(G,b,\mu)_K^\diamond \ar@{^{(}->}[d] \ar[r]^\sim & \Sht(G,b,\mu)_K\ar@{^{(}->}[d]\\
\M(\GL_n,b',\mu')_{K'}^\diamond \ar[r]^\sim	 & \Sht(\GL_n,b',\mu')_{K'}
	}\]

Now assume that $(G,[b],\{\mu\})$ is of abelian type.
We can apply Propositions \ref{P:local system} and \ref{P:shtuka cart}, and compare the construction of $\M(G,b,\mu)_K$ with that of $\Sht(G,b,\mu)_K$. Here as above, we use the fact that the categories of \'etale $\Z_p$-local systems and $\Q_p$-local systems on an adic space $X$ are eauivalent to the corresponding categories on the pro-\'etale site $X_{\tr{pro\'et}}$, cf. \cite{KL} Lemma 9.1.11.
\end{proof}

Let $(G,[b],\{\mu\})$ be a local Shimura datum with $G$ unramified. By Theorem \ref{thm:local Shimura}, there exists a tower of local Shimura varieties $\Big(\M(G,b,\mu)_K\Big)_K$ over $\tr{Sp} \breve{E}$ as conjectured by Rapoport-Viehmann. Take an embedding $G\hookrightarrow \GL_n$. Then we get an induced triple $(\GL_n,[b'],\{\mu'\})$, which is a local shtuka datum. The following corollary is now a consequence of Proposition \ref{P:embedding} and Theorem \ref{thm:local Shimura}. 
\begin{corollary}
For any $K\subset G(\Z_p)$, there exists a $K'\subset \GL_n(\Z_p)$ such that there exists a natural closed embedding of diamonds
\[\M(G,b,\mu)_K^\diamond\hookrightarrow \Sht(\GL_n,b',\mu')_{K'}. \]
\end{corollary}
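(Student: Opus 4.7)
The plan is to deduce the corollary by directly chaining the two inputs the author cites. By Theorem~\ref{thm:local Shimura}(4), the local Shimura variety $\M(G,b,\mu)_K$ (a rigid analytic space over $\breve{E}$ produced from the $J_b(\Q_p)$-equivariant $\Q_p$-$G$-local system over $\Fl\ell_{G,\mu}^{adm}$ corresponding to the local system over $Gr_{G,\mu}^{B_{dR}^+,adm}$ under the Bialynicki-Birula isomorphism) satisfies $\M(G,b,\mu)_K^\diamond\simeq \Sht(G,b,\mu)_K$. On the target side, the induced triple $(\GL_n,[b'],\{\mu'\})$ is a (not necessarily minuscule) local shtuka datum, and the moduli diamond $\Sht(\GL_n,b',\mu')_{K'}$ exists for any open compact $K'\subset \GL_n(\Z_p)$ by the general construction of Scholze recalled above.

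First I would pick $K'$: since $G\hookrightarrow \GL_n$ is a closed embedding coming from a closed immersion of reductive models $G_{\Z_p}\hookrightarrow \GL(\Lambda)$, the image of $K\subset G(\Z_p)$ is an open compact subgroup of $\GL_n(\Z_p)$, and we simply take any open compact $K'\subset \GL_n(\Z_p)$ containing this image (for example a congruence subgroup $K_{n_0}'=\ker(\GL_n(\Z_p)\ra \GL_n(\Z/p^{n_0}))$ sufficiently deep, or $K\cdot K_{n_0}'$ itself). Proposition~\ref{P:embedding} then furnishes a closed embedding of diamonds
\[\Sht(G,b,\mu)_K\hookrightarrow \Sht(\GL_n,b',\mu')_{K'},\]
functorial in $K\subset K'$ and equivariant for the Hecke action of $G(\Q_p)$ via $G\hookrightarrow \GL_n$. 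Composing with the diamondification isomorphism from Theorem~\ref{thm:local Shimura}(4) produces the desired natural closed embedding
\[\M(G,b,\mu)_K^\diamond\hookrightarrow \Sht(\GL_n,b',\mu')_{K'}.\]

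The only step requiring care is verifying that the composite map is genuinely the ``natural'' one, i.e.\ that the diamondification $\M(G,b,\mu)_K^\diamond\simeq \Sht(G,b,\mu)_K$ is compatible with the Tannakian functoriality used in Proposition~\ref{P:embedding}. This follows because both sides of Theorem~\ref{thm:local Shimura}(4) are constructed from the same $J_b(\Q_p)$-equivariant $\Q_p$-$G$-local system on $\Fl\ell_{G,\mu}^{adm}\simeq Gr_{G,\mu}^{B_{dR}^+,adm,\diamond}$ (via the minuscule Bialynicki-Birula isomorphism of \cite{CS} 3.4.5), so the induced maps to the corresponding $\GL_n$-objects agree. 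No additional analysis is needed beyond what is already packaged in Proposition~\ref{P:embedding} and Theorem~\ref{thm:local Shimura}; the main ``obstacle,'' such as it is, is merely the bookkeeping of matching the level structures and verifying that closedness is preserved under the diamondification functor, which is standard since the functor $X\mapsto X^\diamond$ is fully faithful on normal rigid analytic spaces and closed embeddings of diamonds pull back along the isomorphism $\M(G,b,\mu)_K^\diamond\simeq \Sht(G,b,\mu)_K$.
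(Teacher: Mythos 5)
Your argument matches the paper's own proof exactly: diamondify $\M(G,b,\mu)_K$ via Theorem~\ref{thm:local Shimura}(4) and compose with the closed embedding of diamonds furnished by Proposition~\ref{P:embedding}. The one slip---$f(K)$ is compact but \emph{not} open in $\GL_n(\Z_p)$, and Proposition~\ref{P:embedding} does not assert that an arbitrary open compact $K'\supset f(K)$ works, so it is cleaner to simply invoke the $K'$ whose existence that proposition provides rather than construct one by hand---does not affect the validity of the composition.
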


\begin{remark}\label{Remarks}
	\begin{enumerate}
\item Let $(G,[b],\{\mu\})$ be an unramified local Shimura datum of Hodge type, with the associated Rapoport-Zink spaces $\M_K$ and the moduli spaces of local $G$-shtukas $\Sht_K$. The isomorphism of diamonds over $\Spa (\breve{E})^\diamond$ \[\M_K^\diamond \simeq \Sht_K\] indicates the magic ``switching $p$-divisible groups with additional structures to local $G$-shtukas''.
\item If $(G,[b],\{\mu\})$ is  a general local Shimura datum, e.g.  an unramified local Shimura datum of abelian type but not of Hodge type, then we do not have $p$-divisible groups any more. However, via $\M_K^\diamond \simeq \Sht_K$, the local Shimura varieties $\M_K$ can be viewed as moduli of local $G$-shtukas.
\item Corollary \ref{C:local ab} should be upgraded to the integral level as \cite{SW17} Theorem 25.1.2 and Corollary 25.1.3. Namely, we should get an isomorphisms of $v$-sheaves over $\tr{Spd} W$\[\breve{\M}^\diamond\simeq \M^{\tr{int}},\]where $\M^{\tr{int}}$ is the moduli functor introduced in \cite{SW17} Definition 25.1.1 for the unramified local Shimura datum $(G,[b],\{\mu\})$. This could be done similarly by the methods of \cite{SW17} Lecture XXV, and we will leave the details to the reader.
Therefore, at the end we would have a canonical moduli interpretation for general $\breve{\M}$, compared with subsection \ref{subsection moduli}.
\end{enumerate}
\end{remark}

\begin{remark}
We refer to \cite{RV} sections 6,7,8 and \cite{F} section 8 for the discussions on the conjectures on the realizations of local Langlands correspondences and local Jacquet-Langlands correspondences in the $\ell$-adic cohomology of the tower $(\M_K)_K$ or $(\Sht_K)_K$.
\end{remark}

\section{Rapoport-Zink uniformization for Shimura varieties of abelian type}
We discuss some global applications in this section.  As \cite{RZ} chapter 6 and \cite{Kim2}, we apply our construction of the formal schemes $\breve{\M}(G,b,\mu)$ to prove a uniformization theorem for Kisin's integral canonical models of Shimura varieties of abelian type \cite{Ki1}. Throughout this section, we assume $p>2$.

\subsection{Integral canonical models for Shimura varieties of abelian type}
Let $(G,X)$ be a Shimura datum of abelian type, i.e. there exists a Shimura datum of Hodge type $(G_1, X_1)$ together with a central isogeny $G_1^{der}\ra G^{der}$, such that it induces an isomorphism of the associated adjoint Shimura data $(G_1^{ad}, X_1^{ad})\simeq (G^{ad},X^{ad})$.
Fix a prime $p>2$. Assume that $G$ is unramified at $p$ from now on. By Lemma 3.4.13 of \cite{Ki1}, we can find a Shimura datum of Hodge type $(G_1, X_1)$ satisfying the above and $G_1$ is unramified at $p$. Let $E$ be the local reflex field of $(G,X)$ for some place over $p$. In the following we will only consider the open compact subgroups $K\subset G(\A_f)$ in the form $K=K_pK^p$ with $K_p=G(\Z_p)$.
\begin{theorem}[\cite{Ki1} Theorem 3.4.10, Corollary 3.4.14]
	With the above notation and assumption, for any sufficiently small open compact subgroup $K^p\subset G(\A_f^p)$, there exists an integral canonical smooth model \[S_K(G,X)\] of $\Sh_K(G,X)$ over $\Ol_E$. When $K^p$ varies, the prime to $p$ Hecke action on $\big(\Sh_K(G,X)\big)_K$ extends to \[\big(S_K(G,X)\big)_K.\]
\end{theorem}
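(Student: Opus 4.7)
The plan is to follow Kisin's strategy: first construct the integral canonical model in the Hodge-type case, then descend to abelian type via a twisting construction along the common adjoint quotient. By Lemma 3.4.13 of \cite{Ki1}, I would fix a Shimura datum of Hodge type $(G_1, X_1)$ with $G_1$ unramified at $p$ and with $(G_1^{ad}, X_1^{ad}) \simeq (G^{ad}, X^{ad})$, so that all local questions can be answered on the Hodge-type side.

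For the Hodge-type case, I would choose a symplectic embedding $\iota\colon (G_1, X_1) \hookrightarrow (\GSp(V), S^{\pm})$ together with a $G_1(\Z_p)$-stable lattice $V_{\Z_p}\subset V$, so that the reductive model $G_{1,\Z_p}$ is the schematic stabilizer of a finite collection of tensors $(s_\alpha)\subset V_{\Z_p}^{\otimes}$ (Proposition 1.3.2 of \cite{Ki1}). For sufficiently small $K_1^p\subset G_1(\A_f^p)$ one can find $K'^p\subset \GSp(V)(\A_f^p)$ with $\iota(K_1^p)\subset K'^p$ such that $\iota$ induces a closed immersion of Shimura varieties on the generic fibre. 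I would then define $S_{K_1}(G_1,X_1)$ as the normalization of the Zariski closure of $\Sh_{K_1}(G_1,X_1)$ inside the Siegel integral model over $\Ol_{E_1}$ (the moduli space of polarized abelian varieties with $K'^p$-level structure). The nontrivial content is formal smoothness: at each closed point $x$ the completed local ring should be identified, via Faltings' deformation theory for $p$-divisible groups with crystalline Tate tensors, with the universal $G_1$-deformation ring at the fibre, which by \cite{Ki1} 1.5.4 is formally smooth of the expected dimension. Uniqueness is captured by the N\'eron-type extension property for smooth $\Ol_{E_1}$-test schemes, which drops out of the smoothness and the Hodge tensor formalism.

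For the abelian-type case, I would descend from $(G_1, X_1)$ to $(G, X)$ by the twisting construction of \cite{Ki1} \S3.4. A geometric connected component $S_{K_1}(G_1, X_1)^+$ depends, essentially, only on the adjoint datum $(G_1^{ad}, X_1^{ad}) = (G^{ad}, X^{ad})$, so it provides an integral model of the corresponding connected component of $\Sh_{K^{ad}}(G^{ad}, X^{ad})$ once we quotient by the action of a finite arithmetic subgroup coming from $G_1^{ad}(\Q)^+$. I would then reassemble $S_K(G, X)$ as a quotient of the form
\[
S_K(G,X) \;=\; \bigl[\,S_{K_1}(G_1, X_1)^{+} \times G(\A_f)/K\,\bigr] \big/ \Ac,
\]
for an appropriate adelic equivalence relation $\Ac$ built from the exact sequence $1\to Z_G\to G\to G^{ad}\to 1$ and the analogous one for $G_1$; on generic fibres this recovers Deligne's construction \cite{D}. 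Smoothness and the extension property transfer from the Hodge-type model through this quotient, and the prime-to-$p$ Hecke action on the tower $\bigl(S_K(G,X)\bigr)_{K^p}$ is built into the construction by acting on the second factor.

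The main obstacle is the smoothness step in the Hodge-type case: it requires a careful local analysis of the deformation functor of $p$-divisible groups together with their crystalline Tate tensors, and it is precisely at this point that the hypothesis $p>2$ is used (to apply the Grothendieck--Messing / Faltings style deformation theory and to control the group-theoretic lifts of the Hodge filtration). Once this is established, verifying the N\'eron extension property characterizing the integral canonical model, and carrying out the descent from Hodge type to abelian type together with the extension of the Hecke action, are essentially formal consequences.
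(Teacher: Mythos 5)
Your proposal reproduces the strategy that the paper recalls in \S 6.1 (and which is Kisin's own in \cite{Ki1}): construct the Hodge-type model as the normalization of the Zariski closure inside a Siegel model and establish formal smoothness via Faltings' deformation theory for $p$-divisible groups with crystalline Tate tensors, then descend to abelian type by the twisting/quotient construction $S_{K_p}(G,X)=[\Ac(G_{\Z_{(p)}})\times S_{K_{1p}}(G_1,X_1)^+]/\Ac(G_{1\Z_{(p)}})^\circ$. This is essentially the same route; the paper only cites the theorem and sketches the construction, so your outline matches what it records.
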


It will be useful to review how these integral models are constructed, cf. \cite{Ki1} 2.3 and 3.4. 
\subsubsection{Case $(G,X)$ of Hodge type} Take an embedding of Shimura data $(G,X)\hookrightarrow (\GSp, S^\pm)$. Let $K=K_pK^p\subset G(\A_f)$ be an open compact subgroup with $K_p=G(\Z_p)$.
Take an open compact subgroup $K'=K'_pK^{'p}$ with $K'_p=\GSp(\Z_p)$, such that $K\subset K'$ and we have an closed immersion
\[\Sh_K(G,X)\hookrightarrow \Sh_{K'}(\GSp, S^\pm)_E, \] where $E$ is the local reflex field for $(G,X)$. For $\Sh_{K'}(\GSp, S^\pm)$ we have the integral canonical model $S_{K'}(\GSp, S^\pm)$. Consider the Zariski closure $S^-_K(G,X)$ of $\Sh_K(G,X)_E$ in $S_{K'}(\GSp, S^\pm)_{\Ol_E}$. Then $S_K(G,X)$ is defined as the normalization of $S^-_K(G,X)$. In particular we have a finite morphism
\[ S_K(G,X)\ra S^-_K(G,X)\subset  S_{K'}(\GSp, S^\pm)_{\Ol_E}.\]

It will be useful to review some further structures for the integral canonical model $S_K(G,X)$.
Let $T$ be a scheme over $\Ol_E$. Attached to each point $x\in S_{K}(G,X)(T)$ we have a triple \[(A_x, \lambda_x, \varepsilon_{x,K}^p),\] where $(A_x, \lambda_x)$ is the polarized abelian scheme up to prime to $p$ isogeny coming from pullback of the universal polarized abelian scheme over $S_{K'}(\GSp, S^\pm)$, and \[\varepsilon_{x,K}^p\in \Gamma(T, \Isom(V_{\A_f^p}, \wh{V}^p(A_x)_\Q)/K^p)\] is the (promoted) $K$-level structure coming from the $K'$-level structure $\varepsilon_{x,K'}^p$ on $A_x$, cf. \cite{Ki1} 3.4.2. The triple $(A_x, \lambda_x, \varepsilon_{x,K'}^p)$ can be viewed as the polarized abelian scheme with level structure attached to the $T$-point of $S_{K'}(\GSp, S^\pm)$ induced by $x$. Let $(s_\alpha)$ be a finite collection of tensors which cut off the inclusion $G\subset \GL(V)$. As explained in 1.3.6 of \cite{Ki2}, there exist de Rham tensors $s_{\alpha, dR, x}$ and $\ell$-adic \'etale tensors $(s_{\alpha, l, x})_{l\neq p}$ on the first relative de Rham cohomology and the first $\ell$-adic cohomology of $A_x$ respectively. The level $\varepsilon_{x,K}^p$ takes $s_\alpha$ to $(s_{\alpha, l, x})_{l\neq p}$. 

If $T=\Spec\, k$ where $k\subset\ov{\F}_p$ is a subfield containing the residue field $k_E$ of $\Ol_E$, then there exists crystalline Tate tensors $(s_{\alpha, 0, x})$ on the first crystalline cohomology of $A_x$. If $x$ is the specialization of a point $\wt{x}$ over $F$ with $F|E$ an extension, then there exist $p$-adic \'etale tensors $(s_{\alpha, p, \wt{x}})$ on the first $p$-adic \'etale cohomology of $A_{\wt{x}}$, and $(s_{\alpha, 0, x})$ and $(s_{\alpha, p, \wt{x}})$ are related by the $p$-adic comparison theorem, cf. Proposition 1.3.7 of \cite{Ki2}. By Corollary 1.3.11 of \cite{Ki2} the data \[(A_x, \lambda_x, \varepsilon_{x,K}^p, (s_{\alpha, 0, x}))\] uniquely determines the point $x\in S_{K}(G,X)(k)$. Sometimes we will write $s_{\alpha, 0, x}$ as $t_{\alpha,x}$ to be compatible with our previous notation on crystalline Tate tensors on $p$-divisible groups.

\subsubsection{Case $(G,X)$ of abelian type} Take a Shimura datum of Hodge type $(G_1,X_1)$ which is unramified at $p$, together with a central isogeny $G_1^{der}\ra G^{der}$, such that it induces an isomorphism of the associated adjoint Shimura datum $(G_1^{ad}, X_1^{ad})\simeq (G^{ad},X^{ad})$. Let $K=K_pK^p\subset G(\A_f)$ be an open compact subgroup with $K_p=G(\Z_p)$. The integral model $S_K(G,X)$ is constructed as the quotient \[S_{K_p}(G,X)/K^p,\] where $S_{K_p}(G,X)$ is an integral model over $\Ol_E$ of the pro-scheme \[\Sh_{K_p}(G,X)=\varprojlim_{K^p}\Sh_{K_pK^p}(G,X).\] The scheme $S_{K_p}(G,X)$ is constructed as follows. Fix a connected component $X^+\subset X$. We get the induced connected component $\Sh_K(G, X)_\C^+$ of the complex Shimura variety as usual. By \cite{Ki1} Proposition 2.2.4 it is defined over $L$. 
Consider the connected component \[\Sh_{K_{1p}}(G_1,X_1)^+=\varprojlim_{K^p_1}\Sh_{K_{1p}K^p_1}(G_1,X_1)^+\] of $\Sh_{K_{1p}}(G_1,X_1)=\varprojlim_{K^p_1}\Sh_{K_{1p}K^p_1}(G_1,X_1)$, where $K_{1p}=G_1(\Z_p)$. Let $S_{K_{1p}}(G_1,X_1)^+$ be the Zariski closure of $\Sh_{K_{1p}}(G_1,X_1)^+$ in $S_{K_{1p}}(G_1,X_1)$ over $W=W(\ov{\F}_p)$. Write $Z=Z_G$. The above integral model $S_{K_p}(G,X)$ of $\Sh_{K_p}(G,X)$ over $W$ is given by \[S_{K_p}(G,X)=[\Ac(G_{\Z_{(p)}})\times S_{K_{1p}}(G_1,X_1)^+ ]/ \Ac(G_{1\Z_{(p)}})^\circ,\]where
\[\Ac(G_{\Z_{(p)}})=G(\A_f^p)/Z(\Z_{(p)})^-\ast_{G(\Z_{(p)})_+/Z(\Z_{(p)})} G^{ad}(\Z_{(p)})^+\] and 
\[\Ac(G_{\Z_{(p)}})^\circ=G(\Z_{(p)})_+^-/Z(\Z_{(p)})^-\ast_{G(\Z_{(p)})_+/Z(\Z_{(p)})} G^{ad}(\Z_{(p)})^+;\] similarly we have $\Ac(G_{1\Z_{(p)}})$ and $\Ac(G_{1\Z_{(p)}})^\circ$, see \cite{Ki1} 3.3.2. The scheme $S_{K_p}(G,X)$ descends to $\Ol_E$ and gives the integral canonical model of $\Sh_{K_p}(G,X)=\varprojlim_{K^p}\Sh_{K_pK^p}(G,X)$, see the proof of loc. cit. Theorem 3.4.10.

\subsection{Newton stratification of the special fibers}\label{Section:newton}
We keep the notations as above. We will work over $\ov{\F}_p$ in this subsection. By abuse of notation, denote the special fiber of $S_K=S_K(G,X)$ over $\ov{\F}_p$ by $\ov{S}_K$ for simplicity. In this subsection, we will write an element of $B(G_{\Q_p})$ simply by $b$, and $B(G,\mu)=B(G_{\Q_p},\mu)$ as usual. In \cite{SZ}, we proved the following results.
\begin{theorem}
	\begin{enumerate}
		\item For any $b\in B(G,\mu)$, there exists a non empty locally closed subset $\ov{S}_K^b\subset \ov{S}_K$, which we view as a subscheme of $\ov{S}_K$ with its reduced structure, such that set theoretically we have \[ \ov{S}_K=\coprod_{b\in B(G,\mu)}\ov{S}_K^b.\]
		\item For any $b\in B(G,\mu)$, the Zariski closure of $\ov{S}_K^b$ in $\ov{S}_K$ is $\coprod_{b'\leq b}\ov{S}_K^{b'}$.
	\end{enumerate}
\end{theorem}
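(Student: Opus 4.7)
The plan is to reduce to the Hodge type case — where the Newton stratification is known by Hamacher (existence of strata, closure relations via Rapoport--Richartz) and by Kisin--Madapusi Pera / D.-U. Lee (non-emptiness of all strata) — and then to descend along the twisted-product construction of $S_{K_p}(G,X)$ reviewed above.

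In the Hodge type step, for $(G_1,X_1)$ with chosen Hodge embedding, at each geometric point $x\in\ov{S}_{K_1}(G_1,X_1)(\ov{\F}_p)$ the first crystalline cohomology of the associated abelian variety $A_x$ together with the crystalline Tate tensors $(s_{\alpha,0,x})$ defines an $F$-isocrystal with $G_1$-structure, and hence a $\sigma$-conjugacy class $[b_1(x)]\in B(G_{1,\Q_p})$; Mazur's inequality places it in $B(G_{1,\Q_p},\mu_1)$. This Newton map is upper semi-continuous with locally closed fibres and Grothendieck-type closure relations, so I obtain the stratification of $\ov{S}_{K_1}(G_1,X_1)$ and (by restriction) of the integral connected component $\ov{S}_{K_{1p}}(G_1,X_1)^+$, indexed by a certain subset $B(G_{1,\Q_p},\mu_1)^+\subseteq B(G_{1,\Q_p},\mu_1)$.

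To transport the stratification through
\[S_{K_p}(G,X) = \big[\Ac(G_{\Z_{(p)}})\times S_{K_{1p}}(G_1,X_1)^+\big]/\Ac(G_{1\Z_{(p)}})^\circ\]
and then down to $S_K = S_{K_p}/K^p$, two compatibilities are required. First, the $\Ac(G_{1\Z_{(p)}})^\circ$-action preserves Newton strata: the prime-to-$p$ Hecke part is obvious from the quasi-isogeny invariance of the isocrystal, while the $G^{ad}(\Z_{(p)})^+$-part is handled by observing that it acts through central twists of the $G_1$-isocrystal, which do not change the $\sigma$-conjugacy class. Second, the resulting indexing set is precisely $B(G_{\Q_p},\mu)$: passing from $G_1$ to $G_1^{ad}=G^{ad}$ is functorial on $B$-sets, and the analogue of Proposition \ref{P:cartesian} for $B(G,\mu)$ (the Kottwitz-style cartesian diagram comparing $G_1$, $G$ and their common adjoint) identifies $\Ac(G_{1\Z_{(p)}})^\circ$-orbits on $B(G_{1,\Q_p},\mu_1)^+$ with $B(G_{\Q_p},\mu)$, the choice of connected component $+$ being matched to a choice of $c_{b,\mu}\in\pi_1(G)^\Gamma$.

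Non-emptiness of each $\ov{S}_K^b$ follows by reversing the comparison: any $b\in B(G_{\Q_p},\mu)$ lifts to a $b_1\in B(G_{1,\Q_p},\mu_1)^+$ after suitably choosing the connected component (Theorem \ref{T:CKV} provides the necessary transitivity), and the Hodge type non-emptiness of $\ov{S}_{K_1}^{b_1}$ passes through the quotient. For the closure relations in (2), the partial order on $B(G)$ is compatible with the adjoint map and with the cartesian diagram, and the twisted-product quotient is by a prime-to-$p$ action, so Zariski closures commute with the construction. \textbf{The main obstacle} is step (ii) above: matching $\Ac(G_{1\Z_{(p)}})^\circ$-orbits of Newton classes with $B(G_{\Q_p},\mu)$, which requires handling the central isogeny $G_1^{\mathrm{der}}\to G^{\mathrm{der}}$ carefully and confirming that the $G_1^{ad}$-isocrystal together with the chosen connected component pins down the $G$-isocrystal up to the central ambiguity one is quotienting out.
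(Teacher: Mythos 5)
The paper does not prove this theorem; it cites \cite{SZ} (Shen--Zhang) and then merely \emph{reviews} the construction of the stratification in the paragraphs that follow the statement. So the right question is whether your plan, if carried out, would produce a correct proof. The general shape of your argument --- feed in the Hodge-type Newton stratification, then descend through the twisted product $S_{K_p}(G,X)=[\Ac(G_{\Z_{(p)}})\times S_{K_{1p}}(G_1,X_1)^+]/\Ac(G_{1\Z_{(p)}})^\circ$ --- does match the construction the paper reviews. But two of your steps are either misphrased or genuinely gapped.

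The serious gap is your treatment of non-emptiness after restriction to the connected component. You posit an indexing set $B(G_{1,\Q_p},\mu_1)^+\subseteq B(G_{1,\Q_p},\mu_1)$ for the stratification of $\ov{S}_{K_{1p}}(G_1,X_1)^+$ and then try to recover all of $B(G_{\Q_p},\mu)$ by ``suitably choosing the connected component,'' invoking Theorem \ref{T:CKV}. This does not work. The connected component $X^+$ is fixed once and for all in Kisin's construction, so there is no freedom to re-choose it per stratum; and Theorem \ref{T:CKV} is a \emph{local} statement about $J_b(\Q_p)$ acting transitively on $\pi_0$ of an affine Deligne--Lusztig variety, which by itself says nothing about how a Newton stratum of the \emph{global} Shimura variety distributes across $\pi(G_1)=\pi_0(\ov{S}_{K_{1p}}(G_1,X_1))$. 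The actual input, stated explicitly in the paper's review and credited to \cite{SZ}, is that each Hodge-type Newton stratum $\ov{S}_{K_{1p}}(G_1,X_1)^b$ is $\Ac(G_{1\Z_{(p)}})$-stable and surjects onto $\pi(G_1)$. That surjectivity is a nontrivial global result (in the same circle of ideas as the Langlands--Rapoport description and the nonemptiness theorems of Kisin--Madapusi Pera and D.~U.~Lee); it is precisely what makes $B(G_{1,\Q_p},\mu_1)^+$ \emph{equal} to the whole of $B(G_{1,\Q_p},\mu_1)$, so there is nothing to lift. Without establishing this surjectivity, your argument does not give nonemptiness of $\ov{S}_K^b$ for all $b$.

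A second, more cosmetic confusion is the framing in terms of ``$\Ac(G_{1\Z_{(p)}})^\circ$-orbits on $B(G_{1,\Q_p},\mu_1)^+$.'' Nothing acts on the Kottwitz set: since $(G_1^{\mathrm{ad}},\mu_1^{\mathrm{ad}})\simeq(G^{\mathrm{ad}},\mu^{\mathrm{ad}})$, there is a canonical bijection $B(G_1,\mu_1)\simeq B(G,\mu)$ by functoriality of $(\nu,\kappa)$, as the paper records. The quotient by $\Ac(G_{1\Z_{(p)}})^\circ$ is taken on the geometric strata (each of which carries a fixed label), not on the label set. Keeping this straight removes what you call the ``main obstacle'' of your plan. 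Your remarks on closure relations descending through the prime-to-$p$ quotient are fine.
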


For $b\in B(G,\mu)$, we call the subschemes $\ov{S}_K^b$ as the Newton strata of $\ov{S}_K$. If $(G,X)$ is of Hodge type, then the existence of the Newton stratification is implied by \cite{RR}, see also \cite{Ham} 2.3 and \cite{W} 5.2. 

For later use, we briefly review the construction of the Newton stratification. If $(G,X)$ is of Hodge type, it is constructed by the associated $p$-divisible groups with crystalline Tate tensors. We now assume that $(G,X)$ is of abelian type. In this case, let $(G_1, X_1)$ be an unramified Shimura datum of Hodge type $(G_1,X_1)$, together with a central isogeny $G_1^{der}\ra G^{der}$, such that it induces an isomorphism of the associated adjoint Shimura data $(G_1^{ad}, X_1^{ad})\simeq (G^{ad},X^{ad})$. Then we have a canonical bijection $B(G_1,\mu_1)\simeq B(G,\mu)$. Consider the Newton stratification at level $K^p_1$,
\[\ov{S}_{K_{1p}K^p_1}(G_1,X_1)=\coprod_{b\in B(G_1,\mu_1)}\ov{S}_{K_{1p}K^p_1}(G_1,X_1)^b.\]When the level $K^p_1$ varies, the Newton stratifications are compatible.  Therefore, we get a Newton stratification  \[\ov{S}_{K_{1p}}(G_1,X_1)=\coprod_{b\in B(G_1,\mu_1)}\ov{S}_{K_{1p}}(G_1,X_1)^b\] by taking inverse limit over $K^p_1$.
As \cite{Ki2} 3.5.8, consider \[\pi(G_1):=G_1(\Q)^-_+\setminus G_1(\A_f)/G_1(\Z_p)=G_1(\Z_{(p)})_+^-\setminus G_1(\A_f^p),\]which is the set of geometric connected components of $S_{K_{1p}}(G_1,X_1)$. By \cite{SZ}, \[\ov{S}_{K_{1p}}(G_1,X_1)^b\subset  \ov{S}_{K_{1p}}(G_1,X_1)\] is stable under the action of $\Ac(G_{1\Z_{(p)}})$, and we have a surjective $\Ac(G_{1\Z_{(p)}})$-equivariant map
\[\ov{S}_{K_{1p}}(G_1,X_1)^b\ra  \pi(G_1).\]
Let $\ov{S}_{K_{1p}}(G_1,X_1)^{b,+}$ be the pullback of $\ov{S}_{K_{1p}}(G_1,X_1)^{b}$ under the inclusion $\ov{S}_{K_{1p}}(G_1,X_1)^+\hookrightarrow \ov{S}_{K_{1p}}(G_1,X_1)$. In other words, we consider the following commutative diagrams
\[\xymatrix{\ov{S}_{K_{1p}}(G_1,X_1)^{b,+}\ar@{^{(}->}[r]\ar@{^{(}->}[d]&\ov{S}_{K_{1p}}(G_1,X_1)^{+}\ar@{^{(}->}[d]\\
	\ov{S}_{K_{1p}}(G_1,X_1)^{b}\ar@{^{(}->}[r]\ar@{->>}[d]&\ov{S}_{K_{1p}}(G_1,X_1)\ar@{->>}[d]\\
	\pi(G_1)\ar@{=}[r]&\pi(G_1),
	}\]
where the above diagram is cartesian. The stabilizer of $\ov{S}_{K_{1p}}(G_1,X_1)^{b,+}\subset \ov{S}_{K_{1p}}(G_1,X_1)^{b}$ is $\Ac(G_{1\Z_{(p)}})^\circ$, and we have the identity
\[\ov{S}_{K_{1p}}(G_1,X_1)^b=[\Ac(G_{1\Z_{(p)}})\times \ov{S}_{K_{1p}}(G_1,X_1)^{b,+}]/\Ac(G_{1\Z_{(p)}})^\circ.\]
For more details we refer to \cite{SZ}. Now as
\[\ov{S}_{K_{p}}(G,X)=[\Ac(G_{\Z_{(p)}})\times \ov{S}_{K_{1p}}(G_1,X_1)^{+}]/\Ac(G_{1\Z_{(p)}})^\circ,\]
we get the Newton stratification 
  \[\ov{S}_{K_p}(G,X)=\coprod_{b\in B(G,\mu)}\ov{S}_{K_p}(G,X)^b, \] where for any $b\in B(G,\mu)$, the associated stratum
\[ \ov{S}_{K_p}(G,X)^b=[\Ac(G_{\Z_{(p)}})\times \ov{S}_{K_{1p}}(G_1,X_1)^{b,+}]/ \Ac(G_{1\Z_{(p)}})^\circ\hookrightarrow \ov{S}_{K_{p}}(G,X).\]
 For any sufficiently small open compact subgroup $K^p\subset G(\A_f^p)$, we define
 \[ \ov{S}_{K_pK^p}(G,X)^b=\ov{S}_{K_p}(G,X)^b/K^p.\]Therefore we get the Newton stratification at the finite level
 \[\ov{S}_{K_pK^p}(G,X)= \coprod_{b\in B(G,\mu)}\ov{S}_{K_pK^p}(G,X)^b.\]

\subsection{Rapoport-Zink uniformization}
The notations will be the same as the previous subsection. We will work over $W$ in the rest of this section. For simplicity, denote the base change of $S_K=S_K(G,X)$ over $W$ by the same notation.
Let $b\in B(G,\mu)$ (the same convention as the last subsection). We get an unramified local Shimura datum of abelian type $(G_{\Q_p},b,\{\mu\})$, thus a formal scheme $\breve{\M}=\breve{\M}(G,b,\mu)$ over $W$. Fix a point $x\in \ov{S}_K^b(\ov{\F}_p) $. 

\subsubsection{Case $(G,X)$ of Hodge type} We want to construct a morphism of formal schemes over $\Spf W$
\[\Theta=\Theta_x: \breve{\M}\times G(\A_f^p)/K^p\lra \wh{S}_K,\]where $\wh{S}_K$ is the formal completion of $S_K$ along its special fiber.  The morphism $\Theta$ is constructed in \cite{Kim2} Proposition 4.3 and Corollary 4.3.2. Let $(A_x, \lambda_x, \varepsilon_{x,K}^p, (t_{\alpha, x}))$ be the abelian variety with additional structures attached to $x$, and let $I_\phi(\Q)$ be the group of quasi-isogenies of $A_x$ preserving $(t_{\alpha, x})$. Then $I_\phi(\Q)$ is the group of $\Q$-points of a reductive group $I_\phi$ over $\Q$ (cf. \cite{Ki2} Corollary 2.3.1) which depends only on the isogeny class of $x$ (\cite{Ki2} 1.4.14).
In this case, $\Theta$ factors through the quotient by $I_\phi(\Q)$
\[\Theta:  I_\phi(\Q)\setminus\breve{\M}\times G(\A_f^p)/K^p\lra \wh{S}_K,\]
and the image $\Zm_{\phi,K^p}$ is contained in the stratum $\ov{S}_{K}^b$.

\subsubsection{Case $(G,X)$ of abelian type} We first work on the level of sets. By \cite{Ki2} Theorem 4.6.7, we have the following bijection
\[\ov{S}_{K_p}(G,X)^b(\ov{\F}_p)\st{\sim}{\lra}\coprod_{[\phi],b(\phi)=b}S(G,\phi), \]where
\[\phi: \mathfrak{Q}\lra  \mathfrak{G}_G\] runs through the set of admissible morphisms of Galois gerbs, $[\phi]$ is the associated equivalence class, cf. \cite{Ki2} 3.3, and
\[S(G,\phi)=\varprojlim_{K^p} I_{\phi}(\Q)\setminus \M_{red}(\ov{\F}_p)\times G(\A_f^p)/K^p,\]
where $\M_{red}$ is the reduced special fiber of the Rapoport-Zink space $\breve{\M}$ associated to $(G_{\Q_p}, b(\phi), \{\mu\})$.
 
\begin{remark}
In \cite{Ki2} 3.3, in fact one considers the set 
\[S(G,\phi)=\varprojlim_{K^p} I_\phi(\Q)\setminus X_p(\phi)\times X^p(\phi)/K^p,\]
where $X_p(\phi)$ and $X^p(\phi)$ are certain sets canonically associated to $\phi$, such that (cf. Lemma 3.3.4 of \cite{Ki2}) \[X_p(\phi) \simeq X_{\mu}^G(b)\simeq \M_{red}(\ov{\F}_p)\] and $X^p(\phi)$ is a $G(\A_f^p)$-torsor.
\end{remark}
 
Take an unramified Shimura datum of Hodge type $(G_1,X_1)$, together with a central isogeny $G_1^{der}\ra G^{der}$, such that it induces an isomorphism of the associated adjoint Shimura data $(G_1^{ad}, X_1^{ad})\simeq (G^{ad},X^{ad})$. Let 
\[\phi_1: \mathfrak{Q}\lra  \mathfrak{G}_{G_1}\] be an admissible morphism of Galois gerbs.
We note that \[\begin{split}
S(G_1,\phi_1)&=\varprojlim_{K^p_1} I_{\phi_1}(\Q)\setminus \M_{1red}(\ov{\F}_p)\times G_1(\A_f^p) /K^p_1\\
&= I_{\phi_1}(\Q)\setminus \M_{1red}(\ov{\F}_p)\times G_1(\A_f^p),
\end{split}
\]
where $\M_{1red}$ is the reduced special fiber of the Rapoport-Zink space $\breve{\M}_1$ associated to $(G_{1\Q_p}, b(\phi_1), \{\mu_1\})$.

Fix an admissible morphism $\phi_0: \mathfrak{Q}\ra  \mathfrak{G}_{G^{ad}}$. Consider
\[\begin{split} S(G,\phi_0)&=\coprod_{[\phi], \phi^{ad}=\phi_0} S(G, \phi)\\
&= \coprod_{[\phi], \phi^{ad}=\phi_0}\varprojlim_{K^p} I_{\phi}(\Q)\setminus \M_{red}(\ov{\F}_p)\times G(\A_f^p)/K^p. \end{split}\] By \cite{Ki2} Lemmas 3.7.2 and 3.7.4, there is an action of $\Ac(G_{\Z_{(p)}})$ on $S(G,\phi_0)$, together with an $\Ac(G_{\Z_{(p)}})$-equivariant surjective map \[c_G: S(G,\phi_0)\lra \pi(G).\] Recall that we have fixed a point $x\in \ov{S}_{K_p}(G,X)^b(\ov{\F}_p)$. We choose $\phi_0$ such that $x\in S(G,\phi_0)$ under the bijection $\ov{S}_{K_p}(G,X)^b(\ov{\F}_p)\st{\sim}{\lra}\coprod_{[\phi],b(\phi)=b}S(G,\phi)$. For the identity class $e\in \pi(G)$, consider the fiber \[S(G,\phi_0)^+=c_{G}^{-1}(e).\] Let $(G_1, X_1)$ be the unramified Shimura datum of Hodge type  as above.
Similarly we have $S(G_1, \phi_0)=\coprod_{[\phi_1], \phi^{ad}_1=\phi_0} S(G_1, \phi_1)$ and $S(G_1, \phi_0)^+$.

\begin{proposition}\label{P:quotient}
We have the following isomorphism of sets with $\Ac(G_{\Z_{(p)}})\times \lan \Phi \ran$-action
\[S(G,\phi_0)\simeq [ \Ac(G_{\Z_{(p)}})\times S(G_1,\phi_0)^+]/\Ac(G_{1\Z_{(p)}})^{\circ} .\]
\end{proposition}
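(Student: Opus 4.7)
The plan is to mimic, at the level of the Langlands--Rapoport groupoid, the construction of the integral canonical model itself,
\[S_{K_p}(G,X) = [\Ac(G_{\Z_{(p)}}) \times S_{K_{1p}}(G_1,X_1)^+]/\Ac(G_{1\Z_{(p)}})^\circ,\]
and the parallel formula for the Newton stratum derived in subsection \ref{Section:newton}, now transferred to the side of $\ov{\F}_p$-points. First I would introduce $S(G,\phi_0)^+ := c_G^{-1}(e) \subset S(G,\phi_0)$ and analogously $S(G_1,\phi_0)^+$. The proof then breaks into two parts: (a) an $\Ac(G_{1\Z_{(p)}})^\circ$-equivariant bijection $S(G,\phi_0)^+ \simeq S(G_1,\phi_0)^+$, compatible with Frobenius; and (b) a transitivity/stabilizer calculation showing that $\Ac(G_{\Z_{(p)}})$ permutes the fibers of $c_G$ transitively with stabilizer of $S(G,\phi_0)^+$ equal to $\Ac(G_{1\Z_{(p)}})^\circ$.

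For (a), the key input is that by the very construction in the proof of Theorem \ref{T:ab}, $\breve{\M}(G,b,\mu)^+ = \breve{\M}(G_1,b_1,\mu_1)^+$, so on reduced special fibers $\M_{red}^+ \simeq \M_{1red}^+$ compatibly with Frobenius and with the common subgroups of $J_b(\Q_p)$ and $J_{b_1}(\Q_p)$ stabilizing the ``$+$''-component. On the other hand, every admissible $\phi$ with $\phi^{ad} = \phi_0$ admits an admissible lift $\phi_1$ to $\mathfrak{G}_{G_1}$ with $\phi_1^{ad} = \phi_0$, and the reductive groups $I_\phi, I_{\phi_1}$ share the common adjoint group $I_{\phi_0}$. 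Packaging these compatibilities and restricting to the fiber over $e$ in $\pi(G)$, respectively $\pi(G_1)$, yields the claimed bijection of ``$+$''-parts, following the recipe in \cite{Ki2} 3.5 and 3.7.

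For (b), I would follow Kisin's strategy in \cite{Ki2} 4.6 (compare also \cite{SZ} Section 5): surjectivity of $c_G$ and transitivity of $\Ac(G_{\Z_{(p)}})$ on $\pi(G)$ follow from the definition of $\Ac(G_{\Z_{(p)}})$ combined with the adelic surjectivity derived from Lemma \ref{L:surjective}, while the identification of the stabilizer of $S(G,\phi_0)^+$ with $\Ac(G_{1\Z_{(p)}})^\circ$ is the standard centralizer computation that underlies the construction of $S_{K_p}(G,X)$ as an abelian-type quotient. Assembling (a) and (b) gives the isomorphism in the statement, and Frobenius-equivariance on both sides is automatic from the geometric origin of all the relevant data.

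The main obstacle I expect is (a). While the identification of the ``$+$'' pieces is built into the definition, translating it into a bijection between $S(G,\phi_0)^+$ and $S(G_1,\phi_0)^+$ that is simultaneously equivariant for Frobenius, for the various $I_\phi(\Q)$-actions as $[\phi]$ varies over lifts of $\phi_0$, and for the $\Ac$-action, requires one to control precisely how $\Ac(G_{\Z_{(p)}})$ permutes these lifts $[\phi]$ in the disjoint union defining $S(G,\phi_0)$. This bookkeeping is exactly what replaces, in the abelian-type setting, the missing moduli interpretation by abelian varieties which would otherwise make the identification transparent.
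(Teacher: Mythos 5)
The paper's proof of this proposition is a single line: it follows from Corollary~3.8.12 of \cite{Ki2}. That corollary is precisely the set-theoretic counterpart of the integral model construction, already formulated for the Langlands--Rapoport groupoid, and it packages all the bookkeeping you are trying to reconstruct. Your proposal follows essentially the same strategy that Kisin uses internally, citing separate pieces of [Ki2] (3.5, 3.7, 4.6) where the paper cites the single consolidated statement 3.8.12.

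Two remarks on the gaps you leave. First, there is a slip in part (b): the stabilizer of $S(G,\phi_0)^+$ inside $\Ac(G_{\Z_{(p)}})$ is $\Ac(G_{\Z_{(p)}})^\circ$, \emph{not} $\Ac(G_{1\Z_{(p)}})^\circ$. The latter group only enters through the balanced product: it maps (not necessarily surjectively) into $\Ac(G_{\Z_{(p)}})^\circ$, acts on the right on $\Ac(G_{\Z_{(p)}})$ and on the left on $S(G_1,\phi_0)^+$, and the quotient $[\Ac(G_{\Z_{(p)}}) \times S(G_1,\phi_0)^+]/\Ac(G_{1\Z_{(p)}})^\circ$ is a twisted product, not a quotient of one factor by its own stabilizer. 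Conflating these two roles would break the orbit-stabilizer count. Second, the ``bookkeeping'' obstruction you flag at the end of part (a) is real and is not a side issue: it is the entire content of Kisin's intermediate results (Lemmas~3.7.2, 3.7.4, and the sequence 3.8.x culminating in 3.8.12), which keep track of the twisting on $X_p(\phi)$ and $X^p(\phi)$ as $\phi$ varies over lifts $\phi_1$ of $\phi_0$, including nontrivial Galois-cohomological torsor issues in $H^1(\Q, Z_1)$ and $H^1(\Q, I_{\phi_1})$ (compare Remark~\ref{R:finer quotient} in this paper, where exactly those groups appear). As written, your proposal is a plausible outline of a proof of Kisin's corollary, not a proof of the proposition. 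The clean move, which the paper takes, is to cite Corollary~3.8.12 directly.
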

\begin{proof}
This follows from Corollary 3.8.12 of \cite{Ki2}.
\end{proof}

Now we come back to Rapoport-Zink spaces.
If $K_1^{p'}\subset K_1^p$ is another open compact subgroup of $G_1(\A_f^p)$, then we have the following commutative diagram
\[\xymatrix{
	I_{\phi_1}(\Q)\setminus\breve{\M}_1\times G_1(\A_f^p)/K^{p'}_1\ar[r]\ar[d]^{\Theta_{1K^{p'}_1}}& I_{\phi_1}(\Q)\setminus\breve{\M}_1\times G_1(\A_f^p)/K^p_1\ar[d]^{\Theta_{1K^{p}_1}} \ar[d]\\
\wh{S}_{K_{1p}K^{p'}_1}(G_1,X_1) \ar[r]& \wh{S}_{K_{1p}K^p_1}(G_1,X_1)
}\]with horizontal maps finite. Therefore, if we set
\[\wh{S}(G_1,\phi_1):= \varprojlim_{K^p_1}I_{\phi_1}(\Q)\setminus\breve{\M}_1\times G_1(\A_f^p)/K^p_1,\]
then we get 
\[\Theta_1=\varprojlim_{K^p_1}\Theta_{1K^{p}_1}: \wh{S}(G_1,\phi_1)\lra \varprojlim_{K^p_1} \wh{S}_{K_{1p}K^p_1}(G_1,X_1), \] 
where both limits are taken in the category of formal schemes. 
\begin{lemma}
Let $\wh{S}_{K_{1p}}(G_1,X_1)$ be the formal completion of $S_{K_{1p}}(G_1,X_1)$ along its special fiber. Then we have
a canonical isomorphism of formal schemes \[\varprojlim_{K^p_1} \wh{S}_{K_{1p}K^p_1}(G_1,X_1)=\wh{S}_{K_{1p}}(G_1,X_1).\] 
\end{lemma}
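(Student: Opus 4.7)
The plan is to reduce the statement to the fact that the formal-completion functor commutes with cofiltered inverse limits of schemes whose transition maps are affine. First, I recall that $S_{K_{1p}}(G_1,X_1)=\varprojlim_{K^p_1}S_{K_{1p}K^p_1}(G_1,X_1)$ is taken in the category of $W$-schemes; since $K^p_1$ runs over sufficiently small open compact subgroups and the transition maps $S_{K_{1p}K^{p'}_1}(G_1,X_1)\to S_{K_{1p}K^{p}_1}(G_1,X_1)$ are finite \'etale (they come from the prime-to-$p$ Hecke action on the Hodge-type tower by \cite{Ki1}), the limit exists as an honest, flat $W$-scheme, and its special fiber satisfies $\ov{S}_{K_{1p}}(G_1,X_1)=\varprojlim_{K^p_1}\ov{S}_{K_{1p}K^p_1}(G_1,X_1)$ with affine transition maps.

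Next, I would unwind the definition of the formal completion on each side. On the right, $\wh{S}_{K_{1p}}(G_1,X_1)$ is by definition the locally topologically ringed space whose underlying space is $|\ov{S}_{K_{1p}}(G_1,X_1)|$ and whose structure sheaf is $\varprojlim_n \mathcal{O}_{S_{K_{1p}}(G_1,X_1)}/(p^n)$. On the left, the inverse limit $\varprojlim_{K^p_1}\wh{S}_{K_{1p}K^p_1}(G_1,X_1)$ makes sense in the category of formal schemes because all transition maps, being base-changes of the finite \'etale maps above, are in particular affine morphisms of formal schemes; the limit then has underlying space $\varprojlim_{K^p_1}|\ov{S}_{K_{1p}K^p_1}(G_1,X_1)|=|\ov{S}_{K_{1p}}(G_1,X_1)|$ and structure sheaf $\varprojlim_{K^p_1}\varprojlim_n\mathcal{O}_{S_{K_{1p}K^p_1}(G_1,X_1)}/(p^n)$.

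It then remains to identify the two structure sheaves. This is a formal manipulation: swapping the order of the two cofiltered inverse limits (both are indexed by directed sets and both are being taken in the category of sheaves of rings, so the swap is valid), and using that $\mathcal{O}_{S_{K_{1p}}(G_1,X_1)}=\varprojlim_{K^p_1}\mathcal{O}_{S_{K_{1p}K^p_1}(G_1,X_1)}$ (affine pullback) commutes with the finite colimits implicit in quotienting by $p^n$, one obtains
\[
\varprojlim_{K^p_1}\varprojlim_n\mathcal{O}_{S_{K_{1p}K^p_1}(G_1,X_1)}/(p^n)
=\varprojlim_n\mathcal{O}_{S_{K_{1p}}(G_1,X_1)}/(p^n),
\]
yielding the desired isomorphism of formal schemes.

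The one subtle point, and the main thing that needs to be checked carefully, is that the inverse limit of formal schemes along the $K^p_1$-tower really exists and is computed levelwise on both the underlying space and the structure sheaf; this is where the finiteness (hence affineness) of the transition maps is essential, so that the limit can be constructed Zariski-locally by taking $\Spf$ of the $p$-adically completed direct limit of coordinate rings, matching the completion of the limit scheme. Once that is in place, the equality of structure sheaves above is routine.
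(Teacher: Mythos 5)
Your proof is correct and takes essentially the same approach the paper intends: the paper's one-line proof appeals to "the definition of inverse limit of formal schemes," and your write-up is a careful unwinding of precisely that appeal, identifying the affineness of the finite \'etale transition maps as the reason the limit exists levelwise on underlying spaces and structure sheaves and then swapping the two cofiltered limits. The only real difference is the level of detail — the paper leaves all of this to the reader — so your version is a faithful expansion rather than an alternative argument.
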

\begin{proof}
This follows from the definition of inverse limit of formal schemes.
\end{proof}
We have thus $\Theta_1: \wh{S}(G_1,\phi_1)\lra \wh{S}_{K_{1p}}(G_1,X_1)$.  On the other hand, we have a surjective map \[c_{G_1}: S_{K_{1p}}(G_1,X_1)\lra \pi(G_1).\] Consider the fiber over $e$ of this map $c_{G_1}$, $S_{K_{1p}}(G_1,X_1)^+\subset S_{K_{1p}}(G_1,X_1)$,  and
let $\wh{S}_{K_{1p}}(G_1,X_1)^+$ be formal completion of $S_{K_{1p}}(G_1,X_1)^+$ along its special fiber.
Let \[\Theta_1^+: \wh{S}(G_1,\phi_1)^+:=\Big( \varprojlim_{K^p_1}I_{\phi_1}(\Q)\setminus\breve{\M}_1\times G_1(\A_f^p)/K^p_1\Big)^+\lra \wh{S}_{K_{1p}}(G_1,X_1)^+\]
be the pullback of 
\[\Theta_1: \wh{S}(G_1,\phi_1)= \varprojlim_{K^p_1}I_{\phi_1}(\Q)\setminus\breve{\M}_1\times G_1(\A_f^p)/K^p_1 \lra \wh{S}_{K_{1p}}(G_1,X_1)\] 
under the inclusion $\wh{S}_{K_{1p}}(G_1,X_1)^+\hookrightarrow \wh{S}_{K_{1p}}(G_1,X_1)$. The morphism 
$\Theta_1^+$ can be written as $\Theta_1^+=\varprojlim_{K^p_1}\Theta_{1K^{p}_1}^+$, with
\[\xymatrix{
\Big(I_{\phi_1}(\Q)\setminus\breve{\M}_1\times G_1(\A_f^p)/K^p_1\Big)^+\ar[r]\ar[d]^{\Theta_{1K^{p}_1}^+}& I_{\phi_1}(\Q)\setminus\breve{\M}_1\times G_1(\A_f^p)/K^p_1\ar[d]^{\Theta_{1K^{p}_1}}\\
\wh{S}_{K_{1p}K^p_1}(G_1,X_1)^+\ar[r]& \wh{S}_{K_{1p}K^p_1}(G_1,X_1).
}\] 
Define formal schemes
\[\begin{split}\wh{S}(G_1,\phi_0)^+&=\coprod_{[\phi_1],\phi^{ad}_1=\phi_0} \wh{S}(G_1,\phi_1)^+\\
&=\coprod_{[\phi_1],\phi^{ad}_1=\phi_0}\Big( \varprojlim_{K^p_1}I_{\phi_1}(\Q)\setminus\breve{\M}_1\times G_1(\A_f^p)/K^p_1\Big)^+\end{split}\]
and 
\[\wh{S}(G,\phi_0)=\coprod_{[\phi],\phi^{ad}=\phi_0} \varprojlim_{K^p}I_\phi(\Q)\setminus \breve{\M}\times G(\A_f^p)/K^p.\]
\begin{proposition}\label{P:formal quotient}
In the above situation, we have
\[ \wh{S}(G,\phi_0)\simeq [\Ac(G_{\Z_{(p)}})\times \wh{S}(G_1,\phi_0)^+ ]/\Ac(G_{1\Z_{(p)}})^{\circ}. \]
\end{proposition}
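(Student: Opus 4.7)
The plan is to upgrade the set-theoretic identification of Proposition \ref{P:quotient} to an isomorphism of formal schemes, exploiting the fact that both sides are built by the same quotient recipe from the Hodge-type input on the $+$-component.

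First I would rewrite the left hand side using the construction of $\breve{\M}=\breve{\M}(G,b,\mu)$ given in Theorem \ref{T:ab}. By that construction, $\breve{\M}^+=\breve{\M}(G_1,b_1,\mu_1)^+$ and $\breve{\M}=[J_b(\Q_p)\times\breve{\M}^+]/J_b(\Q_p)^+$, and under the adjoint isomorphism $J_b^{ad}\simeq J_{b^{ad}}\simeq J_{b_1}^{ad}$ the actions of $J_b(\Q_p)$ on $\breve{\M}$ and of $J_{b_1}(\Q_p)$ on $\breve{\M}_1$ become compatible on $+$-components. Combining this with the definition of $\wh{S}(G,\phi_0)$ and keeping track of the surjections $I_\phi\twoheadrightarrow I_\phi^{ad}$ (for varying $[\phi]$ with $\phi^{ad}=\phi_0$) one obtains a natural identification
\[
\wh{S}(G,\phi_0)^+\;:=\;\coprod_{[\phi],\,\phi^{ad}=\phi_0}\Big(\varprojlim_{K^p}I_\phi(\Q)\setminus\breve{\M}\times G(\A_f^p)/K^p\Big)^+\;\simeq\;\wh{S}(G_1,\phi_0)^+,
\]
by pairing each $\phi$ with the set of $\phi_1$'s lifting $\phi_0$ and using the identification $\breve{\M}^+=\breve{\M}_1^+$.

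Next I would transport the $\Ac(G_{\Z_{(p)}})$- and $\lan\Phi\ran$-actions from the set-theoretic level (Proposition \ref{P:quotient}, resting on \cite{Ki2} Cor.~3.8.12) to the formal scheme level. The key point here is that the prime-to-$p$ Hecke action on both $\breve{\M}\times G(\A_f^p)/K^p$ (through the $G(\A_f^p)$-factor) and on $\wh{S}_{K_p}(G,X)$ acts through morphisms of formal schemes, and the centralizer group $I_\phi$ of a point acts via quasi-isogenies that extend to automorphisms of the corresponding formal neighborhoods (this is how $\Theta_1$ and $\Theta_1^+$ are constructed in the Hodge-type case). Once this functoriality is in place, the quotient
\[
[\Ac(G_{\Z_{(p)}})\times \wh{S}(G_1,\phi_0)^+]/\Ac(G_{1\Z_{(p)}})^{\circ}
\]
makes sense as a formal scheme, by exactly the same amalgamated product recipe used to construct $S_{K_p}(G,X)$ out of $S_{K_{1p}}(G_1,X_1)^+$.

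Finally I would produce the isomorphism by the following chain: apply the previous paragraph's identification to the $+$-component, then multiply through by $\Ac(G_{\Z_{(p)}})$ and take the quotient by $\Ac(G_{1\Z_{(p)}})^\circ$; this already yields the right hand side. On the other hand the left hand side is obtained from its $+$-component by the same procedure, because the decomposition $\breve{\M}=J_b(\Q_p)\breve{\M}^+$ together with $J_b(\Q_p)/J_b(\Q_p)^+\simeq\pi_1(G)^\Gamma$ is exactly the local shadow of the decomposition $\Ac(G_{\Z_{(p)}})/\Ac(G_{1\Z_{(p)}})^\circ$-action on connected components encoded in Kisin's Lemmas 3.7.2 and 3.7.4. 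Checking the set-theoretic identification of Proposition \ref{P:quotient} is intertwined with both quotient presentations then gives the desired isomorphism of formal schemes. The main obstacle I expect is this last bookkeeping step: verifying that the two presentations are compatible not merely on $\ov{\F}_p$-points but on formal neighborhoods, which will force one to check that the action of $\Ac(G_{1\Z_{(p)}})^\circ$ on $\wh{S}(G_1,\phi_0)^+$ (a mixture of prime-to-$p$ Hecke translates and quasi-isogenies coming from $I_{\phi_1}(\Q)$) matches its action on $\wh{S}_{K_{1p}}(G_1,X_1)^+$ obtained via the Kim uniformization $\Theta_1^+$; this requires invoking the Hodge-type uniformization at each finite level and passing to the limit, as already done for $\Theta_1$ above.
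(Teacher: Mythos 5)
Your approach is in the same spirit as the paper, which is far terser: the paper simply asserts that the proof is ``identical'' to that of Proposition~\ref{P:quotient}, which in turn cites Kisin's Corollary~3.8.12 with no further argument. What you have done is to unpack why the purely set-theoretic argument of Kisin lifts to an isomorphism of formal schemes — namely, that all the moving parts (the identification $\breve{\M}^+=\breve{\M}_1^+$, the $\Ac$-actions, the prime-to-$p$ Hecke translates, the quasi-isogeny actions of $I_\phi(\Q)$) are realized by morphisms of formal schemes, so the amalgamated-product recipe computing the left side on $\ov{\F}_p$-points also computes it on formal completions. Your closing paragraph correctly identifies the real content: one must check, using the finite-level Hodge-type uniformization maps $\Theta_{1K^p_1}^+$ and passing to the limit, that the quotient action on the abstract model $\wh{S}(G_1,\phi_0)^+$ agrees with the induced action under $\Theta_1^+$ on $\wh{S}_{K_{1p}}(G_1,X_1)^+$.

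One caveat about your route. The intermediate identification $\wh{S}(G,\phi_0)^+\simeq\wh{S}(G_1,\phi_0)^+$, which you introduce ``by pairing each $\phi$ with the set of $\phi_1$'s lifting $\phi_0$,'' is not an immediate consequence of $\breve{\M}^+=\breve{\M}_1^+$. The two coproducts are indexed by different sets of admissible morphisms (lifts of $\phi_0$ to $\mathfrak{G}_G$ versus to $\mathfrak{G}_{G_1}$), and the groups $I_\phi$ and $I_{\phi_1}$ differ (same adjoint quotient, different centers); reconciling these index sets and stabilizers is precisely the nontrivial bookkeeping carried out in Kisin's 3.8. Asserting the identification as a stepping stone risks proving the proposition by something essentially equivalent to the proposition itself. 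It would be cleaner to avoid introducing $\wh{S}(G,\phi_0)^+$ at all, and to say instead that Kisin's set-theoretic argument in Cor.~3.8.12 is purely group-theoretic in the Langlands--Rapoport framework, and that it applies verbatim once one replaces ``set of $\ov{\F}_p$-points'' by ``formal scheme'' throughout, using your observation that the group actions and uniformization maps are morphisms of formal schemes. That is the route the paper evidently intends and it sidesteps the circularity.
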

\begin{proof}
This is identical to the proof of Proposition \ref{P:quotient}.
\end{proof}

Let $\Zm_{\phi_1,K_1^p}$ (resp. $\Zm_{\phi_1,K_1^p}^+$ ) be the image of $\Theta_{1K^p_1}$ (resp. $\Theta_{1K^p_1}^+$). This exists a geometric structure on $\Zm_{\phi_1,K_1^p}$ as follows. We can write
\[\Zm_{\phi_1,K_1^p}=\bigcup _{j\in J_{K^p_1}}Z_{\phi_1,K_1^p}^j,\]
where $J_{K^p_1}$ is the $I_{\phi_1}(\Q)$-orbits of irreducible components of $\breve{\M}_1\times G_1(\A_f^p)/K^p$, and $Z_{\phi_1,K_1^p}^j$ is the image of the irreducible components under $\Theta_{1K^p_1}$ corresponding to $j\in J_{K^p_1}$. For each $j\in  J_{K^p_1}$, there exists only finitely many $j'\in J_{K^p_1}$ such that \[Z_{\phi_1,K_1^p}^j\bigcap Z_{\phi_1,K_1^p}^{j'}\neq \emptyset.\]
 Thus we get an induced geometric structure on $\Zm_{\phi_1,K_1^p}^+$ as
  \[ \Zm_{\phi_1,K_1^p}^+=\bigcup_{j\in J_{K^p_1}}Z_{\phi_1,K_1^p}^{j,+},\]
  where $Z_{\phi_1,K_1^p}^{j,+}$ is the pullback of $Z_{\phi_1K_1^p}^{j}$ to $\wh{S}_{K_{1p}K^p_1}(G_1,X_1)^+$.
When $K^p_1$ varies, $J_{K^p_1}$, $\Zm_{\phi_1,K_1^p}$, and $\Zm_{\phi_1,K_1^p}^+$  form inverse systems, and we set
\[\Zm_{\phi_1}=\varprojlim_{K^p_1}\Zm_{\phi_1,K_1^p}, \quad \Zm_{\phi_1}^+=\varprojlim_{K^p_1}\Zm_{\phi_1,K_1^p}^+. \]
Let $J_{1}$ be the $I_{\phi_1}(\Q)$-orbits of irreducible components of $\breve{\M}_1\times G_1(\A_f^p)$. For any $j\in J_1$, let $Z_{\phi_1}^j$ be the image of the irreducible components under $\Theta_{1}$ corresponding to $j$, then we can write
\[\Zm_{\phi_1}=\bigcup _{j\in J_{1}}Z_{\phi_1}^j\]and\[Z_{\phi_1}^j=\varprojlim_{K^p_1}Z_{\phi_1,K_1^p}^j,\]
where $Z_{\phi_1K_1^p}^j$ is the image of the irreducible components corresponding to $j$ under the composition 
\[\breve{\M}_1\times G_1(\A_f^p)\ra\breve{\M}_1\times G_1(\A_f^p)/K^p_1\ra  \wh{S}_{K_{1p}K^p_1}(G_1,X_1).\]
Similarly for $\Zm_{\phi_1}^+$.
By the proof of Proposition 4.6.2 of \cite{Ki2}, we have $\lan\Phi\ran \times Z_1(\Q_p)\times \Ac(G_{1\Z_{(p)}})^{I_{\phi_1}}$-equivariant bijection of sets (cf. Remark \ref{R:finer quotient})
\[\Zm_{\phi_1}(\ov{\F}_p)\simeq S(G_1,\phi_1), \quad \Zm_{\phi_1}^+(\ov{\F}_p)\simeq S(G_1,\phi_1)^+ .\]

We have (cf. \cite{Ki1} 3.4.11)
\[ [\Ac(G_{\Z_{(p)}})\times \wh{S}_{K_{1p}}(G_1,X_1)^+ ]/\Ac(G_{1\Z_{(p)}})^{\circ}= \wh{S}_{K_{p}}(G,X).\]
Recall that we fixed an admissible morphism $\phi_0: \mathfrak{Q}\ra  \mathfrak{G}_{G^{ad}}$. Set 
\[ \Zm_{G_1,\phi_0}^+=\coprod_{[\phi_1],\phi^{ad}_1=\phi_0}\Zm_{\phi_1}^+.\]
Applying the functor $[\Ac(G_{\Z_{(p)}})\times - ]/\Ac(G_{1\Z_{(p)}})^{\circ}$ to $\Zm_{G_1,\phi_0}^+$, we get a subset $\Zm_{\phi_0}(=\Zm_{G,\phi_0})\subset \wh{S}_{K_p}=\wh{S}_{K_{p}}(G,X)$. Let $\Zm_{\phi_0, K^p}$ be the image of $\Zm_{\phi_0}$ under the projection $\wh{S}_{K_p}\ra \wh{S}_K=\wh{S}_{K_pK^p}$. Then we can define the formal completion of $\wh{S}_K$ along $\Zm_{\phi_0,K^p}$ as \cite{RZ} chapter 6 and \cite{Kim2} Definition 4.6.
\begin{theorem}\label{Thm:uniformize}
We have an isomorphism of formal schemes over $W$
\[ \Theta: \coprod_{[\phi],\phi^{ad}=\phi_0} I_\phi(\Q)\setminus \breve{\M}\times G(\A_f^p)/K^p\st{\sim}{\lra}  \wh{S_K}_{/\Zm_{\phi_0,K^p}}.  \]
\end{theorem}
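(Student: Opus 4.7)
The plan is to deduce this from the already-known Hodge type case by exactly the same quotient construction that defines $S_K(G,X)$ out of $S_{K_{1p}}(G_1,X_1)^+$. First I would start from Kim's (or Howard-Pappas's) uniformization isomorphism at finite level for the auxiliary Hodge type datum $(G_1,X_1)$: for each admissible $\phi_1$ with $\phi_1^{ad}=\phi_0$ and each sufficiently small $K_1^p\subset G_1(\A_f^p)$ one has
\[
\Theta_{1K_1^p}\colon I_{\phi_1}(\Q)\setminus \breve{\M}_1\times G_1(\A_f^p)/K_1^p \stackrel{\sim}{\lra} \wh{S_{K_{1p}K_1^p}(G_1,X_1)}_{/\Zm_{\phi_1,K_1^p}}.
\]
Taking the inverse limit in $K_1^p$ gives the isomorphism $\Theta_1$, and pulling back to the fiber of $c_{G_1}$ over $e\in \pi(G_1)$ yields the ``$+$'' version $\Theta_1^+$. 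Taking the disjoint union over classes $[\phi_1]$ with $\phi_1^{ad}=\phi_0$ produces an isomorphism
\[
\Theta_0^+\colon \wh{S}(G_1,\phi_0)^+\stackrel{\sim}{\lra}\wh{S}_{K_{1p}}(G_1,X_1)^+{}_{/\Zm_{G_1,\phi_0}^+},
\]
where by construction both sides carry compatible actions of $\Ac(G_{1\Z_{(p)}})^\circ$ and are equivariant for the natural morphisms to $\Ac(G_{\Z_{(p)}})$-torsors.

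Next I would apply the functor $[\Ac(G_{\Z_{(p)}})\times - ]/\Ac(G_{1\Z_{(p)}})^\circ$ to both sides of $\Theta_0^+$. On the target, by Kisin's construction recalled in the text, this functor converts $\wh{S}_{K_{1p}}(G_1,X_1)^+$ into $\wh{S}_{K_p}(G,X)$, and transports the locally closed formal subscheme $\Zm_{G_1,\phi_0}^+$ to a locally closed formal subscheme which, by its very definition, is $\Zm_{\phi_0}\subset \wh{S}_{K_p}(G,X)$. On the source, Proposition \ref{P:formal quotient} (the formal-scheme analogue of Proposition \ref{P:quotient}) identifies the result with $\wh{S}(G,\phi_0)=\coprod_{[\phi],\phi^{ad}=\phi_0}\varprojlim_{K^p} I_\phi(\Q)\setminus \breve{\M}\times G(\A_f^p)/K^p$. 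Hence one obtains an isomorphism
\[
\Theta_0\colon \wh{S}(G,\phi_0)\stackrel{\sim}{\lra}\wh{S}_{K_p}(G,X){}_{/\Zm_{\phi_0}}.
\]
Finally, quotienting by the prime-to-$p$ level $K^p$ on both sides yields the statement of the theorem; on the right this produces $\wh{S_K}_{/\Zm_{\phi_0,K^p}}$ by definition of $\Zm_{\phi_0,K^p}$, and on the left it gives $\coprod_{[\phi],\phi^{ad}=\phi_0} I_\phi(\Q)\setminus \breve{\M}\times G(\A_f^p)/K^p$.

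The main technical issue, and the step I would work out most carefully, is the interaction of the Kisin quotient with formal completions and with the subspaces $\Zm_{\phi_1}^+$. Concretely, one must verify that (i) the action of $\Ac(G_{1\Z_{(p)}})^\circ$ stabilizes $\wh{S}_{K_{1p}}(G_1,X_1)^+{}_{/\Zm_{G_1,\phi_0}^+}$ and $\wh{S}(G_1,\phi_0)^+$, and that $\Theta_0^+$ is equivariant; (ii) forming the twisted product with $\Ac(G_{\Z_{(p)}})$ commutes with taking formal completion along the relevant closed subsets, which reduces to the fact that the $\Ac$-action permutes connected components and preserves the Newton stratification as recalled in subsection \ref{Section:newton}; and (iii) the isomorphism is equivariant for the residual prime-to-$p$ Hecke action, so that passing to $K^p$-quotients is allowed. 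All three checks amount to tracking actions through the definitions, and the Hodge type statement together with Propositions \ref{P:quotient}, \ref{P:formal quotient} and the construction of integral canonical models does the rest. The basic case $[b]=[b_0]$ then follows because in that case $\Zm_{\phi_0,K^p}$ is exactly the closed basic locus $\ov{S}_K^b$, by construction of the Newton stratification in the abelian type case.
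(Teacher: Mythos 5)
Your proof is correct and follows essentially the same strategy as the paper's: reduce to Kim's Hodge-type uniformization (\cite{Kim2} Theorem 4.7), restrict to the ``$+$'' component corresponding to the fiber of $c_{G_1}$ over $e\in\pi(G_1)$, take the disjoint union over $[\phi_1]$ with $\phi_1^{ad}=\phi_0$, apply the Kisin quotient functor $[\Ac(G_{\Z_{(p)}})\times - ]/\Ac(G_{1\Z_{(p)}})^{\circ}$ (invoking Proposition \ref{P:formal quotient} to identify the result on the source side with $\wh{S}(G,\phi_0)$), and finally pass to the $K^p$-quotient. The paper's own proof is extremely terse — it just says ``it suffices to prove $\wh{S}(G,\phi_0)\simeq [\Ac(G_{\Z_{(p)}})\times \wh{S}(G_1,\phi_0)^+ ]/\Ac(G_{1\Z_{(p)}})^{\circ}$, which is Proposition \ref{P:formal quotient}'' — so you have filled in exactly the equivariance and compatibility checks that the paper leaves implicit, which is the right thing to do.
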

\begin{proof}
If $(G,X)$ is of Hodge type, this is proved in \cite{Kim2} Theorem 4.7. Assume that we are in the general case. By the above notation, it suffices to prove that \[ \coprod_{[\phi],\phi^{ad}=\phi_0} \varprojlim_{K^p}I_\phi(\Q)\setminus \breve{\M}\times G(\A_f^p)/K^p\simeq [\Ac(G_{\Z_{(p)}})\times \wh{S}(G_1,\phi_0)^+ ]/\Ac(G_{1\Z_{(p)}})^{\circ}. \]This is given by Proposition \ref{P:formal quotient}.
\end{proof}

\begin{remark}\label{R:finer quotient}
Denote by $G^{ad}_1(\Z_{(p)})^{+,I_{\phi_1}}$ the kernel of the composite of
\[G^{ad}_1(\Z_{(p)})^+ \hookrightarrow G^{ad}_1(\Z_{(p)})\ra H^1(\Q,Z_{1})\ra H^1(\Q,I_{\phi_1}),\]where $Z_1$ is the center of $G_1$.
Similarly we define $G^{ad}(\Z_{(p)})^{+,I_{\phi}}$.  Following \cite{Ki2} 4.3.4, we define
\[\begin{split} 
\Ac(G_{1\Z_{(p)}})^{I_{\phi_1}}&=G_1(\A_f^p)/Z_1(\Z_{(p)})^-\ast_{G_1(\Z_{(p)})_+/Z_1(\Z_{(p)})} G^{ad}_1(\Z_{(p)})^{+,I_{\phi_1}}\\
\Ac(G_{1\Z_{(p)}})^{I_{\phi_1},\circ}&= G_1(\Z_{(p)})_+^-/Z_1(\Z_{(p)})^-\ast_{G_1(\Z_{(p)})_+/Z_1(\Z_{(p)})} G^{ad}_1(\Z_{(p)})^{+,I_{\phi_1}}.
\end{split}
\]
Similarly we define $\Ac(G_{\Z_{(p)}})^{I_{\phi}}$ and $\Ac(G_{\Z_{(p)}})^{I_{\phi},\circ}$.
The group $\Ac(G_{1\Z_{(p)}})^{I_{\phi_1}}$ acts on $ S(G_1,\phi_1)$, cf. \cite{Ki2} Lemma 4.5.9.
By construction,  we have an $\Ac(G_{1\Z_{(p)}})^{I_{\phi_1}}$-equivariant map 
\[c_{G_1}: S(G_1,\phi_1)\ra \pi(G_1),\]which is surjective since $G_1(\A_f^p)$ (and thus $\Ac(G_{1\Z_{(p)}})^{I_{\phi_1}})$ acts transitively on $\pi(G_1)$. 
For the identity class $e\in \pi(G_1)$, consider the fiber \[S(G_1,\phi_1)^+=c_{G_1}^{-1}(e).\]We have then $S(G_1,\phi_0)^+=\coprod_{[\phi_1],\phi^{ad}_1=\phi_0} S(G_1,\phi_1)^+$.
The stabilizer of $S(G_1,\phi_1)^+\subset S(G_1,\phi_1)$ is
\[ \Ac(G_{1\Z_{(p)}})^{I_{\phi_1},\circ}\subset \Ac(G_{1\Z_{(p)}})^{I_{\phi_1}}.\] 
We have
\[S(G_1,\phi_1)=[\Ac(G_{1\Z_{(p)}})^{I_{\phi_1}}\times S(G_1,\phi_1)^+]/\Ac(G_{1\Z_{(p)}})^{I_{\phi_1},\circ}.  \]
Take any $\phi_1: \mathfrak{Q}\ra  \mathfrak{G}_{G_1}$, such that \[\phi^{ad}=\phi^{ad}_1: \mathfrak{Q}\ra  \mathfrak{G}_{G^{ad}}.\] 
It should be possible that the strategy of \cite{Ki2} 3.8 enables us to prove the following refinement of Proposition \ref{P:quotient}
\[S(G,\phi)=[\Ac(G_{1\Z_{(p)}})^{I_{\phi_1}}\times S(G_1,\phi_1)^+]/\Ac(G_{\Z_{(p)}})^{I_{\phi},\circ}.  \]
Once this is done, the same argument as above shows that there is an isomorphism of formal schemes over $W$
\[  I_\phi(\Q)\setminus \breve{\M}\times G(\A_f^p)/K^p\st{\sim}{\lra}  \wh{S_K}_{/\Zm_{\phi,K^p}}, \]where $\Zm_{\phi,K^p}$ is the image under 
the projection $\wh{S}_{K_p}\ra \wh{S}_{K_pK^p}$ of \[\Zm_{\phi}:=[\Ac(G_{1\Z_{(p)}})^{I_{\phi_1}}\times \Zm_{\phi_1}^+]/\Ac(G_{\Z_{(p)}})^{I_{\phi},\circ}.\]
\end{remark}

\begin{remark}
In the special cases of Shimura curves associated to quaternion algebras over a totally real field, see \cite{BZ} for a construction of the uniformization by Drinfeld spaces.
\end{remark}

Let $\Sh_K(\phi_0)=(\wh{S_K}_{/\Zm_{\phi_0,K^p}})^{ad}_\eta$. We get a natural morphism of adic spaces $\Sh_K(\phi_0)\ra \Sh_K^{ad}$. For any open compact subgroup $K_p'\subset G(\Q_p)$, let $\Sh_{K_p'K^p}(\phi_0)\ra \Sh_{K_p'K^p}^{ad}$ be the pullback of $\Sh_K(\phi_0)\ra \Sh_K^{ad}$ under the projection $\Sh_{K_p'K^p}^{ad}\ra \Sh_{K_pK^p}^{ad}$.
We get the following corollary from Theorem \ref{Thm:uniformize}.
\begin{corollary}\label{C:rigid uniformize}
	With the above notations, 
	$\Theta$ induces an isomorphism of rigid analytic spaces over $L$
	\[ \Theta: \coprod_{[\phi],\phi^{ad}=\phi_0} I_\phi(\Q)\setminus \M_{K_p'}\times G(\A_f^p)/K^p\st{\sim}{\lra} \Sh_{K_p'K^p}(\phi_0).  \]
\end{corollary}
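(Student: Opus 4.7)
The plan is to deduce this from the formal uniformization of Theorem \ref{Thm:uniformize} by passing to the rigid analytic generic fiber and then trivializing level structures at $p$ on both sides.

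First, I would start with the isomorphism of formal schemes over $W$
\[ \Theta: \coprod_{[\phi],\phi^{ad}=\phi_0} I_\phi(\Q)\setminus \breve{\M}\times G(\A_f^p)/K^p\st{\sim}{\lra}  \wh{S_K}_{/\Zm_{\phi_0,K^p}} \]
from Theorem \ref{Thm:uniformize}, and apply the rigid analytic generic fiber functor $(-)^{ad}_\eta$. By construction, this sends $\wh{S_K}_{/\Zm_{\phi_0,K^p}}$ to $\Sh_K(\phi_0)$, and sends $\breve{\M}$ to $\M = \M_{G(\Z_p)}$. The group $I_\phi(\Q)$ acts discretely (a quotient by a discrete group of a rigid analytic space is again a rigid analytic space when the action is sufficiently discontinuous, which here follows from the corresponding property in the formal category). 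Since $G(\A_f^p)/K^p$ is discrete and the formation of generic fibers commutes with disjoint unions and with quotients by discrete group actions that are free away from a discrete locus (as guaranteed by the sufficiently small choice of $K^p$), we obtain an isomorphism of rigid analytic spaces over $L$
\[ \Theta^{ad}_\eta: \coprod_{[\phi],\phi^{ad}=\phi_0} I_\phi(\Q)\setminus \M\times G(\A_f^p)/K^p\st{\sim}{\lra} \Sh_K(\phi_0). \]

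Next, I would extend this to the tower in $K_p'$. By Theorem \ref{T:ab} and subsection \ref{subsection:generic}, for each open compact $K_p'\subset G(\Z_p)$, the space $\M_{K_p'}$ is a finite \'etale cover of $\M$, and one has a Hecke action of $G(\Q_p)$ on the tower $(\M_{K_p'})_{K_p'}$; moreover the $J_b(\Q_p)$-action extends. Globally, $\Sh_{K_p'K^p}\to \Sh_{K_pK^p}$ is likewise a finite \'etale cover, and $\Sh_{K_p'K^p}(\phi_0)$ is by definition the pullback of $\Sh_K(\phi_0)$. Therefore, one can just pull back the identification $\Theta^{ad}_\eta$ above along $\Sh_{K_p'K^p}\to \Sh_K$. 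The key point to check is that the finite \'etale cover $\Sh_{K_p'K^p}(\phi_0)\to \Sh_K(\phi_0)$ corresponds under $\Theta^{ad}_\eta$ to the cover $\M_{K_p'}\to \M$ on each connected piece: this can be verified by working with the local systems $\mathbb{L}$ on $\M$ constructed in Proposition \ref{P:local system}, noting that they arise as the $p$-adic \'etale realization of the universal object on the Hodge type ``local chart'' $\M^+$, and the same local system governs the integral canonical model (via the tensor package $(s_\alpha)$) at the corresponding local chart $S^+$ appearing in Kisin's construction. In particular, both covers at level $K_p'$ are obtained by imposing the same $K_p'$-level structure on $\mathbb{L}$, hence match under $\Theta$.

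The main obstacle I expect is the comparison of the finite \'etale structures: one must show that the ``local'' trivializations defining $\M_{K_p'}$ over $\M$ agree, after pullback under the uniformization map, with the restriction to $\Zm_{\phi_0,K^p}$ of the global finite \'etale cover $\Sh_{K_p'K^p}\to \Sh_K$. In the Hodge type case this is part of \cite{Kim2}; in the abelian type case one reduces to the Hodge type case by the ``+''-component construction of section \ref{Subsection:RZab}, using that the global covers and local covers are both produced by the same quotient process from the corresponding Hodge type towers (by Proposition \ref{P:quotient} globally, and by the constructions of subsection \ref{subsection:generic} locally), together with the cartesian diagrams for $\pi_1(G)^\Gamma$ vs.\ $\pi_1(G_1)^\Gamma$. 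Once the Hodge type comparison is granted, the abelian type case follows formally by taking quotients on both sides. Finally, we restore the product with $G(\A_f^p)/K^p$ and the quotient by $I_\phi(\Q)$ exactly as in the formal version, to obtain the claimed isomorphism.
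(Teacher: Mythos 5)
Your proposal is correct and follows the approach the paper has in mind: the paper gives no explicit proof, defining $\Sh_{K_p'K^p}(\phi_0)$ as the pullback of $\Sh_K(\phi_0)\to\Sh_K^{ad}$ and then simply stating "We get the following corollary from Theorem \ref{Thm:uniformize}." Your argument — apply the rigid generic fiber functor to the formal isomorphism of Theorem \ref{Thm:uniformize}, use that this functor commutes with disjoint unions and with quotients by the discretely acting $I_\phi(\Q)$ (which is built into the formal quotient construction), and then match the finite \'etale covers at level $K_p'$ by identifying the $\Z_p$-$G$-local systems via Proposition \ref{P:local system} (reducing the abelian type comparison to the Hodge type case through the $+$-component constructions) — spells out exactly the content that is implicitly invoked. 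One small notational point: the corollary is stated for $K_p'\subset G(\Q_p)$ but the definition of $\Sh_{K_p'K^p}(\phi_0)$ as a pullback along a projection forces $K_p'\subset K_p=G(\Z_p)$, which is the case you treat; since Hecke operators conjugate any $K_p'$ into $G(\Z_p)$, nothing is lost.
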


We fix a morphism \[\pi: \M\lra \Sh_{K}^{ad}\] coming from the above uniformization isomorphism, which factors through the good reduction locus \[(\wh{S}_K)^{ad}_{\eta}\subset \Sh_K^{ad}.\] By \cite{LZ}, the universal $\Q_p-G$-local system $\mathcal{L}_K$ on $\Sh_K^{ad}$ is de Rham (which can be proved directly for the abelian type case; moreover we assume that $G=G^c$ for the notation $G^c$ of \cite{LZ}). When restricting to $(\wh{S}_K)^{ad}_{\eta}$, it is even crystalline. Recall by Proposition \ref{P:local system}, we have the universal $\Q_p$-$G$-local system $\mathbb{V}$ on $\M$. We have the natural local-global compatibility identity \[\mathbb{V}=\pi^\ast\mathcal{L}_K.\]

Recall that in \cite{Sh1} we have proved that there exists a perfectoid space $\S_{K^p}$ over $\C_p$ such that
\[\S_{K^p}\sim \varprojlim_{K_p'}\Sh_{K_p'K^p}(G,X)^{ad}.\]
On the other hand, by Proposition \ref{P:local perfectoid}, we get a perfectoid space $\M_\infty$ over $\C_p$ such that
\[\M_\infty\sim \varprojlim_{K_p'}\M_{K_p',\C_p}. \]
From the above Corollary \ref{C:rigid uniformize} we get
\begin{corollary}\label{C:perf uniformize}
There exists a perfectoid space $\S_{K^p}(\phi_0)$ together with a map $\S_{K^p}(\phi_0)\ra\S_{K^p}$, such that
\[ \S_{K^p}(\phi_0)\simeq \coprod_{[\phi],\phi^{ad}=\phi_0} I_\phi(\Q)\setminus \M_{\infty}\times G(\A_f^p)/K^p.\]
\end{corollary}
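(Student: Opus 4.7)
The strategy is simply to pass to the inverse limit over $K_p' \subset G(\Q_p)$ in Corollary \ref{C:rigid uniformize}. First, I would define $\S_{K^p}(\phi_0)\subset \S_{K^p}$ as the preimage of $\Sh_K(\phi_0)$ under the canonical projection $\S_{K^p}\to \Sh_K(G,X)^{ad}$ (using the map $\Sh_K(\phi_0)\to \Sh_K^{ad}$ produced just before the statement). Since $\Sh_K(\phi_0)=(\wh{S_K}_{/\Zm_{\phi_0,K^p}})^{ad}_\eta$ is the generic fibre of a formal completion of $\wh S_K$, it is an open adic subspace of $\Sh_K^{ad}$. Hence $\S_{K^p}(\phi_0)$ is an open subspace of the perfectoid space $\S_{K^p}$, and therefore perfectoid.

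Next, I would combine $\S_{K^p}\sim \varprojlim_{K_p'}\Sh_{K_p'K^p}^{ad}$ from \cite{Sh1} with the fact that the tilde relation is preserved by passage to open subspaces to obtain
\[\S_{K^p}(\phi_0)\sim \varprojlim_{K_p'}\Sh_{K_p'K^p}(\phi_0).\]
Applying Corollary \ref{C:rigid uniformize} at each finite level $K_p'$ and using the fact that $\M_\infty\sim\varprojlim_{K_p'}\M_{K_p'}$ (Proposition \ref{P:local perfectoid}, base-changed to $\C_p$), together with the compatibility of the projections $\M_{K_p'}\to\M_{K_p''}$ and of the action of $I_\phi(\Q)$ as $K_p'$ shrinks, I would deduce the isomorphism
\[\S_{K^p}(\phi_0)\simeq \coprod_{[\phi],\phi^{ad}=\phi_0} I_\phi(\Q)\setminus \M_\infty\times G(\A_f^p)/K^p\]
once I check that the right-hand side makes sense as a perfectoid space and has the required tilde relation to its finite-level incarnations.

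The only delicate points are the interactions between the coproduct, the discrete quotient by $I_\phi(\Q)$, and the inverse limit. For the coproduct, there is no issue: the index set is discrete and, after fixing $K^p$, only finitely many classes $[\phi]$ contribute over any given quasi-compact open of $\S_{K^p}$. For the quotient by $I_\phi(\Q)$, the key observation, already implicit in \cite{Kim2} and \cite{Sh1}, is that $I_\phi(\Q)$ acts on $\M_{K_p'}\times G(\A_f^p)/K^p$ through a discrete subgroup of $J_b(\Q_p)\times G(\A_f^p)$ acting freely on the finite-prime-to-$p$ component (at sufficiently small $K^p$), so the quotient is locally given by an \'etale equivalence relation; such quotients commute with cofiltered inverse limits in the perfectoid category and are perfectoid themselves. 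The main obstacle — really the main bookkeeping task — is to verify carefully that the two preperfectoid spaces in the displayed isomorphism agree as subfunctors of $\S_{K^p}$ (equivalently, that the open subspace $\S_{K^p}(\phi_0)$ is exactly the union of the images of the uniformization maps at infinite level) and that the identification is equivariant for the $I_\phi(\Q)\times G(\A_f^p)$-actions; but this follows formally from the finite-level case combined with the universal property defining $\S_{K^p}(\phi_0)$.
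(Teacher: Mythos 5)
Your plan is correct and coincides with the paper's implicit argument: the paper states only that the corollary follows "from the above Corollary \ref{C:rigid uniformize}," and the intended proof is precisely to define $\S_{K^p}(\phi_0)$ as the open subspace of $\S_{K^p}$ lying over $\Sh_K(\phi_0)$, pass to the inverse limit over $K_p'$ using $\S_{K^p}\sim\varprojlim\Sh_{K_p'K^p}^{ad}$ and $\M_\infty\sim\varprojlim\M_{K_p'}$, and match the two sides level-by-level. Your additional care about the quotient by $I_\phi(\Q)$ and the coproduct commuting with the limit is a welcome expansion of what the paper leaves unstated.
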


\begin{remark}
For the $b\in B(G,\mu)$ we fixed in this subsection, we can define the Newton stratum $\S_{K^p}^b\subset \S_{K^p}$, which is a locally closed subspace, cf. \cite{CS} subsection 4.3. Then we have $\S_{K^p}(\phi_0)\ra \S_{K^p}$ factors through $\S_{K^p}^b$. In the case that $b$ is basic, we will have $\S_{K^p}(\phi_0)= \S_{K^p}^b$, cf. the next subsection. In the general case, the image of $\S_{K^p}(\phi_0)\ra \S_{K^p}^b$ is a strict subspace, and to under understand the whole stratum $\S_{K^p}^b$, one should introduce Igusa varieties, cf. \cite{CS} section 4 in the PEL case.
\end{remark}

\subsection{The case of basic strata}

Let the notations be as in the last subsection. Assume now that $b=b_0$ is the basic element. Note that up to equivalence there is only one $\phi$ such that $b(\phi)=b_0$.
\begin{theorem}\label{T:uniformize basic}
In the setting above, $\Zm_{\phi_0,K^p}=\ov{S}^b_K$. Thus we have an isomorphism of formal schemes over $W$
\[  \Theta: I_\phi(\Q)\setminus \breve{\M}\times G(\A_f^p)/K^p \st{\sim}{\lra} \wh{S_K}_{/\ov{S}^b_K}.\]
\end{theorem}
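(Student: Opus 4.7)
My plan is to reduce the basic abelian type case to the basic Hodge type case (which is due to Kim, \cite{Kim2}) by means of the quotient construction used to build both $\breve{\M}(G,b,\mu)$ and the canonical integral model $S_K(G,X)$. In view of Theorem \ref{Thm:uniformize}, the only content of the statement is the identification of the source index set (one class $[\phi]$) and the image ($\ov{S}^b_K$), so the proof amounts to two separate verifications.

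First, I would argue that in the basic case the admissible morphism $\phi$ with $b(\phi)=b_0$ is unique up to equivalence. For the Hodge type companion $(G_1,X_1)$ this is standard Langlands--Rapoport/Kottwitz theory for the basic Kottwitz triple (compare \cite{Ki2} 3.3--3.5): a basic class in $B(G_1,\mu_1)$ pins down an inner twist $J_{b_1}$ of $G_1$, and this already determines $\phi_1$ up to the equivalence relation on admissible morphisms. Exactly the same argument (after the formalism of \cite{Ki2} 3.7--3.8 passing to adjoint data and back) shows uniqueness of $\phi$ with $\phi^{ad}=\phi_0$. Hence the disjoint union $\coprod_{[\phi],\phi^{ad}=\phi_0}$ appearing on the left of Theorem \ref{Thm:uniformize} reduces to a single term in the basic case.

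Second, I need to show $\Zm_{\phi_0,K^p}=\ov{S}^b_K$. In the Hodge type case this is Kim's theorem \cite{Kim2}: every $\ov{\F}_p$-point of the basic stratum of $\ov{S}_{K_1}(G_1,X_1)$ is prime-to-$p$ quasi-isogenous, compatibly with $(t_\alpha)$, to the fixed reference point $x_1$, so the image of $\Theta_{1K_1^p}$ exhausts $\ov{S}^{b_1}_{K_1}(G_1,X_1)$, i.e. $\Zm_{\phi_1,K_1^p}=\ov{S}^{b_1}_{K_1}(G_1,X_1)$. Pulling back along the open-and-closed inclusion $\ov{S}_{K_{1p}}(G_1,X_1)^+\hookrightarrow\ov{S}_{K_{1p}}(G_1,X_1)$ gives $\Zm_{\phi_1}^+=\ov{S}^{b_1,+}_{K_{1p}}(G_1,X_1)$, and by the uniqueness of $[\phi_1]$ above we even have $\Zm_{G_1,\phi_0}^+=\ov{S}^{b_1,+}_{K_{1p}}(G_1,X_1)$.

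Finally, the construction of the Newton stratum in the abelian type case recalled in subsection \ref{Section:newton} gives
\[
\ov{S}^b_{K_p}(G,X)=[\Ac(G_{\Z_{(p)}})\times \ov{S}^{b_1,+}_{K_{1p}}(G_1,X_1)]/\Ac(G_{1\Z_{(p)}})^{\circ},
\]
while Proposition \ref{P:formal quotient} together with the previous paragraph gives
\[
\Zm_{\phi_0}=[\Ac(G_{\Z_{(p)}})\times \Zm_{G_1,\phi_0}^+]/\Ac(G_{1\Z_{(p)}})^{\circ}=\ov{S}^b_{K_p}(G,X).
\]
Taking the further quotient by $K^p$ yields $\Zm_{\phi_0,K^p}=\ov{S}^b_K$, and substituting into Theorem \ref{Thm:uniformize} produces the stated isomorphism. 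The main obstacle is verifying the uniqueness of $[\phi]$ in the first step for general abelian type: it requires unwinding the passage between $G$, $G_1$ and $G^{ad}$ in \cite{Ki2} 3.7--3.8, since a priori several non-equivalent $\phi_1$ could share the same $\phi^{ad}=\phi_0$; one must observe that the extra rigidification provided by the basic $\sigma$-conjugacy class (equivalently, the inner twist $J_b$) cuts this ambiguity down to a single orbit.
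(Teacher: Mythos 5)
Your proposal is correct and follows essentially the same strategy as the paper: the paper disposes of the theorem in two sentences by citing Kim's Hodge-type result (\cite{Kim2} Theorem 4.11) and remarking that the abelian type case ``follows from this by construction'', and your write-up simply unwinds what ``by construction'' means — namely, feed Kim's identification $\Zm_{\phi_1,K_1^p}=\ov{S}^{b_1}_{K_{1p}K_1^p}$ through the $\Ac(G_{\Z_{(p)}})\times(-)/\Ac(G_{1\Z_{(p)}})^{\circ}$ quotient used both to construct the Newton stratum $\ov{S}^b_{K_p}$ and to define $\Zm_{\phi_0}$. One small remark: the formula $\Zm_{\phi_0}=[\Ac(G_{\Z_{(p)}})\times\Zm^+_{G_1,\phi_0}]/\Ac(G_{1\Z_{(p)}})^{\circ}$ is by definition of $\Zm_{\phi_0}$, not an application of Proposition \ref{P:formal quotient} (which concerns $\wh{S}(G,\phi_0)$), though the conclusion is the same. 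Your closing worry about the uniqueness of $[\phi]$ is more cautious than the paper, which simply asserts (before stating the theorem) that for basic $b=b_0$ there is a unique admissible $\phi$ with $b(\phi)=b_0$ up to equivalence; since $\phi_0$ was chosen so that every lift $\phi$ of $\phi_0$ has $b(\phi)=b_0$, the uniqueness of the lift follows directly from this fact about $G$ itself, without needing to pass through $G_1$ or $G^{ad}$.
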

\begin{proof}
In the case that $(G,X)$ is of Hodge type, this is proved in Theorem 4.11 of \cite{Kim2}. We note that by \cite{XZ} Lemma 7.2.14, $I_\phi$ is an inner form of $G$ with $I_{\phi,p}=J_b$.
The general case follows from the Hodge type case by construction.
\end{proof}

Corresponding to Corollaries \ref{C:rigid uniformize} and \ref{C:perf uniformize}, we have
\begin{corollary}For any open compact subgroup $K_p'\subset G(\Q_p)$,
	$\Theta$ induces an isomorphism of rigid analytic spaces over $L$
	\[ \Theta: I_\phi(\Q)\setminus \M_{K_p'}\times G(\A_f^p)/K^p \st{\sim}{\lra} \Sh_{K_p'K^p}^b, \]and an isomorphism of perfectoid spaces over $\C_p$
		\[ \Theta: I_\phi(\Q)\setminus \M_{\infty}\times G(\A_f^p)/K^p \st{\sim}{\lra} \S_{K^p}^b. \]
\end{corollary}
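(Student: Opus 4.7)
The plan is to deduce both statements from Theorem \ref{T:uniformize basic} by systematically passing from the formal level to the rigid analytic level, and then to infinite level. First I would take the rigid analytic (adic) generic fiber of the isomorphism
\[ \Theta: I_\phi(\Q)\setminus \breve{\M}\times G(\A_f^p)/K^p \st{\sim}{\lra} \wh{S_K}_{/\ov{S}^b_K} \]
of formal schemes over $W$. On the left, the generic fiber of $\breve{\M}$ is $\M=\M_{G(\Z_p)}$ by definition, and passing to the generic fiber commutes with the discrete quotients $G(\A_f^p)/K^p$ and $I_\phi(\Q)\setminus -$ (since $I_\phi(\Q)$ acts discretely through $J_b(\Q_p)$ on the formally smooth formal scheme $\breve\M$). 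On the right, the generic fiber of $\wh{S_K}_{/\ov{S}^b_K}$ is the tube $\Sh_K^{b}$ over the basic Newton stratum inside the good reduction locus $(\wh{S}_K)^{ad}_\eta \subset \Sh_K^{ad}$. This yields the case $K_p'=G(\Z_p)$.

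Next I would extend to arbitrary $K_p'\subset G(\Z_p)$ by pulling back along the finite \'etale transition map $\Sh_{K_p'K^p}^{ad}\ra \Sh_{K_pK^p}^{ad}$. Recall from subsection \ref{subsection:generic} that the tower $(\M_{K_p'})_{K_p'\subset G(\Z_p)}$ is the geometric realization of the $J_b(\Q_p)$-equivariant $\Q_p$-$G$-local system $\mathbb{V}$ on $\Fl\ell_{G,\mu}^{adm}$, while on the global side the tower $(\Sh_{K_p'K^p}(\phi_0))_{K_p'}$ trivializes the universal $\Q_p$-$G$-local system $\mathcal{L}_K$ on $(\wh{S}_K)^{ad}_\eta$. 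Since we have the local-global compatibility $\mathbb{V}=\pi^\ast\mathcal{L}_K$ noted just after Corollary \ref{C:rigid uniformize}, the isomorphism at level $G(\Z_p)$ lifts canonically and compatibly to all levels $K_p'\subset G(\Z_p)$. For general $K_p'\subset G(\Q_p)$, one translates by Hecke correspondences via the $G(\Q_p)$-action on the tower on the left and on $\Sh^{ad}_{K_p'K^p}$ on the right. This gives the first assertion.

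For the perfectoid statement, I would pass to the inverse limit over $K_p'$. By Proposition \ref{P:local perfectoid} there is a preperfectoid space $\M_\infty$ over $L$ with $\M_\infty\sim \varprojlim_{K_p'}\M_{K_p'}$, and by \cite{Sh1} there is a perfectoid space $\S_{K^p}$ over $\C_p$ with $\S_{K^p}\sim \varprojlim_{K_p'}\Sh_{K_p'K^p}(G,X)^{ad}$. The locally closed basic Newton stratum $\S_{K^p}^b\subset \S_{K^p}$ is defined (as in \cite{CS, Sh}) as the preimage of $\ov{S}_K^b$, and in particular $\S_{K^p}^b\sim \varprojlim_{K_p'} \Sh_{K_p'K^p}^b$. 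Applying $\varprojlim_{K_p'}$ to the compatible system of isomorphisms from the first part, and using that the discrete quotients $I_\phi(\Q)\setminus -$ and $-\times G(\A_f^p)/K^p$ commute with $\sim$-limits, yields the desired isomorphism of perfectoid spaces.

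The main obstacle is the commutation of the $I_\phi(\Q)$-quotient with taking generic fibers and with $\sim$-limits, which one must verify by checking that the action of $I_\phi(\Q)$ factors through the discrete $J_b(\Q_p)$-action on each $\M_{K_p'}$ and on $\M_\infty$, and that the subgroups of $I_\phi(\Q)$ acting nontrivially on any given quasi-compact open do so properly discontinuously; the remainder is a routine transfer between formal and rigid categories and between finite and infinite level, relying uniformly on the foundational tools of \cite{SW, Sh1} and the local-global compatibility for the universal $G$-local systems.
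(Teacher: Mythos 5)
Your proof is correct and follows the same underlying strategy as the paper: pass from the formal uniformization isomorphism of Theorem \ref{T:uniformize basic} to the rigid-analytic generic fiber, lift the level at $p$ using the universal local systems and the local--global compatibility $\mathbb{V}=\pi^{\ast}\mathcal{L}_K$, and then take the inverse limit to reach the perfectoid statement. The one structural difference is that the paper does not re-derive these steps here: it already established Corollaries \ref{C:rigid uniformize} and \ref{C:perf uniformize} for a general Newton stratum (giving the tower $\Sh_{K_p'K^p}(\phi_0)$ and the perfectoid $\S_{K^p}(\phi_0)$), and the present corollary is simply the specialization to the basic case, where Theorem \ref{T:uniformize basic} identifies $\Zm_{\phi_0,K^p}$ with $\ov{S}^b_K$ and there is (up to equivalence) a unique $\phi$ with $b(\phi)=b_0$, collapsing the coproduct to a single term and identifying $\Sh_{K_p'K^p}(\phi_0)=\Sh_{K_p'K^p}^b$, $\S_{K^p}(\phi_0)=\S_{K^p}^b$. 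So you are essentially re-proving Corollaries \ref{C:rigid uniformize}/\ref{C:perf uniformize} in the basic case rather than citing them; there is no gap, just a small inefficiency in organization.
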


\section{Ekedahl-Oort stratifications for good reductions of Rapoport-Zink spaces}
In this section, we will construct and study Ekedahl-Oort stratifications for the special fibers of our Rapoport-Zink spaces of abelian type, motivated by the study of Artin invariants of K3 surfaces. Then we will discuss some special interesting cases, namely, the fully Hodge-Newton decomposable cases, cf. \cite{GoHeNi}. In the next section we will further specialize to orthogonal groups. As before, we assume $p>2$ in this section.

\subsection{Ekedahl-Oort stratifications for special fibers  of Rapoport-Zink spaces}
Let $(G,[b],\{\mu\})$ be an unramified local Shimura datum of abelian type, $b\in G(L)$ be a representative of $[b]$, and $\breve{\M}=\breve{\M}(G,b,\mu)$ be the associated Rapoport-Zink space by Theorem \ref{T:ab}.  Consider the special fiber $\ov{\M}$ over $\ov{\F}_p$ of $\breve{\M}$ and the associated reduced special fiber $\M_{red}$ of $\breve{\M}$, which is by definition the reduced subscheme of $\ov{\M}$. 

Since our local Shimura datum is unramified, we can consider $G\tr{-Zip}^{\mu}$, the stack of $G$-zips of type $\mu$ over $\ov{\F}_p$ (we refer to \cite{PWZ} and \cite{Zh} 1.2 for some basic facts about $G$-zips and the stack $G\tr{-Zip}^{\mu}$). The underling set of geometric points of $G\tr{-Zip}^{\mu}$ is in canonical bijection with a subset $^J\W$ of the Weyl group $\W$ of $G$ (for a fixed choice of maximal torus). More precisely, let $J$ be the type of the parabolic subgroup of $G$ attached to $\{\mu\}$ in the usual way, and $\W_J$ be the associated subgroup of $\W$, then $^J\W\subset \W$ is the set of elements $w\in\W$ that are of minimal length in their coset $\W_Jw$. There is a partial order $\preceq$ on $^J\W$ making which into a topological space, cf. \cite{Zh} 3.1 or \cite{W} 5.3.  In fact we have isomorphisms of topological spaces \[|G\tr{-Zip}^{\mu}|\simeq\, ^J\W,\] cf. \cite{Zh} Theorem 3.1.5 and \cite{W} Proposition 5.12.

\begin{theorem}\label{T:RZ EO}
	There exists a formally smooth morphism
\[ \zeta: \ov{\M}\lra G\tr{-Zip}^{\mu},\] which induces a decomposition 
\[ \ov{\M}=\coprod_{w\in ^J\W}\ov{\M}_w,\]
where $\ov{\M}_w\subset \ov{\M}$ is locally closed ( could be empty).
\end{theorem}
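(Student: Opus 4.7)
The plan is to construct $\zeta$ by first handling the Hodge type case and then transferring to the abelian type case using the patching construction of Section 4. For a unramified local Shimura datum of Hodge type $(G_1,[b_1],\{\mu_1\})$, the universal $p$-divisible group $(X^{\tr{univ}}, (t_\alpha))$ with crystalline Tate tensors on $\breve{\M}_1 = \breve{\M}(G_1,b_1,\mu_1)$ equips the special fiber $\ov{\M}_1$ with a canonical $G_1$-zip of type $\mu_1$: locally over an affine chart $\Spec R$, the Dieudonn\'e crystal $\D(X^{\tr{univ}})_R$ together with its Hodge filtration, conjugate filtration, Frobenius linearization, and induced tensors $(t_{\alpha,R})$ determines the required $P_{\mu_1}$- and $Q_{\mu_1^{-1}}$-torsors. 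This produces a morphism $\zeta_1 : \ov{\M}_1 \to G_1\tr{-Zip}^{\mu_1}$, which is formally smooth by the local-ring argument of \cite{Zh}: Faltings' universal deformation ring for $p$-divisible groups with crystalline Tate tensors agrees with the completed local ring of $G_1\tr{-Zip}^{\mu_1}$ at the corresponding geometric point, and both are formally smooth of matching dimensions.

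For the abelian type datum $(G,[b],\{\mu\})$, fix a unramified local Shimura datum of Hodge type $(G_1,[b_1],\{\mu_1\})$ with the same adjoint data. By the proof of Theorem \ref{T:ab} we have $\breve{\M}(G,b,\mu)^+ \simeq \breve{\M}(G_1,b_1,\mu_1)^+$, so restricting $\zeta_1$ yields a formally smooth morphism $\ov{\M}^+ \to G_1\tr{-Zip}^{\mu_1}$. Composition with the natural projection $G_1\tr{-Zip}^{\mu_1} \to G^{ad}\tr{-Zip}^{\mu^{ad}}$ induced by $G_1 \to G_1^{ad} = G^{ad}$ gives a formally smooth map to the adjoint zip stack. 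To lift this along the quotient $G\tr{-Zip}^\mu \to G^{ad}\tr{-Zip}^{\mu^{ad}}$, I would use the map $\omega_G : \ov{\M} \to c_{b,\mu}\pi_1(G)^\Gamma$ constructed just after Theorem \ref{T:ab} to specify the central torsor: the $+$-component is pinned down by a fixed value of $\omega_G$, and combined with the identification $G(\Q_p)^+ \simeq G_1(\Q_p)^+$ from Lemma \ref{L:cartesian} this should determine a canonical lift $\zeta^+ : \ov{\M}^+ \to G\tr{-Zip}^\mu$.

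Next I extend $\zeta^+$ to $\zeta$ on all of $\ov{\M}$ using the decomposition $\ov{\M} = J_b(\Q_p)\ov{\M}^+$ from Theorem \ref{T:ab}: translate $\zeta^+$ by each $j \in J_b(\Q_p)$ to define $\zeta$ on $j\cdot\ov{\M}^+$, with consistency on overlaps following from the fact that the stabilizer $J_b(\Q_p)^+$ acts on $\ov{\M}^+$ compatibly, via $J_b(\Q_p)\to J_{b^{ad}}(\Q_p)$, with the intrinsic action on the adjoint zip stack, while the central lift via $\omega_G$ was canonical. Formal smoothness of $\zeta$ is a local property and is inherited from the Hodge type case. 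The stratification $\ov{\M} = \coprod_{w\in{}^J\W}\ov{\M}_w$ with locally closed strata is then the pullback under $\zeta$ of the canonical stratification $|G\tr{-Zip}^\mu| \simeq {}^J\W$.

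The main obstacle is the canonical lifting in the second step: producing on $\ov{\M}^+$ a $G$-zip refining the $G^{ad}$-zip coming from the Hodge type construction, and doing so independently of the auxiliary choice of $(G_1,[b_1],\{\mu_1\})$. The independence should follow from a cocycle computation on central tori comparing two Hodge type lifts, parallel to the descent arguments used in Kisin's construction of integral canonical models of Shimura varieties of abelian type \cite{Ki1}; alternatively, under the nilpotent hypothesis of Theorem \ref{T:moduli}, the B\"ultel-Pappas moduli interpretation of $\breve{\M}(G,b,\mu)$ by $(G,\mu)$-displays provides an intrinsic input from which the $G$-zip can be extracted directly, bypassing the choice of Hodge embedding altogether.
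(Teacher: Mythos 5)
Your Hodge type case follows the paper in spirit (extract a $G_1$-zip from the universal $p$-divisible group with crystalline Tate tensors), but the formal smoothness argument you give is wrong as stated: $G_1\tr{-Zip}^{\mu_1}$ is a smooth algebraic stack of dimension zero, so its completed local ring at a geometric point cannot ``agree with'' Faltings' deformation ring, which is a power series ring of dimension $\langle 2\rho,\mu_1\rangle$; the dimensions do not match. The paper instead constructs the $E_{G_1,\mu_1}$-torsor $\ov{\M}_1^\sharp\to G_{1,\ov{\F}_p}$ over $\zeta_1$ and reduces formal smoothness to surjectivity of the induced maps on tangent spaces at closed points, invoking the computation in the proof of \cite{Zh} Theorem 3.1.2.

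For the abelian type case your route deviates from the paper's and, as you yourself flag, it is incomplete. The paper's argument is much shorter: using $\ov{\M}^+=\ov{\M}_1^+$ and the identification $|G_1\tr{-Zip}^{\mu_1}|\simeq|G\tr{-Zip}^{\mu}|$, the restriction $\zeta_1^+$ directly gives a formally smooth $\zeta^+\colon\ov{\M}^+\to G\tr{-Zip}^{\mu}$, which is then propagated over all of $\ov{\M}$ via the $J_b(\Q_p)$-action, using the $J_{b_1}(\Q_p)$-invariance of the Hodge type map; there is no passage through the adjoint zip stack and no lifting problem. Your alternative --- compose with $G_1\tr{-Zip}^{\mu_1}\to G^{ad}\tr{-Zip}^{\mu^{ad}}$ and then lift along $G\tr{-Zip}^{\mu}\to G^{ad}\tr{-Zip}^{\mu^{ad}}$ ``using $\omega_G$'' --- does not produce the claimed morphism from what you invoke: $\omega_G$ takes values in $\pi_1(G)^\Gamma$ and records the connected component of a point of $\ov{\M}$, but it furnishes no trivialization of the central gerbe separating $G\tr{-Zip}^{\mu}$ from $G^{ad}\tr{-Zip}^{\mu^{ad}}$, and the identity $G(\Q_p)^+\simeq G_1(\Q_p)^+$ from Lemma \ref{L:cartesian} is an equality of subgroups of rational points, not a comparison of zip data. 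You correctly identify this as ``the main obstacle,'' and the fallback to B\"ultel--Pappas $(G,\mu)$-displays requires the nilpotence hypothesis of Theorem \ref{T:moduli}, which is not part of Theorem \ref{T:RZ EO}. So the proposal, as written, has a genuine gap exactly where it departs from the paper.
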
 
\begin{proof}
Assume first that $(G,[b],\{\mu\})$  is of Hodge type. Then the universal $p$-divisible group with crystalline Tate tensors gives rise to a $G$-zip of type $\mu$ $(I, I_+, I_-, \iota)$ on $\ov{\M}$ : the arguments of \cite{Zh} Theorem 2.4.1 apply to our local setting. Thus we get a morphism \[ \zeta: \ov{\M}\lra G\tr{-Zip}^{\mu}.\] This morphism is $J_b(\Q_p)$-invariant. To show this morphism is formally smooth, one can apply the arguments for the proof of \cite{Zh} Theorem 3.1.2. More precisely, from the datum $(I_+, I_-, \iota)$ one can construct an $E_{G,\mu}$-torsor $\ov{\M}^\sharp$ over $\ov{\M}$, which sits in the following cartesian diagram:
\[\xymatrix{
\ov{\M}^\sharp\ar[r]^{\zeta^\sharp}\ar[d]& G_{\ov{\F}_p}\ar[d]\\
\ov{\M}\ar[r]&[E_{G,\mu}\setminus G_{\ov{\F}_p}]=G\tr{-Zip}^{\mu}.
}\]
Here the algebraic group $E_{G,\mu}$ is as that in \cite{PWZ} section 3 (see also \cite{Zh} subsection 1.2), considered as a group over $\ov{\F}_p$.
It suffices to show that $\zeta^\sharp$ is formally smooth. By \cite{EGA4.4} Prop. 17.1.6, formal smoothness is local in both the sources and targets, thus we can reduce to the algebra side: take any affine subschemes $\Spec B\subset \ov{\M}^\sharp, \Spec A\subset G_{\ov{\F}_p}$ over $\ov{\F}_p$ with induced $\zeta^\sharp: \Spec B\ra \Spec A$, we need to prove that $B$ is formally smooth over $A$. Since $\ov{\M}^\sharp$ is formally smooth (and formally locally of finite type) over $\ov{\F}_p$, $G_{\ov{\F}_p}$ is smooth over $\ov{\F}_p$, by \cite{Mats} Theorem 65 and Theorem 66, the fact that $B$ is formally smooth over $A$ is equivalent to the map \[\Omega_{A/\ov{\F}_p}\otimes_AB/J\ra \Omega_{B/\ov{\F}_p}\otimes B/J\]is left invertible, where $J\subset B$ is an ideal of definition. Note that $\Omega_{B/\ov{\F}_p}\otimes B/J$ is a projective $B/J$-module. By \cite{EGA4.1} Cor. 19.1.12, the last statement is equivalent to for any closed points $x\in \Spec B, y=\zeta^\sharp (x)\in \Spec A$, the induced maps on the tangent spaces $T_x\Spec B\ra T_y\Spec A$ are surjective. One can then conclude by the arguments in the proof of \cite{Zh} Theorem 3.1.2 to show the surjectivity on tangent spaces.

Now assume that $(G,[b],\{\mu\})$ is unramified of abelian type. Take any unramified local Shimura datum of Hodge type $(G_1,[b_1],\{\mu_1\})$ such that $(G^{ad}, [b^{ad}],  \{\mu^{ad}\})\simeq (G_1^{ad}, [b_1^{ad}],  \{\mu_1^{ad}\})$. Let $\ov{\M}$ and $\ov{\M}_1$ be the special fibers of Rapoport-Zink spaces attached to $(G,[b],\{\mu\})$  and $(G_1,[b_1],\{\mu_1\})$ respectively. By construction after fixing $x_0\in \pi_1(G_1)^\Gamma$ we have $\ov{\M}^+=\ov{\M}_1^+$. Consider the restriction $\zeta_1^+: \ov{\M}_1^+\lra G_1\tr{-Zip}^{\mu_1}$. As $|G_1\tr{-Zip}^{\mu_1}|\simeq |G\tr{-Zip}^{\mu}|$, we get a formally smooth morphism $\zeta^+: \ov{\M}^+\lra G\tr{-Zip}^{\mu}$. Applying the $J_b(\Q_p)$-action, we get a formally smooth $J_b(\Q_p)$-invariant morphism \[\zeta: \ov{\M}\lra G\tr{-Zip}^{\mu},\] as desired.
\end{proof}
We note that in the EL/PEL cases, Wedhorn and Lau proved the above proposition previously, cf. \cite{Wed}, \cite{PWZ} Example 9.21 and \cite{VW} Theorem 3.2. If $(G, [b],  \{\mu\})\hookrightarrow (\GL_n, [b'],  \{\mu'\})$ is an embedding of unramified local Shimura data of Hodge type, by construction, we have the commutative diagram
\[\xymatrix{
\ov{\M}(G, b, \mu)\ar@{^{(}->}[r]\ar[d]^{\zeta_G} &\ov{\M}(\GL_n, b', \mu')\ar[d]^{\zeta_{\GL_n}}\\
	G\tr{-Zip}^{\mu}\ar[r] & \GL_n\tr{-Zip}^{\mu'}.
}
\]

Let $^J\W^b\subset \,^J\W$ be the subset defined by the image of $\zeta$. For each $w\in \,^J\W^b$, we call $\ov{\M}_{w}$ the Ekedahl-Oort stratum of $\ov{\M}$ attached to $w$. We get a stratification \[\ov{\M}=\coprod_{w\in ^J\W^b}\ov{\M}_{w}.\] We also get an induced stratification \[\M_{red}=\coprod_{w\in ^J\W^b}\M_{w},\]
where $\M_w\subset \M_{red}$ is a locally closed subscheme of $\M_{red}$, which we call the Ekedahl-Oort stratum of $\M_{red}$ associated to $w$. For a locally closed subscheme $X\subset Y$, we write $X^{cl}$ the (Zariski) closure of $X$ in $Y$. By construction, we have the closure relations
\[\ov{\M}_w^{cl}=\coprod_{w'\preceq w, w'\in ^J\W^b} \ov{\M}_{w'}, \quad \M_w^{cl}=\coprod_{w'\preceq w, w'\in ^J\W^b} \M_{w'}.\]

In \cite{GoHeNi} 1.4 (and \cite{GoHe} 3.4), there exists a decomposition
\[X^G_\mu(b)=\coprod_{w\in \Adm(\mu)\cap ^K\wt{W}} X_{K, w}(b), \]
where in our case $K=G(\Z_p)$ is the fixed hyperspecial group. Here are some explanations about the notations: $\wt{W}$ is the Iwahori Weyl group of $G$, $\Adm(\mu)\subset \wt{W}$ is the (finite) subset defined as (2.1) of \cite{HR}, $^K\wt{W}\subset \wt{W}$ is the set of minimal length elements in the coset $W_K\setminus\wt{W}$, with $W_K\subset \wt{W}$ the subgroup corresponding to $K=G(\Z_p)$, our fixed hyperspecial group. The above decomposition holds in the general setting of arbitrary local Shimura data.
 On the other hand, if we set $\Adm^K(\mu)=W_K\Adm(\mu)W_K$, then we have 
\[\Adm(\mu)\cap\, ^K\wt{W}= \Adm^K(\mu)\cap\, ^K\wt{W}\st{\sim}{\lra} \,^J\W,\]
where the first equality follows from \cite{HR} Theorem 6.10, and the second bijection is induced by the projection $\wt{W}\ra \W$ by \cite{Vieh} Theorem 1.1 (1). Moreover, this bijection preserves the order $\preceq_{K,\sigma}$ on $^K\wt{W}$ (cf. \cite{HR} 6.5 and \cite{GoHe} 3.3) and the order $\preceq$ on $^J\W$.
Therefore we can rewrite the above decomposition in the hyperspecial level as \[X^G_\mu(b)=\coprod_{w\in ^J\W } X_{K, w}(b).\]

Recall that by Theorem \ref{T:ab} we have \[\ov{\M}^{perf}\simeq X^G_\mu(b).\]
\begin{proposition}\label{P:EO}
For $w\in\, ^J\W$ the strata  $X_{K, w}(b)\neq \emptyset$ if and only if $w\in\, ^J\W^b$, in which case 
we have \[\ov{\M}_{w}^{perf}\simeq X_{K,w}(b).\] In particular, $\ov{\M}_{w}$ is of dimension $\dim X_{K,w}(b)$ if it is non-empty.
\end{proposition}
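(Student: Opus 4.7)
The plan is to reduce to the Hodge type case and then match the two Ekedahl--Oort stratifications — the geometric one coming from the $G$-zip map $\zeta$ and the group-theoretic one of Goertz--He on $X^G_\mu(b)$ — at the level of $\ov{\F}_p$-points.

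First I would use the construction of $\breve{\M}(G,b,\mu)$ from Theorem \ref{T:ab}: after fixing $x_0\in \pi_1(G)^\Gamma$ we have $\ov{\M}(G,b,\mu)^+\simeq\ov{\M}(G_1,b_1,\mu_1)^+$ for an auxiliary unramified local Shimura datum of Hodge type $(G_1,[b_1],\{\mu_1\})$, compatibly with the $J_b(\Q_p)$-action and with the cartesian diagram of Proposition \ref{P:cartesian}. Since $\zeta$ in the abelian type case is constructed from $\zeta_1$ via this local identification, and since $^J\W$ depends only on $(G^{ad},\{\mu^{ad}\})=(G_1^{ad},\{\mu_1^{ad}\})$, the stratum $\ov{\M}_w$ pulls back to the union of $J_b(\Q_p)$-translates of $\ov{\M}_{1,w}\cap \ov{\M}_1^+$. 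A parallel statement holds for $X^G_\mu(b)$ via Corollary \ref{C:local ADLV} and the compatibility of the Goertz--He decomposition with central isogenies (which reduces to the adjoint datum). Thus it suffices to treat the Hodge type case.

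Next, in the Hodge type case I would identify the two stratifications pointwise. By Proposition \ref{P:Zhu} we have $\ov{\M}^{perf}\simeq X^G_\mu(b)$; unwinding the definitions, a point $g\in X^G_\mu(b)(\ov{\F}_p)$ corresponds to a $p$-divisible group with crystalline Tate tensors whose Dieudonné module is $(gG(W),b\sigma)$, and the $G$-zip associated to this point via $\zeta$ is the one built from the triple $(\Lambda\otimes\ov{\F}_p,\Fil^\bullet,b\sigma)$ together with the tensors. On the other hand, by \cite{GoHe} the stratum $X_{K,w}(b)$ consists of those $g$ with $g^{-1}b\sigma(g)\in G(W)\dot w G(W)$, and the main theorem of loc.\ cit.\ (combined with the translation $\Adm(\mu)\cap {}^K\wt W\st{\sim}{\to}{}^J\W$ used in the text just before the proposition) shows that two points of $X^G_\mu(b)$ lie in the same $X_{K,w}(b)$ if and only if their associated $G$-zips are isomorphic. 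This gives the bijective identification $\ov{\M}_w(\ov{\F}_p)\simeq X_{K,w}(b)(\ov{\F}_p)$, hence $^J\W^b=\{w\in{}^J\W : X_{K,w}(b)\neq\emptyset\}$ as claimed.

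It remains to promote this pointwise identification to an isomorphism of perfect schemes $\ov{\M}_w^{perf}\simeq X_{K,w}(b)$. For this I would observe that both sides are locally closed subschemes of $X^G_\mu(b)=\ov{\M}^{perf}$ and that they coincide on $\ov{\F}_p$-points by the previous step; since both stratifications on $X^G_\mu(b)$ are induced from locally closed subspace structures via the same ambient affine Deligne--Lusztig variety, agreement on geometric points forces agreement as reduced subschemes, and then as perfect subschemes. The dimension statement is then immediate from \cite{GoHe} (or \cite{GoHeNi}), where $\dim X_{K,w}(b)$ is computed in terms of $w$ and the defect of $b$.

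The main obstacle I anticipate is the precise matching in the second paragraph: one must verify that the $G$-zip attached by $\zeta$ to the point of $\ov{\M}$ corresponding to $g\in X^G_\mu(b)(\ov{\F}_p)$ is isomorphic, as an object of $G\text{-Zip}^\mu(\ov{\F}_p)$, to the $G$-zip whose isomorphism class is parametrized by the image of $g^{-1}b\sigma(g)$ in $W_K\backslash\wt W/W_K$ under the bijection $^J\W\simeq|G\text{-Zip}^\mu|$. This compatibility is the content of (a hyperspecial-level version of) the Goertz--He comparison, and once it is in place the rest of the proposition follows formally.
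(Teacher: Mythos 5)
Your overall strategy — reduce to the Hodge type case via $\ov{\M}^+=\ov{\M}_1^+$ and $J_b(\Q_p)$-translation, then match the zip stratification with the group-theoretic one on $X_\mu^G(b)$ — is the same as the paper's, and the final appeal to the Goertz--He/Viehmann comparison (via the bijection $\Adm(\mu)\cap{}^K\wt{W}\simeq{}^J\W$) is the correct reference.

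There is, however, a concrete error in your second paragraph: you write that $X_{K,w}(b)$ consists of $g$ with $g^{-1}b\sigma(g)\in G(W)\dot w G(W)$. That is the Cartan double coset, which is the wrong invariant: every $g\in X_\mu^G(b)$ already has $g^{-1}b\sigma(g)\in K\mu(p)K$, so decomposing by $K$-double cosets yields either the whole space or the empty set. The Goertz--He/Goertz--He--Nie decomposition is defined by the $K$-$\sigma$-conjugacy class of $g^{-1}b\sigma(g)$ inside $K\mu(p)K$ — equivalently, by which piece of the decomposition $K\mu(p)K=\coprod_{w\in\Adm(\mu)\cap{}^K\wt W} K\cdot_\sigma(I\dot w I)$ it lies in (with $I$ an Iwahori), which is exactly Viehmann's ``truncation of level~1''. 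Your conclusion that this invariant is equivalent to the isomorphism class of the $G$-zip of the associated $p$-divisible group is nonetheless the right one (this is the content of Viehmann's Theorem 1.1, which is what the paper cites), so the slip does not derail the argument, but the statement as written would not compile as a definition.

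The paper's proof also packages the comparison slightly more cleanly than your two-step ``bijection on points, then upgrade to schemes'': by the proof of Zhu's Proposition 3.11 one has explicit mutually inverse morphisms of perfect schemes $f\colon\ov{\M}^{perf}\to X_\mu^G(b)$ and $f^{-1}$ built from Dieudonn\'e theory over perfect rings, and the only thing to check is that these send $\ov{\M}_w^{perf}$ into $X_{K,w}(b)$ and vice versa, which is precisely the Viehmann comparison. This avoids the mildly delicate ``agreement on $\ov{\F}_p$-points implies agreement of locally closed sub-perfect-schemes'' step in your last paragraph; that step is fine here because both sides are perfections of locally closed subschemes of finite type over $\ov{\F}_p$ inside the same ambient space, but stating it requires care, and working directly with $f,f^{-1}$ sidesteps it.
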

\begin{proof}
It suffices to prove $\ov{\M}_{w}^{perf}\simeq X_{K,w}(b)$ for any $w\in\, ^J\W$.
We first consider the Hodge type case. By the proof of \cite{Zhu} Proposition 3.11, we have two morphism $f: \ov{\M}^{perf}\ra X_{\mu}^G(b)$ and $f^{-1}: X_{\mu}^G(b) \ra \ov{\M}^{perf}$, inverse to each other, by using Dieudonn\'e theory over perfect rings. It suffices to check that $f$ (resp. $f^{-1}$) induces $f: \ov{\M}_{w}^{perf}\ra X_{K,w}(b)$ (resp. $f^{-1}:  X_{K,w}(b)\ra \ov{\M}_{w}^{perf}$). This follows from \cite{Vieh} Theorem 1.1 (see loc. cit. section 7 for some discussion in the global setting of Shimura varieties), see also Remark 6.5 (2) of \cite{HR}. Indeed, the above decomposition $X^G_\mu(b)=\coprod_{w\in \Adm(\mu)\cap ^K\wt{W}} X_{K, w}(b)$ is given by truncations of level 1 of elements in the Witt loop group. Since $G$ is unramified, by \cite{Vieh} Theorem 1.1, the set of $\sigma-\breve{K}$-conjugacy classes in $\breve{K}_1\setminus \breve{K}\mu(p)\breve{K}/\breve{K}_1$ (together with the partial order $\preceq_{K,\sigma}$ is identified with the underlying topological space of the algebraic stack $G\tr{-Zip}^{\mu}$. Here $\breve{K}_1$ is the kernel of the projection $\breve{K}=G(W)\ra G(\ov{\F}_p)$.
Thus the above decomposition is equivalent to a morphism of algebraic stacks
\[X_\mu^G(b)\ra G\tr{-Zip}^{\mu,perf},\]which is given by $\zeta^{perf}\circ f^{-1}$ since by construction $f$ and $f^{-1}$ preserve the universal $G$-Zips.

Now assume that $(G,[b],\{\mu\})$ is unramified of abelian type. Take any unramified local Shimura datum of Hodge type $(G_1,[b_1],\{\mu_1\})$ such that $G^{ad}, [b^{ad}],  \{\mu^{ad}\})\simeq (G_1^{ad}, [b_1^{ad}],  \{\mu_1^{ad}\})$. Let $\ov{\M}$ and $\ov{\M}_1$ be the special fibers of Rapoport-Zink spaces attached to $(G,[b],\{\mu\})$  and $(G_1,[b_1],\{\mu_1\})$ respectively. As always after fixing a point $x_0\in \pi_1(G_1)^\Gamma$ we have $\ov{\M}^+=\ov{\M}_1^+$. The restriction of $f$ induces an isomorphism $f_1^+: \ov{\M}_{1w}^{+,perf}\simeq  X_{K_1,w}(b_1)^+$.  Applying the $J_b(\Q_p)$ action, we get \[\ov{\M}_{w}^{perf}\simeq X_{K,w}(b)\] as desired.
\end{proof}

\begin{remark} Let $(G,[b],\{\mu\})$ be a local Shimura datum such that $G$ is unramified. Fix a representative $b\in G(L)$ of $[b]$.
\begin{enumerate}
	\item The closure relation for the decomposition $X^G_\mu(b)=\coprod_{w\in ^J\W } X_{K, w}(b)$ can be proved similarly as \cite{HR} Theorem 6.15. See also \cite{GoHeNi} 4.11 and \cite{GoHe} section 7.
	\item If we were working in the equal characteristic setting, then a formula for $\dim X_{K,w}(b)$ is known by combining \cite{GoHe} Theorem 4.1.2 (2) and \cite{He} Theorem 6.1. In our mixed characteristic setting, it should be possible to prove the same formula holds by applying \cite{HR} Proposition 6.20 and the Witt vector affine flag varieties in 1.4 of \cite{Zhu} and the method in 3.1 of loc. cit..
\end{enumerate}
\end{remark}

\subsection{Rapoport-Zink spaces for a fully Hodge-Newton decomposable pair $(G,\{\mu\})$}
We discuss some special Rapoport-Zink spaces in this subsection. Our motivation here is the observation that the list in the classification of \cite{GoHeNi} Theorem 2.5 (a posteriori) lies in our class of local abelian type (for minuscule $\mu$).

Let $G$ be a connected reductive group over $\Q_p$ and $\{\mu\}$ be a conjugacy class of cocharacters $\mu: \G_m\ra G_{\ov{\Q}_p}$. Assume that $G$ is quasi-split.
Recall the following definition
\begin{definition}[\cite{GoHeNi} Definition 2.1, \cite{CFS} Lemma 4.11]
	The pair $(G,\{\mu\})$ (or the set $B(G,\mu)$) is called fully Hodge-Newton decomposable if for any non basic $[b']\in B(G,\mu)$, the pair $([b'], \{\mu\})$ is Hodge-Newton decomposable, i.e. there exists a proper Levi subgroup $G\supsetneq M\supset M_{b'}$ such that $\kappa_M(b')=\mu^\sharp$ in $\pi_1(M)_\Gamma$.
\end{definition}
Recall $M_{b'}=M_{\nu_{b'}}$ is the Levi subgroup of $G$ defined as the centralizer of $\nu_{[b']}$.
In \cite{GoHeNi} Theorem 2.5 there is a purely group theoretical classification of all the fully Hodge-Newton decomposable pairs $(G,\{\mu\})$, and in loc. cit. Theorem 2.3 one can find further equivalent conditions ( those in (2)-(6) of the statement of the theorem) for $(G,\{\mu\})$ being fully Hodge-Newton decomposable.

\begin{theorem}\label{T:FHN}
Let $(G,[b],\{\mu\})$  be an unramified local Shimura datum of abelian type, $\M_{red}$ be the reduced Rapoport-Zink space associated to $(G,[b],\{\mu\})$. Suppose that $(G, \{\mu\})$ is fully Hodge-Newton decomposable. Then we have
\begin{enumerate}
	\item if $[b]$ is non basic, then $\dim \M_{red}=0$;
	\item if $[b]$ is basic, then the perfection of each Ekedahl-Oort stratum $\ov{\M}_w\subset \ov{\M}$ is a disjoint union of perfections of classical Deligne-Lusztig varieties;
	\item for each $w\in\, ^J\W$, there exists a unique $[b']\in B(G, \mu)$ such that $\M_w\neq \emptyset$, where $\M_w$ is an Ekedahl-Oort stratum of $\M_{red}'$, the reduced Rapoport-Zink space associated to $(G,[b'],\{\mu\})$. In particular we get a decomposition \[^J\W=\coprod_{[b']\in B(G,\mu)} \,^J\W^{b'}.\]
\end{enumerate}
Conversely, if $(G, \{\mu\})$ is part of any unramified local Shimura datum of abelian type with one of the above three conditions holds, then $(G, \{\mu\})$ is fully Hodge-Newton decomposable. 
\end{theorem}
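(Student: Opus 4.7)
The plan is to translate assertions (1)--(3) into statements about the affine Deligne--Lusztig variety $X_\mu^G(b)$ and its strata $X_{K,w}(b)$, and then apply directly the equivalent characterisations of fully Hodge--Newton decomposability given by \cite{GoHeNi} Theorems 2.3 and 2.5. The conditions (1)--(3) appearing in this theorem are essentially repackaged versions of conditions already known to be equivalent to ``$(G,\{\mu\})$ fully Hodge--Newton decomposable'' on the purely group-theoretic side; the role of the present theorem is to lift this equivalence from $X_\mu^G(b)$ to the Rapoport--Zink space $\breve{\M}(G,b,\mu)$ of abelian type.

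First I invoke Theorem~\ref{T:ab} to identify $\ov{\M}^{perf}\simeq X_\mu^G(b)$, and Proposition~\ref{P:EO} to get the stratum-by-stratum isomorphism $\ov{\M}_w^{perf}\simeq X_{K,w}(b)$ for every $w\in {}^J\W$. Passage to the perfection preserves dimension, non-emptiness, irreducibility, and the property of being (a disjoint union of) a classical Deligne--Lusztig variety. Consequently $\dim \M_{red}=0$ iff $\dim X_\mu^G(b)=0$; each $\ov{\M}_w$ is a disjoint union of perfections of classical DL varieties iff the same holds for $X_{K,w}(b)$; and the set ${}^J\W^{b}$ indexing the non-empty EO strata of $\ov{\M}$ coincides with $\{w\in {}^J\W\,|\,X_{K,w}(b)\neq\emptyset\}$.

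Now I apply \cite{GoHeNi} Theorem~2.3, which provides the following equivalent formulations of the property ``$(G,\{\mu\})$ is fully Hodge--Newton decomposable'': (a) $\dim X_\mu^G(b')=0$ for every non-basic $[b']\in B(G,\mu)$; (b) for the basic class $[b_0]$, every non-empty stratum $X_{K,w}(b_0)$ is a disjoint union of classical Deligne--Lusztig varieties; (c) the subsets ${}^J\W^{b'}$ for $[b']\in B(G,\mu)$ partition ${}^J\W$, so that each $w\in {}^J\W$ lies in a unique ${}^J\W^{b'}$. Under the identifications of the previous step these are respectively equivalent to conditions (1), (2), and (3) of the present theorem, giving both directions simultaneously. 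For the converse, each of (1)--(3) recovers one of (a)--(c) (with (1) understood as holding for all non-basic classes, and (2) for the basic class); since (a), (b), (c) are each equivalent to full Hodge--Newton decomposability by \cite{GoHeNi}, the converse follows.

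The main technical point is the compatibility used in the reduction: I need to know that the Ekedahl--Oort stratification $\ov{\M}=\coprod_w\ov{\M}_w$ defined via the $G$-zip morphism $\zeta$ of Theorem~\ref{T:RZ EO} genuinely matches the decomposition $X_\mu^G(b)=\coprod_w X_{K,w}(b)$ of \cite{GoHe, GoHeNi} after perfection. This was already supplied by Proposition~\ref{P:EO}, whose proof rests on \cite{Vieh} Theorem 1.1 to compare the two indexing parameters in the Hodge-type case and then propagates through the $J_b(\Q_p)$-orbit structure to the abelian type case. A minor additional check is that being of abelian (rather than Hodge) type introduces no new obstruction, because the conditions on the $X_{K,w}(b)$ depend only on $(G,[b],\{\mu\})$ as a local shtuka datum, which is the natural home of the Görtz--He--Nie framework.
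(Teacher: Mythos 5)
Your proposal is correct and takes essentially the same approach as the paper: the paper's proof consists of exactly the three ingredients you identify — Theorem~\ref{T:ab} to pass to $X_\mu^G(b)$, Proposition~\ref{P:EO} to match Ekedahl--Oort strata with the $X_{K,w}(b)$, and then the equivalences of \cite{GoHeNi} Theorem~2.3 at hyperspecial level. Your writeup unpacks the logic that the paper leaves implicit, but the route is identical.
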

\begin{proof}
This follows from \cite{GoHeNi} Theorem 2.3 (in the hyperspecial level case), our Theorem \ref{T:ab} and Proposition \ref{P:EO}.
\end{proof}

\begin{remark}
Let $(G,[b],\{\mu\})$  be an unramified local Shimura datum of abelian type, with $\ov{\M}$ the special fiber of the associated Rapoport-Zink space. Suppose that $[b]$ is non basic and the pair $(b, \{\mu\})$ is Hodge-Newton decomposable. With our Theorem \ref{T:ab} at hand, we refer the reader to \cite{GoHeNi} Theorems 3.16 and 6.2 (in the unramified case) to write down an isomorphism between $\ov{\M}^{perf}$ and $\ov{\M}(M, b_M, \mu_M)^{perf}$,  the perfection of the special fiber of some Rapoport-Zink space of abelian type attached to a Levi subgroup $M$ of $G$.
\end{remark}

\section{Applications to moduli spaces of K3 surfaces in mixed characteristic}
In this section, we discuss some applications to K3 surfaces and their moduli in mixed characteristic.  We will firstly discuss some examples of Rapoport-Zink spaces of orthogonal type, following the line of the previous section. Then we will move to orthogonal Shimura varieties and moduli spaces of K3 surfaces.  Finally, we will apply our constructions of Rapoport-Zink uniformization and Ekedahl-Oort stratifications to moduli spaces of K3 surfaces. Again, we assume $p>2$ in this section.

\subsection{Rapoport-Zink spaces of orthogonal type}
In this and the next subsection, we will discuss an example of Rapoport-Zink space for a fully Hodge-Newton decomposable pair $(G,\{\mu\})$.

Let $(L, Q)$ be a non degenerate self dual quadratic lattice of rank $n+2$ over $\Z_p$, where $n\geq 1$ is an integer. We write $(V,Q)$ as the induced quadratic space over $\Q_p$. Let $G=\SO(V, Q), G_1=\GSpin(V, Q)$ be the associated special orthogonal and spinor similitudes groups over $\Q_p$. By our assumption that $L$ is self dual, both $G$ and $G_1$ are unramified. We have an exact sequence of groups 
\[1\ra \G_m\ra G_1\ra G\ra 1,\] which is in fact defined over $\Z_p$.

As in \cite{HP} subsection 4.2, there is a natural choice of minuscule cocharacter $\mu_1$ of $G_1$. Take any $[b_1]\in B(G_1,\mu_1)$. Then $(G_1,[ b_1],\{\mu_1\})$ is a local Shimura datum of Hodge type.
We get a local Shimura datum $(G,[b],\{\mu\})$ by taking $[b],\{\mu\}$ as the image of $[b_1],\{\mu_1\}$ under the map $G_1\ra G$. By construction $(G,[b],\{\mu\})$ is unramified of abelian type.  We get the associated Rapoport-Zink spaces $\breve{\M}_1=\breve{\M}(G_1,b_1,\mu_1)$ and $\breve{\M}=\breve{\M}(G,b,\mu)$. The pairs $(G_1, \{\mu_1\})$ and $(G, \{\mu\})$ are fully Hodge-Newton decomposable by Theorem 2.5 of \cite{GoHeNi} (or one can compute the sets $B(G_1,\mu_1)$  and $B(G,\mu)$ directly to see they are fully Hodge-Newton decomposable).

Let $X_0$ be the $p$-divisible group over $\ov{\F}_p$ with contravariant Dieudonn\'e module $(C(V)^\vee\otimes W, b_1\sigma)$, where $C(V)$ is the Clifford algebra attached to $V$ and $C(V)^\vee$ is its dual. Fix any $\delta\in C(V)^\times$ with $\delta^\ast=\delta$ where $\ast$ is the canonical involution on $C(V)$. Then $\psi_\delta(c_1,c_2)=\tr{Tr}(c_1\delta c_2^\ast)$ is a perfect symplectic form on $C(V)$. Here $\tr{Tr}: C(V)\ra \Z_p$ is the reduced trace map. The perfect symplectic form $\psi_\delta$ on $C(V)$ induces a principal polarization $\lambda_0: X_0\ra X_0^\vee$. There exists a finite collection tensors $(s_\alpha)_{\alpha\in I}$ which includes $\psi_\delta$, such that $G_1\subset \GL(C(V))$ is cut out by $(s_\alpha)_{\alpha\in I}$.
Recall that $\breve{\M}_1$ has the following moduli interpretation. For any $R\in\Nilp_W^{sm}$, $\breve{\M}_1(R)=\{(X,(t_\alpha)_{\alpha\in I}, \rho)\}/\simeq$, where
	\begin{itemize}
		\item $X$ is a $p$-divisible group on $\Spec R$,
		\item $(t_\alpha)_{\alpha\in I}$ is a collection of crystalline Tate tensors of $X$,
		\item $\rho: X_0\otimes R/J\ra X\otimes R/J$ is a quasi-isogeny which sends $s_\alpha\otimes 1$ to $t_\alpha$ for $\alpha\in I$, where $J$ is some ideal of definition of $R$,
	\end{itemize}
	such that the following condition holds:\\
	the $R$-scheme 
	\[ \Isom\Big(\big(\D(X)_R, (t_\alpha), \Fil^\bullet(\D(X)_R) \big), \big(\Lambda\otimes R, (s_\alpha\otimes1), \Fil^\bullet\Lambda\otimes R\big)\Big) \]
	that classifies the isomorphisms between locally free sheaves $\D(X)_R$ and $\Lambda\otimes R$ on $\Spec R$ preserving the tensors and the filtrations is a $P_{\mu_1}\otimes R$-torsor.

The exact sequence $1\ra \G_m\ra G_1\ra G\ra 1$ induces a long exact sequence (cf. \cite{Bor} Lemma 1.5)
\[1\ra \pi_1(\G_m)^\Gamma\ra \pi_1(G_1)^\Gamma\ra \pi_1(G)^\Gamma\ra H^1(\Gamma, \pi_1(\G_m))\ra \cdots\]
We have the following isomorphisms \[\pi_1(\G_m)^\Gamma=\pi_1(\G_m)\simeq X_\ast(\G_m)\simeq \Z.\] Since $G_1^{der}=\Spin(V)$ is simply connected and we have the exact sequence \[1\ra \Spin(V)\ra \GSpin(V)\ra \G_m\ra 1,\] we get (\cite{Bor} 1.6) \[\pi_1(G_1)^\Gamma\simeq\pi_1(\G_m)^\Gamma\simeq \Z.\] On the other hand, since 
\[1\ra \mu_2\ra \Spin(V)\ra \SO(V)\ra 1\] is exact, we get \[\pi_1(G)=\mu_2(-1)=\Z/2\Z.\]
\begin{lemma}
We have $\pi_1(G)^\Gamma\simeq \Z/2\Z$ and the map $\pi_1(G_1)^\Gamma\ra \pi_1(G)^\Gamma$ is surjective.
\end{lemma}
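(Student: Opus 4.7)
\medskip

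\noindent\textbf{Proof plan.} The argument will proceed by direct Galois cohomology computation on the short exact sequence
\[1 \lra \G_m \lra G_1 \lra G \lra 1\]
of reductive groups over $\Q_p$. The plan is to extract both claims from the associated long exact sequence of $\pi_1$'s displayed just above the lemma, once one verifies that the Galois actions are trivial and that $H^1(\Gamma, \Z)$ vanishes.

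For the first assertion, I would note that by the identification $\pi_1(G) = \mu_2(-1)$ already recorded in the text, $\pi_1(G)$ is an abelian group of order $2$. Since $\Z/2\Z$ has trivial automorphism group, any $\Gamma$-action on $\pi_1(G)$ must be trivial; alternatively, one may observe directly that $G$ is unramified and that the character lattice of $\mu_2 \subset \Spin(V)$ is the constant module $\Z/2\Z$. Either way, $\pi_1(G)^{\Gamma} = \pi_1(G) \simeq \Z/2\Z$.

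For the surjectivity, I would feed the exact sequence into the continuation of the long exact sequence from the text, obtaining
\[\pi_1(G_1)^{\Gamma} \lra \pi_1(G)^{\Gamma} \lra H^1(\Gamma, \pi_1(\G_m)).\]
Since $\G_m$ is split over $\Q_p$, the Galois action on $\pi_1(\G_m) = X_\ast(\G_m) = \Z$ is trivial, so
\[H^1(\Gamma, \pi_1(\G_m)) = \Hom_{\mathrm{cont}}(\Gamma, \Z) = 0,\]
the last equality because $\Gamma = \mathrm{Gal}(\ov{\Q}_p|\Q_p)$ is profinite while $\Z$ is discrete and torsion-free. Therefore the map $\pi_1(G_1)^{\Gamma} \to \pi_1(G)^{\Gamma}$ is surjective, as required.

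The only step requiring any verification is the triviality of the Galois action on $\pi_1(\G_m)$ and on $\pi_1(G)$, and both are immediate from the fact that all groups involved ($\G_m$, $\Spin(V)$, $\SO(V)$, $\GSpin(V)$) are unramified and that the relevant character modules are $\Z$ and $\Z/2\Z$ respectively with evidently trivial Frobenius action. There is no genuine obstacle; the lemma is essentially a bookkeeping exercise in the long exact sequence.
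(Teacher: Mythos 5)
Your proof is correct. For the first assertion you argue exactly as the paper does: the Galois action on $\pi_1(G)\simeq\Z/2\Z$ is trivial (the paper uses $\mu_2\subset\G_m$ and triviality of the action on $\G_m$; your observation that $\operatorname{Aut}(\Z/2\Z)$ is trivial is an equally valid shortcut), so $\pi_1(G)^\Gamma=\pi_1(G)\simeq\Z/2\Z$.

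For the surjectivity you take a genuinely different route. The paper looks directly at the map $\pi_1(G_1)^\Gamma\to\pi_1(G)^\Gamma$, identifies it concretely as the reduction $\Z\to\Z/2\Z$ (since both actions are trivial), and observes this is onto. You instead continue the long exact sequence one step further and show the obstruction group $H^1(\Gamma,\pi_1(\G_m))=\Hom_{\mathrm{cont}}(\Gamma,\Z)$ vanishes because $\Gamma$ is profinite and $\Z$ is discrete and torsion-free. Both arguments are short and correct. The paper's version is more explicit — it actually names the map — and so also tells you directly what the image is; yours is slightly more robust in that the same one-line vanishing of $H^1(\Gamma,\Z)$ gives surjectivity whenever the central kernel is a split torus, without needing to identify the induced map on $\pi_1$'s. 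In this particular case the two approaches are of comparable length and either would do.
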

\begin{proof}
As $\mu_2\subset \G_m$, and $\Gamma$ acts trivially on the later, we get $\pi_1(G)^\Gamma=\mu_2(-1)^\Gamma=\mu_2(-1)=\Z/2\Z$. For the second assertion, note that \[ \pi_1(G_1)^\Gamma/2\pi_1(G_1)^\Gamma =\Z/2\Z\subset \textrm{Im}(\pi_1(G_1)^\Gamma\ra \pi_1(G)^\Gamma).\]Thus the image is $\pi_1(G)^\Gamma$.
\end{proof}
\begin{corollary}\label{C:RZ SO}
	We have an isomorphism of formal schemes $\breve{\M}\simeq \breve{\M}_1/p^\Z$. 
\end{corollary}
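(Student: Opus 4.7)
The plan is to apply the central-quotient argument behind Proposition \ref{P:moduli} to the exact sequence $1\to \G_m\to G_1\to G\to 1$, with $Z=\G_m$ in the role of the central subgroup, and then to identify the induced $X_\ast(\G_m)^\Gamma$-action on $\breve{\M}_1$ with the action of $p^\Z$.

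First I would invoke the lemma immediately preceding Proposition \ref{P:moduli}: the surjectivity of $\pi_1(G_1)^\Gamma\to \pi_1(G)^\Gamma$ just established gives an identification of affine Deligne--Lusztig varieties
\[ X_{\mu_1}^{G_1}(b_1)/\G_m(\Q_p) \;\simeq\; X_\mu^G(b). \]
Since the $\G_m(\Q_p)=\Q_p^\times$-action factors through the valuation $\Q_p^\times\twoheadrightarrow X_\ast(\G_m)^\Gamma\simeq\Z$, the subgroup $\Z_p^\times$ acts trivially and the quotient coincides with $X_{\mu_1}^{G_1}(b_1)/p^\Z$.

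Next I would lift this from the perfect special fibre to the formal scheme by rerunning the proof of Proposition \ref{P:moduli} with the square
\[ \xymatrix{ \breve{\M}_1 \ar[r] \ar[d]_{\omega_{G_1}} & \breve{\M} \ar[d]^{\omega_G} \\ c_{b_1,\mu_1}\pi_1(G_1)^\Gamma \ar[r] & c_{b,\mu}\pi_1(G)^\Gamma } \]
in place of the $(G_1,G_1^{ad})$-version used there; both vertical maps are surjective by Proposition \ref{P: surjective}, and the underlying ADLV square is cartesian by Proposition \ref{P:cartesian}. The bottom row is the surjection $\Z\twoheadrightarrow \Z/2\Z$, whose kernel $2\Z$ coincides with the image of $X_\ast(\G_m)^\Gamma\simeq\Z$ in $\pi_1(G_1)^\Gamma\simeq\Z$ because the composition $\G_m\hookrightarrow G_1\twoheadrightarrow G_1^{ab}=\G_m$ is the squaring spinor norm. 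The cartesian diagram then formally yields $\breve{\M}\simeq \breve{\M}_1/X_\ast(\G_m)^\Gamma$.

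The step where I expect to have to be genuinely careful is the final matching of this abstract $X_\ast(\G_m)^\Gamma$-quotient with $\breve{\M}_1/p^\Z$, where $p^\Z\subset \Q_p^\times=\G_m(\Q_p)\subset Z_{G_1}(\Q_p)\subset J_{b_1}(\Q_p)$ acts via the central $J_{b_1}(\Q_p)$-action on $\breve{\M}_1$. On $\pi_0(\breve{\M}_1)=\pi_1(G_1)^\Gamma=\Z$ both actions translate by $2$ (via the squaring map just noted), and on each fibre $\breve{\M}_1^+$ of $\omega_{G_1}$ both act trivially because, by the construction in Theorem \ref{T:ab}, $\breve{\M}_1^+$ is identified with $\breve{\M}^+$; combining these two observations gives the desired $\breve{\M}\simeq\breve{\M}_1/p^\Z$.
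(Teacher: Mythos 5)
Your proof is correct and follows essentially the same route as the paper's (which is just a two-line pointer to the lemma on $X_{\leq\mu_1}^{G_1}(b_1)/Z(\Q_p)\simeq X_{\leq\mu_2}^{G_2}(b_2)$ and to the argument of Proposition~\ref{P:moduli}(1), applied here with $Z=\G_m$ rather than $Z=Z_{G_1}$). Your intermediate observations — that the kernel of $\pi_1(G_1)^\Gamma\to\pi_1(G)^\Gamma$ is $2\Z$ because $\G_m\hookrightarrow\GSpin\twoheadrightarrow\GSpin^{ab}=\G_m$ is squaring, and that this agrees with the image of $\pi_1(\G_m)^\Gamma$ — are accurate and usefully fill in the exactness of $0\to\Z\to\Z\to\Z/2\Z\to 0$.

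The one place where the write-up is muddled is the final paragraph. The matching of the $X_\ast(\G_m)^\Gamma$-quotient with $\breve{\M}_1/p^\Z$ does not require the two-part ``components plus fibres'' argument you sketch, and that argument as stated does not quite parse: $p\in p^\Z$ does \emph{not} preserve $\breve{\M}_1^+$ — it translates it to the fibre of $\omega_{G_1}$ over $x_0+2$ — so the claim that ``both act trivially on $\breve{\M}_1^+$'' is false as literally written (equality $\breve{\M}_1^+=\breve{\M}^+$ is a set identification, not a statement about an action). What makes the matching immediate is instead that the $X_\ast(\G_m)^\Gamma$-quotient in the lemma means, by definition, the quotient by $\G_m(\Q_p)=\Q_p^\times$ acting through $\Q_p^\times\twoheadrightarrow X_\ast(\G_m)^\Gamma\simeq\Z$; since $\Z_p^\times\subset G_1(\Z_p)$ is central and therefore acts trivially on $X_{\mu_1}^{G_1}(b_1)$ and on $\breve{\M}_1$, this is exactly the $p^\Z$-quotient. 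In short, the step you flag as requiring ``genuine care'' is already built into the definitions and needs no further argument; the rest of your reduction is sound.
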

\begin{proof}
 By the above lemma $\pi_1(G_1)^\Gamma\ra \pi_1(G)^\Gamma$ is surjective.Thus $ \breve{\M}\simeq \breve{\M}_1/p^\Z$ as the proof of (1) of Proposition \ref{P:moduli}.
\end{proof}

As the pairs $(G_1, \{\mu_1\})$ and $(G, \{\mu\})$ are fully Hodge-Newton decomposable, by Theorem \ref{T:FHN} we get
\begin{corollary}
Assume that $[b_1]$ (and hence $[b]$) is non basic. Then we have $\dim \M_{red}=\dim \M_{1red}=0$
\end{corollary}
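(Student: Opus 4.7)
The plan is to deduce this corollary as a direct instance of Theorem \ref{T:FHN}(1), so the real content has already been set up in the preceding discussion. First I would verify that both triples $(G_1,[b_1],\{\mu_1\})$ and $(G,[b],\{\mu\})$ qualify as unramified local Shimura data of abelian type: this is immediate since $(G_1,[b_1],\{\mu_1\})$ is of Hodge type by \cite{HP} Subsection 4.2 (hence automatically of abelian type), and $(G,[b],\{\mu\})$ has the same adjoint datum (as $G_1 \to G$ has central kernel $\G_m$), so the definition of abelian type applies with $(G_1,[b_1],\{\mu_1\})$ as the witness.

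Next I would invoke the fact, already stated in the paragraph introducing the setup, that $(G_1,\{\mu_1\})$ and $(G,\{\mu\})$ are both fully Hodge-Newton decomposable. This is either quoted from the classification \cite{GoHeNi} Theorem 2.5, or it can be checked by hand: $\{\mu_1\}$ is minuscule of the form considered in \cite{HP}, so $B(G_1,\mu_1)$ consists only of the basic class and Hodge-Newton decomposable non-basic classes (one checks that the Newton polygon of any non-basic element factors through a proper Levi coming from an isotropic line in $V$). The adjoint datum is unchanged under $G_1 \to G$, so the same holds for $(G,\{\mu\})$.

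With these two inputs in place, the application of Theorem \ref{T:FHN}(1) under the hypothesis that $[b_1]$ (equivalently $[b]$, via the bijection $B(G_1,\mu_1)\simeq B(G,\mu)$ coming from the central isogeny) is non-basic gives $\dim \M_{1red}=0$ and $\dim \M_{red}=0$ directly. As a sanity check, I would also note consistency with Corollary \ref{C:RZ SO}: the isomorphism $\breve{\M}\simeq \breve{\M}_1/p^\Z$ immediately yields $\dim \M_{red}=\dim \M_{1red}$, so once one of the two dimension vanishings is known the other is automatic, and either can be taken as the starting point.

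There is no real obstacle here; the statement is essentially a specialization of the general Theorem \ref{T:FHN}, so the only thing to be careful about is matching the hypotheses (unramified abelian type, $(G,\{\mu\})$ fully Hodge-Newton decomposable, $[b]$ non-basic). The non-trivial inputs---construction of $\breve{\M}(G,b,\mu)$ and identification of its perfection with $X_\mu^G(b)$, together with the dimension statement for zero-dimensional affine Deligne-Lusztig varieties in the fully Hodge-Newton decomposable case from \cite{GoHeNi} Theorem 2.3---have all been established already, so the proof is a one-line reference.
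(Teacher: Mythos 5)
Your proposal is correct and follows exactly the paper's route: the paper observes (a few lines before the corollary) that $(G_1,\{\mu_1\})$ and $(G,\{\mu\})$ are fully Hodge-Newton decomposable by \cite{GoHeNi} Theorem 2.5, and then deduces the corollary directly from Theorem \ref{T:FHN}(1). Your extra checks (abelian-type hypothesis, the bijection $B(G_1,\mu_1)\simeq B(G,\mu)$ identifying non-basic classes, and consistency with Corollary \ref{C:RZ SO}) are all sound and simply make explicit the hypotheses the paper is implicitly using.
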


\subsection{Ekedahl-Oort and Howard-Pappas stratifications for basic Rapoport-Zink spaces of orthogonal type}\label{subsection:HPEO}
Let the notations be as in the previous subsection. Now assume that $[b_1]$ (thus $[b]$) is basic.

In \cite{HP}, Howard and Pappas introduced a stratification\footnote{In \cite{HP} 6.5 it is called the Bruhat-Tits stratification, and our $\M_{1\Lambda}^\circ$ is denoted by $\tr{BT}_\Lambda$ there. } for the reduced special fiber $\M_{1red}$ of $\breve{\M}_1$:
\[\M_{1red}=\coprod_{\Lambda}\M_{1\Lambda}^\circ,\]where $\Lambda$ runs through the set of vertex lattices. 
By definition (cf. \cite{HP} section 5), a vertex $\Lambda$ lattice is a $\Z_p$-lattice in $V_L^{\Phi}$, such that \[p\Lambda \subset \Lambda^\vee\subset \Lambda\subset V_L^\Phi.\] Here $L=W(\ov{\F}_p)_\Q, \Phi=b\sigma$ is the Frobenius, $V_L^{\Phi}$ admits a quadratic form induced from $V_L$, so that this quadratic space $V_L^{\Phi}$ has the same dimension and determinant as $V$, but has Hasse invariant -1. Associated to a vertex, we have the type \[t(\Lambda):=\dim \Lambda/\Lambda^\vee,\] which is an even integer with $2\leq t(\Lambda)\leq t_{max}$, where
\[t_{max}=\begin{cases}
n+1,& n \quad\tr{odd},\\
n+2,& n\quad \tr{even},\quad \det V\neq (-1)^{\frac{n}{2}},\\
n,& n\quad \tr{even},\quad \det V= (-1)^{\frac{n}{2}}.
\end{cases}
\]
Recall that we have the inclusion \[V_L^\Phi\subset End(X_0)_\Q,\] so each vertex lattice $\Lambda\subset V_L^\Phi$ can be viewed as a set of self quasi-isogenies of $X_0$.
For each vertex lattice $\Lambda$, the associated Howard-Pappas stratum\footnote{We refer the reader to \cite{LiZhu} Definition 2.9.1 and Theorem 4.2.11 for a variant.} \[\M_{1\Lambda}^\circ\subset\M_{1red}\] is the locus $(X, (t_\alpha),\rho)$ where \[\rho\circ \Lambda^\vee\circ \rho\subset End(X)\] and this does not hold for any smaller vertex lattice $\Lambda'\subsetneq \Lambda$.  Let $\M_{1\Lambda}\subset\M_{1red}$ be its Zariski closure.
In \cite{HP} 4.3.3 and 6.4.1, Howard and Pappas proved that there exists a decomposition 
\[\M_{1red}=\coprod_{j\in\Z}\M_1^{(j)},\]
such that each $\M_1^{(j)}$ is a connected component of $\M_{1red}$. Accordingly, we get a decomposition for each stratum
\[\M_{1\Lambda}^\circ=\coprod_{j\in\Z}\M_{1\Lambda}^{(j),\circ}.\]
By \cite{HP} Theorem 6.5.6, each connected stratum $\M_{1\Lambda}^{(j),\circ}$ is isomorphic to a Deligne-Lusztig variety $X_B(w)$ for the group $\SO(\Lambda_W/\Lambda_W^\vee)$.

 As 
\[\breve{\M}\simeq \breve{\M}_1/p^\Z\simeq \breve{\M}^{(0)}\coprod \breve{\M}^{(1)},\]
we get an induced Howard-Pappas stratification for $\M_{red}$
\[\M_{red}=\coprod_{\Lambda}\M_\Lambda^\circ.\]
In fact, in \cite{HP} sections 5 and 6 Howard and Pappas studied the geometric structures of $\M_{1red}$ by passing to the quotient space $\M_{red}=\M_{1red}/p^\Z$ first.

Recall that $W=W(\ov{\F}_p), L=W_\Q$. Following \cite{HP}, we can describe the sets $\M_{red}(\ov{\F}_p), \M_{\Lambda}(\ov{\F}_p)$ and $\M_{\Lambda}^\circ(\ov{\F}_p)$ in terms of special lattices of $V_L$ as follows. By definition (\cite{HP} Definition 5.2.1) a special lattice $\Ll\subset V_L$ is a self-dual $W$-lattice such that \[(\Ll+\Phi_\ast(\Ll))/\Ll\simeq W/pW,\] where $\Phi_\ast(\Ll)$ is the $W$-submodule generated by $\Phi(\Ll)$.
By Proposition 6.2.2 of \cite{HP}, we have a bijection
\[\M_{red}(\ov{\F}_p) \simeq \{\tr{special lattices}\,\Ll\subset V_L\}.\]By loc. cit. 5.3.1 and Theorem 6.3.1 we have bijections
\[ \begin{split} \M_{\Lambda}(\ov{\F}_p)&\simeq \{\tr{Lagrangians}\, \Ll\subset \Omega: \dim(\Ll+\Phi(\Ll))=d+1\}\\
&\simeq \{\tr{special lattices}\,\Ll\subset V_L: \Lambda_W^\vee\subset\Ll\subset \Lambda_W\}\\
&= \{\tr{special lattices}\,\Ll\subset V_L: \Lambda(\Ll)\subset\Lambda\},
\end{split}\]
where $\Omega=\Lambda_W/\Lambda_W^\vee, \Lambda(\Ll)=(\Ll^{(d)})^{\Phi}, d=\frac{t(\Lambda)}{2}$, and $\Ll^{(d)}=\Ll+\Phi(\Ll)+\cdots+\Phi^{d}(\Ll)$.  Under the above description, we have the bijection
\[\begin{split} \M_{\Lambda}^\circ(\ov{\F}_p)&\simeq \{\tr{special lattices}\,\Ll\subset V_L:  \Ll^{(d)}=\Lambda_W\}\\
&= \{\tr{special lattices}\,\Ll\subset V_L: \Lambda(\Ll)=\Lambda\}.
\end{split}\]
In fact the above descriptions are true for any finitely generated field extension $k'|\ov{\F}_p$ (cf. \cite{HP}).

Let $G_1\tr{-Zip}^{\mu_1}$ be the stack of $G_1$-zips of type $\mu_1$ over $\ov{\F}_p$. The universal $p$-divisible group with crystalline Tate tensors on  $\breve{\M}_1$ defines a morphism
\[\zeta: \ov{\M}_{1}\lra G_1\tr{-Zip}^{\mu_1}.\]
The underling set of geometric points of $G_1\tr{-Zip}^{\mu_1}$ is in canonical bijection with the subset $^J\W$ of the Weyl group of $G_1$. In fact we have isomorphisms of topological spaces \[|G_1\tr{-Zip}^{\mu_1}|\simeq |G\tr{-Zip}^{\mu}|\simeq\, ^J\W.\] Let $^J\W^b\subset \,^J\W$ be the subset defined by the image of $\zeta$. For each $w\in \,^J\W^b$, recall we have the Ekedahl-Oort stratum of $\M_{1red}$ associated to $w$: \[\M_{1w}=\zeta^{-1}(w)_{red}.\] We get the Ekedahl-Oort stratification \[\M_{1red}=\coprod_{w\in ^J\W^b}\M_{1w}.\] We get also the induced Ekedahl-Oort stratification for $\M_{red}$.

Let $m\geq 1$ be such that $2m=n+1$ if $n$ is odd, and $2m=n+2$ if $n$ is even. Then there is a bijection (cf. \cite{Zh} subsection 4.4)
\[^J\W \st{\sim}{\lra} \begin{cases} \{0,1,\cdots, 2m-1\},  & n=2m-1 \quad\tr{odd}\\
\{0,1,\cdots, m-2, m-1, m-1', m, \cdots, 2m-2\},   & n=2m-2\quad\tr{even}\end{cases}
\] 
induced by the length function $ w\mapsto \ell(w)$, where we use the symbols $m-1', m-1$ to distinguish 
the two elements with the same length $m-1$.  Under the above bijection, the subset $^J\W^b\subset\, ^J\W$ can be described as
\[^J\W^b \st{\sim}{\lra} \begin{cases}
\{m,\cdots, 2m-1\},& n=2m-1 \quad\tr{odd},\\
\{m,\cdots, 2m-2\},& n=2m-2\quad \tr{even},\quad \det V= (-1)^{\frac{n}{2}},\\
\{m-1, m-1', m, \cdots, 2m-2\},& n=2m-2\quad \tr{even},\quad \det V\neq (-1)^{\frac{n}{2}}.
\end{cases}
\]
For each $i\neq m-1'$ on the right hand side, we denote the corresponding element of the left hand side as $w_i$. The element corresponding to $m-1'$ will be denoted by $w_{m-1}'$.

We can describe the map $i\mapsto w_i$ in more details. Assume first that $n$ is odd. The simple reflections are \[\begin{cases} s_i=(i,i+1)(2m+1-i,2m+2-i), &1\leq i\leq m-1\\
s_m=(m,m+2), &i=m, \end{cases}\] 
and we have \[ w_i=\begin{cases} s_1\cdots s_i,&0\leq i\leq m\\ s_1\cdots s_{m-1}s_ms_{m-1}\cdots s_{2m-i}, &m+1\leq i\leq 2m-1.
 \end{cases}\]
Now assume that $n$ is even. The simple reflections are \[\begin{cases} s_i=(i,i+1)(2m-i,2m+1-i), &1\leq i\leq m-1\\
s_m=(m-1,m+1)(m,m+2), &i=m, \end{cases}\] 
and we have \[ w_i=\begin{cases} s_1\cdots s_i,&0\leq i\leq m\\s_1\cdots s_ms_{m-2}\cdots s_{2m-1-i},& m+1\leq i\leq 2m-2, \end{cases}\]  and \[w_{m-1}'=s_1\cdots s_{m-2}s_m.\] 

Let $\ov{V}=L_W\otimes \ov{\F}_p$ be the quadratic space over $\ov{\F}_p$. For each $w_i\in \,^J\W$ we will attach to it an orthogonal $F$-zip (also called a $\SO(V)$-zip) as follows. Fix a basis $e_1,\dots, e_{n+2}$ of $L$ such that the quadratic form $Q$ has the form $x_1x_{n+2}+x_2x_{n+1}+\cdots+x_{m}x_{m+2}+x_{m+1}^2$ (cf. \cite{Zh} the proof of Proposition 4.4.1). By abuse of notation we still denote by $e_1,\dots, e_{n+2}$  the induced basis of $(\ov{V},Q)$. For each $w\in \, ^J\W$, let $M_w$ be the orthogonal $F$-zip $(\ov{V}, Q, C^\bullet, D_\bullet, \phi_\bullet)$ where
\begin{itemize}
	\item $C^\bullet$ is the descending filtration $\ov{V}\supset \lan e_2,e_3,\dots,e_{n+2}\ran\supset \lan e_{n+2}\ran \supset 0$, denoted by $C^0\supset C^1\supset C^2\supset C^3$,
	\item $D_\bullet$ is the ascending filtration $0\subset \lan w(e_1)\ran\subset \lan w(e_1), w(e_2),\dots,w(e_{n+1})\ran\subset \ov{V}$, denoted by $D_0\subset D_1\subset D_2\subset D_3$,
	\item $\varphi_\bullet$ is the collections of isomorphisms $\varphi_0: (C^0/C^1)^{(p)}\st{\sim}{\ra} D_1,\quad \varphi_1: (C^1/C^2)^{(p)}\st{\sim}{\ra} D_2/D_1,\quad \varphi_2: (C^2/C^3)^{(p)}\st{\sim}{\ra} D_3/D_2$.
\end{itemize}
We remark that the above construction is not the standard isomorphism $^J\W \simeq | G-\tr{Zip}^{\mu}|$ of Pink-Wedhorn-Ziegler (for example as in Theorem 3.1.5 of \cite{Zh}): the standard association is the twist \[w\longmapsto M_{w_0w}\] of ours, where $w_0$ is the maximal length element of $^J\W$. In particular $\ell(w_0w)=n-\ell(w)$.

\begin{theorem}\label{Thm:EO}
Each stratum $\M_{1w}$ is some union of Howard-Pappas strata of $\M_{1red}$.
\end{theorem}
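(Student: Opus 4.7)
The plan is to show that for each geometric point $x \in \M_{1red}(\ov{\F}_p)$ with associated special lattice $\Ll \subset V_L$, the isomorphism class of the $\SO(V)$-zip at $x$ is determined by the integer
\[d(\Ll) := \min\{k \geq 1 \mid \Ll^{(k)} = \Ll^{(k+1)}\},\]
equivalently by the type $t(\Lambda(\Ll)) = 2 d(\Ll)$ of the associated vertex lattice.

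First I would compute the universal orthogonal $F$-zip on $\ov{\M}_1$ at $x$ in concrete terms. At the point $x$ the contravariant Dieudonn\'e module is $(\Ll, \Phi|_\Ll)$ with quadratic form induced from $Q$, so both the Hodge filtration and the conjugate filtration of the zip can be read off from the interaction of $\Ll$ and $\Phi(\Ll)$. The Frobenius-stable chain
\[\Ll \subsetneq \Ll^{(1)} \subsetneq \cdots \subsetneq \Ll^{(d)} = \Lambda(\Ll)_W,\]
with each step one-dimensional by the special lattice condition, produces an isotropic flag inside $\Lambda(\Ll)_W / \Lambda(\Ll)_W^\vee$ that rigidifies the $\SO$-zip structure up to isomorphism.

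Second I would match this combinatorial picture with the explicit orthogonal $F$-zip $M_{w_i}$ written down earlier. The length function gives a piecewise bijection between $^J\W^b$ and the set of possible values of $t(\Lambda)$, so $d(\Ll)$ uniquely determines an element $w = w(d) \in {}^J\W^b$ such that the zip at $x$ is isomorphic to $M_w$. In the case $n = 2m - 2$ with $\det V \neq (-1)^{m-1}$, the two elements $w_{m-1}$ and $w_{m-1}'$ must be distinguished by the spinor-norm class of the maximal isotropic subspace appearing in the zip; this can be read off from which $\SO$-orbit of vertex lattices of maximal type $\Lambda(\Ll)$ belongs to, equivalently from the connected component $\M_1^{(j)}$ containing $x$.

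Third, once the assignment $x \mapsto w(x) \in {}^J\W^b$ is shown to factor through $x \mapsto t(\Lambda(\Ll_x))$ (with the refinement just mentioned), the equality
\[\M_{1w}(\ov{\F}_p) = \bigcup_{\Lambda : w(\Lambda) = w} \M_{1\Lambda}^\circ(\ov{\F}_p)\]
is immediate from the parametrizations $\M_{1red}(\ov{\F}_p) \leftrightarrow \{\text{special lattices}\}$ and $\M_{1\Lambda}^\circ(\ov{\F}_p) \leftrightarrow \{\Ll : \Lambda(\Ll) = \Lambda\}$ recalled earlier in the subsection. Both sides are reduced locally closed subschemes of $\M_{1red}$ (the EO strata by Theorem \ref{T:RZ EO}, the HP strata by Howard-Pappas), so agreement on $\ov{\F}_p$-points forces agreement as subschemes.

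The main obstacle I anticipate is the explicit zip computation in the first step: one must choose a basis of $\Ll$ adapted to the $\Phi$-chain, identify the resulting filtrations and Frobenius-linear isomorphisms with those of $M_{w_i}$ in the standard basis of $\ov{V}$, and in the even non-split case keep track of the discrete spinor-norm invariant that separates $w_{m-1}$ from $w_{m-1}'$. Once this is done, the remainder is bookkeeping with the Weyl-group combinatorics of $^J\W^b$ and the Howard-Pappas parametrization by vertex lattices.
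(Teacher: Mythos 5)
Your approach is a genuinely different route from the paper's. The paper reduces Theorem~\ref{Thm:EO} to Corollary~\ref{C:EO for SO} (the statement for $\M_{red}$, i.e.\ after quotienting by $p^\Z$), proves the pointwise equality by quoting two existing results --- Howard--Pappas Theorem~6.5.6, which identifies each $\M_\Lambda^{\circ}$ with a disjoint union of Deligne--Lusztig varieties $X_B(w^\pm)$ for Coxeter elements $w^\pm$ in the Weyl group of $\SO(\Omega)$, and G\"ortz--He Corollary~4.1.3, which is the purely group-theoretic statement that the Ekedahl--Oort strata $X_{K,w}(b)$ of the affine Deligne--Lusztig variety decompose as unions of these Coxeter-type pieces --- and then upgrades to a scheme-theoretic identity by the Vollaard--Wedhorn openness argument. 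You propose instead to compute the orthogonal $F$-zip at a special lattice $\Ll$ directly from the chain $\Ll\subsetneq\Ll^{(1)}\subsetneq\cdots\subsetneq\Ll^{(d)}$ and match the result with the explicit zips $M_{w_i}$. Your step~3 agrees with the paper's scheme-theoretic upgrade.

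The gap in your proposal is that the claim that the $\Phi$-chain ``rigidifies the $\SO$-zip up to isomorphism'' is exactly what has to be proved, and you have not proved it; it is precisely the content of G\"ortz--He's comparison of Ekedahl--Oort strata of affine Deligne--Lusztig varieties with Coxeter-type Deligne--Lusztig pieces. Calling this ``bookkeeping'' in your final paragraph understates the issue: the assignment $\Ll\mapsto d(\Ll)$ visibly gives only a coarsening of the zip invariant (it is constant on each Howard--Pappas stratum), and the real content of the theorem is that the zip invariant is {\em no finer} than this coarsening. That is a nontrivial statement about the interaction of the Hodge and conjugate filtrations on $\Ll/p\Ll$, which the paper outsources to the cited references. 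You also need to be careful that the special lattice parametrization applies to $\M_{red}$, not directly to $\M_{1red}$; you should either pass to the $\SO$-quotient first, as the paper does, or explicitly descend the GSpin-zip along $\GSpin\to\SO$. Finally, on separating $w_{m-1}$ from $w_{m-1}'$ in the even non-split case: the decomposition $\M_\Lambda^\circ\simeq X_B(w^+)\sqcup X_B(w^-)$ of Howard--Pappas already lives inside a single Howard--Pappas stratum, so the distinction need not be visible at the level of which vertex lattice (or which connected component $\M_1^{(j)}$) one is in; it is the choice of Coxeter element $w^\pm$ inside $\W_\Omega$ that matters, and this should be extracted from G\"ortz--He rather than from the spinor-norm of $\Lambda$.
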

\begin{proof}
	By the methods of \cite{HP}, it suffices to prove the following assertion first.
\end{proof}

\begin{corollary}\label{C:EO for SO}
Each stratum $\M_{w}$ is some union of Howard-Pappas strata of $\M_{red}$.
\end{corollary}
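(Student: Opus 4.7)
The plan is to deduce Corollary \ref{C:EO for SO} directly from Theorem \ref{Thm:EO} via the quotient presentation $\breve{\M}\simeq \breve{\M}_1/p^\Z$ of Corollary \ref{C:RZ SO}. The point is that both the Ekedahl--Oort and the Howard--Pappas stratifications of $\M_{1red}$ are $p^\Z$-stable and descend compatibly to the quotient $\M_{red}=\M_{1red}/p^\Z$, so a decomposition of one EO stratum upstairs into HP strata implies the analogous decomposition downstairs.

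First I would check that the Ekedahl--Oort stratification descends. The zip morphism $\zeta_1:\ov{\M}_1\to G_1\text{-Zip}^{\mu_1}$ is $J_{b_1}(\Q_p)$-equivariant (for the trivial action on the target), so in particular invariant under $p^\Z\subset Z_{G_1}(\Q_p)\subset J_{b_1}(\Q_p)$. Under the natural identification $|G_1\text{-Zip}^{\mu_1}|\simeq |G\text{-Zip}^{\mu}|\simeq{}^J\W$ (used already in the proof of Theorem \ref{T:RZ EO} to construct the EO stratification on $\ov{\M}$ from that on $\ov{\M}_1^+$), the morphism $\zeta_1$ descends to a morphism $\zeta:\ov{\M}\to G\text{-Zip}^{\mu}$ whose fibers $\M_w$ are exactly the images of $\M_{1w}$ under $\M_{1red}\twoheadrightarrow \M_{red}$. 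Thus $\M_w=\M_{1w}/p^\Z$ as locally closed subschemes of $\M_{red}$.

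Next I would check that the Howard--Pappas stratification descends. The stratification $\M_{1red}=\coprod_\Lambda \M_{1\Lambda}^\circ$ is indexed by vertex lattices $\Lambda\subset V_L^{\Phi}$, and by construction $\M_{1\Lambda}^\circ$ depends only on the condition $\rho\circ\Lambda^\vee\circ\rho\subset\mathrm{End}(X)$, which is invariant under replacing $\rho$ by $p^n\rho$ for any $n\in\Z$; equivalently, in the bijection of \cite{HP} between $\M_{1red}(\ov{\F}_p)$ and special lattices $\Ll\subset V_L$, the action of $p\in J_{b_1}(\Q_p)$ on quasi-isogenies fixes every special lattice. Hence each $\M_{1\Lambda}^\circ$ is $p^\Z$-stable, and $\M_\Lambda^\circ:=\M_{1\Lambda}^\circ/p^\Z$ gives the induced HP stratification on $\M_{red}=\M_{1red}/p^\Z$.

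Finally, assuming Theorem \ref{Thm:EO} in the form $\M_{1w}=\coprod_{\Lambda\in S_w}\M_{1\Lambda}^\circ$ for some set $S_w$ of vertex lattices (depending on $w\in{}^J\W^b$), I would take the quotient by $p^\Z$ on both sides. Since $p^\Z$ permutes the vertex lattices (trivially, in fact, as $p\cdot\Lambda=\Lambda$ as quasi-isogenies up to rescaling that does not change the vertex lattice condition) and acts compatibly on each stratum, we conclude
\[ \M_w \;=\; \M_{1w}/p^\Z \;=\; \coprod_{\Lambda\in S_w}\M_{1\Lambda}^\circ/p^\Z \;=\; \coprod_{\Lambda\in S_w}\M_\Lambda^\circ, \]
giving the desired decomposition. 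No new geometric input is needed beyond Theorem \ref{Thm:EO} and Corollary \ref{C:RZ SO}; the only subtlety, which I would want to verify carefully, is that the $p^\Z$-action commutes with both stratifications, and this is the only potential obstacle but is immediate from the centrality of $p$ in $J_{b_1}(\Q_p)$ and from the moduli descriptions recalled in \cite{HP}.
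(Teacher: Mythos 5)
Your quotient argument is logically sound as a deduction, but it inverts the dependency the paper actually uses, and as a result it would be circular in the paper's own organization. The paper's proof of Theorem \ref{Thm:EO} consists of the single sentence ``By the methods of \cite{HP}, it suffices to prove the following assertion first'' --- that is, the theorem is \emph{reduced to} Corollary \ref{C:EO for SO}, not the reverse. So if one substitutes your proof of the corollary (which assumes the theorem) into the paper, neither result gets an independent proof.

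The genuine mathematical content in the paper therefore lives in the proof of the corollary itself, and it is not a formal quotient manipulation. It runs as follows. One first works at the level of $k$-points for $k$ algebraically closed: by \cite{HP} Theorem 6.5.6 each Howard--Pappas stratum $\M_\Lambda^\circ$ is isomorphic to a disjoint union $X_B(w^+)\sqcup X_B(w^-)$ of Deligne--Lusztig varieties attached to Coxeter elements $w^\pm$ in the Weyl group $\W_\Omega$ of $\SO(\Omega)$, $\Omega=\Lambda_W/\Lambda_W^\vee$; writing $w(\Lambda)=w^+$ and viewing it inside $\W$, \cite{GoHe} Corollary 4.1.3 then yields $\M_w(k)=\coprod_{\Lambda,\,w(\Lambda)=w}\M_\Lambda^\circ(k)$. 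This is the decisive input that your proposal does not supply. Finally, the identity is promoted from points to schemes by an open-and-closed argument as in \cite{VW} Corollary 4.10, using that $\M_\Lambda^\circ$ is open in $\M_\Lambda$ and that $\M_\Lambda\cap\M_w=\M_\Lambda^\circ$.

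Your observation that both stratifications are $p^\Z$-stable and descend compatibly from $\M_{1red}$ to $\M_{red}=\M_{1red}/p^\Z$ is correct and is essentially how the paper then recovers Theorem \ref{Thm:EO} from the corollary; so your argument is the right tool for that direction, not this one. To repair your proposal you would need to replace the appeal to Theorem \ref{Thm:EO} with an independent proof, which in effect means carrying out the \cite{HP}--\cite{GoHe} comparison described above.
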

\begin{proof}
	
	We first prove the equalities for the sets of $k$-points, where $k$ is an algebraically closed field of characteristic $p$. This follows from \cite{HP} Theorem 6.5.6 and \cite{GoHe} Corollary 4.1.3. 
	
	Indeed, by \cite{HP} Theorem 6.5.6, we have an isomorphism \[\M_{\Lambda}^\circ\simeq X_B(w^+)\coprod X_B(w^-),\]where $X_B(w^+)$ and $X_B(w^-)$ are the Deligne-Lusztig varieties associated to the elements $w^+$ and $w^-$ of $\W_\Omega$, the Weyl group of $\SO(\Omega)$, where as before $\Omega=\Lambda_W/\Lambda^\vee_W$. As in \cite{HP} 6.5.4, $w^{\pm}$ are Coxeter elements. Write $w(\Lambda)=w^+$, and consider it as an element in $\W$ under the inclusion $\W_\Omega\hookrightarrow\W$. Then by \cite{GoHe} Corollary 4.1.3, we have \[\M_w(k)=\coprod_{\Lambda, w(\Lambda)=w}\M_{\Lambda}^\circ(k).\]
	
	To prove the identities on the level of schemes, we argue as in the proof of Corollary 4.10 of \cite{VW}. That is, it suffices to show that $\M_{\Lambda}^\circ$ is open and closed in $\M_{w}$. This follows from the facts that $\M_{\Lambda}^\circ$ is open  in $\M_{\Lambda}$,  $\M_{\Lambda}\cap \M_{w}=\M_{\Lambda}^\circ$,
	and the above identities on the level of points.
	
\end{proof}

Consider the case $k=\ov{\F}_p$.
For any vertex lattice $\Lambda$ and any point $x\in \M_{\Lambda}^\circ(\ov{\F}_p)$, we have the associated special lattice $\Ll_x$. Reduction modulo $p$, we get an orthogonal $F$-zip $M_x$, which we write it as $M_{w_0w_x}$ attached to $w_0w_x\in \,^J\W^b$ for some $w_x\in\, ^J\W^b$. Then by definition $x\in \M_{w_0w_x}$. By the above corollary, we have the equality \[d-1=\ell(w_0w_x)\]where $d=\frac{t(\Lambda)}{2}$. 
The following corollaries are coarser versions of Theorem \ref{Thm:EO} and Corollary \ref{C:EO for SO}. However, they are more explicit in terms of types.
\begin{corollary}
\begin{enumerate}
		\item If $n$ is odd, or $n$ is even with $\det (V)=(-1)^{\frac{n}{2}}$, then we have the following identity
\[ \M_{1w_i}=\coprod_{\Lambda, t(\Lambda)=2(n-i+1)}\M_{1\Lambda}^\circ. \]
\item If $n$ is even with $\det (V)\neq (-1)^{\frac{n}{2}}$, then 
\begin{enumerate}
	\item if $m\leq i\leq 2m-1$, \[ \M_{1w_i}=\coprod_{\Lambda, t(\Lambda)=2(n-i+1)}\M_{1\Lambda}^\circ. \]
	\item if $i=m-1$, \[ \M_{1w_{m-1}}\coprod \M_{1w_{m-1}'}=\coprod_{\Lambda, t(\Lambda)=2m}\M_{1\Lambda}^\circ. \]
\end{enumerate}

\end{enumerate}
\end{corollary}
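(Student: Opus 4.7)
The proof is a bookkeeping consequence of Corollary \ref{C:EO for SO} combined with the explicit enumeration of $^J\W$ by length given above the statement. The plan is to identify, for each index $w_i\in\, ^J\W^b$, exactly which vertex lattices $\Lambda$ give HP strata $\M_{1\Lambda}^\circ$ contained in $\M_{1w_i}$, and then invoke Corollary \ref{C:EO for SO} to conclude.

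The crux is a dimension match. By Theorem 6.5.6 of \cite{HP}, the HP stratum $\M_{1\Lambda}^\circ$ with $t(\Lambda)=2d$ has dimension $d-1$. On the other hand, in the author's convention (where the explicit $F$-zip $M_w$ corresponds under the standard Pink-Wedhorn-Ziegler bijection to the element $w_0 w$), the EO stratum $\M_{1w}$ has dimension $\ell(w_0 w) = n - \ell(w)$; in particular $\dim \M_{1,w_i}=n-i$ once we identify $w_i\in\, ^J\W$ with the element of length $i$. Combining these, the identity $d-1=\ell(w_0 w_x)$ recalled in the paragraph preceding the corollary forces the inclusion $\M_{1\Lambda}^\circ\subset \M_{1,w_i}$ precisely when $i=n-d+1$, that is $t(\Lambda)=2(n-i+1)$.

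The second step is to run through the range of possible $d$ and check it matches $^J\W^b$. For $n$ odd one has $d\in\{1,\dots,m\}$ and $t_{\max}=2m$, so $i=n-d+1$ ranges over $\{m,\dots,2m-1\}$, which is exactly the range of indices appearing in $^J\W^b$; case (1) for $n$ odd follows. The even case with $\det V=(-1)^{n/2}$ is handled identically, with $t_{\max}=n$ and $i\in\{m,\dots,2m-2\}$. Since in both of these cases the length function is injective on $^J\W^b$, every stratum appearing in Corollary \ref{C:EO for SO} is accounted for, giving the equality claimed.

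The main obstacle, and the content of case (2)(b), is the exceptional situation $n=2m-2$ even with $\det V\neq (-1)^{n/2}$, where $^J\W$ contains two distinct elements $w_{m-1}$ and $w_{m-1}'$ of the same length $m-1$. For vertex lattices of maximal type $t(\Lambda)=2m$, Theorem 6.5.6 of \cite{HP} presents $\M_{1\Lambda}^\circ=X_B(w^+)\sqcup X_B(w^-)$ as a disjoint union of two Deligne-Lusztig varieties for $\SO(\Omega)\cong \SO_{2m}$ (of type $D_m$), attached to a pair of non-conjugate Coxeter elements related by the outer automorphism of the Dynkin diagram. The remaining task is to show that these two components are separated by the EO stratification, with one lying in $\M_{1,w_{m-1}}$ and the other in $\M_{1,w_{m-1}'}$. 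I would do this by computing the $F$-zip attached to a special lattice $\Ll_x$ with $\Lambda(\Ll_x)=\Lambda$ and matching the resulting orthogonal filtration against the two explicit models $M_{w_{m-1}}$ and $M_{w_{m-1}'}$ constructed from the simple reflections listed just before the corollary. Summing over both labels then recovers all HP strata of maximal type, yielding the remaining identity.
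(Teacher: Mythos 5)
Your core argument for parts (1) and (2)(a) is the paper's: each Howard--Pappas stratum $\M_{1\Lambda}^\circ$ lies in a single Ekedahl--Oort stratum by Theorem \ref{Thm:EO}, and the identity $d-1=\ell(w_0 w_x)$ with $d=t(\Lambda)/2$ pins down which one, giving $t(\Lambda)=2(n-i+1)$; the ranges of $d$ and $i$ match $^J\W^b$ in each case. (Your framing in terms of ``$\dim\M_{1w}=n-\ell(w)$'' is an unnecessary detour --- that dimension formula is not independently established in the paper, and what you actually use is the length identity itself --- but this does not create a gap.)

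Your proposed extra step for case (2)(b), however, rests on a misreading. You aim to show that the two Deligne--Lusztig components $X_B(w^+)$ and $X_B(w^-)$ of a \emph{single} maximal-type stratum $\M_{1\Lambda}^\circ$ are separated by the EO stratification, one landing in $\M_{1w_{m-1}}$ and the other in $\M_{1w_{m-1}'}$. But Theorem \ref{Thm:EO} (and Corollary \ref{C:EO for SO}) assert precisely the opposite: each $\M_{1\Lambda}^\circ$ as a whole is open and closed in the EO stratum that contains it, so both components of a given $\M_{1\Lambda}^\circ$ lie in the \emph{same} EO stratum $\M_{1w(\Lambda)}$. What is partitioned between $\M_{1w_{m-1}}$ and $\M_{1w_{m-1}'}$ is the \emph{set of vertex lattices} of maximal type $2m$ --- some have $w(\Lambda)=w_{m-1}$ and others $w(\Lambda)=w_{m-1}'$ --- and since the length function alone cannot distinguish these two EO labels, the corollary is deliberately stated only as an equality of unions. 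Thus the ``main obstacle'' you identify is not present; case (2)(b) follows by the same bookkeeping as the other cases, with the sole caveat that the two EO strata of length $m-1$ must be summed because the type alone does not separate them.
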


\begin{corollary}
\begin{enumerate}
	\item If $n$ is odd, or $n$ is even with $\det (V)=(-1)^{\frac{n}{2}}$, then we have the following identity
	\[ \M_{w_i}=\coprod_{\Lambda, t(\Lambda)=2(n-i+1)}\M_{\Lambda}^\circ. \]
	\item If $n$ is even with $\det (V)\neq (-1)^{\frac{n}{2}}$, then 
	\begin{enumerate}
		\item if $m\leq i\leq 2m-1$, \[ \M_{w_i}=\coprod_{\Lambda, t(\Lambda)=2(n-i+1)}\M_{\Lambda}^\circ. \]
		\item if $i=m-1$, \[ \M_{w_{m-1}}\coprod \M_{w_{m-1}'}=\coprod_{\Lambda, t(\Lambda)=2m}\M_{\Lambda}^\circ. \]
	\end{enumerate}
	
\end{enumerate}
\end{corollary}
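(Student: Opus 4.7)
The plan is to derive this corollary from the immediately preceding one for $\M_{1red}$ by descent along the isomorphism $\breve{\M} \simeq \breve{\M}_1/p^\Z$ of Corollary \ref{C:RZ SO}. The action of $p \in Z_{G_1}(\Q_p) \subset J_{b_1}(\Q_p)$ on $\M_{1red}$ is free and properly discontinuous, and by construction $\M_{red} = \M_{1red}/p^\Z$. It therefore suffices to check that both the Ekedahl-Oort decomposition $\M_{1red} = \coprod_w \M_{1w}$ and the Howard-Pappas decomposition $\M_{1red} = \coprod_\Lambda \M_{1\Lambda}^\circ$ are stable under $p^\Z$, and that the term-by-term quotients agree with the stratifications $\M_w$ and $\M_\Lambda^\circ$ respectively; taking $(-)/p^\Z$ of each identity in the preceding corollary then yields the stated identifications of $\M_{w_i}$ (and of $\M_{w_{m-1}} \sqcup \M_{w_{m-1}'}$ in the exceptional range).

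For the Ekedahl-Oort decomposition, the zip morphism $\zeta_1: \ov{\M}_1 \to G_1\text{-Zip}^{\mu_1}$ is $J_{b_1}(\Q_p)$-equivariant by construction (Theorem \ref{T:RZ EO}), hence a fortiori $p^\Z$-invariant. Moreover, in the proof of Theorem \ref{T:RZ EO} the zip morphism $\zeta: \ov{\M} \to G\text{-Zip}^{\mu}$ attached to the abelian-type Rapoport-Zink space is built precisely by restricting $\zeta_1$ to $\ov{\M}(G,b,\mu)^+ = \ov{\M}_1^+$ and then spreading out by the $J_b(\Q_p)$-action, using the canonical identification $|G_1\text{-Zip}^{\mu_1}| \simeq |G\text{-Zip}^{\mu}| \simeq\, ^J\W$ (which depends only on the common adjoint datum). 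Hence $\M_{1w}/p^\Z = \M_w$ for every $w \in\, ^J\W^b$. For the Howard-Pappas decomposition, each $\M_{1\Lambda}^\circ$ is characterized on $\ov{\F}_p$-points by the condition $\Lambda(\Ll) = \Lambda$ on the associated special lattice $\Ll \subset V_L$, a datum intrinsic to the quadratic space $V_L^\Phi$ and in particular unchanged under the rescaling action of $p^\Z$; the scheme-theoretic openness/closedness argument from the proof of Corollary \ref{C:EO for SO} then upgrades this set-theoretic equality to $\M_{1\Lambda}^\circ/p^\Z = \M_\Lambda^\circ$.

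The main obstacle is the bookkeeping needed to confirm that the indexing subset $^J\W^b$ and the explicit correspondence $w_i \leftrightarrow t(\Lambda) = 2(n-i+1)$ transport unaltered through the quotient. This reduces to the facts that $^J\W$, its partial order $\preceq$, and the subset $^J\W^b$ all depend only on the adjoint data $(G^{ad}, [b^{ad}], \{\mu^{ad}\}) = (G_1^{ad}, [b_1^{ad}], \{\mu_1^{ad}\})$, together with the already established analogue of the correspondence for $\M_1$ in the preceding corollary. Once these compatibilities are in place, the proof is essentially formal: the asserted identity for $\M$ is the image under $(-)/p^\Z$ of the asserted identity for $\M_1$.
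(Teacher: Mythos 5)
Your descent from the immediately preceding corollary for $\M_1$ along $\breve{\M}\simeq\breve{\M}_1/p^\Z$ is correct and essentially formal, and your verifications of $p^\Z$-compatibility (invariance of the zip morphism $\zeta_1$, and of the vertex-lattice invariant $\Lambda(\Ll)$ attached to the special lattice) are sound. Note, however, that the direction of your deduction is the reverse of the paper's. The combinatorial heart of the argument --- the identity $d-1=\ell(w_0w_x)$ for $x\in\M_\Lambda^\circ(\ov{\F}_p)$ with $d=\tfrac{t(\Lambda)}{2}$, combined with the twist $\ell(w_0w)=n-\ell(w)$ to extract the translation $i\leftrightarrow t(\Lambda)=2(n-i+1)$ --- is worked out in the paragraph immediately preceding the two corollaries, and it is carried out directly on $\M_{red}$ (the paper manipulates $\M_\Lambda^\circ$, not $\M_{1\Lambda}^\circ$), using Corollary \ref{C:EO for SO} and Howard-Pappas' special-lattice description of $\M_{red}(\ov{\F}_p)$. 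So the statement you were asked to prove is in fact the primary one, with the $\M_1$-version the one obtained by pullback along the $\Z$-fold cover $\M_{1red}\to\M_{red}$, not the other way round. Your argument is acceptable as stated since the preceding corollary is available to cite, but it transfers the entire numerical content onto that unproved statement, whereas the paper's argument supplies the length-versus-type bookkeeping directly. Your route buys modularity; the paper's is more self-contained.
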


\subsection{Moduli spaces of polarized K3 surfaces with level structures and the integral Kuga-Satake map}\label{subsection:mod K3}
In this and the next subsection, we will turn to moduli spaces of polarized K3 surfaces, with the involved Shimura varieties, cf. \cite{MP2} sections 2 and 4, \cite{Ri2} section 6. 

Let $U$ be the hyperbolic lattice over $\Z$ of rank 2, and $E_8$ be the positive quadratic lattice associated to the Dynkin diagram of type $E_8$. Set $N=U^{\oplus3}\oplus E_8^{\oplus2}$, which is a self-dual lattice. Let $d\geq 1$ be an integer. Choose a basis $e, f$ for the first copy of $U$ in $N$ and set
\[L_d=\lan e-df\ran^\bot\subset N.\]This is a quadratic lattice over $\Z$ of discriminant $2d$ and rank 21 (in \cite{Ri2} it is denoted by $L_{2d}$ ). Let $V_d=L_d\otimes\Q$ and $L_d^\vee\subset V_d$ be the dual lattice. Set \[G=\SO(V_d),\] which is isomorphic to the special orthogonal group over $\Q$ of signature $(2, 19)$. Let $K\subset G(\A_f)$ be an open compact subgroup which stabilizes $L_{d,\wh{\Z}}$ and acts trivially on $L_d^\vee/L_d$. Such compact opens are called \emph{admissible}. We fix a prime $p>2$ such that $p\nmid d$ from now on. Then as $L$ is self dual at $p$, the local reductive group $G_{\Q_p}$ is unramified. Let $K_p=G(\Z_p)$ be the hyperspecial group. We only consider open compact subgroups $K^p\subset G(\A_f^p)$ which is contained in the discriminant kernel of $L_{d,\wh{\Z}^p}$ with finite index. In particular, $K=K_pK^p$ is admissible, cf. \cite{Ri2} 5.3. For the reductive group $G$, we have the associated Shimura varieties $\Sh_{K_pK^p}$, which are defined over $\Q$. By \cite{Ki1}, there exists an integral smooth canonical model $S_{K_pK^p}$ of $\Sh_{K_pK^p}$ over $\Z_p$.

Let $\Mm_{2d}$ (resp. $\Mm^\ast_{2d}$) be the moduli spaces of K3 surfaces $f: X\ra S$ together with a primitive polarization $\xi$ (resp. quasi-polarization) of degree $2d$ over $\Z_p$ (in \cite{MP2} section 2, these spaces are denoted by $\Mm_{2d}^\circ$ and $\Mm_{2d}$ respectively). These are Deligne-Mumford stacks of finite type over $\Z_p$. The natural map $\Mm_{2d}\ra \Mm_{2d}^\ast$ is an open immersion. Moreover, $\Mm_{2d}$ is separated and smooth of dimension 19 over $\Z_p$, cf. \cite{Ri2} Theorem 4.3.3, Proposition 4.3.11 and \cite{MP2} Proposition 2.2.

Let $(f: \mathcal{X}\ra \Mm_{2d}, \xi)$ be the universal object over $\Mm_{2d}$. For any prime $\ell$, we consider the second relative \'etale cohomology $H_\ell^2$ of $\mathcal{X}$ over $\Mm_{2d}$. This is a lisse $\Z_\ell$-sheaf of rank 22 equipped with a perfect symmetric Poincar\'e pairing $\lan,\ran: H_\ell^2\times H_\ell^2\ra \Z_\ell(-2)$. The $\ell$-adic Chern class $\ch_\ell(\xi)$ of $\xi$ is a global section of the Tate twist $H^2_\ell(1)$ that satisfies $\lan \ch_\ell(\xi),\ch_\ell(\xi)\ran=2d$.  The product \[H^2_{\wh{\Z}}=\prod_{\ell}H_\ell^2\] is a lisse $\wh{\Z}$-sheaf, and the Chern classes of $\xi$ can be put together to get the Chern class $\ch_{\wh{\Z}}(\xi)$ in $H^2_{\wh{\Z}}(1)$. Recall that we have the quadratic lattice $N$ of rank 22 over $\Z$.
\begin{definition}\label{D:level}
	Consider the \'etale sheaf over $\Mm_{2d}$ whose sections over any scheme $T\ra \Mm_{2d}$ are given by
	\[ I(T)=\{ \eta: N\otimes\wh{\Z}\st{\sim}{\ra} H^2_{\wh{\Z},T}(1)\,\tr{isometries},\,\tr{with}\,\eta(e-df)=\ch_{\wh{\Z}}(\xi) \}.\] Let $K=K_pK^p\subset K_{L_{\wh{\Z}^p}}$ be an admissible open compact subgroup. Then $I$ admits a natural action by the constant sheaf of groups $K$. A section $\ov{\eta}\in H^0(T,I/K)$ is called a $K$-level structure over $T$ (in \cite{Ri2} 5.3 it is called a full $K$-level structure). 
\end{definition}
Let $\Mm_{2d,K}$ (resp. $\Mm_{2d,K}^\ast$) be the relative moduli problem over $\Mm_{2d}$ (resp. $\Mm_{2d}^\ast$) which parametrizes $K$-level structures. For $K^p$ (thus $K$) small enough, these are smooth algebraic spaces. Moreover, the maps 
\[\Mm_{2d, K}\ra \Mm_{2d},\quad \Mm_{2d,K}^\ast\ra \Mm_{2d}^\ast\] are finite \'etale. For another admissible $K'=K_pK^{p'}\subset K=K_pK^p$, we have natural finite \'etale projections \[\Mm_{2d,K^{'}}\ra \Mm_{2d,K},\quad \Mm^{\ast}_{2d,K^{'}}\ra \Mm^\ast_{2d,K}\] as algebraic spaces over $\Mm_{2d}, \Mm_{2d}^\ast$ respectively.  When $K^{p'}$ is a normal subgroup of $K^p$, these projections are Galois with Galois group $K^p/K^{p'}$.

For any prime $\ell$, we have the primitive cohomology sheaf \[P_\ell=\lan \ch_\ell(\xi)\ran^\bot \subset H_\ell^2.\]Let $H_B^2$ and $H_{dR}^2$ be the second relative Betti and de Rham cohomology respectively of the universal K3 surface $\mathcal{X}\ra \Mm_{2d,K,\C}^\ast$. We have also the primitive cohomology sheaves
\[P_B=\lan \ch_B(\xi)\ran^\bot \subset H_B^2,\quad P_{dR}=\lan \ch_{dR}(\xi)\ran^\bot\subset H_{dR}^2.\]
Consider $\wt{\Mm}_{2d,K}^\ast\ra \Mm_{2d,K}^\ast$,  the two-fold finite \'etale cover parameterizing isometric trivializations $\det(L_d)\otimes\Z_2\st{\sim}{\ra} \det(P_2)$  of the determinant of the primitive 2-adic cohomology of the universal quasi-polarized K3 surface. We can identify $\wt{\Mm}_{2d,K}^\ast$ with the the space of isometric trivializations $\det(L_d)\st{\sim}{\ra} \det(P_B)$
of the determinant of the primitive Betti cohomology. There is a Hodge-de Rham filtration $F^\bullet P_{dR}$ on $P_{dR}$,  and we have a natural isometric trivialization $\eta: \tr{disc}(L_d)\st{\sim}{\ra} \tr{disc}(P_B)$ and the the tautological trivialization $\beta: \det(L_d)\st{\sim}{\ra}\det(P_B)$. The tuple $(P_B,F^\bullet P_{dR},\eta,\beta)$ gives rise to a natural period map
\[\wt{\Mm}_{2d,K,\C}^\ast\lra \Sh_{K,\C},\] cf. \cite{MP2} Propositions 4.2 and 3.3. There is a section map $\Mm_{2d,K,\C}\subset \Mm_{2d,K,\C}^\ast\ra \wt{\Mm}_{2d,K,\C}^\ast$, whose composition with the above period map gives us the Kuga-Satake period map
 \[\iota_\C: \Mm_{2d,K,\C}\lra \Sh_{K,\C}.\] It follows from \cite{Ri1} Theorem 3.9.1, this map is defined over $\Q$. 
Therefore we get the map over $\Q_p$ \[\iota_{\Q_p}: \Mm_{2d,K,\Q_p}\lra \Sh_{K,\Q_p}.\]
 As $S_{K}$ is the integral canonical model of $\Sh_{K}$, by extension property of $S_{K}$, the Kuga-Satake map extends to a map over $\Z_p$
 \[\iota: \Mm_{2d,K}\lra S_{K}.\]
\begin{theorem}[\cite{MP2} Corollary 5.15]\label{T:MP}
	The integral Kuga-Satake period map
	\[\iota: \Mm_{2d,K}\lra S_{K}\] is an open immersion.
\end{theorem}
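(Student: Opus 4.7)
The plan is to prove that $\iota$ is both étale everywhere and injective on $\ov{\F}_p$-points. Granted these two properties, combined with the fact that the generic fibre $\iota_{\Q_p}\colon \Mm_{2d,K,\Q_p}\to \Sh_{K,\Q_p}$ is an open immersion — which follows from the classical Torelli theorem for complex polarized K3 surfaces together with the algebraicity over $\Q$ of the Kuga-Satake period map, cf.\ \cite{Ri1} — a flat, étale, universally injective morphism of finite type between smooth separated $\Z_p$-schemes of the same relative dimension $19$ is automatically a monomorphism, hence an open immersion.

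For étaleness, I would fix a closed point $x\in\ov{\Mm}_{2d,K}(\ov{\F}_p)$ with image $y=\iota(x)\in\ov{S}_K(\ov{\F}_p)$ and identify the two formal neighbourhoods. Since $S_K$ is the integral canonical model of an abelian-type Shimura variety with underlying Hodge-type datum $(G_1,X_1)$ for $G_1=\GSpin(V_d)$, Kisin's construction (up to the central isogeny $G_1\to G$) identifies $\wh{S}_{K,y}$ with Faltings's universal deformation of the Kuga-Satake $p$-divisible group $A_y[p^\infty]$ together with its spinor crystalline Tate tensors. On the other hand, by Deligne's deformation theory for K3 surfaces, $\wh{\Mm}_{2d,K,x}$ is the universal formal deformation of the Hodge filtration on the primitive crystalline cohomology $P^2_{\tr{cris}}(X_x/W)$. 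The functoriality of Kuga-Satake embeds $P^2_{\tr{cris}}(X_x/W)$ as the piece of $\tr{End}(\D(A_y[p^\infty]))$ cut out by the spinor tensors and matches the two Hodge filtrations, which translates to an isomorphism of the two deformation functors.

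For injectivity on $\ov{\F}_p$-points, if $\iota(x_1)=\iota(x_2)$ then the Kuga-Satake abelian varieties $A_{y_i}$ together with their crystalline Tate tensors, polarizations and $K$-level structures are isomorphic. Unpacking Kuga-Satake functoriality supplies an isomorphism between the quasi-polarized primitive crystals $P^2_{\tr{cris}}(X_{x_1}/W)\simeq P^2_{\tr{cris}}(X_{x_2}/W)$ that preserves the Chern class of the polarization and the $\wh{\Z}^p$-level structure $\ov{\eta}$. Applying the crystalline Torelli theorem for K3 surfaces of Nygaard-Ogus (non-supersingular case) and Ogus (supersingular case), rigidified by the prime-to-$p$ level structure, then yields an isomorphism $(X_{x_1},\xi_{x_1},\ov{\eta}_{x_1})\simeq (X_{x_2},\xi_{x_2},\ov{\eta}_{x_2})$, whence $x_1=x_2$.

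The principal obstacle is verifying that the Kuga-Satake isomorphism of formal deformation spaces in the étaleness step preserves the polarization tensors \emph{integrally} (as opposed to only after inverting $p$), which is subtle precisely at supersingular points, where the Kuga-Satake abelian variety has no ordinary reduction and the $F$-isocrystal alone does not detect the isomorphism class of the surface. Controlling this requires the integral refinement of the Kuga-Satake morphism, built from the spinor embedding $G_1\subset \GL(C(V_d))$ together with the self-duality of $L_d$ at $p$, and careful tracking of the compatibility between the $p$-adic comparison isomorphisms for the K3 surface and those for its Kuga-Satake abelian variety — this is the heart of Madapusi Pera's argument and is what distinguishes the integral theorem from its classical characteristic-zero counterpart.
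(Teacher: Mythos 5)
The paper does not prove this statement itself; it cites it directly from \cite{MP2} Corollary 5.15, so there is no in-paper argument to compare against. Your skeleton (\'etaleness from a local Torelli comparison of deformation theories, plus injectivity, plus the known characteristic-zero open immersion) does match the shape of Madapusi Pera's actual argument, and your \'etaleness step — matching Kisin's formal deformation space at a point of $S_K$, i.e.\ Faltings's deformation ring for the Kuga-Satake $p$-divisible group with its spinor crystalline Tate tensors, against the deformation space of the quasi-polarized K3 surface — is the right idea (on the K3 side this is due to Ogus's crystalline deformation theory of K3 surfaces, not to Deligne, so the attribution should be adjusted).

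However, the injectivity step as you propose it has a genuine gap. You invoke crystalline Torelli theorems of Nygaard-Ogus for the non-supersingular case and Ogus for the supersingular case, but these do not cover what you need: Nygaard's Torelli theorem is only for ordinary (height one) K3 surfaces, the Nygaard-Ogus paper proves the Tate conjecture rather than a Torelli theorem for finite-height K3s, and Ogus's supersingular crystalline Torelli theorem was established only for $p\geq 5$ and is phrased in terms of the full supersingular K3 crystal, not merely the primitive $F$-crystal with a marked Chern class. There is no off-the-shelf crystalline Torelli theorem covering the intermediate-height strata, and results of this shape are in fact \emph{consequences} of the Tate conjecture and of the very open-immersion statement you are proving, so leaning on them risks circularity. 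The cleaner route, and one that dispenses with crystalline Torelli entirely, is to exploit that the deformation-theoretic comparison gives more than \'etaleness: it produces an \emph{isomorphism} of complete local rings $\wh{\Ol}_{\Mm_{2d,K},x}\st{\sim}{\ra}\wh{\Ol}_{S_K,\iota(x)}$ at every $\ov{\F}_p$-point $x$. If two points $x_1\neq x_2$ had $\iota(x_1)=\iota(x_2)=y$, one would get two disjoint open formal neighbourhoods of $\Mm_{2d,K}$, each mapping isomorphically to $\wh{S}_{K,y}$; passing to adic generic fibers (which sit inside $\Mm_{2d,K,\Q_p}^{ad}$ and $\Sh_{K,\Q_p}^{ad}$ as open subspaces), this would contradict the open-immersion property of $\iota_{\Q_p}$. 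Finally, you should make explicit the promotion from ``\'etale and injective on $\ov{\F}_p$-points, with generic fiber an open immersion'' to ``universally injective'': for an \'etale finite-type morphism over $W$ this follows by a density-of-closed-points argument in each fiber, but it is not literally the hypothesis of the lemma you quote.
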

When $K^p_1\subset K^p$ is another open compact subgroup, we note that the following diagram is cartesian:
\[\xymatrix{ \Mm_{2d, K_1}\ar[r]\ar[d]& S_{K_1}\ar[d]\\
	\Mm_{2d, K}\ar[r]& S_{K}.}\]
As a corollary, we see that for $K^p$ small enough, $\Mm_{2d, K}$ is a scheme.

\subsection{Newton and Ekedahl-Oort stratifications of the moduli spaces of K3 surfaces}
In the rest of this section we will work over $W$. As before we simply denote by the same notation for an object base changed to $W$.
Let $\ov{\Mm}_{2d, K}$ be the special fiber of $\Mm_{2d, K}$, which can be viewed as an open subspace of the special fiber $\ov{S}_{K}$ of $S_{K}$ by Theorem \ref{T:MP}. For the good reduction of Shimura varieties of abelian type, in \cite{SZ} we have introduced the Newton and Ekedahl-Oort stratifications for the special fibers. In subsection \ref{Section:newton} we have seen the Newton stratification. In the cases of GSpin and SO Shimura varieties, we can compare the Newton and Ekedahl-Oort stratifications as follows. These are in the list of Shimura varieties of coxeter type studied in \cite{GoHe} (comp. \cite{GoHeNi}).
\begin{theorem}[\cite{SZ}]
Assume that $n$ is odd\footnote{When $n$ is even, we have a similar but more delicate statement that each Newton stratum is a disjoint union of some Ekedahl-Oort strata, cf. \cite{SZ}.}.
\begin{enumerate}
	\item We have
	\[ \ov{S}_{K}=\coprod_{b\in B(G,\mu)} \ov{S}_{K}^b, \quad  \ov{S}_{K}=\coprod_{w\in ^J\W}\ov{S}_{K}^w,\]with each stratum in the two stratifications non empty. 
	\item Let $b_0$ be the unique basic element in $B(G,\mu)$. We have
	\begin{itemize}
		\item for $b\neq b_0$, there exists a unique $w_b\in\, ^J\W$ such that $ \ov{S}_{K}^b=\ov{S}_{K}^{w_b}$
		\item for $b_0$, $\ov{S}_{K}^{b_0}=\coprod_{
		w\in  ^J\W^{b_0}}\ov{S}_{K}^w$
	\end{itemize}
\end{enumerate}
\end{theorem}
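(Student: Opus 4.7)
The plan is to combine the Rapoport-Zink uniformization (Theorem \ref{Thm:uniformize}) with the local Ekedahl-Oort theory developed in this paper to transport the fine structure of the local Rapoport-Zink spaces to the global Shimura variety $\ov{S}_K$. The crucial simplification, specific to the case $G = \SO(V_d)$ of signature $(2,19)$, is that $(G, \{\mu\})$ is fully Hodge-Newton decomposable (cf. Theorem \ref{T:FHN} and subsection 7.3): for any non-basic $b \in B(G,\mu)$ the associated Rapoport-Zink space $\breve{\M}(G,b,\mu)$ has zero-dimensional reduced special fiber, while the basic space $\breve{\M}(G,b_0,\mu)$ admits the rich Howard-Pappas-type stratification comparable with the EO stratification via the analysis in subsection \ref{subsection:HPEO}.

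I would proceed as follows. First, non-emptiness of each Newton stratum $\ov{S}_K^b$ for $b \in B(G,\mu)$ follows from the Langlands-Rapoport description of $\ov{S}_K(\ov{\F}_p)$ in the abelian type case (cf. \cite{Ki2}), while non-emptiness of each EO stratum indexed by $w \in {}^J\W$ will be deduced from the local non-emptiness on $\breve{\M}(G,b,\mu)$ combined with uniformization and the decomposition ${}^J\W = \coprod_{b} {}^J\W^b$ from Theorem \ref{T:FHN}(3). For the comparison of strata: by Theorem \ref{Thm:uniformize}, the formal completion of $\ov{S}_K$ along the appropriate locus in $\ov{S}_K^b$ is uniformized by $\breve{\M}(G,b,\mu)\times G(\A_f^p)/K^p$ modulo an action of $I_\phi(\Q)$. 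Moreover, both the global EO stratification on $\ov{S}_K$ and the local one on $\ov{\M}(G,b,\mu)$ (Theorem \ref{T:RZ EO}) are defined via the $G$-zip attached to the universal $p$-divisible group with crystalline Tate tensors (after descending from Hodge type through the adjoint group), hence they are compatible under uniformization. Consequently, the set of EO-indices $w$ meeting $\ov{S}_K^b$ is exactly $^J\W^b$. For non-basic $b$, the vanishing $\dim \M_{red}(G,b,\mu) = 0$ from Theorem \ref{T:FHN}(1) forces $^J\W^b$ to be a singleton $\{w_b\}$ and yields $\ov{S}_K^b = \ov{S}_K^{w_b}$. For $b_0$, the identity $\ov{S}_K^{b_0} = \coprod_{w \in {}^J\W^{b_0}} \ov{S}_K^w$ then follows directly from the same comparison.

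The main obstacle is to globalize the compatibility: uniformization only identifies formal neighborhoods of the uniformized subscheme, so the disjointness $\ov{S}_K^b \cap \ov{S}_K^w = \emptyset$ for $w \notin {}^J\W^b$ must be propagated across the entire Newton stratum rather than just near one point. The decisive input is the $J_b(\Q_p)$-equivariance of both the local EO stratification (the $G$-zip being $J_b(\Q_p)$-invariant by construction) and of the uniformization morphism, which allows one to spread the local compatibility along the $J_b(\Q_p)$-orbits that fill out $\ov{S}_K^b$; combined with the decomposition ${}^J\W = \coprod_{b}{}^J\W^b$ of Theorem \ref{T:FHN}(3), this forces the global partition into EO strata to refine the Newton partition in exactly the stated way. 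One then only needs to match the indexing with ${}^J\W$ itself via the explicit description of ${}^J\W^{b_0}$ given in terms of Coxeter length in subsection \ref{subsection:HPEO} (for the odd $n$ case this set is $\{m,\dots,2m-1\}$ with $2m = n+1$).
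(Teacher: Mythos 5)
Your proposal takes a different route from what \cite{SZ} actually does (verifying the axioms of \cite{HR}/\cite{GoHeNi} directly on the special fiber and invoking the group-theoretic classification of \cite{GoHeNi}), and the route you chose has a genuine gap in the non-basic case. The formal-scheme isomorphism of Theorem \ref{Thm:uniformize} identifies the formal completion of $S_K$ along the subscheme $\Zm_{\phi_0,K^p}$, which for non-basic $b$ is a \emph{strict} subspace of the Newton stratum $\ov{S}_K^b$---the paper says so explicitly in the remark after Corollary \ref{C:perf uniformize}, where it notes that the image of the uniformization in the non-basic case is proper and one must introduce Igusa varieties to reach the whole stratum. The $J_b(\Q_p)$-equivariance you invoke cannot close this gap: $J_b(\Q_p)$ already acts on $\breve{\M}$, the uniformization map is $J_b(\Q_p)$-equivariant, and hence the image $\Zm_{\phi_0,K^p}$ is already $J_b(\Q_p)$-stable; moving by $J_b(\Q_p)$ permutes the images of RZ components rather than filling out the remainder of $\ov{S}_K^b$. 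So your conclusion ``the set of EO-indices $w$ meeting $\ov{S}_K^b$ is exactly ${}^J\W^b$'' is established only on the uniformized locus, not on all of $\ov{S}_K^b$, for $b\neq b_0$. For the basic stratum the argument is fine, since Theorem \ref{T:uniformize basic} gives $\Zm_{\phi_0,K^p}=\ov{S}_K^{b_0}$.

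The clean fix is to drop the formal-scheme uniformization and argue point-by-point, which is essentially what \cite{SZ} does. For each $x\in\ov{S}_K(\ov{\F}_p)$ one has---after the reduction to the Hodge-type side exactly as in subsection 6.2---a well-defined $F$-crystal with crystalline Tate tensors, from which one reads off simultaneously the Newton class $b_x\in B(G,\mu)$ (from the isocrystal) and, by reducing modulo $p$, the $G$-zip and hence the element $w_x\in{}^J\W$. The decomposition ${}^J\W=\coprod_{b}{}^J\W^{b}$ from Theorem \ref{T:FHN}(3) (a reinterpretation of \cite{GoHeNi} Theorem 2.3 via Proposition \ref{P:EO}) is a statement about arbitrary $F$-crystals with $G$-structure, so it gives $w_x\in{}^J\W^{b_x}$ for \emph{every} point $x$, not just for those in the image of uniformization. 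This yields globally that the EO stratification refines the Newton stratification in the stated way, and non-emptiness of all strata follows either from the Langlands-Rapoport parametrization (as you observe) or from \cite{EvG}. Your argument would go through once you replace the appeal to Theorem \ref{Thm:uniformize} by this pointwise comparison of crystals.
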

Note that the subset $^J\W^{b}=\{w_b\}$ for any $b\neq b_0$.
In fact these statements are just the global analogue of Theorem \ref{T:FHN} in the setting of Shimura varieties of abelian type, cf. \cite{SZ} section 7 (see also \cite{GoHeNi} section 6, where the authors there assume that the axioms of \cite{HP} are verified).

We return to the case $n=19$.
Consider the Kuga-Satake map \[\ov{\iota}: \ov{\Mm}_{2d, K}\hookrightarrow \ov{S}_{K},\] which is an open immersion by Theorem \ref{T:MP}. The above stratifications of $\ov{S}_{K}$ in turn induce stratifications of $\ov{\Mm}_{2d, K}$
\[ \ov{\Mm}_{2d,K}=\coprod_{b\in B(G,\mu)} \ov{\Mm}_{2d,K}^b, \quad  \ov{\Mm}_{2d,K}=\coprod_{w\in\, ^J\W}\ov{\Mm}_{2d,K}^w,\]
where $\ov{\Mm}_{2d,K}^b$ and $\ov{\Mm}_{2d,K}^w$ are the pullbacks of the corresponding strata $\ov{S}_{K}^b$ and $\ov{S}_{K}^w$ under the open immersion $\ov{\iota}: \ov{\Mm}_{2d, K}\hookrightarrow\ov{S}_{K}$. We have the similar relation 
\begin{itemize}
	\item for $b\neq b_0$, there exists a unique $w_b\in\, ^J\W$ such that $ \ov{\Mm}_{2d,K}^b=\ov{\Mm}_{2d,K}^{w_b}$,
	\item for $b_0$, $\ov{\Mm}_{2d,K}^{b_0}=\coprod_{w\in\, ^J\W^{b_0}}\ov{\Mm}_{2d,K}^w$. We will also write  $\ov{\Mm}_{2d,K}^{b_0}$ as $\ov{\Mm}_{2d,K}^{ss}$ to indicate that it is the supersingular locus of $\ov{\Mm}_{2d, K}$.
\end{itemize}
We will investigate these stratifications in some more classical terms, which appeared already in the literature, see \cite{Og} for example.
\subsubsection{Newton stratification vs. height stratification}
Let $X$ be a K3 surface over a field $k$ of characteristic $p$. Consider the functor on local Artinian $k$-algebras with residue field $k$ defined by
\[\begin{split}
\Phi^2_{X/k}: (Art/k)&\ra (\textrm{Abelian groups})\\
R &\mapsto \ker \big(H^2_{et}(X\times \Spec R, \G_m)\ra H^2_{et}(X, \G_m)\big).
\end{split}
\]
It is pro-representable by a one-dimensional formal group $\wh{\textrm{Br}}(X)$, the so called formal Brauer group. The height $h$ of this formal Brauer group of the K3 surface $X$ satisfies $1\leq h\leq 10$ or $h=\infty$. 

The Newton slopes of the $F$-crystal $H^2_{cris}(X/W)$ are equal to $(1-\frac{1}{h}, 1, 1+\frac{1}{h})$. Thus the set $B(G,\mu)$ is in bijection with the set $\{1,\dots,10,\infty\}$. The basic element $b_0$ corresponds to $\infty$. We write $B(G,\mu)=\{b_1,\dots, b_{10}, b_{11}=b_0\}$. The Newton stratification of $\ov{\Mm}_{2d, K}$ is just the classical height stratification. By \cite{EvG}, for each $b\in B(G,\mu)$, the Newton stratum $\ov{\Mm}_{2d,K}^b$ is non empty.
\subsubsection{Ekedahl-Oort stratification vs. Artin invariant stratification}
Thanks to the recent proof of the Tate conjecture for K3 surfaces, we know that for a K3 surface $X$ over $\ov{\F}_p$,  $h=\infty$ if and only if its Picard rank $\rho=22$, i.e. it is Artin supersingular if and only if it is Shioda supersingular, cf. \cite{L} Theorem 2.3. We simply call $X$ supersingular in this case. Let $X$ be a supersingular K3 surface over $\ov{\F}_p$, then the discriminant of its N\'eron-Severi lattice is equal to \[-p^{2\sigma_0(X)}\] for some integer $1\leq \sigma_0(X)\leq 10$. The integer $\sigma_0(X)$ is called the Artin invariant of $X$.

By \cite{EvG}, we have an explicit description of the set $^J\W$ as \[\{w_1, \dots, w_{20}\},\] with $w_i$ corresponds to $b_i$ for $1\leq i\leq 10$, and for $11\leq i\leq 20$ the elements $w_i$ are basic.  The K3 surfaces in the stratum $\ov{\Mm}_{2d,K}^{w_i}$ have Artin invariant $21-i$. In particular, we note that the index $i$ in the description of the set $^J\W$ in subsection \ref{subsection:HPEO} (where $0\leq i\leq 19$ in our case) is shifted to $i+1$ here. By \cite{EvG}, for each $w\in\, ^J\W$, the Ekedahl-Oort stratum $\ov{\Mm}_{2d,K}^w$ is non empty.

\subsection{Rapoport-Zink type uniformization and Artin invariants}\label{subsection: K3 app}
In this final subsection, we make the link between Rapoport-Zink spaces and moduli spaces of K3 surfaces.

Let $\wh{\Mm}_{2d, K}$ and $\wh{S}_{K}$ be the formal completion of $\Mm_{2d,K}$ and $S_{K}$ along their special fibers respectively. Then the integral Kuga-Satake period map in Theorem \ref{T:MP} induces an open immersion of formal schemes
\[\wh{\iota}: \wh{\Mm}_{2d, K}\lra  \wh{S}_{K}.\]

Let $x_0\in \ov{\Mm}_{2d, K}$ be any point in the special fiber $\ov{\Mm}_{2d, K}$ of $\Mm_{2d, K}$, and $x=\ov{\iota}(x_0)$ be its image in $\ov{S}_K$. Let $b\in B(G,\mu)$ be the Newton point associated to $x$ and consider the corresponding formal Rapoport-Zink space $\breve{\M}=\breve{\M}_b$ for the group $\SO(V)$. The choice of the point $x$ determines a morphism of formal schemes
\[\Theta_x:  \breve{\M}\lra \wh{S}_{K}.\]
Denote by $\breve{\N}$ the pullback of $\breve{\M}$ under $\wh{\iota}: \wh{\Mm}_{2d, K}\ra  \wh{S}_{K}$. In other words, we get a cartesian diagram
\[\xymatrix{\breve{\N}\ar[r] \ar[d]&\breve{\M}\ar[d]^{\Theta_x}\\
	\wh{\Mm}_{2d, K}\ar[r]^{\wh{\iota}}&\wh{S}_{K},
	}\]
	with the upper horizontal map $\breve{\N}\ra \breve{\M}$ is an open immersion. By the moduli description of $\breve{\M}$, we get the following description of $\breve{\N}$: for any $R\in \Nilp_W^{sm}$, 
	\[\breve{\N}(R)=\{(X, (t_\alpha),\ov{\rho})\in \breve{\M}(R)\}\] where
	\begin{itemize}
	\item $(X, (t_\alpha),\rho)\in \breve{\M}_1(R)$, with $X=KS(Y)[p^\infty]$, where $Y$ is a K3 surface over $R$, $KS(Y)$ is the Kuga-Satake abelian scheme attached to $Y$ (cf. Theorem \ref{T:MP} and \cite{MP2} section 5),
	\item $\ov{\rho}$ is the $p^\Z$-orbit of $\rho$.
	 \end{itemize}
In particular, $\breve{\N}$ is stable under the action of $J_b(\Q_p)$ on $\breve{\M}$.
\begin{remark}
By construction, we have an open subspace $\breve{\N}_1\subset   \breve{\M}_1$, such that for any $R\in \Nilp_W^{sm}$, \[\breve{\N}_1(R)=\{(X,(t_\alpha),\rho)\}\] with $(X, (t_\alpha),\rho)\in \breve{\M}_1(R)$ as above. The space $\breve{\N}$ is given by $\breve{\N}=\breve{\N}_1/p^\Z$. On the level of affine Deligne-Lusztig varieties, we get subsets \[\N_{red}(\ov{\F}_p)\subset \M_{red}(\ov{\F}_p)=X_{\mu}^G(b),\quad \N_{1red}(\ov{\F}_p)\subset \M_{1red}(\ov{\F}_p)=X_{\mu_1}^{G_1}(b_1).\] In the case that $b$ is basic, it will be interesting to describe the above subsets by special lattices as in \cite{HP} section 5.
\end{remark}

We can apply the Rapoport-Zink uniformization theorem for $S_{K}$ to deduce a similar uniformization for $\Mm_{2d, K}$. Recall that as $\dim V=21$ is odd, the group $G=\SO(V)$ is adjoint.
\begin{corollary}\label{C:K3 RZ}
	Let $J_\phi$ be the pullback of $\Zm_{\phi,K^p}$ under the open immersion $\ov{\iota}: \ov{\Mm}_{2d, K}\hookrightarrow \ov{S}_{K}$. Then we have the following identity \[\wh{\Mm_{2d, K}}_{/J_\phi} =\coprod_{j\in I}\breve{\N}/\Gamma_j, \]
	where $I$ is certain countable set, and for each $j\in I$, $\Gamma_j\subset J_b(\Q_p)$ is some discrete subgroup (constructed as usual from the uniformization theorem of the last section). If moreover $b=b_0$ is basic, then $J_\phi=\ov{\Mm}_{2d, K}^{ss}$ which is the supersingular locus in $\ov{\Mm}_{2d, K}$, and the above disjoint union is finite.
\end{corollary}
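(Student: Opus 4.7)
The plan is to derive the corollary from the uniformization theorems of Section 6 by pulling back along the open immersion $\wh{\iota}\colon\wh{\Mm}_{2d,K}\hookrightarrow\wh{S}_K$ furnished by Theorem \ref{T:MP}. Since $G=\SO(V_d)$ is adjoint we have $\phi^{ad}=\phi$, and Theorem \ref{Thm:uniformize} directly yields
\[\Theta\colon I_\phi(\Q)\setminus\breve{\M}\times G(\A_f^p)/K^p\st{\sim}{\lra}\wh{S_K}_{/\Zm_{\phi,K^p}}.\]
Fix representatives $\{g_j\}_{j\in I}$ of the double coset space $I=I_\phi(\Q)\setminus G(\A_f^p)/K^p$, and set $\Gamma_j=I_\phi(\Q)\cap g_jK^pg_j^{-1}$, regarded as a discrete subgroup of $J_b(\Q_p)$ via the natural embedding $I_\phi(\Q)\hookrightarrow I_\phi(\Q_p)\subset J_b(\Q_p)$. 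Unpacking the double coset, the right-hand side rewrites as
\[\wh{S_K}_{/\Zm_{\phi,K^p}}=\coprod_{j\in I}\Gamma_j\setminus\breve{\M}.\]

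Next, I would pull back this identification along $\wh{\iota}$. By the defining cartesian square, the pullback of $\breve{\M}$ is precisely $\breve{\N}$; moreover $\breve{\N}\subset\breve{\M}$ is $J_b(\Q_p)$-stable, because it is cut out by the condition that the universal $p$-divisible group with its Tate tensors be the $p$-divisible group of the Kuga--Satake abelian scheme of a K3, a condition intrinsic to $X$ and independent of the framing quasi-isogeny $\rho$ that $J_b(\Q_p)$ modifies. Hence each $\Gamma_j$ stabilises $\breve{\N}$, and because $\wh{\iota}$ is an open immersion, pullback commutes with formation of quotients by discrete groups acting by formally \'etale automorphisms. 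This yields
\[\wh{\Mm_{2d, K}}_{/J_\phi}=\coprod_{j\in I}\breve{\N}/\Gamma_j.\]

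For the basic case, I would apply Theorem \ref{T:uniformize basic} in place of \ref{Thm:uniformize}: it gives $\Zm_{\phi,K^p}=\ov{S}_K^{b_0}$, so
\[J_\phi=\ov{\iota}^{-1}(\ov{S}_K^{b_0})=\ov{\Mm}_{2d,K}^{b_0}=\ov{\Mm}_{2d,K}^{ss}.\]
Finiteness of $I$ in this case follows from a standard adelic class-number argument: when $[b]=[b_0]$ is basic, $J_b=I_{\phi,\Q_p}$ is an inner form of $G_{\Q_p}$, and $I_\phi$ is a reductive $\Q$-group with $I_\phi(\R)/Z(\R)$ compact (\cite{Ki2} Cor.~2.3.5); hence $I_\phi(\Q)\setminus G(\A_f^p)/K^p$ is finite.

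The main technical point is the $J_b(\Q_p)$-stability of $\breve{\N}\subset\breve{\M}$; once this is granted, the result is essentially bookkeeping around Theorems \ref{Thm:uniformize} and \ref{T:uniformize basic}. This stability is the abelian-type analogue of the arguments of \cite{Kim2}~\S4 and \cite{HP}~\S6 in the Hodge-type case, where one checks that the Kuga--Satake locus is closed under the $J_b(\Q_p)$-orbit on the corresponding special lattices.
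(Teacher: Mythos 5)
Your proof is correct and follows the same route the paper implicitly takes: apply Theorem \ref{Thm:uniformize} (or \ref{T:uniformize basic} in the basic case) with $\phi=\phi_0$ since $G$ is adjoint, unwind the double cosets $I_\phi(\Q)\backslash G(\A_f^p)/K^p$ to get the $\Gamma_j$, and pull back along $\wh{\iota}$ using the cartesian square defining $\breve{\N}$ together with its $J_b(\Q_p)$-stability (which the paper derives, as you do, from the moduli description of $\breve{\N}$ being a condition on $X$ alone). The explicit double-coset bookkeeping and the class-number finiteness argument in the basic case match what the paper leaves implicit.
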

Recently there has been a definition of isogeny between two K3 surfaces in characteristic $p$, cf. \cite{Yang}. One can cheek that the locus $J_\phi$ parametrizes an isogeny class of polarized K3 surfaces.
\begin{remark}
If the open compact subgroup $K=K_pK^p\subset G(\A_f)$ ($K_p=G(\Z_p)$) is the image of some open compact subgroup $K_1=K_{1p}K_1^p\subset G_1(\A_f)$ ($K_{1p}=G_1(\Z_p)$), then it will be much easier to prove the uniformization theorem for $S_K$: one can work directly on the finite level and take a finite \'etale quotient from the corresponding Rapoport-Zink uniformization for $G_1$,  cf. \cite{Sh1} section 4 for example.
\end{remark}

Assume that $b=b_0$ is basic. Let $\N_{red}$ be the reduced special fiber of  $\breve{\N}$. Then the Howard-Pappas stratification of the reduced special fiber $\M_{red}$ of $\breve{\M}$ induces a similar stratification of the open subspace $\N_{red}$:
\[\N_{red}=\coprod_{\Lambda}\N_{\Lambda}^\circ, \]where $\N_{\Lambda}^\circ\subset \N_{red}$ is the pullback of the stratum $\M_{\Lambda}^\circ\subset \M_{red}$. For each  $w_i\in\, ^J\W^b$, consider the corresponding Ekedahl-Oort stratum
\[\M_{w_i}=\coprod_{\Lambda, t(\Lambda)=2(21-i)}\M_\Lambda^\circ,\quad \N_{w_i}=\coprod_{\Lambda, t(\Lambda)=2(21-i)}\N_\Lambda^\circ.\]For each $11\leq i\leq 20$, the image of $\N_{w_i}$ under the uniformization morphism gives us the corresponding Ekedahl-Oort stratum $\ov{\Mm}_{2d,K}^{w_i}$ in supersingular locus. 

For $(X,\xi)\in  \ov{\Mm}_{2d, K}^{ss}(\ov{\F}_p)$, consider \[\Ll=\lan \ch_{cris}(\xi)\ran ^\bot\subset H^2_{cris}(X/W).\]This is a special lattice in the sense of Definition 5.2.1 of \cite{HP}. Then we can apply Proposition 5.2.2 of loc. cit. to produce a vertex lattice $\Lambda(\Ll)$. For any integer $r\geq 0$ define 
\[\Ll^{(r)}=\Ll+\Phi(\Ll)+\cdots+\Phi^r(\Ll). \]Then there is a unique integer $1\leq d\leq 10$ such that
\[\Ll=\Ll^{(0)}\subsetneq \Ll^{(1)}\subsetneq\cdots\subsetneq \Ll^{(d)}=\Ll^{(d+1)}.\]The vertex lattice $\Lambda(\Ll)$ is defined by
\[\Lambda(\Ll)=(\Ll^{(d)})^{\Phi}. \]It has type \[t\big(\Lambda(\Ll)\big)=2d\] and $\Lambda(\Ll)^\vee=\Ll^\Phi$.
The following corollary follows from the above uniformization and Corollary \ref{C:EO for SO}.
\begin{corollary}\label{C:K3 Artin}
	Under the uniformization identity \[ \ov{\Mm}_{2d, K}^{ss}=\coprod_{j\in I}\N_{red}/\Gamma_j, \] the Ekedahl-Oort stratum $\ov{\Mm}_{2d, K}^{w_i}$ for each $11\leq i\leq 20$ is the image of $\N_{w_i}$. In particular, if $x\in\ov{\Mm}_{2d, K}^{ss}(\ov{\F}_p)$, let $X_x$ be the associated supersingular K3 surface over $\ov{\F}_p$, then we have the identity between the Artin invariant $\sigma_0(X_x)$ and the type $t(\Lambda_x)$ \[\sigma_0(X_x)=\frac{t(\Lambda_x)}{2},\]where $\Lambda_x=\Lambda(\Ll_x)$ is the vertex lattice attached to the special lattice associated to $(X_x,\xi_x)$ as above.
\end{corollary}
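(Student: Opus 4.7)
The plan is to deduce both statements by transferring Corollary \ref{C:EO for SO} from the local Rapoport-Zink side to $\ov{\Mm}_{2d,K}^{ss}$ via the uniformization $\Theta$ of Corollary \ref{C:K3 RZ}. First I would verify that the Ekedahl-Oort stratifications on $\breve{\N}$ (inherited from $\breve{\M}$ via Theorem \ref{T:RZ EO}) and on $\ov{\Mm}_{2d,K}^{ss} \subset \ov{S}_K$ are compatible under $\Theta$. Both are defined via the same $G$-zip construction of type $\mu$ applied to a universal $p$-divisible group with crystalline Tate tensors: on the Shimura side it is the Kuga-Satake $p$-divisible group with its $(t_\alpha)$, while on the Rapoport-Zink side it is the universal $p$-divisible group in $\breve{\M}_1$, restricted to $\breve{\N}$. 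Since $\Theta$ is constructed precisely by matching these two universal objects, both factor through the same map to the stack $G\tr{-Zip}^\mu$, and the stratifications match strata-by-strata.

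Given this compatibility, for each basic $w_i$ with $11\leq i\leq 20$, the preimage of $\ov{\Mm}_{2d,K}^{w_i}$ under the uniformization is $\coprod_j \N_{w_i}/\Gamma_j$, so the first assertion follows. Now apply Corollary \ref{C:EO for SO} (in the odd case $n=19$, index shifted by one): on $\breve{\M}$ we have the identity $\M_{w_i} = \coprod_{\Lambda: t(\Lambda)=2(21-i)}\M_\Lambda^\circ$, and restricting to the open $\breve{\N}$ gives $\N_{w_i} = \coprod_{\Lambda: t(\Lambda)=2(21-i)}\N_\Lambda^\circ$. Hence, for $x \in \ov{\Mm}_{2d,K}^{ss}(\ov{\F}_p)$ mapping to the stratum $\ov{\Mm}_{2d,K}^{w_i}$, the vertex lattice $\Lambda_x$ attached via the HP stratification has type $t(\Lambda_x) = 2(21-i)$. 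Combined with the Ekedahl-van der Geer description recalled in the text (Artin invariant $\sigma_0(X_x) = 21-i$ on the EO stratum $\ov{\Mm}_{2d,K}^{w_i}$), this gives $\sigma_0(X_x) = (21-i) = t(\Lambda_x)/2$.

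The main obstacle, and the place where care is needed, is checking that the vertex lattice produced intrinsically from $(X_x,\xi_x)$ in the statement — namely $\Lambda_x = \Lambda(\Ll_x)$ built from the special lattice $\Ll_x = \lan \ch_{cris}(\xi_x)\ran^\bot \subset H^2_{cris}(X_x/W)$ — really coincides with the vertex lattice produced from the HP moduli description applied to the point $\Theta^{-1}(x) \in \breve{\N}(\ov{\F}_p)$. The HP description assigns a special lattice inside $V_L^\Phi$ via the filtration on $(C(V)^\vee \otimes W, b_1\sigma)$ of the Kuga-Satake $p$-divisible group, whereas $\Ll_x$ sits in $H^2_{cris}(X_x/W)$. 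Reconciling these requires tracing through the Kuga-Satake embedding $V \hookrightarrow \mathrm{End}(C(V))$ together with the crystalline realization of the absolute Hodge cycle $s_\alpha$ cutting out $V$ inside $C(V)^\otimes$; this identifies $V_L^\Phi$ with the Frobenius-invariant part of the primitive $H^2$ and matches the two special lattices. Once this is in place, the HP stratum containing $\Theta^{-1}(x)$ is indexed by the same vertex lattice $\Lambda_x$ appearing in the statement, and the corollary follows formally.
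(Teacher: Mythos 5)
Your proof follows essentially the same route as the paper, which simply declares that the corollary ``follows from the above uniformization and Corollary~\ref{C:EO for SO}.'' You have correctly identified, and outlined how to close, the two compatibility checks the paper leaves implicit — that the $G\textrm{-Zip}^{\mu}$ stratifications on $\breve{\N}$ (via the Kuga-Satake $p$-divisible group in $\breve{\M}_1$) and on $\ov{S}_K$ match under $\Theta$, and that the intrinsic special lattice $\Ll_x=\lan\ch_{cris}(\xi_x)\ran^\perp\subset H^2_{cris}(X_x/W)$ is carried to the Howard-Pappas special lattice sitting in $V_L^\Phi$ under the crystalline realization of the tensors cutting out $V\subset \mathrm{End}(C(V))$ — so the argument and its bookkeeping of indices agree with the paper's.
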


\appendix

\section{Admissibility and weakly admissibility in the basic orthogonal case}

In this appendix, we investigate the $p$-adic period domains $\Fl\ell_{G,\mu}^{adm}$ and $\Fl\ell_{G,\mu}^{wa}$ in the case $b$ is basic and $G=\SO$. Although the following Theorem \ref{T: wa=a SO} appears as a special case of our more recent work \cite{CFS}, we still would like to present it here, since it provides some concrete computations, which can be served as a good example-based introduction to our proof for Theorem 6.1 in \cite{CFS} in the direction $(1)\Rightarrow (2)$.  All the following materials are taken from \cite{F4}. We thank Fargues sincerely for kindly allowing us to include it here. 

Let $V=\Q_p^n$ equipped with the quadratic form $Q$ with matrix $\left( \begin{array}{ccc}
& & 1\\
& \iddots& \\
1& &
\end{array} \right)$. 
Let $G=\SO(V, Q)$ and consider the minuscule cocharacter $\mu: \G_m\ra G_{\ov{\Q}_p}$ given by $\mu(z)=\tr{diag}(z, 1, \cdots, 1, z^{-1})$. Then the basic class in $B(G,\mu)$ is $[b]=[1]$ and thus $J_b=G$. One checks easily that any non basic Newton polygon has a non trivial contact point with the Hodge polygon, i.e. $(G, \{\mu\})$ is fully Hodge-Newton decomposable in the sense of \cite{GoHeNi} Definition 2.1.

For simplicity, we write $\Fl\ell=\Fl\ell_{G,\mu}$ as the $p$-adic flag variety,  $\Fl\ell^{wa}=\Fl\ell_{G,\mu}^{wa}$, and $\Fl\ell^{adm}=\Fl\ell_{G,\mu}^{adm}$. We first describe the weakly admissible locus $\Fl\ell^{wa}$. The associated isocrystal is $\breve{\Q}_p^n$ with Frobenius $\sigma^{\oplus n}$. The sub-isocrystals are in bijection with the sub $\Q_p$-vector space of $V$. Let $C$ be a complete and algebraically closed extension of $\breve{\Q}_p$. Then we have
\[\Fl\ell(C,\Ol_C)=\{\tr{Lagrangian lines}\, D\subset V_C \} .\]
It follows that $\Fl\ell\subset \mathbb{P}^n_{\breve{\Q}_p}$ is the quadric defined by the equation $\sum_{i=1}^nx_ix_{n-i+1}=0$. Let $\Q_p^{[\frac{n}{2}]}\oplus (0)\subset V$ be a Lagrangian subspace with associated parabolic subgroup $P\subset G$. For any line $D\in \Fl\ell(C,\Ol_C)$ we attach to it the following Hodge filtration
\[0\subset \Fil^1=D\subset \Fil^0=D^\perp\subset \Fil^{-1}=V_C. \]Then 
\[\Fl\ell^{wa}(C, \Ol_C)=\{D\in \Fl\ell(C,\Ol_C)|\, D\cap W_C=0, \, \forall \, \tr{totally isotropic subspace} \,W\subset V \}.\]
Therefore, we get
\begin{proposition}
\[ \Fl\ell^{wa}=\Fl\ell\setminus G(\Q_p) S^{ad}, \]
where $S^{ad}$ is the adic space associated to the Schubert variety attached to $P$ ($S$ is defined by the locus $x_{[\frac{n}{2}]+1}=\cdots=x_n=0$ inside $\Fl\ell$).
\end{proposition}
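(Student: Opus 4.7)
My plan is to check the equality on $(C,\mathcal{O}_C)$-points for every algebraically closed complete extension $C\vert\breve\Q_p$; this suffices as both sides are locally closed subspaces of $\Fl\ell$ determined by their classical points.

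The starting point is the description of $\Fl\ell^{wa}(C,\mathcal{O}_C)$ recorded just above the proposition. Since any totally isotropic $\Q_p$-subspace of $V$ is contained in a maximal one and $D$ is a line, the condition ``$D\cap W_C=0$ for every totally isotropic $\Q_p$-subspace $W\subset V$'' is equivalent to ``$D\not\subset M_C$ for every maximal totally isotropic $\Q_p$-subspace $M\subset V$''. Thus the complement of $\Fl\ell^{wa}$ in $\Fl\ell$ consists of all Lagrangian lines contained in the base change of some $\Q_p$-rational maximal isotropic subspace.

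Next I would unwind the right-hand side. The linear subspace cut out by $x_{[n/2]+1}=\cdots=x_n=0$ is exactly the maximal totally isotropic subspace $M_0:=\Q_p^{[n/2]}\oplus 0\subset V$, whose stabilizer in $G$ is the Siegel parabolic $P$. Hence $S^{ad}(C,\mathcal{O}_C)$ parametrizes the Lagrangian lines lying in $M_0\otimes C$, and
\[G(\Q_p)\cdot S^{ad}(C,\mathcal{O}_C)=\bigl\{D\in\Fl\ell(C,\mathcal{O}_C)\,\bigl|\,D\subset g(M_0)\otimes C\ \text{for some}\ g\in G(\Q_p)\bigr\}.\]

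The heart of the proof is to match these two sets, which reduces to showing that the $G(\Q_p)$-orbit of $M_0$ accounts for every $\Q_p$-rational maximal totally isotropic subspace of $V$ once one is only interested in which Lagrangian lines of $V_C$ can be covered. This is Witt's extension theorem applied to the non-degenerate split quadratic space $(V,Q)$ over $\Q_p$: the full orthogonal group acts transitively on maximal isotropics, and for $n$ odd this transitivity descends to $\SO(V,Q)(\Q_p)=G(\Q_p)$. The main obstacle I anticipate is the even-dimensional case, where $G(\Q_p)$ has two orbits on $\Q_p$-rational maximal isotropics; here one must either augment $S$ by the Schubert subvariety attached to the other orbit or argue (using the residual quadratic space $D^{\perp}/D$) that the second orbit contributes no new Lagrangian line after $G(\Q_p)$-translation. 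Once this $G(\Q_p)$-orbit issue is resolved, combining the three steps yields the pointwise identity and hence the claimed equality of adic spaces, both $\Fl\ell^{wa}$ and $\Fl\ell\setminus G(\Q_p)S^{ad}$ being open in $\Fl\ell$.
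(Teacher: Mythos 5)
Your plan reproduces exactly the (very terse) argument the paper itself implicitly relies on: the paper gives no proof beyond the word ``Therefore'' following the displayed description of $\Fl\ell^{wa}(C,\Ol_C)$, and the content of that ``Therefore'' is precisely the reduction you sketch --- pass to $(C,\Ol_C)$-points, reformulate weak admissibility via maximal isotropics, and then invoke transitivity of $G(\Q_p)$ on $\Q_p$-rational Lagrangians. For $n$ odd your argument is complete: since $-1\notin\SO(V,Q)$, $O(V,Q)(\Q_p)$ and $\SO(V,Q)(\Q_p)$ have the same orbits on Lagrangians, and Witt's theorem finishes the job.

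The worry you raise about $n$ even is genuine, and in fact the proposition as literally stated fails there. With $n=2m$, $\SO(V,Q)(\Q_p)$ has two orbits on $\Q_p$-rational Lagrangians, distinguished by $\dim(M\cap M_0)\bmod 2$. Take $M_1$ in the orbit \emph{not} containing $M_0$ (e.g.\ for $n=4$, $M_0=\lan e_1,e_2\ran$ and $M_1=\lan e_1,e_3\ran$), and take an isotropic line $D\subset M_{1,C}$ not contained in the base change of any proper $\Q_p$-subspace of $M_1$ (e.g.\ $D=C(e_1+\pi e_3)$ with $\pi$ transcendental). Then $M_1$ is the \emph{only} $\Q_p$-rational Lagrangian with $D\subset M_{1,C}$, so $D\notin G(\Q_p)\cdot S^{ad}$, yet $D\notin\Fl\ell^{wa}$. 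Thus for $n$ even one really must augment $S$ by the Schubert variety of the other Lagrangian family --- your first proposed fix is the correct one. Your second suggestion (arguing via $D^\perp/D$) does not work: $D$ is not $\Q_p$-rational, so $D^\perp/D$ carries only a $C$-structure and its Lagrangians produce no new $\Q_p$-rational maximal isotropics of $V$ containing $D$. Since the application in the paper (moduli of K3 surfaces) has $n=21$ odd, the case the proposition actually uses is the one where your proof closes cleanly; still, your instinct to flag the even case was the right one.
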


Now we look at the admissible locus $\Fl\ell^{adm}$ (cf. \cite{Ra2} Definition A.6 or \cite{CFS} Definition 3.1). We have the following
\begin{theorem}\label{T: wa=a SO}
$\Fl\ell^{adm}=\Fl\ell^{wa}$.
\end{theorem}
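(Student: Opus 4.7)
The plan is to prove the reverse inclusion $\Fl\ell^{wa}\subset \Fl\ell^{adm}$, the forward one being automatic. It suffices to check the equality on $(C,\Ol_C)$-points for $C|\breve{\Q}_p$ complete and algebraically closed. So I would take $D\in \Fl\ell^{wa}(C,\Ol_C)$ and form the modification $\Ec_D$ of the trivial $G$-bundle on the Fargues-Fontaine curve $X=X_{C^\flat}$ at $\infty$, determined by $D$ through the minuscule $B_{dR}^+$-lattice $\Xi_D\subset V_C\otimes B_{dR}$ with $\Xi_D/tV_C\otimes B_{dR}^+=D$. By Proposition \ref{P:admissible}, what has to be shown is that $\Ec_D$ is the trivial $G$-bundle, equivalently semi-stable of slope $0$ after pushing through the standard representation.

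Next I would argue by contradiction. Suppose $\Ec_D$ is not trivial. By Theorem \ref{T: G-bundles} it is isomorphic to $\Ec_{b'}$ for a unique class $[b']\in B(G)$, and by \cite{CS} Proposition 3.5.3 we have $[b']\in B(G,\mu)$. A direct inspection of the Newton polygons in $B(G,\mu)$ (every non-basic Newton polygon meets the Hodge polygon in an interior break point, as observed in the first paragraph of the appendix) shows that $(G,\{\mu\})$ is fully Hodge-Newton decomposable. Hence if $[b']\neq [1]$, the pair $([b'],\mu)$ is Hodge-Newton decomposable: there is a proper standard Levi $M\subsetneq G$ containing $M_{b'}$ with $\kappa_M(b')=\mu^\sharp$ in $\pi_1(M)_\Gamma$. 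For $G=\SO(V)$, proper standard Levi subgroups are stabilizers of totally isotropic flags $0\subsetneq W_1\subsetneq\cdots\subsetneq W_r\subsetneq V$; the associated reduction of $\Ec_D$ to $M$ is in particular a totally isotropic sub-bundle $\Fc\subset \Ec_D$ whose slopes are all strictly positive.

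The heart of the argument is then to extract from $\Fc$ a rational totally isotropic subspace of $V$ violating weak admissibility. Concretely, the modification fits in an exact sequence
\[0\lra \Ol_X^n\lra \Ec_D\lra i_{\infty,\ast}(V_C/D)\lra 0,\]
and I would intersect the isotropic sub-bundle $\Fc$ with the sub-bundle $\Ol_X^n\subset \Ec_D$. A slope inequality (the slopes of $\Fc$ are $>0$, while any non-trivial quotient of $\Ol_X^n$ has slope $\geq 0$) combined with the classification of semi-stable bundles of slope $0$ on $X$ as constant bundles forces the intersection $\Fc\cap \Ol_X^n$ to be of the form $\ul{W}\otimes \Ol_X$ for a non-zero $\Q_p$-subspace $W\subset V$, which inherits the property of being totally isotropic from $\Fc$. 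Reading back through the exact sequence, the fact that $\Fc\cap \Ol_X^n$ lies in the kernel of $\Ol_X^n\ra i_{\infty,\ast}(V_C/D)$ translates exactly into $W_C\subset D^\perp$ together with $W_C\cap D\neq 0$; equivalently, $W$ is a non-zero totally isotropic subspace of $V$ with $D\cap W_C\neq 0$, contradicting $D\in\Fl\ell^{wa}(C,\Ol_C)$.

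The routine parts are the set-up via the $B_{dR}^+$-affine Grassmannian and the verification that $(G,\{\mu\})$ is fully Hodge-Newton decomposable. The main obstacle is the last step: carrying out the intersection $\Fc\cap \Ol_X^n$ and converting the reduction to $M$ into an honest rational totally isotropic subspace of $V$ that contradicts the weak admissibility of $D$. For this one needs to use carefully both the relative form of the Fargues-Fontaine classification (so that slope considerations produce a constant sub-bundle) and the compatibility between the Hodge-Newton reduction of $\Ec_D$ and the Hodge filtration at $\infty$ induced by $D$. Once this is done, the proof of $\Fl\ell^{adm}=\Fl\ell^{wa}$ follows immediately from the contradiction.
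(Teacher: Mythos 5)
Your overall strategy is sound and agrees with the paper's: assume $\Ec_D$ is not trivial, extract a positive-slope totally isotropic sub-bundle of $\Ec_D$, transport it to a slope-zero (hence constant) totally isotropic sub-bundle $\ul{W}\otimes\Ol_X\subset\Ol_X^n$, and read off a rational totally isotropic $W\subset V$ that violates weak admissibility. But there are two genuine gaps in the execution, both occurring exactly in what you flag as the ``main obstacle.''

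First, the exact sequence $0\to\Ol_X^n\to\Ec_D\to i_{\infty,\ast}(V_C/D)\to 0$ is not correct. Since $\mu=\tr{diag}(z,1,\dots,1,z^{-1})$ has both a positive and a negative weight, the lattice $\Xi_D$ at $\infty$ neither contains nor is contained in $(B_{dR}^+)^n$; the modification is two-sided, with each of $\Lambda_0/(\Xi_D\cap\Lambda_0)$ and $\Xi_D/(\Xi_D\cap\Lambda_0)$ of length $1$, not $n-1$. Consequently $\Ol_X^n$ is not a sub-bundle of $\Ec_D$, and the ``intersection'' $\Fc\cap\Ol_X^n$ is not literally available; the paper instead transports the HN-destabilizing piece along the isomorphism on $X\setminus\infty$ and saturates inside $\Ol_X^n$. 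Your final step, reading $W_C\cap D\neq 0$ off the (incorrect) exact sequence, therefore does not go through as written.

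Second, and more importantly, the slope bookkeeping does not close. The inequality ``slopes of $\Fc$ are $>0$, quotients of $\Ol_X^n$ have slope $\geq 0$'' yields only that the transported bundle $\Fl\subset\Ol_X^n$ has degree $\leq 0$; it does not force degree exactly $0$. To conclude $\deg\Fl=0$ one needs two further inputs that the paper supplies via Rapoport's classification (\cite{Ra2} Prop.\ A.9): (a) the non-trivial $\Ec_D$ is necessarily $\Ol_X(\frac{1}{r})\oplus\Ol_X^{n-2r}\oplus\Ol_X(-\frac{1}{r})$, so the positive-slope piece has degree exactly $1$; and (b) because $\mu$ is minuscule, the induced modification on a rank-$r$ sub-bundle is of one of the types $(-1,0,\dots,0)$, $(0,\dots,0,1)$, $(0,\dots,0)$, hence can drop the degree by at most $1$. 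Fully Hodge-Newton decomposability, which you invoke, does not by itself deliver either of these; indeed it plays no essential role here — every non-basic $[b']$ already gives a non-trivial HN filtration of $\Ec_{b'}$, and the compatibility of the HN filtration with the orthogonal pairing is what makes the positive part totally isotropic, independent of HN decomposability. What is actually needed, and is missing from your sketch, is the degree control supplied by the explicit classification of $\Ec_D$ and of the bounded-by-$\mu$ modification types.
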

\begin{proof}
For any point $x\in \Fl\ell^{wa}(C, \Ol_C)$, let $\E_x$ be the associated modification of $\Ol_X^n$ such that the relative position of $(B_{dR}^+)^n$ and $\wh{\E}_{x,\infty}$ is bounded by $\mu$. Here $X$ is the Fargues-Fontaine curve over $\Q_p$ associated to the perfectoid field $C^\flat$, and $\infty=x_C\in X$ is the point defined by $C$. We need to show this weakly admissible modification is in fact an admissible modification (i.e. $\E_x$ is semi-stable of slope 0).

By \cite{Ra2} Proposition A. 9, we have either
\[\E_x\simeq \Ol_X(\frac{1}{r})\oplus \Ol_X^{n-2r}\oplus\Ol_X(-\frac{1}{r}) \]
for some integer $1\leq r\leq [\frac{n}{2}]$, or
\[\E_x\simeq \Ol_X^n.\]The second case is admissible. We have to show this is always the case. Suppose that we are in the first case: we will find a contradiction. The perfect quadratic form on $\E_x$ is such that for any $\lambda\in\Q$, we have $(\E_x^{\geq \lambda})^\perp =\E_x^{>-\lambda}$, where $\E_x^\lambda\subset \E_x$ is a step in the Harder-Narasimhan filtration of $\E_x$. Therefore, we get
\[\Ol_X(\frac{1}{r})^{\perp}=\Ol_X(\frac{1}{r})\oplus \Ol_X^{n-2r}\]
and $\Ol_X(\frac{1}{r})$ is totally isotropic. It follows that there exists a unique sub vector bundle $\Fl\subset \Ol_X^n$ which is a locally direct summand, such that the modification $\E_x|_{X\setminus\infty}\st{\sim}{\ra}\Ol_{X\setminus\infty}^n$ induces a modification
\[\Ol_X(\frac{1}{r})|_{X\setminus\infty}\st{\sim}{\lra}\Fl|_{X\setminus\infty}.\]
In particular, $\Fl$ is totally isotropic in $\Ol_X^n$. Such a modification is necessarily of one of the following types:
\begin{enumerate}
	\item $(-1,0,\dots, 0)$,
	\item $(0, \dots, 0, 1)$,
	\item $(0, \dots, 0)$.
\end{enumerate}
Indeed, it suffices to look at for all the sub $B_{dR}$-vector spaces $E$ of $B_{dR}^n$, the relative positions of the lattices $E\cap (B_{dR}^+)^n$ and $E\cap \lan te_1, e_2, \dots, e_{n-1}, t^{-1}e_n\ran$, where $e_1,\dots, e_n$ is a basis of $V$. As $\Ol_X^n$ is semi-stable, we have $\deg(\Fl)\leq 0$. By looking at the above three cases, we get that $\Fl$ is a degree $-1$ modification of $\Ol_X(\frac{1}{r})$. Thus, 
\[\Fl\simeq \Ol_X^r,\]that is $\Fl=W\otimes\Ol_X$ for some totally isotropic subspace $W\subset \Q_p^n$ of dimension $r$. This implies that our modification $\E_x|_{X\setminus\infty}\st{\sim}{\ra}\Ol_{X\setminus\infty}^n$ is not weakly admissible. Thus we get a contradiction.
\end{proof}

\end{document}